\newcommand{\CC}{\mathbb{C}}
\newcommand{\EE}{\mathcal{E}}
\newcommand{\FF}{\mathcal{F}}
\newcommand{\QQ}{\mathcal{Q}}
\newcommand{\ZZ}{\mathbb{Z}}
\newcommand{\PP}{\mathbf{P}}
\newcommand{\rad}{\mathrm{rad}}
\newtheorem{definition}[thm]{Definition}
\title{Cell decompositions and algebraicity of cohomology for quiver Grassmannians}
\author{Giovanni Cerulli Irelli \and Francesco Esposito \and Hans Franzen \and Markus Reineke}
\address{
Giovanni Cerulli Irelli\\ Sapienza-Universit\`a di Roma\\ Dipartimento S.B.A.I.\\Via A. Scarpa 10 \\00161 Roma (Italy)} \email{giovanni.cerulliirelli@uniroma1.it} 
\address{ 
Francesco Esposito\\ Universit\`a di Padova\\ Dipartimento di Matematica\\ Via Trieste 63\\ 35121 Padova (Italy)}
\email{esposito@math.unipd.it}
\address{
Hans Franzen\\ Ruhr-University Bochum\\ Faculty of Mathematics\\
Universit\"atsstrasse 150\\
44780 Bochum (Germany)}
\email{hans.franzen@rub.de}
\address{ 
Markus Reineke\\ Ruhr-University Bochum\\ Faculty of Mathematics\\
Universit\"atsstrasse 150\\
44780 Bochum (Germany)}
\email{Markus.Reineke@ruhr-uni-bochum.de}
\begin{document}
	
	\begin{abstract}
We show that the cohomology ring of a quiver Grassmannian asssociated with a rigid quiver representation has property (S): there is no odd cohomology and  the cycle map is an isomorphism; moreover, its Chow ring admits  explicit generators defined over any field. From this we deduce the polynomial point count property. By restricting the quiver to finite or affine type, we are able to show a much stronger assertion: namely, that a quiver Grassmannian associated to an indecomposable (not necessarily rigid) representation  admits a cellular decomposition. As a corollary, we establish a cellular decomposition for quiver Grassmannians associated with representations with rigid regular part. Finally, we study the geometry behind the cluster multiplication formula of Caldero and Keller, providing a new proof of a slightly more general result. 
\end{abstract}
\maketitle
	\tableofcontents
	
	\section{Introduction}
	It has long been known that the representation theory of certain algebraic objects (such as Weyl groups, semisimple Lie algebras, Kac--Moody algebras) is intimately linked to the topology of suitable varieties  (such as Grassmannians, flag varieties, Schubert varieties, Springer fibers). Relatively more recently, varieties associated to representations of quivers have been used by Lusztig for the study of canonical bases of quantum groups, building on work of Ringel who realized the upper part of  quantum groups as Hall algebras of quiver representations over finite fields. In 2001, Fomin and Zelevinsky \cite{FZ1} introduced cluster algebras for the study of dual canonical bases of quantum groups. It was hence expected that there exists a close connection between Hall algebras and cluster algebras. This was realized first by Caldero and Chapoton \cite{CC} in 2006 and extended to full generality by Caldero--Keller \cite{CK1,CK2}, Palu \cite{Palu}, Derksen-Weyman-Zelevinsky \cite{DWZ}: instead of using Hall numbers, Caldero and Chapoton used the Euler characteristic of complex quiver Grassmannians to realize the generators of the cluster algebras. The key of Caldero and Chapoton's realization was a multiplication formula for certain characters of the Grothendieck group of $\textrm{Rep}_K(Q)$ where $Q$ is a Dynkin quiver. This formula is based on the non--trivial fact that quiver Grassmannians for Dynkin quivers have polynomial point count, as a consequence of Ringel's work on Hall numbers. The multiplication formula was then extended by Caldero--Keller \cite{CK1,CK2} and Palu \cite{Palu, Palu:12}. A different approach for the proof of a multiplication formula was achieved by Hubery \cite{Hubery} (and independently by Xu \cite{Xu}) and it is based on Green's theorem and associativity of Hall numbers. 
	
	The positivity conjecture of Fomin--Zelevinsky translates into the positivity conjecture for the Euler--Poincar\'e characterisitic of the quiver Grassmannians associated with rigid representations of acyclic quivers (i.e. quiver representations without self-extensions). This result was proved  by Nakajima in \cite[Th.~A.1]{Nakajima:Cluster} and, independently, by Qin in \cite{Qin}: They show that such quiver Grassmannians do not have odd cohomology. 
	
	Following \cite{DLP}, we say that a smooth projective complex variety $X$ has property $(S)$ if
	\begin{enumerate} 
		\item numerical and rational equivalence on $X$ coincide, in particular the Chow ring $A^*(X)$ is a finitely generated  free abelian group,
		\item $H^{2i+1}(X)=0$ for every $i$, and
		\item the cycle map $A^i(X) \to H^{2i}(X)$ is an isomorphism for all $i$
	\end{enumerate}  
	(see Section~\ref{Sec:PropertySandCellDec}). For a (finite dimensional) quiver representation $M$ we denote by $\Gr_\mathbf{e}(M)$ the projective variety parametrizing subrepresentations of $M$ of dimension vector $\mathbf{e}$, i.e. a quiver Grassmannian attached to $M$. We say that $M$ has property (S) if $\Gr_\mathbf{e}(M)$ has property (S) for all $\mathbf{e}$.  
	\begin{thm} \label{t:SIntro}
A rigid quiver representation $M$ has property (S). 
\end{thm}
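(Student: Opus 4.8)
\subsection*{Proof strategy for Theorem~\ref{t:SIntro}}

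The plan is to derive property (S) for every $\Gr_{\mathbf{e}}(M)$ from the cellularity of a product of ordinary Grassmannians, by degenerating $M$ to a semisimple representation. Two preliminary points set this up. First, rigidity makes $\Gr_{\mathbf{e}}(M)$ smooth: since the path algebra of an acyclic quiver is hereditary, applying $\mathrm{Hom}(M,-)$ and then $\mathrm{Hom}(-,M/U)$ to $0\to U\to M\to M/U\to 0$ gives surjections $\mathrm{Ext}^1(M,M)\twoheadrightarrow\mathrm{Ext}^1(M,M/U)\twoheadrightarrow\mathrm{Ext}^1(U,M/U)$, so $\mathrm{Ext}^1(M,M)=0$ forces $\mathrm{Ext}^1(U,M/U)=0$ for every subrepresentation $U$; the obstruction at $[U]$ vanishes, hence $\Gr_{\mathbf{e}}(M)$ is smooth of pure dimension $\langle\mathbf{e},\mathbf{d}-\mathbf{e}\rangle$, and of course projective. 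Second, one embeds $\Gr_{\mathbf{e}}(M)$ into the smooth cellular variety $G:=\prod_i\Gr(e_i,d_i)$ as the zero locus of the tautological section $s_M$ of the globally generated bundle $E_M=\bigoplus_{a\colon i\to j}\mathcal{H}om(\mathcal{S}_i,\mathcal{Q}_j)$ built from the tautological sub- and quotient bundles; the computation above says precisely that $s_M$ vanishes transversally. Equivalently, inside the relative Grassmannian $\widetilde{\Gr}\to\mathrm{Rep}_{\mathbf{d}}(Q)$, which is an affine bundle over $G$ and hence itself smooth and cellular, the variety $\Gr_{\mathbf{e}}(M)$ is the fibre over the point $M$, whose $\mathrm{GL}_{\mathbf{d}}$-orbit is dense because $M$ is rigid.

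Next I would fix a one-parameter subgroup $\lambda$ of $\mathrm{GL}_{\mathbf{d}}$ coming from a height function on $Q$ that is strictly increasing along every arrow, and restrict $\widetilde{\Gr}$ over the orbit closure $\overline{\mathbb{G}_m\cdot M}\cong\mathbb{A}^1$ to obtain a $\mathbb{G}_m$-equivariant family $\mathcal{Y}\to\mathbb{A}^1$ whose fibre over $t\neq 0$ is isomorphic to $\Gr_{\mathbf{e}}(M)$ and whose fibre over $0$ is $\Gr_{\mathbf{e}}(M_0)=G$, with $M_0=\lim_{t\to 0}t\cdot M$ semisimple. Granting that $\mathcal{Y}$ has property (S) and that restriction to the fibre $\Gr_{\mathbf{e}}(M)=\mathcal{Y}_1$ is an isomorphism on singular cohomology (smooth proper family over the contractible base $\mathbb{A}^1$) and is compatible with the cycle maps, the theorem follows: one has $H^*(\Gr_{\mathbf{e}}(M))\cong H^*(G)$, which has no odd part and is torsion-free; and, looking at the commutative square formed by the Gysin map $A_*(\mathcal{Y})\to A_{*-1}(\Gr_{\mathbf{e}}(M))$, the cycle maps, and the cohomology restriction $H^*(\mathcal{Y})\to H^*(\Gr_{\mathbf{e}}(M))$, one finds that the Gysin map is surjective (a cycle on the fibre spreads out flatly in the $\mathbb{G}_m$-direction) while its composite with the cycle map on the cellular $\mathcal{Y}$ is injective, so it is an isomorphism, and therefore the cycle map on $\Gr_{\mathbf{e}}(M)$ is an isomorphism; numerical and rational equivalence then coincide by the usual Poincar\'e duality pairing argument. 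The same bookkeeping shows that $A^*(\Gr_{\mathbf{e}}(M))$ is generated by the Chern classes of the tautological bundles pulled back from $G$ — the generators promised in the abstract, which make sense over any field.

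The main obstacle is exactly the clause ``granting that'': the degeneration of $M$ to $M_0$ is \emph{not} flat, since $G$ has strictly larger dimension than $\Gr_{\mathbf{e}}(M)$ as soon as some arrow of $Q$ acts non-trivially, so $\mathcal{Y}$ is neither smooth nor flat over $\mathbb{A}^1$, the total space acquires an extra component over $0$, and the Ehresmann and cellularity inputs above are not literally available (the Gysin surjectivity, by contrast, survives verbatim, as it only uses $\mathcal{Y}|_{\mathbb{G}_m}\cong\mathbb{G}_m\times\Gr_{\mathbf{e}}(M)$). Resolving this is the heart of the matter, and I would do it either by passing to Borel--Moore homology and running the localization sequences for the reducible, non-equidimensional $\mathcal{Y}$ while controlling the component over $0$, or by exploiting a Bialynicki--Birula decomposition associated with the $\mathbb{G}_m$-action on $\mathcal{Y}$ (whose fixed locus is finite) to produce an affine paving from which the needed isomorphisms can be read off. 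A more economical alternative, which I expect to be the one that in finite and affine type is sharpened to a genuine cell decomposition later in the paper, is to degenerate $M$ instead to an $\mathbf{e}$-generic representation — the associated graded for a filtration chosen so that the quiver Grassmannian degenerates \emph{flatly} — and to induct on the length of such a filtration, the base case being a representation whose quiver Grassmannian is already a product of Grassmannians.
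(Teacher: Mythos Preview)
Your proposal has a genuine gap, which you yourself identify honestly: the degeneration $\mathcal{Y}\to\mathbb{A}^1$ from $\Gr_{\mathbf{e}}(M)$ to the product of Grassmannians is not flat, so none of the specialization isomorphisms you invoke (Ehresmann, Gysin surjectivity matched with injectivity from cellularity of the total space) are available as stated. The fixes you sketch afterwards---localization sequences on the reducible $\mathcal{Y}$, a Bia{\l}ynicki--Birula analysis, or a flat degeneration to some other representation---are not worked out, and each faces real obstacles: $\mathcal{Y}$ is singular so BB does not apply directly, the localization route would require controlling the extra component over $0$ (which is essentially the whole problem), and there is no reason an $\mathbf{e}$-generic degeneration should exist or terminate in something with known Chow groups.

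The paper's approach avoids degeneration entirely. It uses the Ellingsrud--Str{\o}mme criterion: for smooth projective $X$, if the diagonal class $[\Delta]\in A_*(X\times X)$ decomposes as $\sum p_1^*\alpha_\lambda\cdot p_2^*\beta_\lambda\cap[X\times X]$, then $X$ has property (S) and the $\alpha_\lambda$ generate $A^*(X)$. On $X\times X$ with $X=\Gr_{\mathbf{e}}(M)$, rigidity makes the natural map $\Phi:\bigoplus_i\underline{\Hom}(p_2^*\UU_i,p_1^*\QQ_i)\to\bigoplus_\alpha\underline{\Hom}(p_2^*\UU_{s(\alpha)},p_1^*\QQ_{t(\alpha)})$ surjective, so $\ker\Phi=\underline{\Hom}_Q(p_2^*\UU,p_1^*\QQ)$ is a vector bundle of rank $\langle\mathbf{e},\mathbf{d}-\mathbf{e}\rangle=\dim X$. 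The tautological section of $\ker\Phi$ given by $p_2^*\UU\hookrightarrow M\twoheadrightarrow p_1^*\QQ$ vanishes scheme-theoretically exactly on $\Delta$, so $[\Delta]=c_{\mathrm{top}}(\ker\Phi)\cap[X\times X]$. One then expresses this top Chern class via $c_t(H)\cdot c_t(K)^{-1}$ and Lascoux's formula for the Chern polynomial of a tensor product; each coefficient manifestly lies in the subring generated by $p_1^*$- and $p_2^*$-pullbacks of Schur polynomials in the Chern roots of the $\UU_i$, which is exactly the Ellingsrud--Str{\o}mme decomposition. This also yields directly that the Chern classes of the $\UU_i$ generate $A^*(X)$, i.e.\ the surjectivity of restriction from $H^*(\prod_i\Gr_{e_i}(M_i))$ that you were trying to extract from the degeneration.

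Your preliminary steps---smoothness of $\Gr_{\mathbf{e}}(M)$ from $\Ext^1(U,M/U)=0$, and the realization of $\Gr_{\mathbf{e}}(M)$ as the transversal zero locus of a section over $\prod_i\Gr_{e_i}(M_i)$---are correct and used in the paper, but the decisive idea is to work on $X\times X$ rather than to degenerate $X$.
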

It is known that the support of a rigid quiver representation is acyclic (see e.g. \cite[Prop.~2]{Ringel:Tree}) and hence, to prove theorem~\ref{t:SIntro},  it is not restrictive to assume to work with acyclic quivers.  
The proof of Theorem~\ref{t:SIntro} is given in Section~\ref{Sec:PropertyS} and is based on the Ellingsrud-Str{\o}mme decomposition of the diagonal, which provides explicit generators of the Chow rings. Moreover, we prove that quiver Grassmannians attached to rigid quiver representations are irreducible over any field and smooth over $\mathbb{Z}$ (see Proposition~\ref{p:irred_rigid} and Lemma~\ref{Lem:SchemeZ}). This has the following consequence that was previously proved by Qin in \cite[Th.~3.2.6]{Qin} employing quantum cluster algebras (see Section~\ref{Subsec:Cor2}).
\begin{cor}\label{Cor:PolyCount}
A quiver Grassmannian attached to a rigid representation has polynomial point--count.
\end{cor}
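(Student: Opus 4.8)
The plan is to deduce the polynomial point count directly from property (S) together with the integrality statements, by the classical mechanism of the Weil conjectures; in contrast to Qin's argument via quantum cluster algebras, this is a purely cohomological route. Since the support of a rigid representation is acyclic, I would reduce to the case of an acyclic quiver $Q$ and fix an integral model of $M$, so that (as in the setting of Lemma~\ref{Lem:SchemeZ}) the quiver Grassmannian $X := \Gr_{\mathbf e}(M)$ is a smooth projective scheme over $\ZZ$, with geometrically irreducible fibres by Proposition~\ref{p:irred_rigid}. The goal is to show that for every prime power $q$,
\[
\# X(\mathbb{F}_q) \;=\; \sum_{i} \dim_{\mathbb{Q}} H^{2i}(X_{\CC};\mathbb{Q})\, q^{i}.
\]

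Fix a prime $p$, a power $q = p^{r}$ and a prime $\ell \neq p$. First I would invoke smooth and proper base change for $\ell$-adic (étale) cohomology together with the Artin comparison theorem to obtain isomorphisms of $\mathbb{Q}_\ell$-vector spaces
\[
H^{i}\big(X_{\overline{\mathbb{F}_q}};\mathbb{Q}_\ell\big) \;\cong\; H^{i}\big(X_{\overline{\mathbb{Q}}};\mathbb{Q}_\ell\big) \;\cong\; H^{i}\big(X_{\CC};\mathbb{Q}\big)\otimes_{\mathbb{Q}}\mathbb{Q}_\ell ,
\]
where it is essential that $X$ be smooth and proper over all of $\ZZ$ so that no prime is excluded. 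By Theorem~\ref{t:SIntro}(2) the rightmost space vanishes for odd $i$, and hence so does $H^{i}(X_{\overline{\mathbb{F}_q}};\mathbb{Q}_\ell)$.

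Next I would pin down the action of geometric Frobenius $F$ on $H^{2i}(X_{\overline{\mathbb{F}_q}};\mathbb{Q}_\ell)$. By Theorem~\ref{t:SIntro} the Chow ring of $X$ over any field is generated by explicit cycles coming from the Ellingsrud--Str{\o}mme decomposition of the diagonal; since $X$ is smooth over $\ZZ$, that decomposition of the diagonal is itself defined over $\ZZ$, hence specializes, and the generating cycles are defined already over the prime field $\mathbb{F}_p \subseteq \mathbb{F}_q$. Running the diagonal-decomposition argument over $\overline{\mathbb{F}_q}$ in place of $\CC$ then shows that the cycle class map $A^{i}(X_{\overline{\mathbb{F}_q}}) \to H^{2i}(X_{\overline{\mathbb{F}_q}};\mathbb{Q}_\ell(i))$ is surjective, with the classes of the $\mathbb{F}_q$-rational generators already spanning the target. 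Since the class of an $\mathbb{F}_q$-rational codimension-$i$ cycle is $F$-fixed in the Tate-twisted group $H^{2i}(X_{\overline{\mathbb{F}_q}};\mathbb{Q}_\ell(i))$, after untwisting $F$ acts on $H^{2i}(X_{\overline{\mathbb{F}_q}};\mathbb{Q}_\ell)$ by the scalar $q^{i}$.

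Finally, the Grothendieck--Lefschetz trace formula (valid since $X_{\mathbb{F}_q}$ is smooth and proper) gives
\[
\# X(\mathbb{F}_q) \;=\; \sum_{i}(-1)^{i}\operatorname{Tr}\!\big(F \mid H^{i}(X_{\overline{\mathbb{F}_q}};\mathbb{Q}_\ell)\big) \;=\; \sum_{i}\dim_{\mathbb{Q}_\ell}H^{2i}(X_{\overline{\mathbb{F}_q}};\mathbb{Q}_\ell)\,q^{i} \;=\; \sum_{i}\dim_{\mathbb{Q}}H^{2i}(X_{\CC};\mathbb{Q})\,q^{i},
\]
a polynomial in $q$ with non-negative integer coefficients, independent of $p$. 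The one genuinely delicate point I foresee is the preceding step: one must be sure that the generating cycles produced over $\CC$ survive reduction modulo $p$ and still span $\ell$-adic cohomology there, i.e.\ that no transcendental cohomology is created in positive characteristic. This is precisely why one needs the decomposition of the diagonal to be defined over $\ZZ$ — that is, why Lemma~\ref{Lem:SchemeZ} (smoothness over $\ZZ$) and the irreducibility statement of Proposition~\ref{p:irred_rigid} are invoked — after which the remainder is the standard principle that a smooth projective variety of Tate type has polynomial point count.
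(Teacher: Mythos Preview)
Your proposal is correct and follows essentially the same route as the paper: smoothness over $\ZZ$ via Lemma~\ref{Lem:SchemeZ}, smooth--proper base change to identify $\ell$-adic cohomology across characteristics, algebraicity of the cohomology to pin down the Frobenius eigenvalues as $q^i$, and the Grothendieck--Lefschetz trace formula. The only cosmetic difference is in the Frobenius step: rather than re-running the diagonal-decomposition argument over $\overline{\mathbb{F}_q}$ as you do, the paper uses the closed embedding $\iota:\mathcal{X}\hookrightarrow\mathcal{X}_0=\prod_i\Gr_{e_i}(d_i)$, transports the surjectivity of $\iota^*$ on cohomology (Corollary~\ref{Cor:CohGrass}, established over $\CC$) to $\overline{\mathbb{F}_p}$ via the base-change isomorphisms, and then reads off the eigenvalues from the known Frobenius action on the product of Grassmannians --- but this encodes exactly the same content, since the generators in both cases are the Chern classes of the universal bundles.
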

Quiver Grassmannians play a central role in cluster theory. In particular, it is important to know the vanishing of odd cohomology (for positivity) and the polynomial count (for quantum cluster variables). These properties were obtained by Nakajima \cite{Nakajima:Cluster} and Qin, \cite{Qin} with different techniques. We obtain these results by noticing that these varieties admit the decomposition of the diagonal \cite{Voisin}. 

We recall (see Section~\ref{Sec:PropertySandCellDec}) that a variety $X$ admits a cellular decomposition, or affine paving, if there exists a finite partition $X=X_1\amalg\cdots\amalg X_h$ such that each piece $X_i$ is an affine space, and $X_1\cup\cdots\cup X_i$ is closed in $X$ for every $i=1,\cdots, h$. 
Cellular decomposition implies property (S), but in general they are not equivalent. We say that a (finite-dimensional) quiver representation $M$ has property (C) if $\Gr_\mathbf{e}(M)$ admits a cellular decomposition for all $\mathbf{e}$.   Property (C)  has been shown to hold for some rigid representations of an oriented tree \cite{Lorscheid} and for rigid representations of a generalized Kronecker quiver \cite{RW}. 
\begin{conj}\label{Conj:Rigid}
Any rigid quiver representation has property (C). 
\end{conj}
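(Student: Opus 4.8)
The plan is to reduce, via torus actions, to the case of indecomposable representations, and then to build an affine paving from the tree structure of exceptional modules. Since the support of a rigid representation is acyclic \cite{Ringel:Tree}, we may assume $Q$ acyclic and write $M\cong\bigoplus_i M_i^{a_i}$ with the $M_i$ pairwise non-isomorphic indecomposable; each $M_i$ is then exceptional. A maximal torus $T$ of $\operatorname{Aut}_Q(M)$ acts on $\Gr_\mathbf{e}(M)$, and for a generic one-parameter subgroup $\lambda\colon\CC^\ast\to T$ the fixed locus is the disjoint union, over all decompositions $\mathbf{e}=\sum_{i,j}\mathbf{e}_{ij}$, of the products $\prod_{i,j}\Gr_{\mathbf{e}_{ij}}(M_i)$. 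Since $\Gr_\mathbf{e}(M)$ is smooth (Lemma~\ref{Lem:SchemeZ}) and projective, the Bialynicki--Birula decomposition attached to $\lambda$ is filtrable and each of its strata is an affine-space bundle over a component of the fixed locus; as an affine-space bundle over an affine space is again an affine space, a cellular decomposition of the fixed locus lifts to one of $\Gr_\mathbf{e}(M)$. Inducting on the number of indecomposable summands, this reduces Conjecture~\ref{Conj:Rigid} to the statement that $\Gr_\mathbf{e}(N)$ admits a cellular decomposition for every exceptional (equivalently, indecomposable rigid) representation $N$ and every $\mathbf{e}$.

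In this indecomposable case $\operatorname{Aut}_Q(N)$ consists only of scalars, so it acts trivially on $\Gr_\mathbf{e}(N)$ and a different input is required: by Ringel's theorem \cite{Ringel:Tree}, an exceptional module is a \emph{tree module}, that is, it has a basis whose coefficient quiver $\Gamma$ is a tree. Relative to such a basis one has the Schubert decomposition of $\Gr_\mathbf{e}(N)$ indexed by suitable subsets of $\Gamma_0$ of weight $\mathbf{e}$, in the sense of \cite{Lorscheid,RW}, and the tree shape of $\Gamma$ forces every non-empty Schubert cell to be an affine space, since the equations cutting it out can be solved recursively along the branches. Thus $\Gr_\mathbf{e}(N)$ is always a disjoint union of affine cells, and what remains is the \emph{closure condition}: that the cells can be ordered $C_1,\dots,C_h$ so that $C_1\cup\dots\cup C_i$ is closed for every $i$, equivalently that the closure of each Schubert cell is a union of Schubert cells. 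This fails for a poorly chosen basis, so the real task becomes to produce, for every exceptional $N$, a tree basis for which it holds.

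This closure condition is the main obstacle. For $Q$ of finite or affine type the module category and its Auslander--Reiten theory are explicit enough to furnish such bases --- this is carried out in the later sections, where in fact a cellular decomposition is obtained for \emph{all} indecomposables, not merely the rigid ones --- but for wild $Q$ the exceptional modules, while still tree modules, carry no comparably explicit combinatorics, and at present there is neither a uniform recipe for a tree basis whose Schubert cells provably satisfy the closure condition, nor any other construction of an affine paving. One alternative would be to realize the Schubert decomposition as a Bialynicki--Birula decomposition on the quiver Grassmannian of a suitable covering (unfolding) of $N$, where an honest $\CC^\ast$-action is available and filtrability is automatic; but one would then have to control the quiver Grassmannians of the covering representation, which are no longer attached to a rigid module. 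Bridging this gap for arbitrary wild quivers is exactly what keeps the statement a conjecture rather than a theorem.
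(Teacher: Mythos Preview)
The statement is labeled a \emph{conjecture} in the paper and is not proved there; there is no proof to compare your proposal against. Your write-up is not a proof either, and you acknowledge this in the final paragraph: you correctly identify the closure condition for Schubert cells of exceptional modules over wild quivers as the missing ingredient, and explain why no known technique closes the gap. As a discussion of the status of the conjecture this is broadly accurate.

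On the details: the reduction in your first paragraph, from a general rigid $M$ to its exceptional indecomposable summands via a maximal torus of $\operatorname{Aut}_Q(M)$ and the filtrable Bialynicki--Birula decomposition on the smooth projective variety $\Gr_\mathbf{e}(M)$, is correct and is a useful observation. In the second paragraph, the assertion that the tree shape of the coefficient quiver alone forces every non-empty Schubert cell to be affine is slightly too quick --- one also needs a compatible ordering of the basis, in the sense of \cite{Lorscheid,RW}, for the recursive solving to go through --- but such orderings can be arranged for trees, so the genuine obstruction is indeed the closure condition you name. The paper's own contribution towards the conjecture is the weaker property~(S) for all rigid representations (Theorem~\ref{t:SIntro}), obtained by an entirely different route (the Ellingsrud--Str{\o}mme decomposition of the diagonal), together with property~(C) in the finite and affine cases (Theorem~\ref{t:DynkinIntro}), proved not via Schubert cells but via the short-exact-sequence reductions of Section~\ref{Sec:ReductionThms}.
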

For quivers of finite or affine type, the following much stronger statement is true and new. 

\begin{thm} \label{t:DynkinIntro}
An indecomposabe representation of a quiver of finite or affine type has property (C).  
\end{thm}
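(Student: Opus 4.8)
The plan is to argue by induction on $\dim M$. Since property (C) depends only on the connected components of $Q$ and is preserved under passing to $Q^{\mathrm{op}}$ (via $\mathbf{e}\mapsto\mathbf{d}-\mathbf{e}$, which identifies $\Gr_\mathbf{e}(M)$ with a quiver Grassmannian for the dual representation), I would first reduce to the case that $Q$ is connected with a convenient orientation: then $Q$ is a Dynkin quiver or a quiver whose underlying graph is an extended Dynkin diagram, and I will assume it acyclic, treating the cyclic orientation of $\widetilde A_n$ separately at the end. By the Dlab--Ringel trichotomy, $M$ is preprojective, preinjective, or regular; the preinjective case reduces to the preprojective one by the above duality, so two cases remain. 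Throughout, the engine is a reduction that peels off a sink (or, dually, a source) of $Q$, together with the Bialynicki--Birula / attracting-cell technology applied to the resulting Grassmannian fibrations.

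For the preprojective case --- which for Dynkin $Q$ is the only case --- the representation $M$ is exceptional, hence rigid, so by Proposition~\ref{p:irred_rigid} the variety $\Gr_\mathbf{e}(M)$ is smooth, irreducible and projective; moreover, by Ringel \cite{Ringel:Tree}, $M$ is a tree module, so there is a basis of the spaces $M_v$ in which all structure matrices $M_a$ are $0/1$-matrices with tree-shaped coefficient quiver. Fixing such a basis, I would run the Schubert-type decomposition of $\Gr_\mathbf{e}(M)$ in the spirit of \cite{Lorscheid,RW}: one obtains a finite stratification into locally closed pieces, each cut out inside a product of coordinate Schubert cells by vanishing and nonvanishing of explicit minors of the $M_a$. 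Equivalently, one can delete a sink $i$: the forgetful morphism $\Gr_\mathbf{e}(M)\to\Gr_{\mathbf{e}'}(M')$ with $M'=M|_{Q\setminus i}$ and $\mathbf{e}'=\mathbf{e}|_{Q\setminus i}$ is, on each locus where $\dim\sum_{a\colon j\to i}M_a(U_j)$ is constant, a Grassmannian bundle, and $M'$ is a representation of a disjoint union of quivers of strictly smaller type. Since a Grassmannian bundle over an affine space is affinely paved and this class of varieties is stable under the affine-bundle and product operations that occur, an affine paving of the base would lift to $\Gr_\mathbf{e}(M)$; the point to be checked is that the strata refine a coherent system of affine pavings, which is where the $0/1$-structure of the tree module is used to make the minor conditions triangular in the remaining coordinates.

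For the regular case ($Q$ of affine type) I would write $M=E_i[\ell]$, the indecomposable of regular length $\ell$ with regular socle $E_i$ in a tube $\mathcal T$ of rank $p$, and use that the category of regular modules in $\mathcal T$ is uniserial and equivalent to the category of nilpotent representations of the cyclic quiver $\widetilde A_{p-1}$. This equips $M$ with a canonical filtration by sub-$Q$-representations with regular-simple subquotients $E_i,E_{i+1},\dots$, playing the role of the coefficient tree above, along which the same sink-deletion/Grassmannian-bundle recursion can be run while bookkeeping which terms of the filtration the pieces of a subrepresentation meet. When $\mathcal T$ is exceptional and $\ell<p$ the module is rigid and $\Gr_\mathbf{e}(M)$ smooth; in general, in particular for the homogeneous tubes, $M$ is a band module depending on a parameter $\lambda\in\CC^\times$ and $\Gr_\mathbf{e}(M)$ is singular. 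Here I would first remove the parameter: a subspace is a subrepresentation for one value of $\lambda$ iff for all of them, so $\Gr_\mathbf{e}(B(w,\lambda,\ell))$ is independent of $\lambda$, and one may specialize to a value at which $M$ becomes $\ZZ$-gradable. The induced $\mathbb G_m$-action on $\Gr_\mathbf{e}(M)$ has fixed locus a disjoint union of quiver Grassmannians of the associated graded (a representation with indecomposable summands of strictly smaller dimension), and the Bialynicki--Birula decomposition exhibits $\Gr_\mathbf{e}(M)$ as a disjoint union of affine bundles over these fixed components; for quiver Grassmannians these attracting sets are Zariski-locally trivial affine bundles --- the attracting set of a fixed point being parametrized by an explicit affine space of homomorphisms --- so by induction an affine paving of the fixed locus lifts. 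The cyclic orientation of $\widetilde A_n$ is handled in the same spirit, using that its indecomposables are nilpotent or band representations and that deleting any vertex produces a quiver of type $A$.

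The step I expect to be the main obstacle is the affineness and coherence of the strata in the reduction. A priori the sink-deletion morphism is only a stratified Grassmannian fibration, and the loci of constant $\dim\sum_{a}M_a(U_j)$ need not restrict to unions of cells of the inductively constructed paving of the base; similarly the Schubert pieces of a tree module are only quasi-affine in general. The heart of the argument will be to choose the orientation, the sequence of peeled vertices, and the basis of $M$ so that these determinantal conditions collapse to conditions on coordinate subspaces --- which is precisely what the fine structure of indecomposables in finite and affine type (tree modules in the preprojective/preinjective part, uniserial regular structure in the tubes) should make possible --- and, in the singular band-module case, to justify that specializing the parameter leaves the quiver Grassmannian unchanged so that the Bialynicki--Birula machinery applies verbatim.
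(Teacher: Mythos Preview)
Your proposal takes a genuinely different route from the paper, and there are real gaps in both branches.

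\textbf{Regular case.} The step ``a subspace is a subrepresentation for one value of $\lambda$ iff for all of them, so $\Gr_\mathbf{e}(B(w,\lambda,\ell))$ is independent of $\lambda$, and one may specialize to a value at which $M$ becomes $\ZZ$-gradable'' does not work as stated. Rescaling shows that the quiver Grassmannians for different \emph{nonzero} parameters are isomorphic, but a band module in a homogeneous tube is never $\ZZ$-gradable for $\lambda\neq 0$: the automorphism group of an indecomposable homogeneous regular of regular length $\ell$ is a local ring with residue field $K$, so it contains no nontrivial torus, and there is no grading compatible with the module structure. Sending $\lambda\to 0$ changes the isomorphism type (the module becomes decomposable), so the Bialynicki--Birula machinery has no input to run on. The paper instead uses the extending vertex: for a sincere quasi--simple $S$ one has $\dim S_i=1$ at the extending vertex $i$, and the reduction $\Gr_\mathbf{e}(S)\cong\Gr_{\mathbf{e}'}(S')$ with $S'$ supported on the Dynkin subquiver $Q\setminus\{i\}$ handles all quasi--simples uniformly (Proposition~\ref{Prop:PropCQuasiSimpleAffineRegular}); higher regular length is then done by the short exact sequence $0\to R_{n-1}\to R_n\to S\to 0$ and the reduction Theorem~\ref{Thm:Reduction2}, computing $(R_{n-1})_S=R_{n-2}$ and $S^{R_{n-1}}=S$.

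\textbf{Preprojective case.} You correctly identify the obstacle --- that the determinantal loci ``$\dim\sum_a M_a(U_j)$ constant'' need not be unions of cells of the inductive paving --- but you do not resolve it, and this is exactly where the difficulty lies. The tree--module/Schubert stratification \`a la Lorscheid gives only quasi--affine pieces in general; making them affine is the content of Conjecture~\ref{Conj:Rigid}, which is open even for rigid representations of wild quivers. The paper's argument is quite different and substantially more delicate: it produces for each indecomposable preprojective $Y$ a \emph{good monomorphism} $X\hookrightarrow Y$ along a sectional path (Lemma~\ref{Lem:Lemma1Preproj}), shows via Unger's lemma and an explicit analysis of almost split sequences that the cokernel $S$ satisfies $[S,X]^1=1$ with $S^X=S$ and $X_S$ of one of three controlled shapes (Lemma~\ref{Lem:Lemma2Preproj}), and then applies the reduction Theorem~\ref{Thm:Reduction2} to get affine bundles over $\mathcal{U}_\mathbf{f}(X,X_S)=\Gr_\mathbf{f}(X)\setminus\Gr_\mathbf{f}(X_S)$. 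Type $\tilde{E}_8$ requires a separate argument because the kernel of a sectional epi need not be a sectional mono there (Remark~\ref{Rem:KernelSectional}). None of this is visible from the sink--deletion picture, and your proposal gives no mechanism that would replace it.

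In short: the paper's engine is Theorem~\ref{Thm:Reduction2} applied to carefully chosen generating extensions, with the Auslander--Reiten combinatorics controlling $X_S$ and $S^X$; your proposal relies on torus actions and Schubert--type strata, but the torus is not available for homogeneous regulars and the coherence of the strata is precisely the unresolved point.
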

Theorem~\ref{t:DynkinIntro} was proved in \cite[Th.~12]{CFFFR} for a quiver of type  $A$-equioriented, in  \cite{CEsp} for the Kronecker quiver, in \cite{LW1, LW2} for preprojective representations of  type $\tilde{D}_n$, it follows from \cite{CI2011} for rigid representations of type $\tilde{A}_n$ as a consequence of the existence of a torus action.

The proof of Theorem~\ref{t:DynkinIntro} is based on Theorem~\ref{Thm:Reduction2}, which we now illustrate (see Section~\ref{Sec:ReductionThms} for details). Let $Q$ be an acyclic quiver (i.e. a quiver without oriented cycles), and let $X$ and $S$ be two $Q$--representations. Any short exact sequence $\xi: 0\rightarrow X\rightarrow Y\rightarrow S\rightarrow 0$ gives rise to a map $\Psi^\xi: \Gr_\mathbf{e}(Y)\rightarrow\coprod_{\mathbf{f+g=e}}\Gr_\mathbf{f}(X)\times\Gr_\mathbf{g}(S)$ which restricts to algebraic maps: 
$$
\Psi_\mathbf{f,g}^\xi:\mathcal{S}_\mathbf{f,g}^\xi\rightarrow\Gr_\mathbf{f}(X)\times\Gr_\mathbf{g}(S)\;\; (\textrm{for every }\mathbf{f+g=e})
$$
where $\mathcal{S}_\mathbf{f,g}^\xi$ is the preimage of $\Gr_\mathbf{f}(X)\times\Gr_\mathbf{g}(S)$ under $\Psi^\xi$. 
We prove that if $\xi$ generates $\Ext^1(S,X)$, then this is a Zariski-locally trivial affine bundle over its image. So, property (C) of $Y$ can be deduced by the analysis of the images of such maps. 
If $\Ext^1(S,X)=0$, property (C) (resp.  property (S)) for $X$ and $S$ implies property (C) (resp.  property (S)) for $Y$. So Theorems~\ref{t:SIntro} and \ref{t:DynkinIntro} imply the following:
\begin{cor}\label{Cor:DynkAffIntro}Property~(S) holds for a $Q$--representation $M$ with rigid regular part.
Moreover,  property~(C) holds for $M$
if $Q$ is of finite or affine type. 
\end{cor}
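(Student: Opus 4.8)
The plan is to decompose the representation $M$ according to the standard trichotomy available over a quiver of finite or affine type (and, for the property~(S) statement, over any acyclic quiver), and then to feed the pieces into the reduction machinery of Theorem~\ref{Thm:Reduction2}. First I would write $M\cong P\oplus R\oplus I$ where $P$ is preprojective, $R$ is regular, and $I$ is preinjective; by hypothesis $R$ is rigid. The key homological input is the well-known vanishing $\Ext^1(P\oplus R,\, I)=0$, $\Ext^1(I,\,\text{anything in lower part})$ has to be handled the other way, and more precisely the directedness of the category: there are no extensions from preinjectives into preprojectives or regulars, and none from regulars into preprojectives. So I would organise the short exact sequences so that the ``outer'' term $S$ is the part with no self-extensions relative to $X$, applying the $\Ext^1(S,X)=0$ clause of the discussion preceding the corollary.

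Concretely, the first step is the purely regular case: a rigid regular representation. Here I would invoke Theorem~\ref{t:SIntro} directly — a rigid regular representation is in particular a rigid representation (its support is acyclic), so it has property~(S). For property~(C) in finite/affine type, I would instead note that a rigid regular representation over an affine quiver is a direct sum of representations lying in the tubes, is itself an indecomposable-by-indecomposable assembly controlled by Theorem~\ref{t:DynkinIntro} applied summand-wise together with the $\Ext^1=0$ additivity clause (distinct indecomposable rigid regulars have no extensions between them, and a rigid regular is multiplicity-free in each exceptional tube). This reduces the regular part to a finite iterated application of the $\Ext^1(S,X)=0$ step starting from single indecomposables, each of which has property~(C) by Theorem~\ref{t:DynkinIntro}.

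The second step assembles the three parts. Using $\Ext^1(R,P)=0$ (no maps of extensions from regular to preprojective) I would combine $P$ (which has property~(C) by a summand-wise application of Theorem~\ref{t:DynkinIntro} plus the additivity clause, since $\Ext^1$ between distinct indecomposable preprojectives need not vanish — here instead I use that $P$ itself, being a direct sum of preprojectives, still decomposes into indecomposables each covered by Theorem~\ref{t:DynkinIntro}, and I chain them via exact sequences where the outer quotient is always a single indecomposable and the kernel accumulates the rest; the relevant $\Ext^1(S,X)$ with $S$ indecomposable preprojective and $X$ preprojective need not be zero, so here one genuinely uses the affine-bundle clause of Theorem~\ref{Thm:Reduction2} with a generating $\xi$, not merely the split case). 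This is the delicate point. Then with $P\oplus R$ assembled I adjoin $I$ using $\Ext^1(I, P\oplus R)=0$, i.e. the split additivity clause, to conclude property~(C) for $M=P\oplus R\oplus I$. For property~(S) over an arbitrary acyclic $Q$, the same scheme works with (S) in place of (C): the regular rigid part has (S) by Theorem~\ref{t:SIntro}, the preprojective/preinjective parts are rigid hence have (S) by Theorem~\ref{t:SIntro}, and the $\Ext^1(S,X)=0$ clause for (S) glues them since $\Ext^1(I,P\oplus R)=0$ and $\Ext^1(R,P)=0$.

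The main obstacle I anticipate is the preprojective (and dually preinjective) assembly: unlike the regular part, a preprojective representation need not be rigid and its indecomposable summands can have mutual self-extensions, so Theorem~\ref{t:SIntro} does not apply to $P$ as a whole, and one must genuinely run the iterated affine-bundle argument of Theorem~\ref{Thm:Reduction2}, checking at each stage that the chosen extension $\xi$ generates $\Ext^1(S,X)$ for $S$ the indecomposable being peeled off. Verifying that such a generating sequence exists at each step — equivalently, that one can order the indecomposable summands of $P$ compatibly with the directed structure so that the partial sums always admit a generating extension by the next summand — is the technical heart of the argument; everything else is bookkeeping with the two clauses of the reduction theorem.
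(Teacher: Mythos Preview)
Your overall architecture is right and matches the paper: reduce to indecomposable summands via the split case of Theorem~\ref{Thm:ReductionThm}, then invoke Theorem~\ref{t:SIntro} (resp.\ Theorem~\ref{t:DynkinIntro}) on each summand. But you have tied yourself in a knot over the preprojective and preinjective parts, and there is also an internal inconsistency: in the property~(S) paragraph you assert that ``the preprojective/preinjective parts are rigid'', and one paragraph later you (correctly) say that $P$ need not be rigid. The first claim is false, so your (S) argument as written does not go through.

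The fix is not the generating-extension machinery you propose; it is much simpler, and it is what the paper uses. The preprojective component $\mathcal{P}$ is \emph{directed}: for any two indecomposable preprojectives $A,B$, at most one of $\Ext^1(A,B)$, $\Ext^1(B,A)$ is nonzero (by the Auslander--Reiten formula and the fact that there are no cycles of nonzero maps in $\mathcal{P}$), and each indecomposable preprojective is exceptional, so $\Ext^1(A,A)=0$. Hence the indecomposable summands of $P$ can be totally ordered so that $\Ext^1(M_j,M_1\oplus\cdots\oplus M_{j-1})=0$ at every step. The same holds dually for $\mathcal{I}$. Combined with the standard vanishing $\Ext^1(\mathcal{P},\mathcal{R}\oplus\mathcal{I})=\Ext^1(\mathcal{R},\mathcal{I})=0$ and the rigidity of the regular part (which forces $\Ext^1(R_i,R_j)=0$ for all regular summands), you can order \emph{all} indecomposable summands of $M$ so that only the $\Ext^1(S,X)=0$ clause of the reduction is ever invoked. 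No generating extensions, no checking that $[S,X]^1=1$, no ``technical heart''. This is exactly the content of the paper's one-line proof of Corollary~\ref{Cor:PropertySPreprojective} and the opening reductions in the proofs of Theorems~\ref{t:Dynkin} and~\ref{Thm:Preprojective}.
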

So every representation of a Dynkin quiver has property (C) (see Section~\ref{Sec:CellDecDynkin}). We conjecture the same for  affine quivers, i.e. that property (C) holds for regular decomposable representations of affine quivers. For a wild quiver, property (C) cannot hold for every representation. Indeed Ringel has recently shown \cite{Ringel:QuivGrass} the following striking result: given any wild quiver $Q$ and any projective variety $X$ there exists a representation $N$ of $Q$ and a dimension vector $\mathbf{e}$ such that $X\cong Gr_\mathbf{e}(N)$.

If $\xi$ generates $\Ext^1(S,X)\neq0$, the maps $\Psi_\mathbf{f,g}^\xi$ are not surjective and we determine their images. We prove that the following are well--defined (see Lemma~\ref{Lem:XsSXProperties}): 
$$
\begin{array}{cc}
X_S:=\textrm{max}\{N\subseteq_Q X|\, [S, X/N]^1=1\},&S^X:=\textrm{min}\,\{N\subseteq_QS|\, [N,X]^1=1\},
\end{array}
$$ 
where we use the standard shorthand notation $[A,B]^1:=\textrm{dim}\Ext^1(A,B)$.
It turns out that the image of $\Psi^\xi_\mathbf{f,g}$ is the complement inside  $\Gr_\mathbf{f}(X)\times\Gr_\mathbf{g}(S)$ of the closed subset $\left(\Gr_\mathbf{f}(X_S)\times\Gr_{\mathbf{g-dim}\,S^X}(S/S^X)\right)$. 
From this description, we deduce a multiplication formula for the cluster character with coefficients $CC:\textrm{Rep}_K(Q)\rightarrow \ZZ[y_1,\cdots, y_n, x_1^{\pm1},\cdots, x_n^{\pm 1}]$  (see Section~\ref{Sec:ClusterMultFormula} for details).

\begin{thm}\label{Thm:ClusterIntro} Under the previous hypothesis
$
CC(X)CC(S)=CC(Y)+\mathbf{y}^{\mathbf{dim}\, S^X} CC(X_S\oplus S/S^X)\mathbf{x}^{\mathbf{f}}.
$
\end{thm}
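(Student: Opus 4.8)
The plan is to compute the cluster character $CC(X)CC(S)$ directly from its definition as a generating series over dimension vectors of Euler characteristics of quiver Grassmannians, and to organize the resulting sum by stratifying the relevant quiver Grassmannians of the middle term $Y$. Recall that $CC(M) = \sum_{\mathbf e} \chi(\Gr_{\mathbf e}(M))\, \mathbf y^{\mathbf e}\, \mathbf x^{-\langle \mathbf e,-\rangle - \langle -,\mathbf{dim}\,M - \mathbf e\rangle}$ (with the usual conventions; I will use the normalization fixed in Section~\ref{Sec:ClusterMultFormula}). The product $CC(X)CC(S)$ is therefore, coefficient-wise, a sum over pairs $(\mathbf f,\mathbf g)$ of $\chi(\Gr_{\mathbf f}(X))\chi(\Gr_{\mathbf g}(S))$ times a monomial in $\mathbf x,\mathbf y$; a short bookkeeping check, using additivity of $\langle-,-\rangle$ on the short exact sequence $\xi$, shows that the $\mathbf x,\mathbf y$-monomial attached to $(\mathbf f,\mathbf g)$ depends only on $\mathbf e = \mathbf f + \mathbf g$ up to the correction factor $\mathbf x^{2\langle \mathbf f,\mathbf{dim}\,S - \mathbf g\rangle}$, so I will first isolate and record this monomial identity.

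The geometric heart is then the stratification of $\Gr_{\mathbf e}(Y)$ induced by $\Psi^\xi$. By Theorem~\ref{Thm:Reduction2}, for each decomposition $\mathbf e = \mathbf f + \mathbf g$ the restriction $\Psi^\xi_{\mathbf f,\mathbf g}\colon \mathcal S^\xi_{\mathbf f,\mathbf g}\to \Gr_{\mathbf f}(X)\times\Gr_{\mathbf g}(S)$ is a Zariski-locally trivial affine bundle onto its image, and by the paragraph preceding Theorem~\ref{Thm:ClusterIntro} that image is the complement of the closed subset $\Gr_{\mathbf f}(X_S)\times\Gr_{\mathbf g - \mathbf{dim}\,S^X}(S/S^X)$. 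Since Euler characteristics are additive over the (finitely many, locally closed) strata $\mathcal S^\xi_{\mathbf f,\mathbf g}$ partitioning $\Gr_{\mathbf e}(Y)$, and multiplicative along affine bundles, I get
\[
\chi(\Gr_{\mathbf e}(Y)) \;=\; \sum_{\mathbf f+\mathbf g=\mathbf e}\Bigl(\chi(\Gr_{\mathbf f}(X))\chi(\Gr_{\mathbf g}(S)) - \chi(\Gr_{\mathbf f}(X_S))\,\chi(\Gr_{\mathbf g-\mathbf{dim}\,S^X}(S/S^X))\Bigr).
\]
Rearranging, $\sum_{\mathbf f+\mathbf g=\mathbf e}\chi(\Gr_{\mathbf f}(X))\chi(\Gr_{\mathbf g}(S)) = \chi(\Gr_{\mathbf e}(Y)) + \sum_{\mathbf f+\mathbf g=\mathbf e}\chi(\Gr_{\mathbf f}(X_S))\chi(\Gr_{\mathbf g-\mathbf{dim}\,S^X}(S/S^X))$, and the last sum is, up to re-indexing $\mathbf g \rightsquigarrow \mathbf g + \mathbf{dim}\,S^X$, exactly the coefficient extracted from a quiver Grassmannian of $X_S\oplus S/S^X$ (using $\Gr_{\mathbf f}(X_S)\times\Gr_{\mathbf g}(S/S^X) \subseteq \Gr_{\mathbf f+\mathbf g}(X_S\oplus S/S^X)$ and additivity of $\chi$ over that product decomposition).

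Finally I would reassemble the monomials: multiplying the $\mathbf e$-coefficient identity by the $\mathbf x,\mathbf y$-monomial recorded in the first step, the term $\chi(\Gr_{\mathbf e}(Y))$ produces precisely $CC(Y)$, while the shifted sum produces the second term. Here the shift $\mathbf g \rightsquigarrow \mathbf g + \mathbf{dim}\,S^X$ together with the Euler-form bookkeeping accounts for the prefactor $\mathbf y^{\mathbf{dim}\,S^X}$ and for $\mathbf x^{\mathbf f}$ — one must check, using $\mathbf{dim}\,X_S + \mathbf{dim}\,(S/S^X) = \mathbf{dim}\,X + \mathbf{dim}\,S - \mathbf{dim}\,S^X$ and the definition of $X_S$, that the exponents of $\mathbf x$ coming from $CC(X_S\oplus S/S^X)$ differ from those attached to the $(\mathbf f,\mathbf g)$-term of $CC(X)CC(S)$ exactly by the fixed vector $\mathbf f$ appearing in the statement (this $\mathbf f$ is the dimension vector $\mathbf{dim}\,X_S$, or its complement — I will pin down the correct reading against the sign conventions of Section~\ref{Sec:ClusterMultFormula}). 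The main obstacle is this last monomial comparison: keeping the Euler-form signs, the normalization of $CC$, and the re-indexing shift consistent so that the correction exponent collapses to a single clean vector $\mathbf f$ rather than a $(\mathbf f,\mathbf g)$-dependent expression. Everything else — the stratification, affine-bundle invariance of $\chi$, and additivity over products — is supplied by Theorem~\ref{Thm:Reduction2} and the image computation already established, so the proof is essentially a careful combinatorial consolidation once that is in place.
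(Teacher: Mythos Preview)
Your Euler-characteristic argument is correct and is exactly what the paper does: Corollary~\ref{Cor:DecForCluster} packages the stratification and affine-bundle invariance into the identity
\[
\sum_{\mathbf f+\mathbf g=\mathbf e}\chi(\Gr_{\mathbf f}(X))\chi(\Gr_{\mathbf g}(S))=\chi(\Gr_{\mathbf e}(Y))+\chi(\Gr_{\mathbf e-\mathbf{dim}\,S^X}(X_S\oplus S/S^X)),
\]
and with the paper's normalization $CC(M)=\sum_{\mathbf e}\chi(\Gr_{\mathbf e}(M))\,\mathbf y^{\mathbf e}\mathbf x^{B\mathbf e+\mathbf g_M}$ the monomial attached to $(\mathbf f,\mathbf g)$ genuinely depends only on $\mathbf e=\mathbf f+\mathbf g$ (there is no correction factor $\mathbf x^{2\langle\mathbf f,\mathbf{dim}\,S-\mathbf g\rangle}$; additivity of $\mathbf g$-vectors gives $\mathbf g_X+\mathbf g_S=\mathbf g_Y$ directly). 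This immediately yields
\[
CC(X)CC(S)=CC(Y)+\mathbf y^{\mathbf{dim}\,S^X}\,\mathbf x^{\,B\,\mathbf{dim}\,S^X+\mathbf g_{X/X_S}+\mathbf g_{S^X}}\,CC(X_S\oplus S/S^X).
\]

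The genuine gap is in the last step. The vector $\mathbf f$ in the statement is \emph{not} $\mathbf{dim}\,X_S$ or any complement thereof: it is the multiplicity vector of the injective module $I$ in the exact sequence $0\to X/X_S\to\tau S^X\to I\to 0$ of Lemma~\ref{Lem:XsSX}(3), so that $\mathbf x^{\mathbf f}=\mathbf x^{-\mathbf g_I}$. The required identity is therefore
\[
B\,\mathbf{dim}\,S^X+\mathbf g_{X/X_S}+\mathbf g_{S^X}=-\mathbf g_I,
\]
and this does not follow from bookkeeping with $\mathbf{dim}\,X_S$ alone. The paper proves it by combining (i) additivity of $\mathbf g$-vectors on the sequence $0\to X/X_S\to\tau S^X\to I\to 0$, and (ii) the index/coindex relation $\mathbf g^{M}=-\mathbf g_{\tau M}$ for indecomposable non-projective $M$, applied to $M=S^X$; together these give $B\,\mathbf{dim}\,S^X+\mathbf g_{S^X}=\mathbf g^{S^X}=-\mathbf g_{\tau S^X}=-\mathbf g_{X/X_S}-\mathbf g_I$. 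Without invoking that exact sequence and the Auslander--Reiten identity you cannot collapse the exponent to a vector independent of $\mathbf e$, so your ``main obstacle'' is real and your proposed resolution of it is aimed at the wrong object.
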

Multiplication formulas for cluster characters were studied intensively by several authors \cite{CK1, CK2, Palu, Palu:12, Hubery, Xu} in much more general contexts. In \cite{CK1, Palu:12, Hubery, Xu} the assumption $[S,X]^1=1$ is dropped,
 and it would be interesting to know if there exists a more general reduction theorem, underlying those results.

\section{Generalities}\label{Sec:Generalities}
	
\subsection{Representation theory of quivers}\label{Sec:Quivers}
In this section we collect some well-known facts about quiver representations. Standard references are \cite{CB1}, \cite{CB2}, \cite{ARS:97}, \cite{Ringel:Book}, \cite{ASS:06}. Let $Q=(Q_0,Q_1,s,t)$ be a finite connected quiver with set of vertices $Q_0$ of cardinality $n$, finite set of arrows $Q_1$ and every arrow $\alpha\in Q_1$ is oriented  from its starting vertex $s(\alpha)\in Q_0$ towards its terminal vertex $t(\alpha)\in Q_0$ and we write $\alpha:s(\alpha)\rightarrow t(\alpha)$. Given a field $K$, we consider the category $\textrm{Rep}_K(Q)$ of $Q$--representations over $K$. For a representation $M$ of $Q$ we denote by $M_i$ the $K$--vector space attached to vertex $i\in Q_0$, and by $M_\alpha: M_{s(\alpha)}\rightarrow M_{t(\alpha)}$ the K--linear map attached to an arrow $\alpha\in Q_1$.  The category $\textrm{Rep}_K(Q)$ of finite--dimensional $Q$--representations is equivalent to the category $A\!-\!\textrm{mod}$ of finite dimensional module over the path algebra $A=KQ$ of $Q$.  We work with left modules and we do not distinguish between objects of $\textrm{Rep}_K(Q)$ and $A\!-\!\textrm{mod}$. The category $\textrm{Rep}_K(Q)$ is abelian, Krull-Schmidt and hereditary, i.e. every modules has projective dimension at most one, i.e. the functors $\Ext^i_Q(-,-)$ vanish for $i\geq2$. Given two $Q$--representations, we often use the standard notation:  
$$
\begin{array}{cc}
[M,N]:=\textrm{dim}_K\Hom_Q(M,N),&[M,N]^1:=\textrm{dim}_K\Ext^1_Q(M,N).
\end{array}
$$
When $Q$ is acyclic, the simple objects of $\textrm{Rep}_K(Q)$ are one--dimensional and supported on a single vertex. Given a every vertex $k\in Q_0$ we denote by $S_k$ the corresponding simple, by $P_k$ its projective cover  and by $I_k$ its injective envelope. The Nakayama functor  $\nu:=D\Hom(-,A)$ establishes an equivalence $\nu:\textrm{Proj}(A)\rightarrow \textrm{Inj}(A)$ from the category of projectives to the category of injectives, and it is characterized by $\nu(P_k)=I_k$. \subsubsection{Auslander-Reiten theory of $\textrm{Rep}_K(Q)$} The category $\textrm{Rep}_K(Q)$ is endowed with the two endofunctors  $\tau:=D\Ext^1(-, A)$ and $\tau^-:=\Ext^1(D(-),A)$, the Auslander-Reiten translate and its quasi--inverse ($D=\Hom_K(-,K)$ denotes the standard duality). Notice that $\tau$ is left-exact and $\tau^-$ is right-exact. For any two $A$--modules $L$ and $M$ there are functorial isomorphisms called Auslander-Reiten formulas:
$$
\Ext^1(L,M)\cong D\Hom(M,\tau L)\cong D\Hom(\tau^- M, L).
$$
Given an indecomposable non--projective $Q$--representation $M$ there exists a short exact sequence 
$$
0\rightarrow \tau M\rightarrow E\rightarrow M\rightarrow 0
$$
which is almost split, i.e. it is non--split, every morphism $\tau M\rightarrow Z$ which is not split epi factors through $E$ and every morphism $Z\rightarrow M$ which is not split mono factors through $E$. 

An indecomposable module $M$ is called preprojective (resp. preinjective) if there exist $k\geq0$ and $\ell\in Q_0$ such that $M\cong \tau^{-k}P_\ell$ (resp. $M\cong \tau^k I_\ell$) and  regular if $\tau^{\pm k} M$ is non--zero for all $k$. We say that $M$ is  preprojective (resp. preinjective, regular) if all its indecomposable direct summands are  preprojective (resp. preinjective, regular). We denote respectively by $\mathcal{P}$, $\mathcal{R}$, $\mathcal{I}$ the full subcategory of $A$--mod whose objects are preprojectives, regular, preinjectives, respectively. The category $\mathcal{P}$ is closed under taking submodules; the category $\mathcal{I}$ is closed under taking quotients. Moreover, 
$$
\Hom(\mathcal{I}, \mathcal{R})=\Hom(\mathcal{I}, \mathcal{P})=\Hom(\mathcal{R}, \mathcal{P})=0=
\Ext^1(\mathcal{R}, \mathcal{I})=\Ext^1(\mathcal{P}, \mathcal{I})=\Ext^1(\mathcal{P}, \mathcal{R}).
$$
Every module $M$ admits a unique split filtration
$M'\subseteq M''\subseteq M$
where $M'\in\mathcal{I}$, $M''/M'\in\mathcal{R}$ and $M/M''\in\mathcal{P}$; these are called the preinjective, regular and preprojective parts of $M$, respectively.  

Given two indecomposable $Q$--representations $X$ and $Y$, let $\rad(X,Y)\subseteq\Hom_Q(X,Y)$ be the vector subspace of non--invertible morphisms from $X$ to $Y$. Inside $\rad(X,Y)$ there is the  subspace $\rad^2(X,Y)$ of morphisms $f=f_2\circ f_1$ such that $f_1\in \rad(X,M)$ and $f_2\in \rad(M,Y)$ for some $M$. The quotient space is denoted by $\mathrm{Irr}(X,Y)=\rad(X,Y)/\rad^2(X,Y)$. A morphism $f\in\Hom_Q(X,Y)$ is called irreducible if $f\in \rad(X,Y)\setminus \rad^2(X,Y)$. The Auslander-Reiten quiver $\Gamma(Q)$ of $Q$ is the quiver whose vertices are isoclasses of indecomposable $Q$--representations and the number of arrows between two vertices $[X]$ and $[Y]$ (corresponding to the two indecomposables $X$ and $Y$) equals $\textrm{dim}_K\textrm{Irr}(X,Y)$.  Many properties of $\textrm{Rep}_K(Q)$ can be read from $\Gamma(Q)$. The quiver $\Gamma(Q)$ is finite if and only if $Q$ is Dynkin. 
If $Q$ is not Dynkin, $\Gamma(Q)=\Gamma(\mathcal{P})\cup \Gamma(\mathcal{R})\cup\Gamma(\mathcal{I})$ where $\Gamma(\mathcal{P})$ (resp. $\Gamma(\mathcal{R})$, resp. $\Gamma(\mathcal{I})$) is the Auslander--Reiten quiver of $\mathcal{P}$ (resp. $\mathcal{R}$, resp. $\mathcal{I}$). Notice that $\Gamma(\mathcal{P})$ (resp. $\Gamma(\mathcal{I})$) contains all the indecomposable projectives (resp. injectives), and can be described combinatorially via the knitting algorithm. The regular components are described by Ringel \cite{Ringel:Wild}. 

Given two indecomposable representations $X$ and $Y$, a sectional morphism $f:X\rightarrow Y$ is a composition 
$$
f: \xymatrix{X\ar^{f_1}[r]&X_1\ar^{f_2}[r]&\cdots\ar^{f_{t-1}}[r]&X_{t-1}\ar^{f_t}[r]&X_t=Y}
$$
of irreducible morphisms $f_i:X_{i-1}\rightarrow X_i$ such that  $X_{i}\not\cong 
\tau X_{i+2}$. Every sectional morphism is represented by an oriented (connected) path between $[X]$ and $[Y]$ in $\Gamma(Q)$ which does not contain a path of the form $[\tau M]\rightarrow [Z]\rightarrow [M]$. 

\subsubsection{Euler form and representation type}
Given a representation $M$ of $Q$ we denote by $\mathbf{dim}\,M:=(\textrm{dim}_K(M_i))\in\ZZ_{\geq0}^{Q_0}$ the dimension vector of $M$ and we sometimes use the shorthand notation $\mathbf{d}^M=(d_i^M)$.  Given two $Q$--representations $M$ and $N$, Ringel defined the following map 
\begin{equation}\label{Eq:DefiPhiQuiver}
\Phi_M^N:\bigoplus_{i\in Q_0} \Hom_K(M_i,N_i) \rightarrow \bigoplus_{\alpha\in Q_1}\Hom_K(M_{s(\alpha)}, N_{t(\alpha)}):\; (f_i)\mapsto (N_\alpha\circ f_{s(\alpha)}-f_{t(\alpha)}\circ M_\alpha).
\end{equation}
On sees that $\textrm{Ker}(\Phi_M^N)=\Hom_Q(M,N)$ and  $\textrm{Coker}(\Phi_M^N)\cong \Ext^1_Q(M,N)$. In particular we get
\begin{equation}\label{Eq:EulerForm}
[M,N]-[M,N]^1=\langle\mathbf{dim}\,M,\mathbf{dim}\,N\rangle:=\sum_{i\in Q_0}d_i^Md_i^N-\sum_{\alpha\in Q_1}d_{s(\alpha)}^Md_{t(\alpha)}^N.
\end{equation}
The bilinear form $\langle-,-\rangle:\ZZ^{Q_0}\times \ZZ^{Q_0}\rightarrow \ZZ$ is called the Euler--Ringel form of the quiver $Q$.  The corresponding symmetric form $b_Q(\mathbf{x},\mathbf{y}):=\frac{1}{2}(\langle\mathbf{x},\mathbf{y}\rangle+\langle\mathbf{y},\mathbf{x}\rangle)$ defines a quadratic form $q_Q(\mathbf{x})=b_Q(\mathbf{x},\mathbf{x})$ which plays an important r\^ole for the representation theory of $Q$. Indeed $q_Q$ is positive definite if and only if $Q$ is an orientation of  a simply--laced Dynkin diagram of type $A_n$ ($n\geq 1$), $D_n$ ($n\geq4$), $E_6$, $E_7$ and $E_8$. The quiver $Q$ is called affine if $q_Q$ is positive semi--definite, but not definite. This happens if and only if $Q$ is an orientation of  a simply--laced extended Dynkin diagram of type $\tilde{A}_n$ ($n\geq 1$), $\tilde{D}_n$ ($n\geq4$), $\tilde{E}_6$, $\tilde{E}_7$ and $\tilde{E}_8$ (see table~\ref{Fig:ExtendedDynkinDiagrams}). In this case the kernel of $q_Q$ is generated by the minimal positive imaginary that is denoted with $\delta$. Notice that $\delta$ does not depend on the orientation of the quiver, but only on its underlying graph.  In table~\ref{Fig:ExtendedDynkinDiagrams} we recollect the minimal positive imaginary root in each type. The quiver $Q$ is called wild if $q_Q$ is indefinite.  

By Gabriel's theorem, $Q$ admits a finite number of indecomposable representations (up to isomorphism) precisely when $q_Q$ is positive definite. In this case the dimension vectors of the indecomposables are precisely $\mathbf{x}\in\ZZ^{Q_0}_{\geq0}$ such that $q_Q(\mathbf{x})=1$. These dimension vectors are precisely the positive roots of the  semisimple complex Lie algebra corresponding to $Q$.

If $Q$ is affine, the dimension vectors of the indecomposable $Q$--representations are precisely those $\mathbf{x}\in\ZZ^{Q_0}_{\geq0}$ such that $q_Q(\mathbf{x})\leq 1$. Moreover $q_Q(\mathbf{x})=0$ if and only if $\mathbf{x}$ is a multiple of $\delta$. In section~\ref{Sec:Affine}, we will give more information concerning the representation theory of affine quivers. 

\subsubsection{Representation varieties and group actions}
A $Q$--representation $M$ is called rigid if $[M,M]^1=0$. It is called exceptional, if it is indecomposable and rigid. Finally, $M$ is called a brick if $[M,M]=1$. 
Given a dimension vector $\mathbf{d}\in\ZZ_{\geq0}^{Q_0}$, the vector space $R_\mathbf{d}(Q):=\bigoplus_{\alpha\in Q_1} \Hom_K(K^{d_{s(\alpha)}}, K^{d_{t(\alpha)}})$ parametrizes the $Q$--representations of dimension vector $\mathbf{d}$ and  we identify points in $R_\mathbf{d}$ with representations. This vector space is acted upon by the group $G_\mathbf{d}:=\prod_{i\in Q_0} GL_{d_i}(K)$ via base change. The $G_\mathbf{d}$--orbits are in bijection with isoclasses of $Q$--representations of dimension vector $\mathbf{d}$. Given a point $M\in R_\mathbf{d}(Q)$, the corresponding orbit has codimension $[M,M]^1$ in $\mathrm{R}_\mathbf{d}(Q)$. In particular, a rigid representation has dense orbit and hence for any dimension vector $\mathbf{d}$ there exists at most one rigid representation (up to isomorphism) that we denote by $M_\mathbf{d}$.

\subsection{Useful Lemmata}

We recall two  lemmata which are known to experts. We include  the proofs for convenience of the reader.

\begin{lem}[Happel--Ringel]\cite[Lem.~4.1]{HR:82}\label{Lem:HappelRingel}
Let $X$ and $Y$ be indecomposable $Q$--representations. If $[Y,X]^1=0$, then a non--zero map $f:X\rightarrow Y$ is either injective or surjective. 
\end{lem}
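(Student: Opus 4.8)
The plan is to argue by analyzing the image of the non-zero map $f \colon X \to Y$. Let $I = \operatorname{im}(f)$, so that we have short exact sequences $0 \to K \to X \to I \to 0$ and $0 \to I \to Y \to C \to 0$, where $K = \ker f$ and $C = \operatorname{coker} f$. Suppose for contradiction that $f$ is neither injective nor surjective, i.e. $K \neq 0$ and $C \neq 0$. The strategy is to show that under the hypothesis $[Y,X]^1 = 0$ the second sequence must split, which forces $I$ to be a direct summand of the indecomposable $Y$, hence $I \cong Y$; but then $f$ is surjective, contradicting $C \neq 0$.

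The key step is the vanishing of $\operatorname{Ext}^1(C, I)$. Since $Q$ is hereditary, applying $\operatorname{Hom}_Q(C, -)$ to $0 \to K \to X \to I \to 0$ gives a surjection $\operatorname{Ext}^1_Q(C,X) \twoheadrightarrow \operatorname{Ext}^1_Q(C,I)$, and applying $\operatorname{Hom}_Q(-,X)$ to $0 \to I \to Y \to C \to 0$ gives an exact piece $\operatorname{Ext}^1_Q(Y,X) \to \operatorname{Ext}^1_Q(I,X) \to 0$, while the relevant handle on $\operatorname{Ext}^1(C,X)$ comes from $\operatorname{Hom}_Q(I,X) \to \operatorname{Ext}^1_Q(C,X) \to \operatorname{Ext}^1_Q(Y,X) = 0$. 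Thus $\operatorname{Ext}^1_Q(C,X)$ is a quotient of $\operatorname{Hom}_Q(I,X)$, and composing with $X \twoheadrightarrow I$ one sees $\operatorname{Hom}_Q(I,X) \to \operatorname{Ext}^1_Q(C,X)$ factors in a way that, together with the surjection onto $\operatorname{Ext}^1_Q(C,I)$, lets us conclude $\operatorname{Ext}^1_Q(C,I) = 0$: indeed any extension of $C$ by $I$ is the pushout of an extension of $C$ by $X$ along $X \to I$, and every such extension of $C$ by $X$ is itself the pushout along the inclusion $I \hookrightarrow Y$ of... — more cleanly, one checks directly that the connecting map identifies $\operatorname{Ext}^1_Q(C,I)$ with a subquotient of $\operatorname{Ext}^1_Q(Y,X)$ via the two sequences, using that $Q$ is hereditary so all higher Ext vanish. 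Hence $\operatorname{Ext}^1_Q(C,I) = 0$ and the sequence $0 \to I \to Y \to C \to 0$ splits.

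Once the sequence splits, $I$ is a non-zero direct summand of $Y$; since $Y$ is indecomposable, $I \cong Y$, so $C = 0$, contradicting our assumption. Therefore $f$ is injective or surjective. The main obstacle is the bookkeeping in the second paragraph: chasing the two long exact sequences to pin down $\operatorname{Ext}^1_Q(C,I)$ as a quotient of $\operatorname{Ext}^1_Q(Y,X) = 0$ requires care about which maps are the natural ones (pullback along $I \hookrightarrow Y$ and pushout along $X \twoheadrightarrow I$), but it is purely homological and uses nothing beyond heredity of $KQ$ and the hypothesis $[Y,X]^1 = 0$. An alternative, perhaps cleaner, route is to apply $\operatorname{Hom}_Q(-,X)$ to $0 \to I \to Y \to C \to 0$ to get $\operatorname{Ext}^1_Q(C,X) \cong \operatorname{Ext}^1_Q(Y,X) \big/ \operatorname{im}\operatorname{Ext}^1_Q(I,X)$, combined with $\operatorname{Hom}_Q(I,X) \to \operatorname{Ext}^1_Q(C,X)$ surjective, and then push forward along $X \to I$; I would present whichever of these is shortest in the final write-up.
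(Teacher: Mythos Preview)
Your strategy has a genuine gap: the claim that $\Ext^1_Q(C,I)=0$ (equivalently, that $0\to I\to Y\to C\to 0$ splits) is not justified by the chase you outline, and in fact cannot be. Notice that nowhere in your argument do you use that $X$ is indecomposable; you only invoke indecomposability of $Y$ at the very end. So if your reasoning were valid it would prove the stronger statement with $X$ arbitrary. But that statement is false. Take $Q\colon 1\to 2\to 3$, let $Y=P_1$ (projective, so $[Y,-]^1=0$ automatically), and let $X=P_2\oplus I_2$. The map $f\colon X\to Y$ given by the inclusion $P_2\hookrightarrow P_1$ on the first summand and zero on the second is neither injective nor surjective; here $I=P_2$, $C=S_1$, and the sequence $0\to P_2\to P_1\to S_1\to 0$ is the non-split one, so $\Ext^1(C,I)\neq 0$.

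The specific place your chase breaks down is the assertion that $\Ext^1(C,I)$ is ``a subquotient of $\Ext^1(Y,X)$''. You correctly observe that $\Ext^1(C,I)$ is a quotient of $\Ext^1(C,X)$ (pushout along $X\twoheadrightarrow I$), and that $\Ext^1(C,X)$ is a quotient of $\Hom(I,X)$ via the connecting map for $\xi_2$. But the composite $\Hom(I,X)\to\Ext^1(C,X)\to\Ext^1(C,I)$ sends $g$ to $(pg)_*[\xi_2]$ where $p\colon X\to I$ is the projection and $[\xi_2]\in\Ext^1(C,I)$ is the class of the second sequence itself; there is no reason this should vanish, and it does not factor through $\Ext^1(Y,X)$.

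The paper's argument is genuinely different and uses indecomposability of \emph{both} $X$ and $Y$. One applies $\Hom(-,\ker f)$ to $\xi_2$ to get (by heredity) a surjection $\Ext^1(Y,\ker f)\twoheadrightarrow\Ext^1(\operatorname{im} f,\ker f)$, so $\xi_1$ is the pullback along $\iota\colon\operatorname{im} f\hookrightarrow Y$ of some extension $0\to\ker f\to Z\to Y\to 0$. The resulting map of short exact sequences produces a short exact sequence $0\to X\to Z\oplus\operatorname{im} f\to Y\to 0$, which splits since $[Y,X]^1=0$. Krull--Schmidt, applied to $X\oplus Y\cong Z\oplus\operatorname{im} f$ with $X$ and $Y$ indecomposable, then forces $\operatorname{im} f\cong X$ or $\operatorname{im} f\cong Y$.
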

\begin{proof}
We consider the two short exact sequences
$$
\begin{array}{cc}
\xymatrix@C=15pt{
\xi_1:0\ar[r]&\ker f\ar[r]&X\ar[r]&\im f\ar[r]&0,
}
&
\xymatrix@C=15pt{
\xi_2:0\ar[r]&\im f\ar^\iota[r]&Y\ar[r]&\coker f\ar[r]&0.
}
\end{array}
$$
Apply $\Hom(-,\ker f)$ to $\xi_2$ and get a surjective morphism $\xymatrix{\Ext^1(Y,\ker f)\ar@{->>}[r]&\Ext^1(\im f,\ker f)}$  which implies the existence of a commutative diagram with exact rows
$$
\xymatrix@C=15pt{
\xi_1:0\ar[r]&\ker f\ar[r]\ar@{=}[d]&X\ar[r]\ar[d]&\im f\ar[r]\ar^\iota[d]&0\\
0\ar[r]&\ker f\ar[r]&Z\ar[r]&Y\ar[r]&0
}
$$
and hence of a short exact sequence
$$
\xymatrix{
0\ar[r]&X\ar[r]&Z\oplus \im f\ar[r]&Y\ar[r]&0.
}
$$
Since $\Ext^1(Y,X)=0$, by the Krull-Schmidt property, $\im f=Y$ or $\im f\cong X$. 
\end{proof}
The following lemma is attibuted to Unger, but we could not find a published version of it. 
\begin{lem}[Unger]\label{Lem:Unger}
Let $X$ and $Y$ be exceptional representations of $Q$ such that $[Y,X]^1=0$. Let $f:X\hookrightarrow Y$ be a monomorphism between them and let $S$ be its cokernel. Then $[S,S]=1$. In particular $S$ is indecomposable. Moreover 
\begin{equation}\label{Eq:Unger}
[S,S]^1=[X,Y]-1.
\end{equation}
\end{lem}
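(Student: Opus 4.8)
The plan is to exploit the long exact sequences in $\Hom$ and $\Ext^1$ induced by the short exact sequence $0\to X\xrightarrow{f} Y\to S\to 0$, together with the hypotheses $[X,X]=[Y,Y]=1$, $[X,X]^1=[Y,Y]^1=0$ and $[Y,X]^1=0$. First I would apply $\Hom(S,-)$ to the sequence. This gives
\[
0\to\Hom(S,X)\to\Hom(S,Y)\to\Hom(S,S)\to\Ext^1(S,X)\to\Ext^1(S,Y)\to\Ext^1(S,S)\to 0.
\]
Then I would apply $\Hom(-,X)$ and $\Hom(-,Y)$ to the sequence to pin down the terms involving $X$ and $Y$: from $\Hom(-,X)$ we get $0\to\Hom(S,X)\to\Hom(Y,X)\xrightarrow{f^*}\Hom(X,X)\to\Ext^1(S,X)\to\Ext^1(Y,X)=0$, and from $\Hom(-,Y)$ we get $0\to\Hom(S,Y)\to\Hom(Y,Y)\xrightarrow{f^*}\Hom(X,Y)\to\Ext^1(S,Y)\to\Ext^1(Y,Y)=0$.

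Next I would identify the relevant maps. The identity $\mathrm{id}_X$ is not in the image of $f^*:\Hom(Y,X)\to\Hom(X,X)$ (otherwise $f$ would be a split mono, forcing $S=0$, but $X\not\cong Y$ since their dimension vectors differ as $f$ is a proper mono — or more carefully, if $f$ split then $Y\cong X\oplus S$ and $[Y,Y]=1$ would force $S=0$, contradicting that $f$ is not an isomorphism; here one should note $f$ is a \emph{proper} monomorphism, which I'd want to justify or assume as part of the setup). Since $\mathrm{End}(X)=K\cdot\mathrm{id}_X$ is one-dimensional, $f^*:\Hom(Y,X)\to\Hom(X,X)$ is the zero map, hence $\Hom(S,X)\cong\Hom(Y,X)$ and $\Ext^1(S,X)\cong\Hom(X,X)\cong K$, so $[S,X]^1=1$. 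Dually, $f^*:\Hom(Y,Y)\to\Hom(X,Y)$ sends $\mathrm{id}_Y\mapsto f\neq 0$, and since $\mathrm{End}(Y)=K\cdot\mathrm{id}_Y$, this map is injective, hence $\Hom(S,Y)=0$ and $\Ext^1(S,Y)\cong\Hom(X,Y)/K f$, so $[S,Y]^1=[X,Y]-1$.

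Now I would feed these into the $\Hom(S,-)$ sequence. We have $\Hom(S,Y)=0$, so the segment $0\to\Hom(S,X)\to\Hom(S,Y)=0$ forces $\Hom(S,X)=0$, and then $\Hom(S,S)\hookrightarrow\Ext^1(S,X)\cong K$. So $[S,S]\le 1$; since $S\neq 0$ it has a nonzero endomorphism, giving $[S,S]=1$, which also shows $S$ is indecomposable (its endomorphism ring is local, being one-dimensional). For the extension formula, the tail of the $\Hom(S,-)$ sequence reads $K\cong\Hom(S,S)\to\Ext^1(S,X)\cong K\to\Ext^1(S,Y)\to\Ext^1(S,S)\to 0$; the first map is the injection just established, hence an isomorphism, so $\Ext^1(S,Y)\cong\Ext^1(S,S)$, and combining with $[S,Y]^1=[X,Y]-1$ from the previous paragraph yields $[S,S]^1=[X,Y]-1$, which is \eqref{Eq:Unger}.

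The main obstacle I anticipate is bookkeeping rather than depth: one must be careful that $f$ is a proper monomorphism (so that $S\neq 0$ and $X\not\cong Y$) and correctly track which of the two connecting maps is zero and which is injective — it is easy to confuse the roles of $X$ and $Y$ here. A secondary point worth stating cleanly is why $\mathrm{End}(X)$ and $\mathrm{End}(Y)$ being one-dimensional (which follows from exceptional $\Rightarrow$ brick, i.e. $[X,X]=1$ for an indecomposable rigid module over a hereditary algebra) lets us conclude that a nonzero linear map out of or into these one-dimensional spaces is either zero or an isomorphism onto its image. Everything else is a direct chase through the two applications of $\Hom(-,X)$, $\Hom(-,Y)$ and the one application of $\Hom(S,-)$.
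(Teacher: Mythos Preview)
Your argument is correct. Both you and the paper prove the lemma by chasing long exact sequences attached to $0\to X\to Y\to S\to 0$, but you choose a different triple of functors: the paper applies $\Hom(Y,-)$, then $\Hom(-,S)$, then $\Hom(X,-)$, obtaining first $[Y,S]^1=0$ and $[Y,S]=1$, then $[S,S]=1$ together with $[S,S]^1=[X,S]$, and finally $[X,S]=[X,Y]-1$. You instead apply $\Hom(-,X)$, $\Hom(-,Y)$ and $\Hom(S,-)$, obtaining $[S,X]^1=1$, $[S,Y]=0$, $[S,Y]^1=[X,Y]-1$, and then read off $[S,S]=1$ and $[S,S]^1=[S,Y]^1$ from the covariant sequence. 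The two routes are of equal length and essentially dual; a small bonus of yours is that it produces $[S,X]^1=1$ along the way, a fact the paper re-derives separately when it is needed later. Your concern about $f$ being a proper monomorphism is legitimate but shared: the paper's proof also tacitly assumes $S\neq 0$.
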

\begin{proof}
Since $Y$ is exceptional and $S$ is a quotient of $Y$ we have $[Y,S]^1\leq[Y,Y]^1=0$. 
We consider the short exact sequence $\xi:\xymatrix@1@C=15pt{0\ar[r]&X\ar^f[r]&Y\ar[r]&S\ar[r]&0}$. We apply the funtor $\Hom(Y,-)$ to $\xi$ and get:
$$
\xymatrix@1{0\ar[r]&\Hom(Y,X)\ar[r]&\Hom(Y,Y)\ar[r]&\Hom(Y,S)\ar[r]&\Ext^1(Y,X)=0}
$$ 
from which we deduce that $[Y,S]\leq [Y,Y]=1$ and hence $[Y,S]=1$ because $S$ is a non-zero quotient of $Y$. We then apply the functor $\Hom(-,S)$ and get  
$$
\xymatrix@1{0\ar[r]&\Hom(S,S)\ar[r]&\Hom(Y,S)\ar^F[r]&\Hom(X,S)\ar[r]&\Ext^1(S,S)\ar[r]&\Ext^1(Y,S)=0}.
$$
Since $\Hom(S,S)$ is non--zero and $\Hom(Y,S)$ is at most one--dimensional, we deduce that they are isomorphic and one--dimensional. In particular, $S$ is indecomposable. Moreover, the map $F$ is zero and hence $[S,S]^1=[X,S]$. We apply $\Hom(X,-)$ to $\xi$ and get the short exact sequence $$\xymatrix@1{0\ar[r]&\Hom(X,X)\ar[r]&\Hom(X,Y)\ar[r]&\Hom(X,S)\ar[r]&0}$$ from which we deduce  $[X,S]=[X,Y]-[X,X]=[X,Y]-1$ and the proof is complete. 
\end{proof}
\subsection{Quiver Grassmannians}\label{Sec:QuivGrass}
Let $Q$ be an acyclic quiver and $\mathbf{e}\leq\mathbf{d}$ be two dimension vectors for $Q$, where the partial order is componentwise.  Let $M\in \mathrm{R}_\mathbf{d}(Q)$ be a representation $Q$ of dimension vector $\mathbf{d}$. The quiver Grassmannian $\Gr_\mathbf{e}(M)$ parametrizes the subrepresentations of $M$ of dimension vector $\mathbf{e}$. To give the precise definition, we need to recall the construction of a family called the universal quiver Grassmannian (see \cite{Schofield}, \cite{CFR} ). We define $\Gr_\mathbf{e}(\mathbf{d}):=\prod \Gr_{e_i}(K^{d_i})$. The universal quiver Grassmannian is the incidence variety 
$$
\Gr_\mathbf{e}^Q(\mathbf{d}):=\{((U_i), (M_\alpha))\in \Gr_\mathbf{e}(\mathbf{d})\times \mathrm{R}_\mathbf{d}(Q)| M_\alpha(U_{s(\alpha)})\subseteq U_{t(\alpha)}\;\forall \alpha\in Q_1\}.
$$
It is equipped with the two projections 
$
\xymatrix{
\Gr_\mathbf{e}(\mathbf{d})&\Gr_\mathbf{e}^Q(\mathbf{d})\ar_{p_1}[l]\ar^{p_2}[r]&\mathrm{R}_\mathbf{d}(Q)
}
$
which are $G_\mathbf{d}$--equivariant. The map $p_1$ realizes $\Gr_\mathbf{e}^Q(\mathbf{d})$ as the total space of a homogeneous vector bundle over $\Gr_\mathbf{e}(\mathbf{d)}$. This implies that it is  irreducible, smooth of dimension $\langle\mathbf{e},\mathbf{d-e}\rangle+\textrm{dim}\,\mathrm{R}_\mathbf{d}(Q)$.  The map  $p_2$ is proper, and its image is the closed $G_\mathbf{d}$--stable subvariety of $\mathrm{R}_\mathbf{d}(Q)$ consisting  of those representations admitting a subrepresentation of dimension vector $\mathbf{e}$. The quiver Grassmannian $\Gr_\mathbf{e}(M)$ is defined to be the fiber of $p_2$ over the point $M$. In particular, if $M$ lies in the image of $p_2$, then every irreducible component of $\Gr_\mathbf{e}(M)$ has dimension at least $\langle\mathbf{e, d-e}\rangle$. In case there exists a rigid representation $M_\mathbf{d}$ of dimension vector $\mathbf{d}$, if the orbit of $M_\mathbf{d}$ lies in the image of $p_2$ then $p_2$ is surjective, and hence every representation $M\in \mathrm{R}_\mathbf{d}(Q)$ admits a subrepresentation of dimension vector $\mathbf{e}$.

We will often use the universal families $\UU$ and $\QQ$ on $\Gr_e(M)$. They arise as follows: let $\UU_i$ be the pull-back of the universal rank $e_i$ subbundle of the trivial bundle with fiber $M_i$ on $\Gr_{e_i}(M_i)$ along the natural morphism $\Gr_e(M) \to \Gr_{e_i}(M_i)$. Similarly let $\QQ_i$ the pull-back of the universal rank $d_i-e_i$ quotient bundle. The family $\UU = (\UU_i)$ satisfies $M_\alpha(\UU_i) \sub \UU_j$.  The fiber of $\mathcal{U}_i$ (resp.~$\mathcal{Q}_i$) over a subrepresentation $U$ is then canonically identified with the subspace $U_i$ of $M_i$ (resp.~the quotient $M_i/U_i$ of $M_i$).

Sometimes it will be convenient to look at the set of $T$-valued points of the quiver Grassmannian $\Gr_{\mathbf{e}}(M)$, where $T$ is a scheme over the algebraically closed field $K$. That is, we consider the contravariant functor which assigns to any $K$-scheme $T$ the set $\Hom(T,\Gr_e(M)) := \Hom_{\Spec K}(T,\Gr_e(M))$. This functor is of course uniquely determined by its values on affine $K$-schemes. Let $T = \Spec A$. Recall that the set of $T$-valued points $\Hom(T,\Gr_k(V))$ of the ordinary Grassmannian $\Gr_k(V)$ of $k$-dimensional subspaces of an $n$-dimensional $K$-vector space $V$ is in functorial bijection to the set
$$
	\{ U \mid U \text{ $A$-submodule of } V \otimes A \text{ such that } (V \otimes A)/U \text{ is projective of rank } n-k \}.
$$
The bijection is provided by the universal subbundle $\UU$ on $\Gr_k(V)$: to a morphism $f: T \to \Gr_k(V)$ we assign the pull-back $f^*\UU$ which corresponds to an $A$-module belonging to this set.
Note that, as $(V \otimes A)/U$ is projective, the short exact sequence $0 \to U \to V \otimes A \to (V \otimes A)/U \to 0$ splits and hence $U$ is also projective; its rank is $k$.

Using this universal property of the Grassmannian and definition of the quiver Grassmannian, it is not hard to see that the set of $T$-valued points of $\Gr_\mathbf{e}(M)$ is the set of tuples $(U_i)$ of submodules $U_i$ of $M_i \otimes A$ for which $(M_i \otimes A)/U_i$ is projective of rank $d_i - e_i$ and such that $(M_\alpha \otimes \id_A)(U_i) \sub U_j$ holds for all $\alpha: i \to j$.

Given a $Q$--representation $M$ we can form the representation $M^\ast=DM$ of the opposite quiver $Q^{op}$, and consider $e^\ast=\mathbf{dim}\,M-\mathbf{e}$. Then there is an isomorphism of projective varieties induced by duality of Grassmannians: $\Gr_\mathbf{e}(M)\stackrel{\cong}{\rightarrow}\Gr_{\mathbf{e}^\ast}(M^\ast): N\mapsto (M/N)^\ast=\mathrm{Ann}_{M^\ast}(N)$.
\subsection{Property (S) and cellular decomposition}	\label{Sec:PropertySandCellDec}
We recall some definitions of \cite[Sec.~1]{DLP}:
\begin{definition}\label{def:AlphaPart}
A finite partition $(X_i)$ of a complex algebraic variety $X$ is said to be an \emph{$\alpha$--partition} if 
\begin{equation}\label{Eq:DefAlphaPart}
X_1\amalg X_2\amalg\cdots\amalg X_i\textrm{ is closed in }X\textrm{ for every }i.
\end{equation}
\end{definition}
Clearly, every piece of an $\alpha$--partition is locally closed. 
\begin{definition}\label{def:CellDec}
A \emph{cellular decomposition} or \emph{affine paving} of  $X$ is an $\alpha$--partition into parts $X_i$ which are isomorphic to affine spaces.
\end{definition}
Let $X$ be a complex variety. We denote by $H_i(X) = H_i^{\operatorname{BM}}(X(\C);\Z)$ the Borel--Moore homology with integer coefficients of $X$ equipped with the analytic topology. 
\begin{definition}\label{def:PropS}
The variety $X$ has \emph{property (S)} if
	\begin{enumerate}
		\item numerical and rational equivalence on $X$ coincide,
		\item $H_i(X)=0$ for $i$ odd, and
		\item the cycle map $\varphi_i:A_i(X) \to H_{2i}(X)$ is an isomorphism for all $i$.
	\end{enumerate}
\end{definition}
(See \cite[1.3]{Fulton:98} for the definition of the Chow groups $A_i(X)$ and \cite[19.1]{Fulton:98} for the cycle map $\varphi_i$.)
Property (S) was introduced in \cite{DLP} as a replacement of cellular decomposition. Indeed, if a variety admits an $\alpha$--partition into pieces having property (S), then it has property (S) \cite[Lem.~1.8]{DLP}. In particular, cellular decomposition implies property (S). Springer fibers for classical groups admit a cellular decomposition, and for the exceptional groups have property (S) \cite[Th.~3.9]{DLP}. It is still open whether Springer fibers of exceptional groups admit a cellular decomposition \cite[Pag.~32-33]{Lusztig:Comments}.

The fact that numerical and rational equivalence on $X$ agree implies that $A_*(X)$ is a finitely generated free abelian group \cite[Ex.\ 19.1.4]{Fulton:98}. If $n$ is the complex dimension of $X$ then $A^{n-i}(X) = A_i(X)$. Supposing that $X$ is smooth, the Borel--Moore homology group $H_i(X)$ equals the singular cohomology group $H^{2n-i}(X)$. So for a smooth complex variety $X$ having property (S), the odd-degree cohomology groups $H^{2i+1}(X)$ vanish and the cycle map $A^i(X) \to H^{2i}(X)$ is an isomorphism.

The following well--known example shows that even if a variety admits a partition into affine spaces, it is not necessarily true that it admits a cellular decomposition. 
\begin{ex}
Let $X=\{[x:y:z]\in\PP^2|\, xyz=0\}$ be the union of three $\PP^1$ which  meet pairwise  in distict points. Then $X=\mathbf{A}^1\amalg \mathbf{A}^1\amalg \mathbf{A}^1$ but $H^1(X)$ is clearly non--zero and hence $X$ has no property (S). In particular, it cannot admit a cellular decomposition. 
\end{ex}

We often use the following lemma to deduce that cellular decompositions ``lift'' along affine bundles.

\begin{lem} \label{l:cell_vb}
	Let $p: A \to X$ be an affine bundle. If $X$ admits a cellular decomposition or $X$ has property (S), then so does $A$.
\end{lem}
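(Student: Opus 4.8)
\textbf{Proof plan for Lemma~\ref{l:cell_vb}.} The plan is to argue by Noetherian induction on the number of pieces in the given decomposition, reducing to the case of a single piece by exploiting the fact that the restriction of an affine bundle to a locally closed subvariety is again an affine bundle. First I would recall what an affine bundle is: a morphism $p: A \to X$ which is Zariski-locally on $X$ isomorphic (over $X$) to a trivial bundle $\mathbf{A}^r \times U \to U$; in particular $p$ is flat with all fibers isomorphic to $\mathbf{A}^r$. The key elementary observation is that if $X' \subseteq X$ is any locally closed subvariety, then $p^{-1}(X') \to X'$ is again an affine bundle of the same rank, and if $X'$ is closed (resp.\ open) in $X$ then $p^{-1}(X')$ is closed (resp.\ open) in $A$; more generally $\alpha$-partitions pull back to $\alpha$-partitions because preimage commutes with finite unions and preserves closedness.

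For the property~(S) statement, I would combine this observation with \cite[Lem.~1.8]{DLP}: given an $\alpha$-partition $X = X_1 \amalg \cdots \amalg X_h$ witnessing property~(S) of $X$ (for instance the trivial one-piece partition when $X$ itself has property (S) — though one has to be slightly careful since a single variety with property (S) need not be smooth, so I would instead take \emph{any} $\alpha$-partition into pieces having property (S), which exists trivially), the sets $A_j := p^{-1}(X_j)$ form an $\alpha$-partition of $A$ by the observation above. Each $A_j \to X_j$ is an affine bundle, so it suffices to know that an affine bundle over a base with property~(S) has property~(S); then \cite[Lem.~1.8]{DLP} assembles these into property~(S) for $A$. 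This last point follows because an affine bundle of rank $r$ is in particular a (Zariski-locally trivial, hence étale-locally trivial) fibration with fiber $\mathbf{A}^r$: the Chow groups satisfy $A_*(A) \cong A_{*-r}(X)$ by the homotopy invariance and localization properties of Chow groups \cite[Thm.~3.3]{Fulton:98} applied to a stratification of $X$ trivializing the bundle, the Borel--Moore homology satisfies $H_*(A) \cong H_{*-2r}(X)$ by the analogous homotopy property, and these isomorphisms are compatible with the cycle maps; numerical equivalence transports along $p^*$ as well since intersection with fibers detects it. In fact, since we have reduced to the case where the base has property (S), one may simply cite that property (S) is preserved under affine bundles as it is under the vector bundle case, which is standard.

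For the cellular decomposition statement the argument is cleaner and self-contained: if $X = X_1 \amalg \cdots \amalg X_h$ is an affine paving with each $X_i \cong \mathbf{A}^{n_i}$, then $A = p^{-1}(X_1) \amalg \cdots \amalg p^{-1}(X_h)$ is an $\alpha$-partition of $A$ by the pullback observation, and each restricted bundle $p^{-1}(X_i) \to X_i \cong \mathbf{A}^{n_i}$ is an affine bundle over an affine space. Here I invoke the fact that an affine bundle over $\mathbf{A}^{n}$ is trivial, hence isomorphic to $\mathbf{A}^{n+r}$: a rank-$r$ affine bundle is a torsor under a rank-$r$ vector bundle, the vector bundle is trivial on $\mathbf{A}^n$ by the Quillen--Suslin theorem (or, since we only need this over a field and could alternatively shrink, one can also note that the $\alpha$-partition refinement lets us assume triviality directly), and torsors under a trivial vector bundle over an affine base are classified by $H^1$ of a quasi-coherent sheaf, which vanishes on the affine scheme $\mathbf{A}^n$, so the torsor is trivial. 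Thus each $p^{-1}(X_i)$ is an affine space and we obtain an affine paving of $A$.

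\textbf{Main obstacle.} The routine bookkeeping (preimages preserving $\alpha$-partitions) is immediate; the genuine content is the triviality of an affine bundle over affine space, which for the cellular statement rests on Quillen--Suslin together with vanishing of $H^1$ for torsors, and for the property~(S) statement rests on the homotopy invariance of Chow groups and Borel--Moore homology under affine bundles together with compatibility with cycle maps. I expect the property~(S) half to be the more delicate one to write carefully, since \cite[Lem.~1.8]{DLP} is phrased for $\alpha$-partitions into pieces with property (S), so the cleanest route is to first establish ``affine bundle over a point'' — equivalently that $\mathbf{A}^r$ has property (S), which is clear — and then bootstrap via a trivializing stratification of $X$ and the localization sequence, rather than trying to prove property (S) ascends along an arbitrary affine bundle in one stroke.
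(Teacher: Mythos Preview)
Your proposal is correct and, for the cellular decomposition half, essentially identical to the paper's argument: pull back the cells and invoke Quillen--Suslin (your extra care about the affine-bundle-as-torsor step is in fact more detailed than the paper, which simply cites Quillen--Suslin and leaves the torsor triviality implicit).

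For the property~(S) half, the paper does not unfold the argument at all: it simply cites \cite[Lem.~1.9]{DLP}, which already states that property~(S) ascends along affine bundles. Your proposed route via a trivializing stratification, homotopy invariance of Chow groups and Borel--Moore homology, and the localization sequence is exactly what lies behind that lemma, so you are reproving what the paper outsources. Note that your initial attempt to use \cite[Lem.~1.8]{DLP} on an $\alpha$-partition of $X$ is circular as written (you reduce to the very statement you want), and you correctly diagnose this in your ``main obstacle'' paragraph; the clean fix is precisely to cite \cite[Lem.~1.9]{DLP} directly, as the paper does.
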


\begin{proof}
For property (S), this is \cite[Lem.~1.9]{DLP}. 
	If $(X_i)$ is a cellular decomposition of $X$, then, by the Quillen-Suslin theorem \cite[Th.~4]{Quillen76}, $(A_i := p^{-1}(X_i))$  is a cellular decomposition of $A$. 
\end{proof}

\begin{definition}\label{Def:PropertySRepresentation}
We  say that a quiver representation has property (C) (resp. (S)) if every non--empty quiver Grassmannian associated with it admits a cellular decomposition (resp. has property (S)). 
\end{definition}

\subsection{Quasi--simple representations}
The results of this section will only be needed in Section~\ref{Sec:Preproj}, for the study of quiver Grassmannians attached to preprojective representations of affine quivers. Thus, for simplicity, throughout the section $Q$ denotes an acyclic, connected and finite quiver without multiple arrows.  In analogy with  Ringel \cite{Ringel:Wild} we give the following definition.
\begin{definition}\label{Def:QuasiSimple}
An indecomposable $Q$--representation $S$ is \emph{quasi--simple} if there are no irreducible monomorphisms ending in $S$ and there are no irreducible epimorphisms starting from $S$.
\end{definition}

\begin{prop}\label{Prop:QuasiSimple}
An indecomposable $Q$--representation $M$ is quasi--simple if and only if $M$ is either simple or an almost split sequence ending in or starting from $M$ has indecomposable middle term.
\end{prop}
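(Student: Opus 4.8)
The plan is to characterize quasi-simple representations by analyzing the structure of the Auslander--Reiten quiver near $M$, specifically the arrows entering and leaving the vertex $[M]$. Recall from the definition that $M$ is quasi-simple if no irreducible map $X \to M$ is a monomorphism and no irreducible map $M \to Y$ is an epimorphism. The key tools are: (i) every irreducible map is either a monomorphism or an epimorphism (a standard fact, provable since $\rad(X,Y)/\rad^2(X,Y) \neq 0$ forces $X,Y$ comparable in a strong sense, or directly from the Happel--Ringel-type argument); (ii) the almost split sequence $0 \to \tau M \to E \to M \to 0$ (for $M$ non-projective), and the dual statement involving $\tau^- M$ (for $M$ non-injective); and (iii) the fact that the irreducible maps into $M$ (resp.~out of $M$) are exactly the components of the maps $E \to M$ (resp.~$\tau^- M \to E'$, or the relevant maps out of $M$) appearing in almost split sequences, together with, when $M$ is projective or injective, the inclusions of (co)radicals.

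First I would dispose of the simple case: if $M = S_k$ is simple, it has no proper nonzero subrepresentation, so no map into it can be a monomorphism unless it is an isomorphism, hence not irreducible; dually no nonzero map out of $S_k$ is an epimorphism onto a proper quotient. Thus simples are always quasi-simple, and more generally one checks projective-simple and injective-simple cases are subsumed. For the main equivalence, assume $M$ is not simple. Suppose first that an almost split sequence ending in $M$ exists (i.e.~$M$ non-projective) and write $0 \to \tau M \to E \to M \to 0$; decompose $E = \bigoplus_i E_i$ into indecomposables, so the components $E_i \to M$ are (up to scalar) a basis of the irreducible maps into $M$. If $E$ is decomposable, I would argue that at least one component $E_i \to M$ must be a monomorphism: indeed, none of them is an isomorphism (the sequence is non-split), so each is a proper mono or a proper epi by (i); if all were epis, the images would generate a proper... more carefully, one uses that $\tau M \to E$ is a monomorphism into each... the cleanest route is: the map $E \to M$ is an epimorphism with kernel $\tau M$, and if $E = E_1 \oplus E_2$ with both $E_i \to M$ epimorphisms, then $\mathbf{dim}\, E_i \geq \mathbf{dim}\, M$ componentwise for each $i$, forcing $\mathbf{dim}\, \tau M = \mathbf{dim}\, E - \mathbf{dim}\, M \geq \mathbf{dim}\, M$, and then a dimension/rank count against the exact sequence (or against $\Hom(E_1, M)$ via the AR property) yields a contradiction — so some $E_i \to M$ is a proper mono, contradicting quasi-simplicity. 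Conversely, if $E$ is indecomposable, then the only irreducible map into $M$ is $E \to M$ which is an epimorphism, and dually (treating the almost split sequence $0 \to M \to E' \to \tau^- M \to 0$ starting at $M$, with $E'$ indecomposable by a symmetric argument — here I would invoke that $E$ indecomposable and $E'$ indecomposable are in fact equivalent conditions via the duality $\Gr$ and $\tau$, or prove the starting-sequence statement directly), the only irreducible map out of $M$ is $M \to E'$, a monomorphism; hence $M$ is quasi-simple. The projective/injective boundary cases (where one of the two almost split sequences is replaced by radical inclusions $\bigoplus \rad P_k / (\ldots) \to P_k$ or socle-type maps) I would handle by noting that the inclusion $\rad P \hookrightarrow P$ is already a monomorphism, so a non-simple projective is quasi-simple iff $\rad P$ is indecomposable, which matches the indecomposable-middle-term condition for the (unique) relevant sequence, and dually for injectives.

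I expect the main obstacle to be the careful bookkeeping showing that when the middle term $E$ of an almost split sequence is decomposable, some component is necessarily a monomorphism (and likewise an epimorphism out of $M$ on the other side) — this is where one must combine the mono-or-epi dichotomy for irreducible maps with dimension-vector constraints coming from the short exact sequence, and be careful not to circularly assume what one is proving. A secondary subtlety is making sure the two conditions ``the almost split sequence ending in $M$ has indecomposable middle term'' and ``the one starting in $M$ has indecomposable middle term'' coincide (when both exist), so that the statement ``an almost split sequence ending in or starting from $M$ has indecomposable middle term'' is unambiguous; this should follow from the duality $\Gr_\mathbf{e}(M) \cong \Gr_{\mathbf{e}^*}(M^*)$ combined with $\tau \leftrightarrow \tau^-$ under passing to $Q^{\mathrm{op}}$, or from a direct symmetric argument, and I would include a brief remark to that effect.
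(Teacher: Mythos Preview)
Your dimension-vector argument has a genuine gap. You want to show that if the middle term $E = E_1 \oplus E_2$ of the almost split sequence ending in $M$ is decomposable, then some component $E_i \to M$ must be a monomorphism. Your proposed route is: if both were epimorphisms then $\mathbf{dim}\,E_i \geq \mathbf{dim}\,M$ for each $i$, hence $\mathbf{dim}\,\tau M = \mathbf{dim}\,E - \mathbf{dim}\,M \geq \mathbf{dim}\,M$. But this inequality is not a contradiction: there is no general reason $\tau M$ should have smaller dimension than $M$, and for preinjective $M$ it is typically larger. In fact the statement you are trying to establish here (``no irreducible mono into $M$'' alone forces $E$ indecomposable) is strictly stronger than what the proposition asserts; in the paper it appears only as a \emph{corollary} of the proposition, valid under the extra hypothesis that $M$ is not preinjective, and is proved by an entirely different dimension argument (iterating $\tau^{-}$ on a resulting epimorphism).

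The paper's proof uses both halves of the quasi-simple hypothesis simultaneously rather than one at a time. Assume $M$ is quasi-simple, non-projective, non-injective, and (after handling the regular case by citing Ringel and invoking duality) preprojective. Suppose $E$ decomposes with two non-isomorphic summands $E_1,E_2$. Since $M$ admits no irreducible monomorphism, $E_1 \to M$ is a proper epimorphism; since $M$ admits no irreducible epimorphism, the irreducible map $M \to \tau^{-}E_2$ is a proper monomorphism. The composite $E_1 \twoheadrightarrow M \hookrightarrow \tau^{-}E_2$ is then nonzero but neither injective nor surjective, contradicting the Happel--Ringel Lemma (which applies in the preprojective component, using the standing hypothesis that $Q$ has no multiple arrows). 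The idea you were missing is precisely this composite-map trick; once you have it, the ``careful bookkeeping'' you anticipated as the main obstacle never arises, and your secondary worry about matching the ending and starting sequences is absorbed by the duality step rather than by any direct comparison of $E$ and $E'$.
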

\begin{proof}
Let $M$ be quasi--simple, non--projective and non--injective.  If $M$ is regular, then the claim follows from  \cite{Ringel:Wild}. By duality, we can assume that $M$ is preprojective.  Let $E_1$ and $E_2$ be two non--isomorphic indecomposable direct summands of the middle term $E$ of the almost split sequence ending in $M$. The composite map 
$\xymatrix@1{E_1\ar@{->>}[r]&M\,\ar@{^(->}[r]&\tau^-E_2}$ is neither surjective nor injective. This contradicts the Happel--Ringel Lemma~\ref{Lem:HappelRingel} and thus $E$ is indecomposable since $Q$ has no multiple arrows. 
\end{proof}
We can give another characterization of quasi--simple modules. Recall that a \emph{leaf} of $Q$ is a vertex $i\in Q_0$ which is joined to precisely one vertex by an edge of $Q$.

\begin{cor}\label{Cor:QuasiSimpleLeaves}
The $Q$--representation $M=\tau^{-t}P_{k}$ is quasi--simple if and only if $k$ is a leaf of $Q$.
\end{cor}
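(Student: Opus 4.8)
The plan is to derive the corollary from Proposition~\ref{Prop:QuasiSimple}: $M=\tau^{-t}P_k$ is quasi-simple if and only if it is simple or an almost split sequence ending in, or starting from, $M$ has indecomposable middle term. Everything therefore reduces to counting the indecomposable direct summands of the middle term of the relevant mesh, which I would read off from the combinatorics of the preprojective component $\Gamma(\mathcal{P})$.

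First I would use the description of $\Gamma(\mathcal{P})$ by the knitting algorithm (equivalently, as a full translation subquiver of $\mathbb{Z}Q^{op}$ with the $P_i$ on the boundary). The arrows of $\Gamma(Q)$ between two projectives $P_i$ and $P_j$ are in bijection with the arrows of $Q$ joining $i$ and $j$; propagating this through the meshes shows that, for $t\geq 1$, the almost split sequence ending in $\tau^{-t}P_k$ reads
\[
0\longrightarrow\tau^{-(t-1)}P_k\longrightarrow\Bigl(\bigoplus_{\alpha\colon k\to j}\tau^{-t}P_j\Bigr)\oplus\Bigl(\bigoplus_{\alpha\colon j\to k}\tau^{-(t-1)}P_j\Bigr)\longrightarrow\tau^{-t}P_k\longrightarrow 0 ,
\]
all displayed summands being nonzero: they remain inside the preprojective component, which in the cases of interest (e.g.\ $Q$ affine) contains all $\tau$-shifts of the projectives. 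Hence the number of indecomposable summands of the middle term equals the number of arrows of $Q$ incident with $k$, and since $Q$ has no multiple arrows this is exactly the number of neighbours of $k$ in the underlying graph.

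Granting this, the middle term is indecomposable precisely when $k$ has a single neighbour, i.e.\ precisely when $k$ is a leaf; this gives the equivalence for $t\geq 1$, provided one also observes that such a $\tau^{-t}P_k$ cannot be simple when $k$ has more than one neighbour, so that the ``$M$ simple'' clause of Proposition~\ref{Prop:QuasiSimple} causes no trouble. For $t=0$ the projective $P_k$ has no almost split sequence ending in it, so I would instead use the one starting from it --- whose middle term, by the same bookkeeping (it is the $t=1$ sequence above viewed from the side of $P_k$), is again indexed by the neighbours of $k$ --- and treat separately the degenerate case in which $P_k$ is itself simple. I expect the main obstacle to be precisely the explicit identification of the mesh at $\tau^{-t}P_k$, i.e.\ determining which $\tau$-shifts of which projectives occur: this needs careful bookkeeping with the knitting algorithm (or with the embedding $\Gamma(\mathcal{P})\hookrightarrow\mathbb{Z}Q^{op}$), and genuinely uses the no-multiple-arrows hypothesis, since a single neighbour joined by several arrows would contribute several summands. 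Once the shape of the mesh is pinned down, the equivalence with ``$k$ is a leaf'' is immediate.
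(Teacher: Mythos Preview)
Your proposal is correct and takes essentially the same approach as the paper: the paper's proof is the single sentence ``It follows by the construction of almost split sequences via the knitting algorithm,'' and what you do is precisely spell out that sentence---identifying the mesh at $\tau^{-t}P_k$ and counting its middle terms as the neighbours of $k$. Your explicit shape of the almost split sequence and your care with the boundary case $t=0$ go beyond what the paper writes but add no new idea.
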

\begin{proof}
It follows by the construction of almost split sequences via the knitting algorithm.
\end{proof}

If $S$ is either regular or preprojective there is a simpler characterization of quasi--simplicity. 
\begin{cor}\label{Cor:QSimplePreproj}
An indecomposable, not preinjective, representation $S$ of a non--Dynkin quiver  is quasi--simple if and only if there are no irreducible monomorphisms ending in $S$. 
\end{cor}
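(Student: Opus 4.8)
The plan is to reduce to the criterion of Proposition~\ref{Prop:QuasiSimple}. The implication ``$S$ quasi-simple $\Rightarrow$ no irreducible monomorphism ends in $S$'' is nothing but Definition~\ref{Def:QuasiSimple}, so the content is the converse: assuming no irreducible monomorphism ends in $S$, show that $S$ is quasi-simple. As $S$ is not preinjective it is preprojective or regular. If $S$ is regular, the statement is contained in Ringel's description of the regular tubes \cite{Ringel:Wild} (exactly as in the regular case of Proposition~\ref{Prop:QuasiSimple}), so we may assume $S$ preprojective.

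If in addition $S$ is projective, say $S=P_k$, recall that every irreducible morphism into $P_k$ is nonzero, is not a split epimorphism, and hence factors through the radical inclusion $\rad P_k\hookrightarrow P_k$; as its image then lies in $\rad P_k\subsetneq P_k$ it cannot be an epimorphism, so it is a monomorphism. Thus the hypothesis forces $\rad P_k=0$, i.e. $S=S_k$ is simple, and $S$ is quasi-simple by Proposition~\ref{Prop:QuasiSimple}.

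It remains to treat $S$ preprojective but not projective. Then $S$ is not injective either (over a non-Dynkin quiver the preprojective component of $\Gamma(Q)$ contains no injectives), so there is an almost split sequence $0\to\tau S\xrightarrow{u}E\xrightarrow{v}S\to0$. Since $Q$ has no multiple arrows, neither does $\Gamma(Q)$; consequently every indecomposable summand of $E=E_1\oplus\dots\oplus E_r$ occurs with multiplicity one and every irreducible morphism ending in $S$ agrees, up to a scalar and an automorphism, with one of the components $v_i\colon E_i\to S$. Each $v_i$ is irreducible, hence a monomorphism or an epimorphism, and by hypothesis it is an epimorphism; in particular $\mathbf{dim}\,E_i\geq\mathbf{dim}\,S$ for all $i$. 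Together with $\mathbf{dim}\,\tau S+\mathbf{dim}\,S=\sum_{i=1}^r\mathbf{dim}\,E_i$ this gives the componentwise inequality $\mathbf{dim}\,\tau S\geq(r-1)\,\mathbf{dim}\,S$. On the other hand, for an indecomposable preprojective, non-projective representation $S$ over a connected non-Dynkin quiver one has $\mathbf{dim}\,\tau S\not\geq\mathbf{dim}\,S$ --- equivalently, $S$ is not a quotient of $\tau S$ ---; this is a standard feature of preprojective representations, reflecting the expanding behaviour of the Coxeter transformation on the preprojective cone (see e.g.\ \cite{Ringel:Book}). Hence $r=1$, $E$ is indecomposable, and Proposition~\ref{Prop:QuasiSimple} shows that $S$ is quasi-simple.

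The only step that is not pure bookkeeping, once Proposition~\ref{Prop:QuasiSimple} is granted, is the numerical input $\mathbf{dim}\,\tau S\not\geq\mathbf{dim}\,S$ for indecomposable non-projective preprojectives; I expect making this precise and correctly referenced --- the regular case being likewise absorbed into Ringel's tube picture --- to be the main point requiring care, the rest being the same mono/epi dichotomy and dimension count already used in the proof of Proposition~\ref{Prop:QuasiSimple}.
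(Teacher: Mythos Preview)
Your reduction to Proposition~\ref{Prop:QuasiSimple} is sound, and you have correctly located the one non-formal step: the componentwise inequality $\mathbf{dim}\,\tau S \not\geq \mathbf{dim}\,S$ for non-projective preprojective $S$. This is not a fact one can simply cite, and establishing it would take additional work; so as written the argument has a genuine (self-acknowledged) gap.

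The paper sidesteps this by arguing directly from Definition~\ref{Def:QuasiSimple} rather than through Proposition~\ref{Prop:QuasiSimple}. Assume no irreducible monomorphism ends in $S$ and suppose, for a contradiction, that an irreducible epimorphism $S\twoheadrightarrow F$ exists. Then $F$ is preprojective and non-projective, so the Auslander--Reiten mesh at $F$ furnishes an irreducible map $\tau F\to S$; by hypothesis it is not mono, hence epi. Composing gives an epimorphism $\tau F\twoheadrightarrow F$. Since $\tau^{-}$ is right exact it preserves epimorphisms, so iterating yields $F\twoheadrightarrow \tau^{-k}F$ for every $k\geq 0$, and thus $\mathbf{dim}\,\tau^{-k}F$ is bounded by $\mathbf{dim}\,F$. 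But $F$ is preprojective on a non-Dynkin quiver, so $\mathbf{dim}\,\tau^{-k}F$ is unbounded; contradiction. Hence no irreducible epimorphism starts at $S$, and $S$ is quasi-simple by definition.

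Both arguments ultimately rest on a growth property of preprojectives, but the paper's version reduces to the standard fact that $\mathbf{dim}\,\tau^{-k}F\to\infty$, which is easy to justify, whereas your version needs a componentwise comparison between $\tau S$ and $S$ that is harder to pin down.
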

\begin{proof}
If $S$ is regular this is well-known \cite{Ringel:Wild}. Let $S$ be preprojective without irreducible monomorphisms ending in $S$. Suppose that there is an irreducible epimorphism $\xymatrix@1{S\ar@{->>}[r]&F}$ starting from $S$. Then the composition $\xymatrix{\tau F\ar@{->>}[r]&S\ar@{->>}[r]&F}$ is epi. Hence, $\mathbf{dim}\, F>\mathbf{dim}\,\tau^{-k}F$ for every $k$, since $\tau^{-}$ preserves epis. By hypothesis $Q$ is not Dynkin, thus we get a contradiction.  
\end{proof}
\begin{lem}\label{Lem:Irred}
Let $\iota:X\hookrightarrow Y$ be an irreducible monomorphism between 
preprojective $Q$-representations and let $S=\textrm{Coker}(\iota)$. Then $S$ is indecomposable, rigid and $[S,X]^1=1$. Moreover there are no irreducible monomorphisms ending in $S$. If $S$ is either regular or preprojective then $S$ is quasi--simple. 
\end{lem}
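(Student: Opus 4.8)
The plan is to derive everything from Unger's Lemma~\ref{Lem:Unger} and the directedness of the preprojective component. Since $\iota$ is irreducible, $X$ and $Y$ are indecomposable, and being preprojective they are exceptional; moreover $X\not\cong Y$, since an irreducible endomorphism of an indecomposable module would be invertible. Hence the sequence $\xi\colon 0\to X\xrightarrow{\iota}Y\to S\to 0$ is non-split. The arrow $[X]\to[Y]$ of the preprojective component — which is directed and, being a full subquiver of $\ZZ Q$, has no multiple arrows — yields three vanishings: there is no path $[Y]\to\cdots\to[X]$, so $\Hom(Y,X)=0$; there is an arrow $[\tau Y]\to[X]$ when $Y$ is non-projective, so $\Hom(X,\tau Y)=0$; and, symmetrically, $\Hom(Y,\tau X)=0$ when $X$ is non-injective. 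By the Auslander--Reiten formulas this gives $\Ext^1(Y,X)=\Ext^1(X,Y)=0$ (trivially so if $X$ or $Y$ is projective).

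First I would feed these vanishings into the long exact sequence obtained by applying $\Hom(-,X)$ to $\xi$; since $\Hom(Y,X)=\Ext^1(Y,X)=0$ it collapses to an isomorphism $\Ext^1(S,X)\cong\Hom(X,X)=K$, so $[S,X]^1=1$ and $\xi$ generates $\Ext^1(S,X)$. As $X,Y$ are exceptional with $[Y,X]^1=0$, Unger's Lemma~\ref{Lem:Unger} applies and gives that $S$ is indecomposable, $[S,S]=1$, and $[S,S]^1=[X,Y]-1$. Thus rigidity of $S$ amounts to the identity $[X,Y]=1$, which (apply $\Hom(X,-)$ to $\xi$ and use $[X,X]=1$) is equivalent to $\Hom(X,S)=0$.

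For the absence of irreducible monomorphisms ending in $S$ I would argue by contradiction. If $g\colon Z\hookrightarrow S$ is such a monomorphism with $Z$ indecomposable, put $W:=\pi^{-1}(g(Z))\subseteq Y$ for $\pi\colon Y\to S$ the projection, so that $X=\ker\pi\subseteq W$ and $0\to X\to W\to Z\to 0$ is exact. Now $\iota$ factors as $X\hookrightarrow W\hookrightarrow Y$, and $W\hookrightarrow Y$ is a proper inclusion (as $g$ is not invertible) hence not a split epimorphism; so irreducibility of $\iota$ forces $X\hookrightarrow W$ to be a split monomorphism, $W=X\oplus W'$ with $W'\cong Z\neq 0$. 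Then $W'\hookrightarrow Y\xrightarrow{\pi}S$ is a monomorphism onto $g(Z)$, hence equal to $g$ up to an isomorphism of the source, so it is irreducible; since $\pi$ is not a split epimorphism ($Y$ is indecomposable while $X,S\neq 0$), the inclusion $W'\hookrightarrow Y$ must split, forcing $W'=Y$ and therefore $W=X\oplus Y\subseteq Y$ — absurd, as $X\neq 0$. Hence no irreducible monomorphism ends in $S$; consequently, when $S$ is regular or preprojective it is indecomposable and not preinjective, so Corollary~\ref{Cor:QSimplePreproj} (or Proposition~\ref{Prop:QuasiSimple} in the Dynkin case) shows $S$ is quasi-simple.

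The hard part will be the identity $[X,Y]=1$, i.e. $\Hom(X,S)=0$: the no-multiple-arrows hypothesis only yields $\dim\mathrm{Irr}(X,Y)=1$, so one must rule out $\mathrm{rad}^2(X,Y)\neq 0$, that is, nonzero compositions $X\to Z\to Y$ through a third indecomposable. I would exploit the structure at $Y$: for $Y$ non-projective with almost split sequence $0\to\tau Y\xrightarrow{u}E\xrightarrow{v}Y\to 0$, irreducibility of $\iota$ forces $E=X\oplus E'$ with $v|_X=\iota$, and since $\iota$ is a monomorphism we get $u(\tau Y)\cap X=0$, whence the $E'$-component of $u$ is a monomorphism and $0\to\tau Y\to E'\to S\to 0$ is exact; together with the fact that $v$ is right almost split (so every morphism $X\to Y$ lifts through $E$), this should reduce the vanishing of $\mathrm{rad}^2(X,Y)$ to a statement about morphisms factoring through summands of $E'$, with $\mathrm{rad}(P_k)$ playing the role of $E$ when $Y=P_k$ is projective. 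Executing this reduction carefully — balancing the mesh relations at $Y$ against the splitting $E=X\oplus E'$ — is the crux of the proof.
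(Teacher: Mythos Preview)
Your derivations of $[S,X]^1=1$ (via $\Hom(-,X)$ applied to $\xi$, using $\Hom(Y,X)=\Ext^1(Y,X)=0$), of the indecomposability of $S$ (via Unger's Lemma once $[Y,X]^1=0$ is established from directedness of the preprojective component), and of the absence of irreducible monomorphisms ending in $S$ are all correct; the last argument, pulling back along $\pi$ and exploiting irreducibility of $\iota$, is more explicit than what the paper offers, which simply cites Unger's Lemma and Corollary~\ref{Cor:QSimplePreproj}. The deduction of quasi-simplicity from Corollary~\ref{Cor:QSimplePreproj} is likewise fine.

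The gap you isolate --- that rigidity of $S$ amounts to $[X,Y]=1$, while the no-multiple-arrows hypothesis only bounds $\dim\mathrm{Irr}(X,Y)$ --- is genuine, and in fact cannot be closed under the hypotheses stated for the section. Take $Q$ of type $\tilde{A}_2$ with arrows $1\to 2$, $2\to 3$, $1\to 3$ (acyclic, connected, no multiple arrows). The inclusion $P_3=S_3\hookrightarrow P_1$ corresponding to the arrow $1\to 3$ is an irreducible monomorphism between indecomposable preprojectives, but $[P_3,P_1]=\dim(P_1)_3=2$ since there are two paths from vertex $1$ to vertex $3$. The cokernel $S$ has dimension vector $\delta=(1,1,1)$, is a brick by Unger's Lemma, and satisfies $\langle\delta,\delta\rangle=0$, hence $[S,S]^1=1$: it is \emph{not} rigid. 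So your mesh-relation reduction cannot succeed in this generality, and the rigidity clause of the lemma fails as stated. The remaining assertions all hold in this example and are precisely what the paper uses later; where rigidity of $S$ is actually invoked (e.g.\ in the $\tilde{E}_8$ argument of Proposition~\ref{Prop:NotQuasiSimpleE8}, where the underlying graph is a tree), the equality $[X,Y]=1$ is asserted separately rather than drawn from this lemma.
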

\begin{proof}
Follows from Unger's Lemma~\ref{Lem:Unger} and corollary~\ref{Cor:QSimplePreproj}.
\end{proof}

\section{Decomposition of quiver Grassmannians induced by short exact sequences}\label{Sec:ReductionThms}
Let $Q$ be a quiver and let 
$\eta:\xymatrix{0\ar[r]&M'\ar^\iota[r]&M\ar^\pi[r]&M''\ar[r]&0}$ be a short exact sequence in $\textrm{Rep}(Q)$. Let $\mathbf{e}\in\ZZ^{Q_0}_{\geq0}$ be a dimension vector. We consider the canonical map
$$
\Psi^\eta: Gr_\mathbf{e}(M)\rightarrow\coprod_{\mathbf{f+g=e}}Gr_\mathbf{f}(M')\times Gr_\mathbf{g}(M''):\, N\mapsto(\iota^{-1}(N), \pi(N)).
$$ 
It induces subsets $
\mathcal{S}_{\mathbf{f,g}}^\eta:=(\Psi^{\eta})^{-1}(\Gr_\mathbf{f}(M')\times \Gr_\mathbf{g}(M''))
$ in $\Gr_\mathbf{e}(M)$ and a decomposition 
\begin{equation}\label{Eq:general decomposition}
\Gr_\mathbf{e}(M)=\coprod_{\mathbf{f+g=e}}\mathcal{S}_{\mathbf{f,g}}^\eta.
\end{equation}

\begin{lem}\label{Lem:AlphaPartQG}
The partition \eqref{Eq:general decomposition} is an $\alpha$--partition of $\Gr_\mathbf{e}(M)$. In particular if $\mathcal{S}_{\mathbf{f,g}}^\eta$ admits a cellular decomposition for any $\mathbf{f+g=e}$ then the same holds for  $\Gr_\mathbf{e}(M)$.
\end{lem}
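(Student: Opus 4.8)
The plan is to show that each partial union $\bigcup_{\text{prescribed}} \mathcal{S}_{\mathbf{f,g}}^\eta$ (taken over a suitable down-set of pairs) is closed in $\Gr_\mathbf{e}(M)$; once one establishes that the condition ``$\dim(\iota^{-1}(N))_i \geq f_i$ for all $i$'' (equivalently, ``$\dim \pi(N)_i \leq g_i$ for all $i$'') is a closed condition on $N \in \Gr_\mathbf{e}(M)$, the $\alpha$--partition property follows formally, since the $\mathcal{S}_{\mathbf{f,g}}^\eta$ with $\mathbf f \geq \mathbf f_0$ componentwise (and correspondingly $\mathbf g \leq \mathbf g_0 = \mathbf e - \mathbf f_0$) assemble into such a closed set, and ordering the finitely many pairs $(\mathbf f, \mathbf g)$ with $\mathbf f + \mathbf g = \mathbf e$ by a linear extension of this partial order exhibits \eqref{Eq:general decomposition} as an $\alpha$--partition. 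The last sentence of the lemma is then immediate from Lemma~\ref{l:cell_vb}'s companion statement that an $\alpha$--partition into pieces admitting cellular decompositions yields a cellular decomposition of the total space (this is the cellular-decomposition analogue of \cite[Lem.~1.8]{DLP}, already recalled in Section~\ref{Sec:PropertySandCellDec}).

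First I would set up the semicontinuity. The map $\iota \colon M' \hookrightarrow M$ gives, for each vertex $i$, an inclusion $\iota_i \colon M'_i \hookrightarrow M_i$; on the quiver Grassmannian we have the universal subbundle $\UU = (\UU_i)$ with $\UU_i \subseteq M_i \otimes \mathcal{O}$. The subset $\mathcal{S}_{\mathbf f, \mathbf g}^\eta$ is cut out by the conditions that the pullback of $\UU_i$ meets $\iota_i(M'_i)$ in a subbundle of rank $\geq f_i$ and maps onto a subbundle of $M''_i$ of rank $\leq g_i$. Concretely, $\iota^{-1}(N)_i = N_i \cap M'_i$ inside $M_i$, and the dimension of this intersection is an upper-semicontinuous function of the point of the Grassmannian $\Gr_{e_i}(M_i)$ (the locus where a fixed subspace meets the varying subspace in dimension $\geq f_i$ is the degeneracy locus of the bundle map $\UU_i \to M_i/M'_i \otimes \mathcal{O}$, hence closed). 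Pulling back along $\Gr_\mathbf{e}(M) \to \Gr_{e_i}(M_i)$ and intersecting over all $i \in Q_0$, the locus $Z_{\mathbf f_0} := \{N : \dim(N_i \cap M'_i) \geq (f_0)_i \ \forall i\}$ is closed in $\Gr_\mathbf{e}(M)$.

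Next I would identify $Z_{\mathbf f_0}$ with the union $\coprod_{\mathbf f \geq \mathbf f_0} \mathcal{S}_{\mathbf f, \mathbf e - \mathbf f}^\eta$: a point $N$ lies in $Z_{\mathbf f_0}$ iff setting $f_i = \dim(N_i \cap M'_i)$ we have $\mathbf f \geq \mathbf f_0$, and then automatically $\mathbf g := \mathbf e - \mathbf f$ satisfies $g_i = \dim \pi(N)_i = e_i - \dim(N_i \cap M'_i)$, so $N \in \mathcal{S}_{\mathbf f, \mathbf g}^\eta$; conversely each $\mathcal S_{\mathbf f, \mathbf g}^\eta$ with $\mathbf f \geq \mathbf f_0$ is visibly inside $Z_{\mathbf f_0}$. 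Choosing any total order $(\mathbf f^{(1)}, \mathbf g^{(1)}), \dots, (\mathbf f^{(h)}, \mathbf g^{(h)})$ on the finite index set refining ``$\mathbf f$ decreasing'', each initial union $\mathcal S^\eta_{\mathbf f^{(1)}, \mathbf g^{(1)}} \cup \dots \cup \mathcal S^\eta_{\mathbf f^{(j)}, \mathbf g^{(j)}}$ equals a finite union of sets $Z_{\mathbf f^{(\ell)}}$ minus nothing — more precisely it is $\bigcup_{\ell \le j} Z_{\mathbf f^{(\ell)}}$ intersected appropriately; since each $Z_{\mathbf f^{(\ell)}}$ is closed and in this ordering the complement $\mathcal S^\eta_{\mathbf f^{(j+1)},\mathbf g^{(j+1)}} \cup \dots$ is contained in the complement of $Z_{\mathbf f^{(1)}} \cup \dots \cup Z_{\mathbf f^{(j)}}$ by the down-set structure, the initial unions are closed. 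Hence \eqref{Eq:general decomposition} is an $\alpha$--partition.

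The only real point requiring care — the ``main obstacle'', though it is mild — is bookkeeping the partial-to-total order reduction correctly: one must verify that the partial order ``$\mathbf f \geq \mathbf f'$ componentwise'' is exactly the order that makes the closed sets $Z_{\mathbf f}$ nested in the right way relative to the partition pieces, so that a single linear extension simultaneously works for all $i \in Q_0$. This is purely combinatorial and poses no difficulty once the semicontinuity of each $N \mapsto \dim(N_i \cap M'_i)$ is in hand; the geometric input (degeneracy loci are closed, pullbacks of closed sets are closed, finite intersections of closed sets are closed) is standard. I would also remark that everything here is valid over an arbitrary base field, since semicontinuity of fiber dimension of a morphism of vector bundles holds scheme-theoretically, although for the statement as phrased we only need it over $\CC$.
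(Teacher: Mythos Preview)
Your proof is correct and follows essentially the same route as the paper: the paper's argument is precisely that $N \mapsto \mathbf{dim}(N \cap \ker\pi)$ is upper-semicontinuous, so each $\coprod_{\mathbf f \ge \mathbf n} \mathcal S^\eta_{\mathbf f,\mathbf g}$ is closed, and one then passes to a total order refining the componentwise order on $\mathbf f$. Your version supplies more detail (degeneracy loci, the explicit verification that initial unions coincide with unions of the $Z_{\mathbf f^{(\ell)}}$), but the strategy is identical; the slightly tangled sentence about ``minus nothing \dots\ intersected appropriately'' could be replaced by the clean observation that $\bigcup_{\ell \le j}\mathcal S^\eta_{\mathbf f^{(\ell)},\mathbf g^{(\ell)}} = \bigcup_{\ell \le j} Z_{\mathbf f^{(\ell)}}$, which is closed.
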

\begin{proof}
Clearly the partition \eqref{Eq:general decomposition} is finite. 
The function $\Gr_\mathbf{e}(M)\rightarrow \ZZ^{Q_0}_{\geq0}: N\mapsto \textbf{dim}\,(N\cap \ker(\pi))$ which maps $N\in\Gr_\mathbf{e}(M)$ to the the dimension vector of $N\cap \ker(\pi)$ is upper--semicontinuous. In other words for every $\mathbf{n}\in\ZZ^{Q_0}_{\geq0}$ the union of  strata $\amalg_{\mathbf{f}\geq \mathbf{n}}\mathcal{S}^\eta_{\mathbf{f,g}}\subset\Gr_\mathbf{e}(M)$  is   closed in $\Gr_\mathbf{e}(M)$. By totally ordering the strata $\mathcal{S}_{\mathbf{f,g}}^\eta$ compatibly with this partial ordering, we get the statement. 
\end{proof}

For any $\mathbf{f,g}$, the  restricted map 
$
\Psi_{\mathbf{f,g}}^\eta:\mathcal{S}_{\mathbf{f,g}}^\eta\rightarrow\Gr_\mathbf{f}(M')\times\Gr_\mathbf{g}(M'')
$
is algebraic.

\begin{lem}\label{Lem:Image}
A point $(N',N'')\in\Gr_\mathbf{f}(M')\times\Gr_\mathbf{g}(M'')$ lies in the image of $\Psi_{\mathbf{f,g}}^{\eta}$ if and only if $\eta$ is in the kernel of the canonical map $\Ext^1 (M'', M') \rightarrow \Ext ^1 (N'' , M'/N')$.
\end{lem}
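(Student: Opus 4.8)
The plan is to rephrase membership in the image of $\Psi^\eta_{\mathbf{f},\mathbf{g}}$ as a splitting condition for a concretely built extension, and then to recognise that extension as the image of $\eta$ under the stated canonical map.

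First I would unwind the definitions. Identifying $M'$ with $\iota(M')\subseteq M$, a point $(N',N'')$ lies in the image of $\Psi^\eta_{\mathbf{f},\mathbf{g}}$ precisely when there is a subrepresentation $N\subseteq M$ with $N\cap M'=N'$ and $\pi(N)=N''$; such an $N$ automatically has dimension vector $\mathbf{f}+\mathbf{g}=\mathbf{e}$, so it indeed lies in $\mathcal{S}^\eta_{\mathbf{f},\mathbf{g}}$ and is sent to $(N',N'')$. Every such $N$ is contained in $E:=\pi^{-1}(N'')$, and $\eta$ restricts to $\eta_{N''}\colon 0\to M'\to E\to N''\to 0$, which by the standard description represents the pullback of $\eta$ along $N''\hookrightarrow M''$.

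Next I would divide out by $N'\subseteq M'\subseteq E$. Since $E/M'\cong N''$, this yields $\bar\eta\colon 0\to M'/N'\to E/N'\to N''\to 0$, and functoriality of Yoneda $\Ext$ in both arguments identifies $\bar\eta$ with the image of $\eta$ under the canonical map $\Ext^1(M'',M')\to\Ext^1(N'',M'/N')$ (pullback along $N''\hookrightarrow M''$ followed by pushout along $M'\twoheadrightarrow M'/N'$; these two operations commute). The heart of the argument is then the equivalence: a subrepresentation $N\subseteq M$ with $N\cap M'=N'$ and $\pi(N)=N''$ exists if and only if $\bar\eta$ splits. For the forward direction, given such an $N$, its image $N/N'$ in $E/N'$ meets $M'/N'$ trivially (since $N\cap M'=N'$) and surjects onto $N''$ (since $\pi(N)=N''$), so $N/N'$ is a subrepresentation of $E/N'$ projecting isomorphically onto $N''$, whence $\bar\eta$ splits. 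For the converse, given a splitting, i.e.\ a subrepresentation $\bar N\subseteq E/N'$ projecting isomorphically onto $N''$, let $N$ be the preimage of $\bar N$ under $E\twoheadrightarrow E/N'$; two short diagram chases then give $N\cap M'=N'$ and $\pi(N)=N''$.

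Finally, $\bar\eta$ splits if and only if its class in $\Ext^1(N'',M'/N')$ vanishes, that is, if and only if $\eta$ lies in the kernel of $\Ext^1(M'',M')\to\Ext^1(N'',M'/N')$; combining this with the two previous paragraphs yields the lemma. I do not anticipate a genuine obstacle here: the only points needing care are the identifications of $\eta_{N''}$ and $\bar\eta$ as the pullback and the subsequent pushout of $\eta$, and the two elementary diagram chases in the equivalence above.
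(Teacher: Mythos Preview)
Your proof is correct and follows essentially the same approach as the paper: both form the preimage $E=\pi^{-1}(N'')$ (the paper calls it $\overline{N}$), identify the induced extension $0\to M'/N'\to E/N'\to N''\to 0$ with the image of $\eta$ under the canonical map, and then observe that this extension splits if and only if a suitable $N$ exists. Your write-up makes the splitting equivalence more explicit than the paper's, but the argument is the same.
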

\begin{proof}
Given $(N', N'')$, the image $\overline{\eta}$ of $\eta$ under the composition of the canonical maps $$\xymatrix@1{\eta\in\textrm{Ext}^1 (M'', M') \ar@{->>}[r]&\Ext^1(N'',M')\ar@{->>}[r]&\Ext ^1 (N'' , M'/N')\ni\overline{\eta}}$$ corresponds to the extension $0\rightarrow M'/N'\rightarrow \overline{N}/N'\rightarrow N''\rightarrow 0$, where $\overline{N}=\pi^{-1}(N'')$.

Now $\overline{\eta}=0$ if and only if there exists a subrepresentation $N$ of $\overline{N}$, containing $N'$, projecting to $N''$ via $\pi$ and such that $M'\cap N=N'$. These are precisely the conditions defining the image of $\Psi_{\mathbf{f,g}}^{\eta}$. 
\end{proof}
Let us consider the forgetful functor $F:Rep(Q)\rightarrow Rep(Q_0)$ which associates to a $Q$--representation $M$ the $Q_0$--graded vector space $M_0:=\bigoplus_{i\in Q_0}M_i$. By definition, a quiver Grassmannian ${\rm Gr}_{\bf e}(M)$ admits a closed embedding
${\rm Gr}_{\bf e}(M)\rightarrow\prod_{i\in Q_0}{\rm Gr}_{e_i}(M_i)=\Gr_\mathbf{e}(M_0).$
We denote by $\mathcal{U}_i$ (resp.~$\mathcal{Q}_i$) the pull-back of the tautological subbundle (resp.~quotient bundle) of the trivial vector bundle $M_i$ on ${\rm Gr}_{e_i}(M_i)$ along the projection $\prod_j \Gr_{e_j}(M_j) \to \Gr_{e_i}(M_i)$. As in Section \ref{Sec:QuivGrass}, we denote the pullback of $\mathcal{U}_i$ (resp.~$\mathcal{Q}_i)$ along this embedding by the same name. The fibre of $\mathcal{U}_i$ (resp.~$\mathcal{Q}_i$) over a subrepresentation $U$ is then canonically identified with the subspace $U_i$ (resp.~the quotient $M_i/U_i$) of $M_i$. We consider the following vector bundles on $\Gr_\mathbf{f}(M'_0)\times\Gr_\mathbf{g}(M''_0)$ (see section~\ref{Sec:QuivGrass} for the definition of the various objects involved):
\begin{align*}
H_0 &:= \bigoplus_i \underline{\Hom}(p_2^*\UU''_i,p_1^*\QQ'_i), &K_0 &:= \bigoplus_{\alpha} \underline{\Hom}(p_2^*\UU''_{s(\alpha)},p_1^*\QQ'_{t(\alpha)})
\end{align*}
where $p_1:\Gr_\mathbf{f}(M'_0)\times\Gr_\mathbf{g}(M''_0)\rightarrow \Gr_\mathbf{f}(M'_0)$ and $p_2: \Gr_\mathbf{g}(M'_0)\times\Gr_\mathbf{g}(M''_0)\rightarrow \Gr_\mathbf{g}(M''_0)$ are the projections onto the two factors and $\underline{\Hom}$ denotes the vector bundle of homomorphisms. Define the bundles $H$ and $K$ on $\Gr_\mathbf{f}(M') \times \Gr_\mathbf{g}(M'')$ as the restrictions
\begin{align*}
	H &:= H_0|_{\Gr_\mathbf{f}(M') \times \Gr_\mathbf{g}(M'')}, & K &:= K_0|_{\Gr_\mathbf{f}(M') \times \Gr_\mathbf{g}(M'')}.
\end{align*}
Let $\Phi: H\rightarrow K$ be the map of vector bundles
defined over a point $(N',N'')$ as the Ringel linear map \eqref{Eq:DefiPhiQuiver}:
	$$
		\Phi_{(N',N'')}: \bigoplus_i \Hom(N''_i,M'_i/N'_i) \to \bigoplus_{\alpha:i \to j} \Hom(N''_i,M'_j/N'_j)
	$$
	which sends a tuple $f=(f_i)_i$ of linear maps $f_i: N''_i \to M'_i/N'_i$ to $\Phi_{N''}^{M'/N'}((f_i)_i) = (Q'_\alpha f_i - f_j N''_\alpha)_{\alpha:i \to j}$ (see \eqref{Eq:DefiPhiQuiver}).  In this context, $N''_\alpha: N''_i \to N''_j$ is the restriction of $M''_\alpha$ and $Q'_\alpha: M'_i/N'_i \to M'_j/N'_j$ is the induced maps by $M'_\alpha$ on the quotients. We briefly write:
$f\mapsto (f\alpha-\alpha f)_{\alpha}$.
Note that over a point $(N',N'')\in\Gr_\mathbf{f}(M')\times\Gr_\mathbf{g}(M'')$, the kernel of $\Phi_{(N',N'')}$ is $\Hom_Q(N'',M'/N')$ while its cokernel is $\Ext^1_Q(N'',M'/N')$ (see Section~\ref{Sec:Quivers}). Let $\eta_0=F(\eta)$ be the short exact sequence of $Q_0$--graded vector spaces, together with a splitting $\theta$: 
$$
\xymatrix{
\eta_0:0\ar[r]&M_0'\ar^\iota[r]&M_0\ar^\pi[r]&M_0''\ar[r]\ar@/_1pc/_{\theta}[l]&0}.
$$
The splitting $\theta$ induces a section $z_\theta=(z_{\theta,\alpha})_{\alpha\in Q_1}$ of the vector bundle $K$ defined by 
$
z_{\theta,\alpha} = \overline{\alpha \theta-\theta\alpha}
$. Here, for a morphism $g:N''\rightarrow M'$ we denote by $\overline{g}:N''\rightarrow M'/N'$ the induced morphism on the quotient. 
Note that, given any $(N',N'') \in \Gr_{\mathbf{f}}(M') \times \Gr_{\mathbf{g}}(M'')$, the image of $z_\theta(N',N'')$ under the map $K_{(N',N'')} \to \Ext^1_Q(N'',M'/N')$ agrees with the image of $\eta$ under $\Ext^1_Q(M'',M') \to \Ext^1_Q(N'',M'/N')$. 
$$
\xymatrix{
 \bigoplus_i \Hom(M''_i,M'_i) \ar[r]\ar_{\Phi_{M'}^{M''}}[d]& \bigoplus_i \Hom(N''_i,M'_i/N'_i) \ar^{\Phi_{M'/N'}^{N''}}[d]\\
 \bigoplus_{\alpha:i \to j} \Hom(M''_i,M'_j)\ar[r]\ar[d]&\bigoplus_{\alpha:i \to j} \Hom(N''_i,M'_j/N'_j)\ar[d]\\
 \Ext^1_Q(M'',M') \ar[r]& \Ext^1_Q(N'',M'/N')
 }
$$
\begin{lem}\label{Lem:InvImageSection}
There is an isomorphism $\mathcal{S}_\mathbf{f,g}^\eta \cong \Phi^{-1}(z_\theta)$ of schemes over $\Gr_\mathbf{f}(M') \times \Gr_\mathbf{g}(M'')$.
\end{lem}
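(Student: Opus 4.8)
The plan is to identify the scheme $\mathcal{S}_\mathbf{f,g}^\eta$ functorially by describing its $T$-valued points, and then recognize this description as the $T$-valued points of the zero scheme $\Phi^{-1}(z_\theta)$ (i.e.\ the fibered product of the section $z_\theta$ and the bundle map $\Phi$, both viewed as sections/morphisms into $K$). Fix a $K$-scheme $T = \Spec A$. A $T$-point of $\mathcal{S}_\mathbf{f,g}^\eta$ is, by definition of the $\alpha$-stratum, a subrepresentation $U \sub M \otimes A$ of dimension vector $\mathbf{e}$ such that $\iota^{-1}(U \cap (M' \otimes A))$ has dimension vector $\mathbf{f}$ and $\pi(U)$ has dimension vector $\mathbf{g}$; equivalently it is the datum of a pair $(N', N'') \in (\Gr_\mathbf{f}(M') \times \Gr_\mathbf{g}(M''))(T)$ together with a \emph{lift}: a submodule $U$ of $(M \otimes A)$ sitting between $N' \otimes$-part and $\pi^{-1}(N'')$, mapping onto $N''$, with $U \cap (M'\otimes A) = N'$. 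Using the fixed splitting $\theta$ of $\eta_0 = F(\eta)$, every $Q_0$-graded complement of $N'$ inside $\pi^{-1}(N'')$ mapping isomorphically onto $N''$ is the graph of a unique homomorphism of $Q_0$-graded $A$-modules $f \colon N'' \to (M' \otimes A)/N'$, i.e.\ a unique section of $H$ over the $T$-point $(N',N'')$. The content is that such a graph is a $Q$-subrepresentation precisely when $f$ satisfies the inhomogeneous linear equation $\Phi_{(N',N'')}(f) = z_\theta(N',N'')$.

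Concretely, I would carry this out in three steps. First, spell out the bijection (over $(N',N'')$) between $Q_0$-graded splittings of $0 \to N' \to \pi^{-1}(N'') \to N'' \to 0$ and sections $f$ of $H|_{(N',N'')}$; this uses the chosen global splitting $\theta$ to trivialize the torsor of splittings, so $f = 0$ corresponds to the $\theta$-induced complement. Second, compute: the complement $\Gamma_f$ (image of $\theta|_{N''} + \tilde f$, suitably interpreted) is stable under each $M_\alpha$ if and only if, for every arrow $\alpha \colon i \to j$, the map $Q'_\alpha f_i - f_j N''_\alpha$ equals the class $\overline{\alpha\theta - \theta\alpha}$ in $\Hom(N''_i, (M'_j\otimes A)/N'_j)$ — this is exactly a direct unwinding of the definition of $M_\alpha$ on $M \otimes A = (M' \oplus_\theta M'') \otimes A$. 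The right-hand side is by definition $z_{\theta,\alpha}(N',N'')$, and the left-hand side is $\Phi_{(N',N'')}(f)_\alpha$. Third, assemble this into a morphism of functors on $K$-schemes $\mathcal{S}_\mathbf{f,g}^\eta \to \Phi^{-1}(z_\theta)$ over $\Gr_\mathbf{f}(M') \times \Gr_\mathbf{g}(M'')$, check it is bijective on $T$-points with inverse given by the same graph construction, and invoke Yoneda to conclude it is an isomorphism of schemes. One should also remark that the submodule $U$ automatically has $(M\otimes A)/U$ projective of the correct ranks, since this holds on the $N'$ and $N''$ factors and the extension of projectives-by-projectives is split.

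The main obstacle I expect is purely bookkeeping: getting the functor-of-points description of $\mathcal{S}_\mathbf{f,g}^\eta$ exactly right — in particular verifying that the conditions "$U \cap (M'\otimes A) = N'$ and $\pi(U) = N''$ with the prescribed dimension vectors" really do cut out, scheme-theoretically and not just set-theoretically, the preimage $(\Psi^\eta)^{-1}(\Gr_\mathbf{f}(M') \times \Gr_\mathbf{g}(M''))$, so that the identification is an isomorphism of \emph{schemes} and not merely of underlying reduced varieties. This is where the compatibility diagram displayed just before the lemma is used: it guarantees that the section $z_\theta$ of $K$ pushes forward, under $K_{(N',N'')} \to \Ext^1_Q(N'',M'/N')$, to the image of $\eta$ under $\Ext^1_Q(M'',M') \to \Ext^1_Q(N'',M'/N')$, which is precisely the obstruction from Lemma~\ref{Lem:Image} to a given $(N',N'')$ lying in the image of $\Psi^\eta_\mathbf{f,g}$ — so on the level of images the statement is already consistent, and the remaining work is to upgrade "image" to "scheme-theoretic fiber". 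Once the $T$-point dictionary is in place the verification that graphs of solutions of $\Phi(f) = z_\theta$ are subrepresentations, and conversely, is a short computation.
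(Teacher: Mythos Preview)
Your proposal is correct and follows essentially the same approach as the paper: a functor-of-points argument in which the splitting $\theta$ sets up a bijection between $T$-points of $\mathcal{S}_{\mathbf{f},\mathbf{g}}^\eta$ over a fixed $(N',N'')$ and homomorphisms $f\in\Hom_{AQ_0}(N'',(M'\otimes A)/N')$, after which one checks that $Q$-stability of the associated submodule is exactly the equation $\Phi(f)=z_\theta$. The only organizational difference is that the paper first establishes the ambient isomorphism $H_0\cong\mathcal{S}_{\mathbf{f},\mathbf{g}}^{\eta_0}$ over the bigger base $\Gr_{\mathbf{f}}(M_0')\times\Gr_{\mathbf{g}}(M_0'')$ and then restricts, which cleanly separates the $Q_0$-graded parametrization from the $Q$-stability condition; your phrase ``complement of $N'$ inside $\pi^{-1}(N'')$'' is slightly off (such a complement has the wrong rank), but what you compute with is the correct object, namely the graph of a lift $N''\to \pi^{-1}(N'')/N'$.
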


\begin{proof}
	We show that the splitting $\theta$ induces an isomorphism 
	\begin{equation}\label{Eq:IsoVectBundQ0}
		H_0 \stackrel{\cong}{\longrightarrow} \mathcal{S}_\mathbf{f,g}^{\eta_0}
	\end{equation}
	as schemes over $X_0 := \Gr_\mathbf{f}(M_0')\times\Gr_\mathbf{g}(M_0'')$. We construct this isomorphism by defining a functorial bijection between the sets $\Hom_{X_0}(T,H_0)$ and $\Hom_{X_0}(T,\smash{\mathcal{S}_{\mathbf{f},\mathbf{g}}^{\eta_0}})$. Let $T = \Spec A$ be an affine $X_0$-scheme. As explained in subsection \ref{Sec:QuivGrass} the morphism $T \to X_0$ corresponds to a pair $(N',N'')$ of $AQ_0$-submodules of $N' \sub M'_0 \otimes A$ and $N'' \sub M''_0 \otimes A$ for which $(M'_i \otimes A)/N'_i$ is a projective $A$-module of rank $\dim M'_i - f_i$ and $(M''_i \otimes A)/N''_i$ is projective of rank $\dim M''_i - g_i$. We get
	\begin{align*}
		\Hom_{X_0}(T,H_0) &= \Hom_{AQ_0}(N'',(M'_0 \otimes A)/N') \\
		\Hom_{X_0}(T,\mathcal{S}_{\mathbf{f},\mathbf{g}}^{\eta_0}) &= \{ N \sub M_0 \otimes A \mid (M_i \otimes A)/N_i \text{ proj.\ of rk. } d_i - e_i,\ \iota^{-1}(N) = N' \text{ and } \pi(N) = N'' \}.
	\end{align*}
	Here we denote the base extensions of $\iota$ and $\pi$ to maps $M'_0 \otimes A \to M_0 \otimes A \to M''_0 \otimes A$ also by $\iota$ resp.\ $\pi$.
	We send a point $f \in \Hom_{X_0}(T,H_0)$ to the subspace
	$$
		N_f = \{ \iota(m') + \theta(n'') \mid m' \in M'_0 \otimes A \text{ and } n'' \in N'' \text{ such that } m'+N' = f(n'') \}.
	$$
	A word on a piece of notation used in the above equation: by $m'+N'$ we denote the image of $m' \in M'_0 \otimes A$ under the quotient map $M'_0 \otimes A \to (M'_0 \otimes A)N'$.
	It is obvious that $\iota^{-1}(N_f) = N'$ and $\pi(N_f) = N''$, so $N_f$ lies in $\Hom_{X_0}(T,\smash{\mathcal{S}_{\mathbf{f},\mathbf{g}}^{\eta_0}})$.
	The inverse of this association is given by mapping a subspace $N$ in $\Hom_{X_0}(T,\smash{\mathcal{S}_{\mathbf{f},\mathbf{g}}^{\eta_0}})$ to the map $f_N:N'' \to (M'_0 \otimes A)/N'$ which is defined by
	$$
		f_N(n'') = m' + N'
	$$
	where $m' \in M'_0 \otimes A$ is an element for which $\iota(m') + \theta(n'') \in N$. We show that the map $f_N$ is well-defined. To show the existence of such an element $m'$ we choose to a given $n'' \in N''$ an inverse image $n \in N$ under $\pi$. Then, as $\theta$ is a splitting, there exists a unique $m' \in M'_0 \otimes A$ such that $\iota(m') + \theta(n'') = n$. Now assume $m_1', m_2' \in M'_0 \otimes A$ such that $\iota(m_1') + \theta(n'')$ and $\iota(m_2') + \theta(n'')$ both lie in $N$. Then so does $\iota(m_1'-m_2')$ and hence $m_1'-m_2' \in \iota^{-1}(N) = N'$. The map $f_N$ is hence well-defined.
	It is ease to see that these two associations are mutually inverse.
	
	Now suppose that $T \to X_0$ factors through $X = \Gr_{\mathbf{f}}(M') \times \Gr_\mathbf{g}(M'')$. That means the corresponding pair $(N',N'')$ of $AQ_0$-modules is actually a pair of $AQ$-submodules of $M' \otimes A$ and $M'' \otimes A$, respectively. 
	An element $f \in \Hom_{X_0}(T,H_0) = \Hom_X(T,H)$ lies in $\Hom_X(T,\Phi^{-1}(z_\theta)|_X)$ if and only if $f\alpha - \alpha f = z_\theta$. A point $N \in \Hom_{X_0}(T,\smash{\mathcal{S}_{\mathbf{f},\mathbf{g}}^{\eta_0}}) = \Hom_X(T,\smash{\mathcal{S}_{\mathbf{f},\mathbf{g}}^{\eta_0}}|_X)$ is contained in $\Hom_X(T,\smash{\mathcal{S}_{\mathbf{f},\mathbf{g}}^{\eta}})$ if and only if $\alpha N \sub N$. The bijections above restrict to a bijection $\Hom_X(T,\Phi^{-1}(z_\theta)) \to \Hom_X(T,\mathcal{S}_{\mathbf{f},\mathbf{g}}^{\eta})$. We have shown that there exists an isomorphism
	$$
		\Phi^{-1}(z_\theta) \xto{}{\cong} \mathcal{S}_{\mathbf{f},\mathbf{g}}^\eta
	$$
	of schemes over $\Gr_\mathbf{f}(M') \times \Gr_\mathbf{g}(M'')$.
\end{proof}

\begin{lem}\label{Lem:Torsor}
	Let $X$ be a scheme, let $H$ and $K$ be two vector bundles on $X$, let $\phi: H \to K$ be a homomorphism of vector bundles, and let $s$ be a global section of $K$ whose image under the map $H^0(X,K) \to H^0(X,\coker(\phi))$ is zero. Then the inverse image $\phi^{-1}(s)$ has the structure of a torsor over $X$ for $\ker \phi$.
\end{lem}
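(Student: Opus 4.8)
The plan is to equip $\phi^{-1}(s)$ with the evident translation action of the group scheme $\ker\phi$ and then verify the two defining properties of a torsor: that the action is simply transitive, and that the structure map $\phi^{-1}(s)\to X$ is Zariski-locally trivial. Throughout I would reason with $T$-valued points, in the style of Section~\ref{Sec:QuivGrass}. Let $\mathbf{H}$ and $\mathbf{K}$ denote the total spaces of the vector bundles $H$ and $K$, so that for $f\colon T\to X$ one has $\mathbf{H}(T)=\Gamma(T,f^{*}H)$ and $\mathbf{K}(T)=\Gamma(T,f^{*}K)$, with $\phi$ inducing a morphism $\mathbf{H}\to\mathbf{K}$ over $X$. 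Then $\ker\phi=\mathbf{H}\times_{\mathbf{K},0}X$ is the closed subscheme with $T$-points $\{k\in\Gamma(T,f^{*}H):\phi_{T}(k)=0\}$, a commutative group scheme over $X$ under fibrewise addition; the section $s$ is a morphism $X\to\mathbf{K}$, and $\phi^{-1}(s)=\mathbf{H}\times_{\mathbf{K},s}X$ is the closed subscheme with $T$-points $\{h\in\Gamma(T,f^{*}H):\phi_{T}(h)=f^{*}s\}$. Since $\phi_{T}(h+k)=\phi_{T}(h)+\phi_{T}(k)=f^{*}s$ whenever $\phi_{T}(h)=f^{*}s$ and $\phi_{T}(k)=0$, fibrewise addition defines a morphism $\ker\phi\times_{X}\phi^{-1}(s)\to\phi^{-1}(s)$, the translation action.

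First I would check that this action is simply transitive, that is, that $\ker\phi\times_{X}\phi^{-1}(s)\to\phi^{-1}(s)\times_{X}\phi^{-1}(s)$, $(k,h)\mapsto(h,h+k)$, is an isomorphism. On $T$-points its inverse is $(h_{1},h_{2})\mapsto(h_{2}-h_{1},h_{1})$: this is well defined because $\phi_{T}(h_{2}-h_{1})=f^{*}s-f^{*}s=0$, so $h_{2}-h_{1}\in(\ker\phi)(T)$. The two assignments are manifestly mutually inverse and natural in $T$, so by Yoneda they give an isomorphism of $X$-schemes, and $\phi^{-1}(s)$ thus carries a simply transitive $\ker\phi$-action (it is a pseudo-torsor).

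It remains to produce sections of $\phi^{-1}(s)\to X$ Zariski-locally on $X$; this is the only non-formal step and is exactly where the hypothesis on $s$ enters, so I expect it to be the main point. The vanishing of the image of $s$ in $H^{0}(X,\coker\phi)$ means that every germ $s_{x}$ ($x\in X$) lies in $\im(\phi_{x}\colon H_{x}\to K_{x})$: a section of $\coker\phi=K/\im\phi$ is zero if and only if all its germs vanish, and the germ of $s$ at $x$ vanishes in $(\coker\phi)_{x}=K_{x}/\im\phi_{x}$ if and only if $s_{x}\in\im\phi_{x}$. Fixing $x$, pick $t_{x}\in H_{x}$ with $\phi_{x}(t_{x})=s_{x}$, lift it to a section $t\in\Gamma(U,H)$ on some open $U\ni x$, and shrink $U$ so that $\phi(t)=s|_{U}$ (the two sections of $K$ over $U$ have the same germ at $x$). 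Then $t$ is a section $U\to\phi^{-1}(s)$ of the structure map, and by the simple transitivity above $k\mapsto t+k$ is a $\ker\phi|_{U}$-equivariant isomorphism $\ker\phi|_{U}\xrightarrow{\sim}\phi^{-1}(s)|_{U}$ over $U$. Covering $X$ by such opens shows $\phi^{-1}(s)$ is Zariski-locally trivial; together with the simple transitivity of the action this is precisely the assertion that $\phi^{-1}(s)$ is a torsor over $X$ for $\ker\phi$. The remaining points — representability of the functors above by the indicated closed subschemes of $\mathbf{H}$, and naturality in $T$ of all the constructions — are routine, using that $H$ and $K$ are locally free.
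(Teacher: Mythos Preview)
Your proof is correct and follows essentially the same approach as the paper: define the translation action of $\ker\phi$ on $\phi^{-1}(s)$ via $T$-points, then use the hypothesis that $s$ vanishes in $H^0(X,\coker\phi)$ to produce local sections of $\phi^{-1}(s)\to X$. The only minor difference is that you spell out the simple-transitivity isomorphism explicitly and argue via germs at each point, whereas the paper phrases the local lifting in terms of $s$ lying in $H^0(X,\im\phi)$; these are equivalent presentations of the same argument.
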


\begin{proof}
	We use in the proof the abbreviations $E = \ker \phi$ and $P = \phi^{-1}(s)$. There is a cartesian square
	\begin{center}
		\begin{tikzpicture}
			\matrix(m)[matrix of math nodes,
			row sep=2.0em, column sep=2.8em,
			text height=1.5ex, text depth=0.25ex]
			{P&H\\
			X&K\\};
			\path[->,font=\scriptsize,>=angle 90]
			(m-1-1) edge (m-1-2)
			edge (m-2-1)
			(m-1-2) edge node[auto] {$\phi$} (m-2-2)
			(m-2-1) edge node[auto] {$s$} (m-2-2);
		\end{tikzpicture}
	\end{center}
	Let $T$ be a scheme over $X$.  
	Then $\Hom_X(T,P)$ is the set of all $f \in \Hom_X(T,H)$ for which
	$\phi \circ f = s|_T$.
	On the other hand $\Hom_X(T,E)$ is the subgroup of all $g \in \Hom_X(T,H)$ such that $\phi \circ g = 0$ in the abelian group $\Hom_X(T,K)$. The additive group of $\Hom_X(T,E)$ acts on $\Hom_X(T,P)$ via $(g,f) \mapsto g + f$, where the addition is taken in the abelian group $\Hom_X(T,H)$.
	To show that $P$ is a torsor for $E$ it suffices to find an open cover $\{ U_i \}_i$ of $X$ such that $\Hom_X(U_i,P) \neq \emptyset$ for every $i$. By assumption $s$ is mapped to zero under $H^0(X,K) \to H^0(X,\coker(\phi))$. That means $s$ lies in $H^0(X,\im(\phi))$. So we find an open cover $\{U_i\}_i$ of $X$ and sections $s'_i \in H^0(U_i,H)$ with $\phi \circ s'_i = s|_{U_i}$ for every $i$. Using the cartesian diagram above we get a unique morphism $r_i: U_i \to P$ such that
	\begin{center}
		\begin{tikzpicture}[descr/.style={fill=white}]
			\matrix(m)[matrix of math nodes, row sep=2em, column sep=2.8em,
			text height=1.5ex, text depth=0.25ex]
			{U_i\\&P&H\\&X&K\\};
			7
			\path[->,font=\scriptsize]
			(m-1-1) edge [bend left=25] node[auto] {$s'_i$} (m-2-3)
			(m-1-1) edge [bend right=25] (m-3-2);
			\path[->,font=\scriptsize]
			(m-1-1) edge node[auto] {$r_i$} (m-2-2);
			\path[->,font=\scriptsize]
			(m-2-2) edge (m-2-3)
			(m-2-2) edge (m-3-2);
			\path[->,font=\scriptsize]
			(m-2-3) edge node[right] {$\phi$} (m-3-3);
			\path[->,font=\scriptsize]
			(m-3-2) edge node[above] {$s$} (m-3-3);
		\end{tikzpicture}
	\end{center}
	is commutative. The map $U_i \to X$ in the above diagram is the open immersion. We have found $r_i \in \Hom_X(U_i,P)$. This implies that $P$ is a torsor for $E$.
\end{proof}

Combining Lemmas \ref{Lem:Image}, \ref{Lem:InvImageSection}, and \ref{Lem:Torsor} we are able to show

\begin{thm}\label{thm:KeyThmReduction}
	Let $Y$ be a locally closed reduced subscheme of $\Gr_\mathbf{f}(M') \times \Gr_\mathbf{g}(M'')$ which lies in the image of $\Psi_{\mathbf{f},\mathbf{g}}^\eta$. Then $(\mathcal{S}_{\mathbf{f},\mathbf{g}}^\eta)|_Y$ is a torsor for $(\ker \Phi)|_Y$. In particular, if the dimension of $\Hom_Q(N'',M'/N')$ is constant, say equal to $d$, for all closed points $(N',N'')$ of $Y$ then $(\mathcal{S}_{\mathbf{f},\mathbf{g}})|_Y$ is an affine bundle on $Y$ of rank $d$.
\end{thm}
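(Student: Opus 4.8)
The plan is to pull back the description of $\mathcal{S}_{\mathbf{f},\mathbf{g}}^\eta$ from Lemma~\ref{Lem:InvImageSection} along the locally closed immersion $j\colon Y\hookrightarrow\Gr_\mathbf{f}(M')\times\Gr_\mathbf{g}(M'')$ and then to invoke Lemma~\ref{Lem:Torsor} over $Y$. Since $\Phi^{-1}(z_\theta)$ is, by construction, the fibre product of $\Phi\colon H\to K$ with the section $z_\theta\colon X\to K$, its formation commutes with base change; restricting the isomorphism $\mathcal{S}_{\mathbf{f},\mathbf{g}}^\eta\cong\Phi^{-1}(z_\theta)$ along $j$ therefore yields an isomorphism $(\mathcal{S}_{\mathbf{f},\mathbf{g}}^\eta)|_Y\cong(\Phi|_Y)^{-1}(z_\theta|_Y)$ of schemes over $Y$, with $\Phi|_Y\colon H|_Y\to K|_Y$ and $z_\theta|_Y\in H^0(Y,K|_Y)$ the restrictions. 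Applying Lemma~\ref{Lem:Torsor} with $(X,\phi,s)$ replaced by $(Y,\Phi|_Y,z_\theta|_Y)$ will then exhibit $(\Phi|_Y)^{-1}(z_\theta|_Y)$, hence $(\mathcal{S}_{\mathbf{f},\mathbf{g}}^\eta)|_Y$, as a torsor over $Y$ for $\ker(\Phi|_Y)$, provided the hypothesis of that lemma is met.

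The one point to check is that the image of $z_\theta|_Y$ under $H^0(Y,K|_Y)\to H^0(Y,\coker(\Phi|_Y))$ vanishes. For this I would combine the commutative square displayed just before Lemma~\ref{Lem:InvImageSection} with Lemma~\ref{Lem:Image}. Since $\coker$ commutes with base change, the fibre of $\coker(\Phi|_Y)$ at a point $(N',N'')$ of $Y$ is canonically $\coker\Phi_{(N',N'')}=\Ext^1_Q(N'',M'/N')$, and under this identification the image of $z_\theta|_Y$ is the image of $\eta$ under $\Ext^1_Q(M'',M')\to\Ext^1_Q(N'',M'/N')$; by Lemma~\ref{Lem:Image} this is zero exactly because $(N',N'')$ lies in the image of $\Psi_{\mathbf{f},\mathbf{g}}^\eta$, which contains $Y$ by hypothesis. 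Thus the section $z_\theta|_Y$ of the coherent sheaf $\coker(\Phi|_Y)$ vanishes at every point of the reduced scheme $Y$, and one concludes that the section itself is zero; Lemma~\ref{Lem:Torsor} then applies. I expect this last step to be the delicate one: over a general reduced base a section of a coherent sheaf that vanishes pointwise need not be the zero section, so the passage from pointwise to genuine vanishing has to exploit reducedness of $Y$ carefully — and it becomes transparent exactly in the affine-bundle case treated next, where $\coker(\Phi|_Y)$ turns out to be locally free.

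For the final assertion I would argue that constancy of $\dim\Hom_Q(N'',M'/N')$ forces everything in sight to be locally free on $Y$. The fibre of $\ker\Phi$ over $(N',N'')$ is $\Hom_Q(N'',M'/N')$, of constant dimension $d$ by hypothesis; and the Euler--Ringel identity~\eqref{Eq:EulerForm} gives $[N'',M'/N']^1=[N'',M'/N']-\langle\mathbf{g},\mathbf{dim}\,M'-\mathbf{f}\rangle$ with the last term independent of $(N',N'')$, so the fibre of $\coker\Phi$ also has constant dimension. As $Y$ is reduced, $\coker(\Phi|_Y)$ is therefore a vector bundle, hence $\im(\Phi|_Y)\subseteq K|_Y$ is a subbundle and so is $\ker(\Phi|_Y)\subseteq H|_Y$; moreover in this flat situation $\ker(\Phi|_Y)=(\ker\Phi)|_Y$, of rank $d$. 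Finally, a torsor for a vector bundle of rank $d$ is precisely a Zariski-locally trivial affine bundle of rank $d$, which completes the proof.
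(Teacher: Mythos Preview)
Your proof follows essentially the same route as the paper: restrict the identification $\mathcal{S}_{\mathbf{f},\mathbf{g}}^\eta \cong \Phi^{-1}(z_\theta)$ from Lemma~\ref{Lem:InvImageSection} to $Y$, verify via Lemma~\ref{Lem:Image} that the induced section of $\coker(\Phi|_Y)$ vanishes pointwise and hence (by reducedness of $Y$) globally, and invoke Lemma~\ref{Lem:Torsor}. Your additional care---flagging the passage from pointwise to global vanishing as the delicate step, and supplying the Euler--Ringel argument to see that $\coker\Phi$ has constant rank on $Y$---is well placed; the paper's own proof is terser at exactly these points but proceeds identically.
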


\begin{proof}
	Consider the section $z_\theta|_Y$ of $K|_Y$ and its image $t$ in $(\coker \Phi)|_Y$. From Lemma \ref{Lem:Image} we deduce that $t(y) = 0$ for every $K$-valued point of $Y$. As $Y$ is reduced it follows that $t = 0$. We are able to apply Lemma \ref{Lem:Torsor} to $\Phi^{-1}(z_\theta|_Y)$. Together with Lemma \ref{Lem:InvImageSection}, the first claim follows. For the second claim, note that, as $Y$ is reduced, $(\ker \Phi)|_Y$ is a vector bundle if 
$[N'',M'/N']$ is constant.
\end{proof}

\subsection{Generating extensions}
In this section we analyze a class of extensions for which the fiber dimension of $\Psi_\mathbf{f,g}^\xi$ is precisely equal to $\langle\mathbf{g},\mathbf{dim}\,M'-\mathbf{f}\rangle$. We call them \emph{generating} extensions. They are defined as follows. 

\begin{definition}
Let $X$ and $S$ be two representations of the quiver $Q$. We say that an extension $\xi\in\Ext^1(S,X)$ is called \emph{generating} if it is a generator of the vector space $\Ext^1(S,X)$.
\end{definition}

By definition, a generating extension $\xi\in\Ext^1(S,X)$ is either split (in which case $[S,X]^1=0$) or it is non--split and $[S,X]^1=1$.

\begin{thm}\label{Thm:ReductionThm}
Let $\xi\in\Ext^1(S,X)$ be a generating extension. Then the image of $\Psi_\mathbf{f,g}^\xi$ is the open subset of $\Gr_\mathbf{f}(X)\times \Gr_\mathbf{g}(S)$ given by
$\textrm{Im}(\Psi_\mathbf{f,g}^\xi)=\{(N',N'')|\, [N'',X/N']^1=0\}$ and $\mathcal{S}_{\mathbf{f,g}}^\xi$ is  an affine bundle of rank $\langle\mathbf{g}, \mathbf{dim}\, X-\mathbf{f}\rangle$ over it.
\end{thm}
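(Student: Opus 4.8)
The plan is to combine Theorem~\ref{thm:KeyThmReduction} with Lemma~\ref{Lem:Image} and a dimension count coming from the hereditariness of $KQ$. First I would identify the image of $\Psi_{\mathbf{f},\mathbf{g}}^\xi$: by Lemma~\ref{Lem:Image}, a point $(N',N'')$ lies in the image if and only if $\xi$ maps to zero under the canonical map $\Ext^1(S,X)\to\Ext^1(N'',X/N')$. Since $\xi$ generates $\Ext^1(S,X)$, this image is the whole of $\Ext^1(S,X)$, so $\xi\mapsto 0$ precisely when $\Ext^1(N'',X/N')=0$, i.e. $[N'',X/N']^1=0$. Hence $\operatorname{Im}(\Psi_{\mathbf{f},\mathbf{g}}^\xi)=\{(N',N'')\mid [N'',X/N']^1=0\}$, and this locus is open because the function $(N',N'')\mapsto[N'',X/N']^1=\dim\operatorname{coker}\Phi_{(N',N'')}$ is upper semicontinuous (it is the generic corank of the vector bundle map $\Phi:H\to K$).

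Next I would call this open set $Y$ and apply Theorem~\ref{thm:KeyThmReduction} with this $Y$. On $Y$ we have $[N'',X/N']^1=0$ for every closed point, so by the Euler--Ringel formula \eqref{Eq:EulerForm} the dimension $[N'',X/N']=\langle\mathbf{dim}\,N'',\mathbf{dim}(X/N')\rangle+[N'',X/N']^1=\langle\mathbf{g},\mathbf{dim}\,X-\mathbf{f}\rangle$ is constant on $Y$, call it $d$. Theorem~\ref{thm:KeyThmReduction} then tells us that $(\mathcal{S}_{\mathbf{f},\mathbf{g}}^\xi)|_Y$ is an affine bundle of rank $d=\langle\mathbf{g},\mathbf{dim}\,X-\mathbf{f}\rangle$ over $Y$. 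Since $Y$ is exactly the full image of $\Psi_{\mathbf{f},\mathbf{g}}^\xi$ and $\mathcal{S}_{\mathbf{f},\mathbf{g}}^\xi$ is (scheme-theoretically) supported over this image, $(\mathcal{S}_{\mathbf{f},\mathbf{g}}^\xi)|_Y=\mathcal{S}_{\mathbf{f},\mathbf{g}}^\xi$, which gives the stated conclusion.

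The main obstacle is checking that the hypotheses of Theorem~\ref{thm:KeyThmReduction} are genuinely satisfied on $Y$, namely that $Y$ is a \emph{reduced} locally closed subscheme lying in the image of $\Psi_{\mathbf{f},\mathbf{g}}^\xi$. Reducedness is automatic if we take $Y$ with its reduced induced structure (and the image, being the non-vanishing locus of $\operatorname{coker}\Phi$, is open hence can be taken reduced if $\Gr_\mathbf{f}(X)\times\Gr_\mathbf{g}(S)$ is). The subtle point is the compatibility between ``$\xi$ dies in $\Ext^1(N'',X/N')$'' (the condition in Lemma~\ref{Lem:Image}) and ``$[N'',X/N']^1=0$'' (the condition defining $Y$): the implication $\Ext^1(N'',X/N')=0\Rightarrow\xi\mapsto 0$ is trivial, and the converse uses precisely that $\xi$ spans $\Ext^1(S,X)$ so that its image spans the image of $\Ext^1(S,X)\to\Ext^1(N'',X/N')$ — but one must also invoke that $\Ext^1(S,X)\to\Ext^1(N'',X/N')$ is surjective, which follows since $N''\hookrightarrow S$ gives a surjection $\Ext^1(S,X)\twoheadrightarrow\Ext^1(N'',X)$ by left-exactness of $\Hom$ and right-exactness of $\Ext^1$ over a hereditary algebra, and $\Ext^1(N'',X)\twoheadrightarrow\Ext^1(N'',X/N')$ by right-exactness of $\Ext^1$ again. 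Once this surjectivity is in place, the identification of the image is immediate, and the rest is a direct application of the already-proved reduction theorem together with \eqref{Eq:EulerForm}.
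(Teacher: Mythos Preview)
Your proof is correct and follows essentially the same route as the paper. The only organizational difference is that the paper argues directly from Lemmas~\ref{Lem:InvImageSection} and~\ref{Lem:Torsor}: on the image $\mathcal{Y}$ the map $\Phi:H\to K$ is surjective (since $\operatorname{coker}\Phi_{(N',N'')}\cong\Ext^1(N'',X/N')=0$ there), so the section $z_\theta$ lifts locally and $\Phi^{-1}(z_\theta)$ is a torsor for $\ker\Phi$; you instead package this step as an invocation of Theorem~\ref{thm:KeyThmReduction}, which is equivalent. Your explicit verification that $\Ext^1(S,X)\to\Ext^1(N'',X/N')$ is surjective (via hereditariness) is a point the paper leaves implicit in the phrase ``follows immediately from Lemma~\ref{Lem:Image}'', and your use of \eqref{Eq:EulerForm} to compute the fibre dimension is exactly what is needed.
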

\begin{proof}
Let $\mathcal{Y}$ be the image of $\Psi_\mathbf{f,g}^\xi$. Since $\xi$ is generating, the description of $\mathcal{Y}$ follows immediately from Lemma~\ref{Lem:Image}. Since $\textrm{Coker }\Phi_{(N',\,N'')}\cong \Ext^1(N'',X/N')$, it follows that $\mathcal{Y}$ is precisely the locus where the map $\Phi:H\rightarrow K$ is surjective. By Lemma~\ref{Lem:InvImageSection}, $\mathcal{S}_\mathbf{f,g}^\xi\cong \Phi^{-1}(z_\theta)$ on $\mathcal{Y}$. Since $\Phi$ is surjective on $\mathcal{Y}$, for every point $y\in \mathcal{Y}$ there exists an open neighborhood $U$ such that $z_\theta|U$ is in the image of the map $H^0(U,H) \to H^0(U,K)$. We can hence use Lemma~\ref{Lem:Torsor} to deduce that, locally around every point of $\mathcal{Y}$, $\Phi^{-1}(z_\theta)$ is a trivial affine bundle for $\textrm{Ker}(\Phi)$.

\end{proof}

In case $\xi=0$ is split, then $\Psi_\mathbf{f,g}^\xi$ is surjective and $\mathcal{S}_\mathbf{f,g}^\xi$ is an affine bundle with a global section isomorphic to $\textrm{Ker}(\Phi)$. In the next section we treat the case of a non--split generating extension.

\subsection{Non--split generating extensions}
In this section we provide a more precise description of the image of the map $\Psi_\mathbf{f,g}^\xi$ for a generating extension $\xi\in\Ext^1(S,X)$ which is non--split. Throughout the section we denote the middle term of $\xi$ by $Y$. 

\begin{lem}\label{Lem:XsSXProperties}
\hspace{2cm}
\begin{enumerate}
\item
Let $N,N'\subset X$ such that $[S, X/N]^1=[S,X/N']^1=1$. Then $[S, X/(N+N')]^1=1$. 
\item Let $N,N'\subset S$ such that $[N, X]^1=[N',X]^1=1$. Then $[N\cap N', X]^1=1$. 
\end{enumerate}
\end{lem}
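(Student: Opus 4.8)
The statement to prove is Lemma~\ref{Lem:XsSXProperties}: (1) if $N, N' \subseteq X$ satisfy $[S, X/N]^1 = [S, X/N']^1 = 1$, then $[S, X/(N+N')]^1 = 1$; and (2) the dual statement for submodules of $S$. First I observe that the two parts are related by the duality $M \mapsto DM$ of Section~\ref{Sec:QuivGrass}: applying $D$ sends a submodule $N \subseteq S$ to a quotient $DS \twoheadrightarrow D(S/N)$ of $DS$, interchanges $\Ext^1(A,B)$ with $\Ext^1(DB, DA)$, and turns intersections into sums. So it suffices to prove part (1), say, and deduce part (2) formally; I would state this reduction at the start.

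The plan is to reduce part~(2) to part~(1) by means of the duality of Section~\ref{Sec:QuivGrass}, and then to prove part~(1) by a dimension count built on a Mayer--Vietoris sequence. For the reduction: under $M \mapsto DM$, a submodule $N \subseteq S$ of the $Q$-representation $S$ becomes a quotient $DS \twoheadrightarrow DN$, so $DN \cong DS/\widetilde{N}$ for the submodule $\widetilde{N} := \ker(DS \to DN)$ of the $Q^{op}$-representation $DS$, and one checks $\widetilde{N \cap N'} = \widetilde{N} + \widetilde{N'}$ and $[N,X]^1 = [DX, DS/\widetilde{N}]^1$ (the latter computed in $\textrm{Rep}_K(Q^{op})$). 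Hence part~(2) for $(X,S)$ over $Q$ is exactly part~(1) applied to the $Q^{op}$-representations $DS$ and $DX$ --- with $DX$ in the role of ``$S$'' and $DS$ in the role of ``$X$'' --- and to the submodules $\widetilde{N}, \widetilde{N'} \subseteq DS$; the standing hypothesis $[S,X]^1 = 1$ becomes $[DX,DS]^1 = 1$, which still holds, so the hypotheses of part~(1) are met over $Q^{op}$. Since I will prove part~(1) for an arbitrary quiver, this settles part~(2).

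To prove part~(1), I would use the Mayer--Vietoris short exact sequence of $Q$-representations
$$
0 \to X/(N \cap N') \to X/N \oplus X/N' \to X/(N+N') \to 0,
$$
with first map $x + (N\cap N') \mapsto (x+N,\,x+N')$ and second map $(a+N,\,b+N') \mapsto (a-b)+(N+N')$, and apply $\Ext^1_Q(S,-)$, which is right exact because $KQ$ is hereditary. This yields an exact sequence
$$
\Ext^1(S, X/(N\cap N')) \to \Ext^1(S, X/N) \oplus \Ext^1(S, X/N') \to \Ext^1(S, X/(N+N')) \to 0.
$$
Since each of $X/N$, $X/N'$, $X/(N\cap N')$ and $X/(N+N')$ is a quotient of $X$, the same right-exactness exhibits each of the four $\Ext^1(S,-)$-groups above as a quotient of $\Ext^1(S,X)$; the hypothesis $[S,X/N]^1 = 1$ forces $\Ext^1(S,X) \neq 0$, and since $\xi$ is a non-split generating extension this means $[S,X]^1 = 1$, so each of the four groups has dimension at most $1$. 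The middle term has dimension $2$ by hypothesis, hence the image of the first map has dimension at most $1$ and the cokernel $\Ext^1(S,X/(N+N'))$ has dimension at least $1$; combined with the upper bound $\leq 1$, this gives $[S,X/(N+N')]^1 = 1$. (As a by-product the count also rules out $N+N'=X$.)

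The only step that is not purely formal is writing down the Mayer--Vietoris sequence and checking its exactness as a sequence of $Q$-representations; granting that, part~(1) is a one-line dimension count using only right-exactness of $\Ext^1_Q(S,-)$ and $\dim\Ext^1(S,X) = 1$. In the duality step the only thing requiring care is the bookkeeping --- that the two representations exchange roles under $D$ and that the hypothesis $[S,X]^1 = 1$ is preserved --- which is routine.
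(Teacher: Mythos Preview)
Your proof of part~(1) is correct and essentially identical to the paper's: both use the Mayer--Vietoris sequence $0 \to X/(N\cap N') \to X/N \oplus X/N' \to X/(N+N') \to 0$, apply the right-exact functor $\Ext^1(S,-)$, and count dimensions using the standing hypothesis $[S,X]^1 = 1$ to bound the outer terms by~$1$.

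For part~(2) you take a different route: you reduce to part~(1) via the duality $M \mapsto DM$, carefully tracking how submodules, intersections, and $\Ext^1$ transform. This is correct, but the paper instead proves part~(2) directly and symmetrically: it applies the contravariant right-exact functor $\Ext^1(-,X)$ to the short exact sequence $0 \to N \cap N' \to N \oplus N' \to N+N' \to 0$ and performs the same dimension count, now using that $\Ext^1(L,X)$ is a quotient of $\Ext^1(S,X)$ for every submodule $L \subseteq S$. The paper's version is a one-line parallel to~(1) and avoids the duality bookkeeping; your reduction is a valid alternative but trades a short direct computation for a longer formal reduction.
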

\begin{proof}
From $0\rightarrow \frac{X}{N\cap N'}\rightarrow \frac{X}{N}\oplus\frac{X}{N'}\rightarrow \frac{X}
{N+N'}\rightarrow 0$ we get the exact sequence
$$
\Ext^1(S, \frac{X}{N\cap N'})\rightarrow \Ext^1(S,\frac{X}{N})\oplus\Ext^1(S,\frac{X}{N'})\rightarrow \Ext^1(S, \frac{X}{N+N'})\rightarrow 0.
$$
and from $0\rightarrow N\cap N'\rightarrow N\oplus N'\rightarrow N+N'\rightarrow 0$ we get the exact sequence
$$
\Ext^1(N + N', X)\rightarrow \Ext^1(N,X)\oplus \Ext^1(N', X)\rightarrow \Ext^1(N\cap N', X)\rightarrow 0.
$$
In the above exact sequences the middle term has dimension two by hypothesis, and the extreme terms have dimension at most one. 
\end{proof}

In view of Lemma~\ref{Lem:XsSXProperties} we can give the following key definition.

\begin{definition}\label{Def:XsSx}
Let $X$ and $S$ such that $[S,X]^1=1$. We define the following subrepresentations:  
$$
\begin{array}{ccc}
X_S:=\max \{N\subset X|\, [S, X/N]^1=1\}\subset X,&\textrm{and}
&
S^X:=\min\{N\subset S|\, [N,X]^1=1\}\subseteq S.
\end{array}
$$
\end{definition}

The notation $X_S$ and $S^X$ wants to recall Ringel reflection functors \cite{Ringel:Reflection} and we call $X_S$ (resp. $S^X$) the \emph{Ringel reflection} of $X$ at $S$ (resp. of $S$ at $X$). By definition,  $X_S$ is the maximal subrepresentation of $X$ such that the push--out sequence $\pi_\ast(\xi)$ induced by the quotient $\pi: \xymatrix{X\ar@{->>}[r]&X/X_S}$ splits. Dually, $S^X$ is the minimal subrepresentation of $S$ such that the pull--back sequence $\iota^\ast(\xi)$ induced by the incusion $\iota: S^X\rightarrow S$ splits. In other words  $X_S$ is the maximal subrepresentations of $X$ such that  the quotient $\xymatrix{X\ar@{->>}[r]&X/X_S}$ factors through $Y$. Dually, $S^X$ is the minimal subrepresentation of $S$ that factors through $Y$.  In particular, if $X$ and $S$ are indecomposable such that $[S,X]^1=1$ and $\xi$ is almost split, then $X_S=0$ and $S^X=S$. The reverse implication is not true, and motivates the following definition. 

\begin{definition}\label{Def:GenAlmostSplit}
A short exact sequence $0\rightarrow X\rightarrow Y\rightarrow S\rightarrow 0$ is called \emph{generalized almost split} if  $[S,X]^1=1$, $X_S=0$ and $S^X=S$. 
\end{definition}
In other words a short exact sequence $0\rightarrow X\rightarrow Y\rightarrow S\rightarrow 0$ is generalized almost split  if $[S,X]^1=1$ and every proper quotient of $X$ and every proper subrepresentation of $S$ factors through $Y$. There are plenty of examples of generalized almost split sequences which are not almost split. 
\begin{ex}
For a quiver of type $A$, generalized almost split sequences are almost split, as one may check by direct inspection. For a quiver of type $D$ the situation is different. For example, let us consider the quiver of type $D_4$ with a unique sink in the central vertex $0$ and let $i$ be a vertex different from $0$. Put $X=P_0$ and $S=\tau^-P_i$, then $[S,X]^1=1$ and a short exact sequence corresponding to a generator $\xi\in\Ext^1(S,X)$ is generalized almost split but not almost split.  
\end{ex}
It is easy to see that if $\xi$ is generalized almost split sequence then $X$ and $S$ must be indecomposable. 

\begin{lem}\label{Lem:XsSX}
Let $X,S\in Rep(\mathcal{Q})$  such that $[S,X]^1=1$. 
\begin{enumerate}
\item Let $f:X\rightarrow \tau S$ be a non--zero morphism; then $X_S= \ker(f)$. 
\item Let $g:\tau^{-}X\rightarrow S$ be a non--zero morphism; then $S^X=\im(g)$. 
\item The representations $X/X_S$ and $S^X$ are indecomposable and there exist exact sequences 
\begin{eqnarray}\label{Eq:SeqSxXXs}
0\rightarrow X/X_S\rightarrow \tau S^X\rightarrow I\rightarrow 0 &&\\
\label{Eq:SeqSxXXsProj}
0\rightarrow P \rightarrow \tau^- (X/X_S) \rightarrow S^X\rightarrow 0 &&
\end{eqnarray}
where $I$ is either injective or zero and $P=\nu^{-1}(I)$.
\item The representations $X/X_S$ and $S^X$ are bricks. 
\item The vector space $\Ext^1(S^X, X/X_S)$ is  generated by a generalized almost split sequence. 
\item Given $N_1\subseteq X$ and $N_2\subseteq S$, $[N_2,X/N_1]^1=0$ if and only if either $N_1\not\subseteq X_S$ or $N_2\not\supseteq S^X$.
\end{enumerate}
\end{lem}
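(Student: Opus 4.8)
\textbf{Proof plan for Lemma~\ref{Lem:XsSX}.}

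The plan is to prove the seven items essentially in the order listed, using the Auslander--Reiten formulas $\Ext^1(S,X)\cong D\Hom(X,\tau S)\cong D\Hom(\tau^- S, \dots)$ systematically to translate the extension-theoretic conditions defining $X_S$ and $S^X$ into kernel/image conditions for morphisms. For (1): since $[S,X]^1=1$ the space $\Hom(X,\tau S)$ is one-dimensional, spanned by some $f$. For $N\subseteq X$ the condition $[S,X/N]^1=1$ means $\Hom(X/N,\tau S)\neq 0$, equivalently the composite $X\to X/N\to \tau S$ can be taken nonzero, i.e. $f$ factors through $X/N$, i.e. $N\subseteq\ker f$. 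Taking the maximal such $N$ gives $X_S=\ker f$. Item (2) is dual, using $\Ext^1(S,X)\cong D\Hom(\tau^- S, \dots)$ — more precisely one uses the AR-formula in the form pairing $\Ext^1(N_2,X)$ with $\Hom(\tau^- X, N_2)$ appropriately — and reads off $S^X=\operatorname{im}(g)$ for $g:\tau^- X\to S$ nonzero.

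For (3), (4), (5): once $X_S=\ker f$ with $f:X\to\tau S$ and $S^X=\operatorname{im}(g)$ with $g:\tau^- X\to S$, the map $f$ induces a monomorphism $X/X_S\hookrightarrow\tau S$, and since $\tau S$ is a submodule of an injective (it lies in $\mathcal{I}$-ish position only if $S$ is preinjective, so more carefully: $\tau$ of anything embeds into an injective via its injective envelope), one gets the exact sequence \eqref{Eq:SeqSxXXs} with $I$ the cokernel, which I must argue is injective or zero — this should follow because $\tau S$ has injective envelope equal to... actually the cleaner route is to observe $X/X_S$ is a brick and $\tau S$ is the AR-translate, and use that $\operatorname{coker}$ of the inclusion into $\tau S^X$ (not $\tau S$) is injective; here one replaces $S$ by $S^X$, noting $\Ext^1(S^X,X)\cong\Ext^1(S^X,X/X_S)$ and $X_S=(X)_{S^X}$. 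Dually \eqref{Eq:SeqSxXXsProj} comes from $g$ and the Nakayama functor $\nu$, with $P=\nu^{-1}(I)$ projective. Indecomposability of $X/X_S$ and $S^X$ follows because a brick is indecomposable, and the brick property (4) I would get from Unger's Lemma~\ref{Lem:Unger}: $X/X_S\hookrightarrow\tau S^X$ is a monomorphism between exceptional (or at least rigid indecomposable) representations with the relevant $\Ext^1$ vanishing, so its cokernel $I$ being injective forces the needed numerics; symmetrically for $S^X$. Item (5) then says $\Ext^1(S^X,X/X_S)$ is one-dimensional (which follows from $[S^X,X/X_S]^1=[S^X,X]^1=1$, using that $X/X_S$ is a quotient with $\Ext^1(S^X,X_S)\to\Ext^1(S^X,X)$ and left-exactness) and that $(X/X_S)_{S^X}=0$, $(S^X)^{X/X_S}=S^X$, i.e. the canonical sequence is generalized almost split; this is a direct check from the definitions once (1) and (2) are in hand, since $\ker$ of the induced map $X/X_S\to\tau S^X$ is zero and $\operatorname{im}$ of the induced $\tau^-(X/X_S)\to S^X$ is all of $S^X$.

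For (6): this is the payoff and the main obstacle. I want to show that for $N_1\subseteq X$, $N_2\subseteq S$, $[N_2,X/N_1]^1=0 \iff N_1\not\subseteq X_S$ or $N_2\not\supseteq S^X$. The $\Leftarrow$ direction: if $N_1\not\subseteq X_S$ then $[S,X/N_1]^1=0$ by maximality of $X_S$ (item (1): $\Hom(X/N_1,\tau S)=0$ since $f$ doesn't factor), and then $\Ext^1(N_2,X/N_1)\hookrightarrow$ something... no — I need that $[S,X/N_1]^1=0$ implies $[N_2,X/N_1]^1=0$; this requires a functoriality/surjectivity statement, namely that $\Ext^1(S,X/N_1)\twoheadrightarrow\Ext^1(N_2,X/N_1)$, which holds because $\operatorname{Rep}_K(Q)$ is hereditary so $\Ext^1(-,X/N_1)$ is right exact and $N_2\hookrightarrow S$ gives $\Ext^1(S,X/N_1)\to\Ext^1(N_2,X/N_1)\to 0$. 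Dually if $N_2\not\supseteq S^X$ then $[N_2,X]^1=0$ by minimality of $S^X$ (item (2)), and $\Ext^1(N_2,X)\twoheadrightarrow\Ext^1(N_2,X/N_1)$ again by heredity, so $[N_2,X/N_1]^1=0$. The $\Rightarrow$ direction (contrapositive): suppose $N_1\subseteq X_S$ and $N_2\supseteq S^X$; I must produce a nonzero element of $\Ext^1(N_2,X/N_1)$. The natural candidate: the generalized almost split sequence generating $\Ext^1(S^X,X/X_S)$ from (5), pulled/pushed along $S^X\hookrightarrow N_2$ and $X/N_1\twoheadrightarrow X/X_S$ — wait, directions: I have $\xi_0\in\Ext^1(S^X,X/X_S)$; I want $\Ext^1(N_2,X/N_1)$. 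Use $X/N_1\twoheadrightarrow X/X_S$ (since $N_1\subseteq X_S$) to get $\Ext^1(S^X,X/N_1)\to\Ext^1(S^X,X/X_S)$, which is surjective by heredity, so lift $\xi_0$ to $\tilde\xi\in\Ext^1(S^X,X/N_1)$, necessarily nonzero. Then I need to push $\tilde\xi$ forward along $S^X\hookrightarrow N_2$ into $\Ext^1(N_2,X/N_1)$ and show it stays nonzero — here "push forward" is wrong; the map induced by $S^X\hookrightarrow N_2$ goes $\Ext^1(N_2,X/N_1)\to\Ext^1(S^X,X/N_1)$, the wrong way. So instead I should argue directly: $\Ext^1(N_2,X/N_1)\to\Ext^1(N_2,X/X_S)$ is surjective and $\Ext^1(N_2,X/X_S)\to\Ext^1(S^X,X/X_S)$ is surjective; their composite surjects onto the $1$-dimensional space $\Ext^1(S^X,X/X_S)$, so $\Ext^1(N_2,X/X_S)\neq 0$, hence $\Ext^1(N_2,X/N_1)\neq 0$. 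This works and is the clean argument; the only care needed is that all these maps are genuinely surjective, which is exactly heredity (right-exactness of $\Ext^1$ in each variable against epimorphisms, which $X/N_1\twoheadrightarrow X/X_S$ is), and that $\Ext^1(S^X,X/X_S)$ is one-dimensional and nonzero, which is (5). The subtle point I anticipate as the real obstacle is making precise, in items (3)--(5), that the cokernels $I$ are genuinely injective (not merely that $X/X_S$ embeds in $\tau S^X$): the key is that $\tau S^X$, with $S^X$ a brick, has the property that any monomorphism from a rigid module into it has injective cokernel — this I expect to deduce by applying $\Hom(-,\tau S^X)$ together with the AR-formula $\Ext^1(S^X,-)\cong D\Hom(-,\tau S^X)$ and the fact that $\Ext^1(S^X,X/X_S)$ is exactly $1$-dimensional and already "used up" by $\xi_0$, forcing $\Ext^1(S^X, I)=0$ and, combined with $\Hom(S^X,I)$ being forced by exactness, that $I$ is injective (an injective is characterized among quotients of $\tau S^X$ in this situation by $\Ext^1(S^X,I)=0$ plus an AR-type vanishing). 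I would fill this in using Lemma~\ref{Lem:Unger} applied to the monomorphism $X/X_S\hookrightarrow\tau S^X$ together with the observation that $\tau S^X$ is exceptional when $S^X$ is exceptional, which it is by (4) combined with rigidity of $S^X$ inherited appropriately — though in full generality $S^X$ need only be a brick, so I may need the slightly more robust statement that a brick which is the cokernel-type module here is automatically rigid, or else carry the argument with bricks throughout, which is the part requiring the most care.
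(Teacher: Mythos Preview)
Your arguments for (1), (2), (5), and (6) are correct and essentially match the paper's. The genuine gap is in (3), specifically the injectivity of $I$, which you correctly flag as ``the real obstacle'' but do not resolve. Your two proposed routes both fail: Unger's Lemma~\ref{Lem:Unger} requires $X/X_S$ and $\tau S^X$ to be exceptional, which is not assumed (only $[S,X]^1=1$ is given; nothing says $X$, $S$, or $S^X$ are rigid), and in any case Unger tells you about the cokernel being a brick, not injective. The claim that ``an injective is characterized among quotients of $\tau S^X$ by $\Ext^1(S^X,I)=0$ plus an AR-type vanishing'' is simply false in general.

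The paper's argument is direct and uses none of this. Having embedded $X/X_S$ into $\tau S^X$ via a map $v$ (obtained from the one-dimensionality of $\Hom(X,\tau S)$ and the factorization through $\tau S^X\subseteq\tau S$), one applies $\tau^-$ to the sequence $0\to X/X_S\xrightarrow{v}\tau S^X\to I\to 0$ to get the four-term exact sequence
\[
0\longrightarrow \nu^{-1}I\longrightarrow \tau^-(X/X_S)\xrightarrow{\ \tau^- v\ } S^X\longrightarrow \tau^- I\longrightarrow 0.
\]
One then checks that $\tau^- v$ is surjective: since $\tau^-$ is right exact, $\tau^- X\twoheadrightarrow\tau^-(X/X_S)$, and the composite $\tau^- X\to\tau^-(X/X_S)\to S^X\hookrightarrow S$ must be (a scalar multiple of) the generator $g$ of the one-dimensional $\Hom(\tau^- X,S)$, whose image is $S^X$ by (2). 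Hence $\tau^- I=0$, which forces $I$ to be injective. The sequence \eqref{Eq:SeqSxXXsProj} with $P=\nu^{-1}I$ then drops out of the same four-term sequence. This also repairs your approach to (4): the paper does not use Unger but applies $\Hom(S^X,-)$ to \eqref{Eq:SeqSxXXs} to get a surjection $\Ext^1(S^X,X/X_S)\twoheadrightarrow\Ext^1(S^X,\tau S^X)$, and since the source has dimension at most $1$ and the target at least $1$, both equal $1$, whence $[S^X,S^X]=[S^X,\tau S^X]^1=1$ by the AR formula. Finally, your deduction of indecomposability in (3) from the brick property (4) is circular as written, since the paper's proof of (4) uses the sequence established in (3); indecomposability follows instead directly from maximality of $X_S$ and minimality of $S^X$.
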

\begin{proof}
\begin{enumerate}
\item
We have $1=[S,X]^1=[X,\tau S]$ and let  $f:X\rightarrow \tau S$ be a non--zero morphism. We show that $\ker(f)=X_S$.  Let $p:\xymatrix{X\ar@{->>}[r]&Z}$ be a proper quotient of $X$ such that $[S,Z]^1=1$. Then $[Z,\tau S]=1$ and there is a 
non--zero composite morphism $\xymatrix{X\ar@{->>}[r]&Z\ar[r]&\tau S}$ which is hence a multiple of $f$. 
In particular, $\ker(p)\subseteq \ker(f)$. This shows that $X_S\subseteq \ker(f)$. On the other hand, $0\neq \im(f)\subseteq \tau S$ and hence $0\neq [\im(f),\tau S]=[S,\im(f)]^1\leq[S,X]^1=1$. It follows that $[S,\im(f)]^1=1$ and hence $\ker(f)\subseteq X_S$.  
\item
We have $1=[S,X]^1=[\tau^{-}X, S]$, and hence  there is a non--zero morphism $g:\tau^{-}X\rightarrow S$. Let $L\subset S$ be a subrepresentation such that $[L,X]^1=1$; then $[\tau^{-}X, L]=1$ and there is a 
non--zero composite morphism $\tau^-X\rightarrow L\subset S$ which is hence a multiple of $g$. 
It follows that $\im(g)\subseteq L$ and hence $S^X\supseteq \im(g)$. On the other hand, $\xymatrix{\tau^{-}X\ar@{->>}[r]&\im(g)}$ and hence $0\neq [\tau^-X,\im(g)]=[\im(g),X]^1\leq [S,X]^1=1$. It follows that $[\im(g),X]^1=1$ and $\im(g)\supseteq S^X$.  
\item The representation $X/X_S$ is indecomposable by maximality of $X_S$ and  $S^X$ is indecomposable by its minimality. Let us prove the existence of the sequences \eqref{Eq:SeqSxXXs} and \eqref{Eq:SeqSxXXsProj}. Let $f: X\rightarrow \tau S$ be a non--zero morphism. Then, by part (1) $X/X_S$ is the image of $f$. Let $g=\tau^{-}(f):\tau^{-}X \rightarrow S$. By part (2) $S^X$ is the image of $g$. Since $\tau$ is left exact, $\tau S^X\subseteq \tau S$ and we have a diagram
$$
\xymatrix{
X\ar[ddr]\ar@{->>}[dr]\ar^f[rr]&&\tau S\\
&X/X_S\ar@{^(->}[ur]\ar@{-->}^v[d]&\\
&\tau S^X\ar@{^(->}[uur]&
}
$$
such that the two triangles with common side f commute (since $[X,\tau S]=1$). Since $X/X_S$ is the image of $f$ there exists a unique map $v:X/X_S\rightarrow \tau S^X$ which completes the diagram, and hence must be injective. Let $I=Coker(v)$. We claim that $I$ is injective. Indeed, we apply $\tau^{-}$ to the short exact sequence $0\rightarrow X/X_S\rightarrow \tau S^X\rightarrow 
I\rightarrow 0$ and we get the exact sequence $0\rightarrow\nu^{-1}I\rightarrow \tau^{-} (X/X_S)\rightarrow S^X\rightarrow \tau^{-} I\rightarrow 0$. The morphism $\tau^{-}(v):\tau^{-} (X/X_S)\rightarrow S^X$ makes the following diagram commutative
$$
\xymatrix{
\tau^{-}X\ar@{->>}[ddr]\ar@{->>}[dr]\ar^g[rr]&& S\\
&\tau^{-}(X/X_S)\ar[ur]\ar^{\tau^{-}v}[d]&\\
&S^X\ar@{^(->}@/_1pc/[uur]&
}
$$
In particular, $\tau^{-}v$ is surjective and hence $\tau^{-}I=0$. It follows that $I$ is injective. The sequence \eqref{Eq:SeqSxXXsProj} is obtained by applying $\tau^-$ to \eqref{Eq:SeqSxXXs}.
\item We apply $\Hom(S^X,-)$ to \eqref{Eq:SeqSxXXs} and get a surjection $\xymatrix@1{\Ext^1(S^X, X/X_S)\ar@{->>}[r]&\Ext^1(S^X,\tau S^X)}$. Since $[S^X,X/X_S]^1\leq 1$ and $[S^X,\tau S^X]^1\geq1$ we see that $1=[S^X,X/X_S]^1=[S^X,\tau S^X]^1$. This implies (4), since $[S^X,S^X]=[\tau S^X,\tau S^X]=[S^X, \tau S^X]^1=1$. To get that $X/X_S$ is a brick, apply $\Hom(-, X/X_S)$ to \eqref{Eq:SeqSxXXsProj} and proceed similarly. 
\item We already proved that $1=[S^X,X/X_S]^1=[S^X,\tau S^X]^1$. Let $\eta\in\Ext^1(S^X, X/X_S)$ be non--split. Then $\eta$ has the form
$$
\xymatrix{
0\ar[r]&X/X_S\ar[r]& Y'\ar[r]&S^X\ar[r]&0
}
$$
We need to show that every (proper) quotient of $X/X_S$ and every (proper) subrepresentation of $S^X$ factor through $Y'$. This follows at once from the maximality of $X_S$ and the minimality of $S^X$: indeed, let $\xymatrix{X/X_S\ar@{->>}[r]&Z}$ be a proper quotient, then $[S,Z]^1=0$ and hence $[S^X,Z]^1\leq [S,Z]^1=0$; let $N\subsetneq S^X$, then $[N,X/X_S]^1\leq [N,X]^1=0$ by minimality of $S^X$.
\item We need to prove that $[N_2, X/N_1]^1=0$ if and only if either $[N_2,X]^1=0$ or $[S, X/N_1]^1=0$. Since $[N_2, X]^1\geq [N_2, X/N_1]^1$ and $[S,X/N_1]^1\geq [N_2,X/N_1]^1$, one implication follows. To prove the other implication let us suppose, for a contradiction, that $[N_2, X/N_1]^1=0$ and both $[N_2,X]^1=[S, X/N_1]^1=1$. Then $N_2\supseteq S^X$ and $N_1\subseteq X_S$. It follows that $[N_2, X/N_1]^1\geq [S^X, X/N_1]\geq [S^X, X/X_S]^1=1$, a contradiction. 
\end{enumerate}
\end{proof}

We can now give a more precise formulation of Theorem~\ref{Thm:ReductionThm} for a non--split generating extension.
\begin{thm}\label{Thm:Reduction2}
Let $0\neq\xi\in \Ext^1(S,X)$ be a generating extension. Then:
$$
Im (\Psi_{\mathbf{f,g}}^\xi)=(\Gr_\mathbf{f}(X)\times\Gr_\mathbf{g}(S))\setminus (\Gr_\mathbf{f}(X_S)\times\Gr_\mathbf{g-dim\,S^X}(S/S^X))
$$
and $\Psi_{\mathbf{f,g}}^\xi:\mathcal{S}_{\mathbf{f,g}}^\xi\rightarrow Im (\Psi_{\mathbf{f,g}}^\xi)$ is a Zarisky-locally trivial affine bundle of rank $\langle\mathbf{g},\mathbf{dim}\,X-\mathbf{f}\rangle$. 
\end{thm}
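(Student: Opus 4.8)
The plan is to combine Theorem~\ref{Thm:ReductionThm} with the explicit description of the "bad locus" given by Lemma~\ref{Lem:XsSXProperties}~and~\ref{Lem:XsSX}. By Theorem~\ref{Thm:ReductionThm} we already know that
\[
\mathrm{Im}(\Psi_\mathbf{f,g}^\xi)=\{(N',N'')\in\Gr_\mathbf{f}(X)\times\Gr_\mathbf{g}(S)\mid [N'',X/N']^1=0\}
\]
and that $\Psi_\mathbf{f,g}^\xi$ is a Zariski-locally trivial affine bundle over this image; moreover its rank is $\langle\mathbf{g},\mathbf{dim}\,X-\mathbf{f}\rangle$ because on the image $\Phi$ is surjective, so $[N'',X/N']$ is forced to equal $\langle\mathbf{g},\mathbf{dim}\,X-\mathbf{f}\rangle$ by the Euler form identity~\eqref{Eq:EulerForm}, hence constant. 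So the only thing left to prove is the set-theoretic identity
\[
\{(N',N'')\mid [N'',X/N']^1=0\}=(\Gr_\mathbf{f}(X)\times\Gr_\mathbf{g}(S))\setminus(\Gr_\mathbf{f}(X_S)\times\Gr_{\mathbf{g-dim}\,S^X}(S/S^X)).
\]

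\textbf{Key steps.} First I would observe that part~(6) of Lemma~\ref{Lem:XsSX} says exactly that $[N'',X/N']^1=0$ if and only if $N'\not\subseteq X_S$ or $N''\not\supseteq S^X$. So a point lies outside the image precisely when $N'\subseteq X_S$ and $N''\supseteq S^X$. The condition $N'\subseteq X_S$ means $N'$ is a subrepresentation of $X_S$ of dimension vector $\mathbf{f}$, i.e.\ $N'\in\Gr_\mathbf{f}(X_S)$. The condition $N''\supseteq S^X$ means that $N''$ corresponds, under the natural closed immersion $\Gr_{\mathbf{g-dim}\,S^X}(S/S^X)\hookrightarrow\Gr_\mathbf{g}(S)$ sending $\overline{N}\mapsto\pi^{-1}(\overline{N})$ (where $\pi\colon S\twoheadrightarrow S/S^X$), to a point of $\Gr_{\mathbf{g-dim}\,S^X}(S/S^X)$; here one uses that every subrepresentation of $S$ containing $S^X$ is the preimage of a unique subrepresentation of $S/S^X$ of dimension vector $\mathbf{g}-\mathbf{dim}\,S^X$, and conversely. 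Putting these together identifies the complement of the image with $\Gr_\mathbf{f}(X_S)\times\Gr_{\mathbf{g-dim}\,S^X}(S/S^X)$, embedded as a closed subvariety, which is what we want. Finally I would note that this complement is indeed closed: $\Gr_\mathbf{f}(X_S)$ is closed in $\Gr_\mathbf{f}(X)$ and the preimage-embedding is a closed immersion, consistent with the fact (from Theorem~\ref{Thm:ReductionThm}) that the image is open.

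\textbf{Main obstacle.} The technical heart is really Lemma~\ref{Lem:XsSX}(6), which has already been proved in the excerpt, so the remaining work here is genuinely just bookkeeping: translating the two membership conditions $N'\subseteq X_S$ and $N''\supseteq S^X$ into statements about the product of Grassmannians and checking the dimension vectors match. The one point that requires a little care is the second identification: one must check that "$N''\supseteq S^X$" really does cut out a copy of $\Gr_{\mathbf{g-dim}\,S^X}(S/S^X)$ rather than something larger — this uses that $S^X$ is a fixed subrepresentation (independent of $N''$) together with the standard correspondence between subrepresentations of $S$ containing a fixed $S^X$ and subrepresentations of the quotient $S/S^X$. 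I would also remark that the rank of the affine bundle is unchanged from Theorem~\ref{Thm:ReductionThm} since we are merely restricting it to the (open) image, so no new computation of fiber dimension is needed.
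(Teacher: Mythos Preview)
Your proposal is correct and follows exactly the paper's approach: the paper's own proof simply cites Lemma~\ref{Lem:XsSX}(6) for the description of the image and Theorem~\ref{Thm:ReductionThm} for the affine bundle structure. Your write-up merely spells out the bookkeeping (the identifications $N'\subseteq X_S\Leftrightarrow N'\in\Gr_\mathbf{f}(X_S)$ and $N''\supseteq S^X\Leftrightarrow N''\in\Gr_{\mathbf{g}-\mathbf{dim}\,S^X}(S/S^X)$) that the paper leaves implicit.
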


\begin{proof}
The description of the image follows from Lemma~\ref{Lem:XsSX}~(6). The rest is Theorem~\ref{Thm:ReductionThm}.
\end{proof}

\begin{cor}\label{Cor:RedThmAlmostSplit}
Let $N$ be a non--projective brick and let $\xi: 0\rightarrow \tau N\rightarrow E\rightarrow N\rightarrow 0$  be the almost split sequence ending in $N$. If $N$ and $\tau N$ have property (C), then $E$ has property (C). 
\end{cor}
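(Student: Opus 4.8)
The plan is to apply Theorem~\ref{Thm:Reduction2} to the almost split sequence $\xi: 0 \to \tau N \to E \to N \to 0$, with $X = \tau N$ and $S = N$. First I would record that $\xi$ is a generating extension: since $N$ is a brick and non-projective, the Auslander--Reiten formula gives $[N,\tau N]^1 \cong [\tau N, \tau N]$, which, together with $\tau$ being an equivalence on the non-projective part, forces $[N,\tau N]^1 = [N,N] = 1$; an almost split sequence is by definition non-split, so it generates the one-dimensional space $\Ext^1(N,\tau N)$. Next I would identify the Ringel reflections: because $\xi$ is almost split and $N$, $\tau N$ are indecomposable, the discussion following Definition~\ref{Def:XsSx} gives $(\tau N)_N = 0$ and $N^{\tau N} = N$, so $\xi$ is in fact generalized almost split.

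With these identifications, Theorem~\ref{Thm:Reduction2} yields, for every splitting $\mathbf{f} + \mathbf{g} = \mathbf{e}$ of the dimension vector,
$$
\Psi^\xi_{\mathbf{f},\mathbf{g}}: \mathcal{S}^\xi_{\mathbf{f},\mathbf{g}} \longrightarrow \bigl(\Gr_\mathbf{f}(\tau N) \times \Gr_\mathbf{g}(N)\bigr) \setminus \bigl(\Gr_\mathbf{f}(0) \times \Gr_{\mathbf{g}-\mathbf{dim}\,N}(N/N)\bigr)
$$
is a Zariski-locally trivial affine bundle onto its image. Now $\Gr_\mathbf{f}(0)$ is empty unless $\mathbf{f} = 0$, and $\Gr_{\mathbf{g}-\mathbf{dim}\,N}(0)$ is empty unless $\mathbf{g} = \mathbf{dim}\,N$; so the removed subset is non-empty only for the single pair $(\mathbf{f},\mathbf{g}) = (0,\mathbf{dim}\,N)$, and even then it is the whole target $\Gr_0(0)\times\Gr_{\mathbf{dim}\,N}(N)$, i.e. a point. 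Hence $\mathcal{S}^\xi_{0,\mathbf{dim}\,N}$ is empty, while for every other pair $(\mathbf{f},\mathbf{g})$ with $\mathbf{f}\neq 0$ the map $\Psi^\xi_{\mathbf{f},\mathbf{g}}$ is an affine bundle over the \emph{full} product $\Gr_\mathbf{f}(\tau N) \times \Gr_\mathbf{g}(N)$. Since $N$ and $\tau N$ have property (C), each factor admits a cellular decomposition, hence so does the product (take products of cells, ordered lexicographically by the two closure-compatible orderings), and then by Lemma~\ref{l:cell_vb} the total space $\mathcal{S}^\xi_{\mathbf{f},\mathbf{g}}$ admits a cellular decomposition.

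Finally I would invoke Lemma~\ref{Lem:AlphaPartQG}: the sets $(\mathcal{S}^\xi_{\mathbf{f},\mathbf{g}})_{\mathbf{f}+\mathbf{g}=\mathbf{e}}$ form an $\alpha$-partition of $\Gr_\mathbf{e}(E)$, and since each non-empty piece admits a cellular decomposition, so does $\Gr_\mathbf{e}(E)$. As $\mathbf{e}$ was arbitrary, $E$ has property (C). The only genuinely delicate point is the bookkeeping around the pair $(0,\mathbf{dim}\,N)$ — one must check that $\mathcal{S}^\xi_{0,\mathbf{dim}\,N}$ is empty rather than trying to make it a (zero-dimensional) affine bundle over the removed point — but this is immediate from the fact that a subrepresentation of $E$ mapping isomorphically onto $N$ under $\pi$ would split $\xi$, which is impossible since $\xi$ is almost split; everything else is a routine assembly of the cited results.
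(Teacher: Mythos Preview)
Your argument is correct and matches the paper's (very terse) proof: $\xi$ is generating because $N$ is a brick, almost split implies generalized almost split, and Theorem~\ref{Thm:Reduction2} together with Lemmas~\ref{Lem:AlphaPartQG} and~\ref{l:cell_vb} does the rest. One small slip in your case analysis: the affine bundle is over the full product for \emph{every} pair $(\mathbf{f},\mathbf{g})\neq(0,\mathbf{dim}\,N)$, not just those with $\mathbf{f}\neq 0$ --- the pairs $(0,\mathbf{g})$ with $\mathbf{g}\neq\mathbf{dim}\,N$ are also covered, since then $\Gr_{\mathbf{g}-\mathbf{dim}\,N}(0)=\emptyset$ and nothing is removed.
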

\begin{proof}
Since $N$ is a brick, $\xi$ is generating. Since $\xi$ is almost split, it is a generalized almost split sequence. The claim now follows from Theorem~\ref{Thm:Reduction2}.
\end{proof}

The following corollary is useful for the application to cluster algebras given in Section~\ref{Sec:ClusterMultFormula}. 
	\begin{cor}\label{Cor:DecForCluster}
	Let $X$ and $S$ be two representations of $Q$ such that $[S,X]^1=1$. Let $\xi:0\rightarrow X\rightarrow Y\rightarrow S\rightarrow 0$ be a non--split short exact sequence. For any  $\mathbf{f+g=e}$ there is a decomposition
	$$
	\Gr_\mathbf{f}(X)\times \Gr_\mathbf{g}(S)= Im (\Psi_{\mathbf{f,g}}^\xi)\coprod (\Gr_\mathbf{f}(X_S)\times\Gr_\mathbf{g-dim\,S^X}(S/S^X)).
	$$
In particular the following relation between euler characteristics holds	
\begin{equation}\label{Eq:EulerCharFormula}
\sum_{\mathbf{f+g=e}} \chi(\Gr_\mathbf{f}(X)\times \Gr_\mathbf{g}(S))=\chi(\Gr_\mathbf{e}(Y))+\chi (\Gr_\mathbf{e-dim\,S^X}(X_S\oplus S/S^X))
\end{equation}

	\end{cor}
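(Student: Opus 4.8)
The plan is to derive Corollary~\ref{Cor:DecForCluster} directly from Theorem~\ref{Thm:Reduction2} together with the $\alpha$--partition of Lemma~\ref{Lem:AlphaPartQG}. First I would observe that $\xi$ is automatically a generating extension: since $[S,X]^1=1$ and $\xi$ is non--split, $\xi$ is a nonzero element of the one--dimensional space $\Ext^1(S,X)$, hence a generator. Thus Theorem~\ref{Thm:Reduction2} applies verbatim and gives, for each $\mathbf{f}+\mathbf{g}=\mathbf{e}$, that
$$
Im(\Psi_{\mathbf{f},\mathbf{g}}^\xi)=(\Gr_\mathbf{f}(X)\times\Gr_\mathbf{g}(S))\setminus(\Gr_\mathbf{f}(X_S)\times\Gr_{\mathbf{g}-\mathbf{dim}\,S^X}(S/S^X)),
$$
which is exactly the asserted decomposition of $\Gr_\mathbf{f}(X)\times\Gr_\mathbf{g}(S)$ into the (open) image and the (closed) complement. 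Here I should note that $\Gr_{\mathbf{g}-\mathbf{dim}\,S^X}(S/S^X)$ is identified with $\Gr_\mathbf{g}(S^X\subseteq S)$, i.e.\ the locus of subrepresentations of $S$ of dimension vector $\mathbf{g}$ containing $S^X$, via $N''\mapsto N''/S^X$; this is the content of Lemma~\ref{Lem:XsSX}~(6) and needs only a one--line remark.

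For the Euler characteristic identity, the key input is additivity of the (compactly supported) Euler characteristic over the $\alpha$--partition \eqref{Eq:general decomposition}: $\chi(\Gr_\mathbf{e}(Y))=\sum_{\mathbf{f}+\mathbf{g}=\mathbf{e}}\chi(\mathcal{S}_{\mathbf{f},\mathbf{g}}^\xi)$. By Theorem~\ref{Thm:Reduction2}, $\Psi_{\mathbf{f},\mathbf{g}}^\xi:\mathcal{S}_{\mathbf{f},\mathbf{g}}^\xi\to Im(\Psi_{\mathbf{f},\mathbf{g}}^\xi)$ is a Zariski--locally trivial affine bundle (of rank $\langle\mathbf{g},\mathbf{dim}\,X-\mathbf{f}\rangle$), so $\chi(\mathcal{S}_{\mathbf{f},\mathbf{g}}^\xi)=\chi(Im(\Psi_{\mathbf{f},\mathbf{g}}^\xi))$ because affine bundles preserve $\chi$ (the fibre $\mathbf{A}^N$ has $\chi=1$). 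Then, using the decomposition of the first paragraph and additivity of $\chi$ on the disjoint union of a closed piece and its open complement,
$$
\chi(Im(\Psi_{\mathbf{f},\mathbf{g}}^\xi))=\chi(\Gr_\mathbf{f}(X)\times\Gr_\mathbf{g}(S))-\chi(\Gr_\mathbf{f}(X_S)\times\Gr_{\mathbf{g}-\mathbf{dim}\,S^X}(S/S^X)).
$$
Summing over all $\mathbf{f}+\mathbf{g}=\mathbf{e}$ and using $\Gr_\mathbf{e}(A\oplus B)=\coprod_{\mathbf{f}+\mathbf{g}=\mathbf{e}}\Gr_\mathbf{f}(A)\times\Gr_\mathbf{g}(B)$ (applied once to $Y$--side sums collapsing into $\chi(\Gr_\mathbf{e}(Y))$ and once to $X_S\oplus S/S^X$ with the dimension shift $\mathbf{e}-\mathbf{dim}\,S^X$) yields \eqref{Eq:EulerCharFormula}.

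I do not expect a serious obstacle here: the corollary is essentially a repackaging of Theorem~\ref{Thm:Reduction2} plus the standard motivic properties of $\chi$. The one point requiring a little care is the bookkeeping of dimension vectors in the last step --- matching $\sum_{\mathbf{f}+\mathbf{g}=\mathbf{e}}\chi(\Gr_\mathbf{f}(X_S)\times\Gr_{\mathbf{g}-\mathbf{dim}\,S^X}(S/S^X))$ with $\chi(\Gr_{\mathbf{e}-\mathbf{dim}\,S^X}(X_S\oplus S/S^X))$ by reindexing $\mathbf{g}'=\mathbf{g}-\mathbf{dim}\,S^X$, and noting that terms with $\mathbf{g}\not\geq\mathbf{dim}\,S^X$ simply contribute empty quiver Grassmannians. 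A secondary technical point is to justify that $\chi$ is additive over the $\alpha$--partition of $\Gr_\mathbf{e}(Y)$ and invariant under Zariski--locally trivial affine bundles; both follow from the standard theory of the Euler characteristic with compact supports for complex varieties, so I would simply cite this.
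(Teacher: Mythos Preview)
Your proposal is correct and follows the same approach as the paper: the decomposition is immediate from Theorem~\ref{Thm:Reduction2} (the paper phrases it via the dichotomy $[N_2,X/N_1]^1\in\{0,1\}$ and Lemma~\ref{Lem:XsSX}(6), which is exactly what underlies that theorem), and the Euler characteristic identity follows from additivity over the $\alpha$--partition and invariance under affine bundles. One small imprecision: the identity $\Gr_\mathbf{e}(A\oplus B)=\coprod_{\mathbf{f}+\mathbf{g}=\mathbf{e}}\Gr_\mathbf{f}(A)\times\Gr_\mathbf{g}(B)$ is not literal as varieties (each piece is an affine bundle over the product, cf.\ the split case of Theorem~\ref{Thm:ReductionThm}), and the ``$Y$--side'' collapse comes from the $\alpha$--partition of Lemma~\ref{Lem:AlphaPartQG}, not from a direct sum; but this only affects phrasing, not the argument.
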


\begin{proof}
Since $[S,X]^1=1$ we have a dichotomy (see Lemma~\ref{Lem:XsSXProperties}~(6) for the second equality)
\begin{eqnarray*}
\Gr_\mathbf{f}(X)\times \Gr_\mathbf{g}(S)&=&\{(N_1,N_2)|\, [N_2, X/N_1]^1=0\}\coprod\{(N_1,N_2)|\, [N_2, X/N_1]^1=1\}\\
&=& Im (\Psi_{\mathbf{f,g}}^\xi)\coprod (\Gr_\mathbf{f}(X_S)\times \Gr_{\mathbf{g-dim}S^X}(S/S^X)).
\end{eqnarray*}
Formula~\ref{Eq:EulerCharFormula} then follows from Theorem~\ref{Thm:Reduction2}.
	\end{proof}
	
\subsection{Description of Ringel reflections in particular cases}\label{SubSec:XsSX}
In this section we investigate further $X_S$ and $S^X$ in particular cases. 
\begin{prop}\label{Prop:XsSXRigidity}
Let $X,S$ be exceptional $Q$--representations such that $[S,X]^1=1$ and $[X,S]^1=0$. Let $\xi:0\rightarrow X\rightarrow Y\rightarrow S\rightarrow 0\in\Ext^1(S,X)$ be a generating extension. Then
\begin{enumerate}
\item $S^X$ and $X/X_S$ are rigid bricks (even without the hypothesis $[X,S]^1=0$). 
\item $X\oplus Y\oplus X_S\oplus S/S^X$ is rigid. 
\item $S\oplus Y\oplus X_S\oplus S/S^X$ is rigid. 
\end{enumerate}
\end{prop}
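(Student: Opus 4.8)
The plan is to prove (1) and (2) directly and to obtain (3) from (2) via the duality $\Gr_\mathbf{e}(M)\cong\Gr_{\mathbf{e}^\ast}(M^\ast)$ of Section~\ref{Sec:QuivGrass}: applying $D$ and passing to $Q^{op}$ replaces $(X,S)$ by $(DS,DX)$, which still satisfies the hypotheses ($D$ preserves $\Hom$, $\Ext^1$ and exceptionality), and it interchanges the roles of $X/X_S$ with $S^X$ and of $X_S$ with $S/S^X$; hence ``(3) for $(X,S)$'' is ``(2) for $(DS,DX)$'', and in (1) it suffices to treat $S^X$. Two observations will be used throughout: $X$ is non--injective and $S$ non--projective (otherwise $\Ext^1(S,X)=0$), so $\tau^-X$ and $\tau S$ are exceptional and $\Hom(\tau^-X,X)\cong D\Ext^1(X,X)=0$, $\Hom(S,\tau S)\cong D\Ext^1(S,S)=0$; and, by Lemma~\ref{Lem:XsSX}(1)--(2), $X/X_S=\im f$ and $S^X=\im(\tau^-f)$ for the essentially unique nonzero $f\colon X\to\tau S$, so $X/X_S$ is at once a quotient of $X$ and a submodule of $\tau S$, while $S^X$ is at once a submodule of $S$ and a quotient of $\tau^-X$.

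For (1), $X/X_S$ and $S^X$ are bricks by Lemma~\ref{Lem:XsSX}(4), so only rigidity of $S^X$ is at issue. First I would observe $\Ext^1(S^X,S)=0$ (it is a quotient of $\Ext^1(S,S)=0$, since $S^X\hookrightarrow S$); feeding this into the long exact sequence of $\Hom(S^X,-)$ applied to $0\to S^X\to S\to S/S^X\to0$ shows that $\Ext^1(S^X,S^X)$ is a quotient of $\Hom(S^X,S/S^X)$, so the whole matter is the vanishing $\Hom(S^X,S/S^X)=0$. For this I would use that $S^X$ is a quotient of $\tau^-X$: a nonzero $S^X\to S/S^X$ gives a nonzero $\tau^-X\to S/S^X$; but $\Hom(\tau^-X,S)$ is $1$--dimensional (of dimension $[S,X]^1$) with generator having image $S^X=\ker(S\to S/S^X)$, so $\Hom(\tau^-X,S)\to\Hom(\tau^-X,S/S^X)$ is zero, whence $\Hom(\tau^-X,S/S^X)\hookrightarrow\Ext^1(\tau^-X,S^X)\cong D\Hom(S^X,X)$ by the Auslander--Reiten formula, and $\Hom(S^X,X)\hookrightarrow\Hom(\tau^-X,X)=0$. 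Then $S^X$ is rigid, and $X/X_S$ is rigid by duality (symmetrically: $\Ext^1(X,X/X_S)=0$ since $X/X_S$ is a quotient of $X$, so $\Ext^1(X/X_S,X/X_S)$ is a quotient of $\Hom(X_S,X/X_S)$, and $\Hom(X_S,X/X_S)=0$ is dual to $\Hom(S^X,S/S^X)=0$).

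For (2), write $M=X\oplus Y\oplus X_S\oplus S/S^X$ and check $[A,B]^1=0$ for all summands $A,B$. The block $X\oplus Y$ is handled first: as $\xi$ generates $\Ext^1(S,X)$, the connecting map $\Hom(X,X)\to\Ext^1(S,X)$ is onto, so $\Hom(-,X)$ of $\xi$ gives $\Ext^1(Y,X)\hookrightarrow\Ext^1(X,X)=0$, and then $[X,Y]^1=[Y,Y]^1=0$ follow from $[X,S]^1=[S,S]^1=0$ via $\Hom(X,-)$ and $\Hom(Y,-)$ of $\xi$; the same reasoning with $S$ in place of $X$ also yields $[S,Y]^1=[Y,S]^1=0$, which is the extra input needed for (3). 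For the summands $X_S\hookrightarrow X$ and $S/S^X$ a quotient of $S$, the ``internal'' vanishings ($[X_S,X]^1$, $[X,X_S]^1$, $[X_S,X_S]^1$ and the three $S/S^X$--analogues) follow from long exact sequences together with $\Ext^1(X,X)=\Ext^1(S,S)=0$ and the brick/Hom--vanishing statements from (1) (notably $\Hom(X_S,X/X_S)=0$ and that $\Hom(S^X,S)$ is $1$--dimensional); the ``cross'' vanishings involving $X$ on the left, and those involving $Y$, come from $[X,S]^1=0$ and the $X\oplus Y$, $S\oplus Y$ rigidity just established; and the remaining ones --- $[S/S^X,X]^1$, $[S/S^X,X_S]^1$, $[S/S^X,Y]^1$, $[Y,X_S]^1$ --- reduce to $\Hom(S^X,X)=0$, $\Hom(S^X,X_S)\hookrightarrow\Hom(\tau^-X,X)=0$, $\Hom(S,X/X_S)\hookrightarrow\Hom(S,\tau S^X)\cong D\Ext^1(S^X,S)=0$, and the fact that, $\xi$ being a generator, the canonical maps $\Ext^1(S,X)\to\Ext^1(S^X,X)$ and $\Ext^1(S,X)\to\Ext^1(S,X/X_S)$ are isomorphisms (both targets are $1$--dimensional by the defining minimality of $S^X$ and maximality of $X_S$), which forces $\Ext^1(S/S^X,X)=0$ and $\Ext^1(S,X_S)=0$. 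Finally (3) is (2) applied to $(DS,DX)$.

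I expect the main obstacle to be the Hom--vanishing $\Hom(S^X,S/S^X)=0$ in (1): the naive attempt --- ``if this Hom were nonzero, post--compose with $X/X_S\hookrightarrow\tau S^X$ to get $\Hom(S^X,\tau S^X)\neq0$, i.e.\ $\Ext^1(S^X,S^X)\neq0$'' --- is circular with the conclusion, and the way around it is to exploit that $S^X$ is not an arbitrary submodule of $S$ but the image of a morphism out of the \emph{exceptional} module $\tau^-X$, so that $\Hom(\tau^-X,X)=0$ can be brought to bear.
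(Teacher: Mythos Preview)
Your argument is correct and follows essentially the same route as the paper: long exact sequences built from the natural short exact sequences involving $X_S$, $X/X_S$, $S^X$, $S/S^X$, together with the key vanishing $\Hom(\tau^{-}X,X)=0$ (dually $\Hom(S,\tau S)=0$) coming from exceptionality. Your organisation of part~(1) uses $0\to S^X\to S\to S/S^X\to 0$ where the paper uses $0\to\ker g\to\tau^{-}X\to S^X\to 0$, but the decisive input --- that $S^X$ is simultaneously a submodule of $S$ and a quotient of $\tau^{-}X$ --- is the same in both. For (2)--(3) the paper says only that the vanishings follow from ``long exact sequences of Ext-spaces'' and are ``straightforward''; you fill in precisely those sequences and add the duality trick $(X,S)\leftrightarrow(DS,DX)$ on $Q^{op}$ to deduce (3) from (2), which the paper does not make explicit but which indeed works (one checks $(DS)_{DX}\cong D(S/S^X)$ and $(DX)^{DS}\cong D(X/X_S)$, so (2) for $(DS,DX)$ is (3) for $(X,S)$).
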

\begin{proof}
By Lemma~\ref{Lem:XsSX}(4) we know that $S^X$ and $X/X_S$ are bricks. To show that $S^X$ is rigid apply  $\Hom(-,S)$ to the short exact sequence $0\rightarrow \textrm{Ker}(g)\rightarrow \tau^-X\rightarrow S^X\rightarrow 0$ (where $g$ is given by Lemma~\ref{Lem:XsSX}(2)) to get $[S^X,S]=1$ and $[\textrm{Ker}(g), S]=0$; then apply $\Hom(-,S^X)$ to the same sequence to get $[S^X,S^X]^1\leq[\textrm{Ker}(g), S^X]\leq [\textrm{Ker}(g), S]=0$.  
To show that $X/X_S$ is rigid, apply $\Hom(-,\tau S)$ to the short exact sequence $0\rightarrow X_S\rightarrow X\rightarrow X/X_S\rightarrow 0$ to get $[X_S,\tau S]=0$; then apply $\Hom(X,-)$ to the same sequence to get $[X/X_S,X/X_S]^1\leq [X_S,X/X_S]\leq [X_S,\tau S]=0$.

The rigidity of the two modules in (2) and (3) is obtained by constructing long exact sequences of Ext---spaces. In each case it is straightforward to find such sequences and to obtain the claimed vanishing conditions. 

\end{proof}

\begin{lem}\label{Lem:XsSXSectionalPath}
Let $X$ and $Y$ be indecomposable preprojective representations such that $[X,Y]=1$, $[X,\tau Y]=0$ and a non--zero morphism $\iota: X \rightarrow Y$ is mono. Let $S=Coker(\iota)$. Then $[S,X]^1=1$ and hence $X_S$ and $S^X$ are well--defined. Let $E\rightarrow X$ be the minimal right almost split morphism ending in $X$. If $Y$ is not projective, then there is a decompostion $E=E_0\oplus E'$ where $E_0$ is indecomposable such that $[S,E_0]^1=1$, $[S, E']^1=0$,
$
X_S\cong E'\oplus\textrm{Ker }(E_0\rightarrow \tau Y)$
and there exists a short exact sequence
$
0\rightarrow X\rightarrow \textrm{Im }(\tau^{-}E_0\rightarrow Y)\rightarrow S^X\rightarrow 0.
$
If $Y$ is projective then $X_S=E$.  
\end{lem}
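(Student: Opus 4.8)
The plan is to establish the decomposition $E = E_0 \oplus E'$ by isolating the indecomposable summand of $E$ through which the morphism $X \to \tau S$ factors, and then to identify $X_S$ and $S^X$ using parts (1) and (2) of Lemma~\ref{Lem:XsSX}. Recall from Unger's Lemma~\ref{Lem:Unger} that $[S,S]^1 = [X,Y]-1 = 0$, so $S$ is an exceptional representation, and $[S,X]^1 = [X, \tau S] = 1$ by the Auslander--Reiten formula applied together with $[X,Y]=1, [X, \tau Y]=0$ (the morphism $X \to \tau S$ being, up to scalar, $\tau$ applied to the quotient $Y \to S$ composed appropriately; more precisely one uses that applying $\Hom(X,-)$ to $0 \to X \to Y \to S \to 0$ and using $[X, \tau S]$ versus $[S,X]^1$). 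So $X_S$ and $S^X$ are well defined.

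\medskip

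First I would fix a non--zero morphism $f \colon X \to \tau S$; by Lemma~\ref{Lem:XsSX}(1), $X_S = \ker f$. Since $E \to X$ is minimal right almost split and $X \to \tau S$ is not a split epimorphism (as $\tau S \ne 0$ and, $Y$ not being projective, $\tau S$ and $X$ are not isomorphic — one checks $[\tau S, X] = [S, \tau^- X]$, but more simply $f$ is not invertible because it is not injective, its kernel being $X_S$, and not surjective by dimension count since $\mathbf{dim}\,\tau S = \mathbf{dim}\, Y - \mathbf{dim}\, X$ compared with $\mathbf{dim}\, X$), the morphism $f$ factors through $E \to X$, say $f = f' \circ (E \to X)$ with $f' \colon E \to \tau S$. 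Writing $E = \bigoplus E_i$ into indecomposables, $[S, E]^1 = [E, \tau S] = \sum [E_i, \tau S]$; I claim exactly one summand, call it $E_0$, has $[E_0, \tau S] = 1$ and the others have $[E_i, \tau S]=0$. This is where the Happel--Ringel Lemma~\ref{Lem:HappelRingel} enters: since $S$ is exceptional and $[S,X]^1 = 1$, one controls $[E_i, \tau S] = [S, E_i]^1 \le [S, X]^1 = 1$ (using that $E_i$ is, via the irreducible map, comparable to $X$ — actually one uses the exact sequence from $E \to X$ being surjective in the preprojective component, or directly that the $\Ext$ long exact sequence bounds each term), and a rank/support argument forces the total to be $1$. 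Setting $E' = \bigoplus_{i \ne 0} E_i$ then gives $[S, E']^1 = 0$ and $[S, E_0]^1 = 1$.

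\medskip

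Next, to compute $X_S$: the composite $E \to X \xrightarrow{f} \tau S$ restricted to $E'$ is zero (since $[E', \tau S]=0$), so $f$ factors as $E_0 \to X \to \tau S$ on the $E_0$-component, i.e. $\ker(E_0 \to \tau S)$ together with all of $E'$ maps into $\ker f = X_S$. Conversely, one shows $X_S$ is no larger by comparing dimension vectors, using that $E \to X$ is surjective with kernel of known dimension (the almost split / right almost split structure in the preprojective component) — hence $X_S \cong E' \oplus \ker(E_0 \to \tau Y)$, where I have replaced $\tau S$ by $\tau Y$ noting $X/X_S \hookrightarrow \tau S^X \hookrightarrow \tau S$ and that the relevant kernel only sees the image, which lands in $\tau Y$ via $\tau$ applied to $S^X \hookrightarrow S$; here I use $S^X = \im g$ where $g = \tau^- f \colon \tau^- X \to S$ by Lemma~\ref{Lem:XsSX}(2), and the sequence $0 \to X \to \tau S^X \to I \to 0$ of Lemma~\ref{Lem:XsSX}(3). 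For $S^X$: apply $\tau^-$ to the map $E_0 \to X$ to get $\tau^- E_0 \to \tau^- X \xrightarrow{g} S$; since $X/X_S$ is the image of $f$ and $S^X = \im g$, one obtains $S^X = \im(\tau^- E_0 \to S)$, which gives the asserted short exact sequence $0 \to X \to \im(\tau^- E_0 \to Y) \to S^X \to 0$ after identifying $\tau^- E_0 \to S$ with the composite $\tau^- E_0 \to Y \to S$ (using that $\iota \colon X \hookrightarrow Y$ and the irreducible $E_0 \to X$ fit into a commutative square with $\tau^- E_0 \to Y$, by functoriality of the knitting in the preprojective component). Finally, when $Y$ is projective, $\tau S = 0$, hence the only constraint $[S, X/N]^1 = 1$ degenerates — one shows directly that $[S, X/N]^1 = [S, X]^1 = 1$ for every proper $N$ (since any $N \subsetneq X$ still has $X/N$ surjecting onto the relevant quotient and $\tau S = 0$ forces $[S, X/N]^1$ to be computed purely from $\Hom$-dimensions that do not drop), so $X_S = X$; but more carefully, when $Y$ is projective one instead argues $S^X = S$ and $[S, X/N]^1 = 1$ fails only at $N = X$, giving $X_S = E$ — I would verify this by the explicit $\Ext$ computation using that $Y$ projective implies $\Ext^1(S, -)$ behaves as in an almost-split-type situation shifted by one.

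\medskip

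\textbf{The main obstacle} I anticipate is the precise bookkeeping in the claim that exactly one indecomposable summand $E_0$ of $E$ carries the extension, together with the identification $X_S \cong E' \oplus \ker(E_0 \to \tau Y)$ — this requires knowing that $E \to X$ is not merely right almost split but that its kernel and the map to $\tau S$ interact compatibly with the sectional-path structure in the preprojective component, so that the naive dimension count is exact rather than just an inequality. The cleanest route is probably to combine Lemma~\ref{Lem:Irred} (applied to suitable irreducible monos among preprojectives) with the knitting description of $\Gamma(\mathcal{P})$, reducing everything to a statement about the shape of the mesh around $[X]$; the degenerate projective case is then a boundary case of the same picture where the mesh is truncated.
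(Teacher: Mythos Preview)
Your overall strategy---use Lemma~\ref{Lem:XsSX}(1),(2) to reduce to understanding the map $f:X\to\tau S$ and its $\tau^-$-translate---is the right one, and matches the paper. But two steps in your argument do not work as written.

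\textbf{First}, the factorization ``$f$ factors through $E\to X$'' is a misuse of the almost split property. The minimal right almost split morphism $E\to X$ has the lifting property for maps \emph{into} $X$ (any $Z\to X$ that is not split epi factors through $E$), not for maps \emph{out of} $X$. You cannot conclude $f=f'\circ(E\to X)$ from this. What you actually want is simpler: when $X$ is not projective, apply $\tau$ to the sequence $0\to X\to Y\to S\to 0$ to obtain $0\to\tau X\to\tau Y\to\tau S\to 0$, and compare it with the almost split sequence $0\to\tau X\to E\to X\to 0$. Since both have left term $\tau X$, the map $f:X\to\tau S$ completes to a morphism of short exact sequences with identity on $\tau X$; the snake lemma then gives $\ker(f)\cong\ker(\ell)$ where $\ell:E\to\tau Y$ is the middle vertical, and $\ker(\ell)=E'\oplus\ker(E_0\to\tau Y)$ immediately.

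\textbf{Second}, your argument that exactly one indecomposable summand $E_0$ of $E$ satisfies $[S,E_0]^1=1$ (``a rank/support argument forces the total to be $1$'') is where the real content lies, and you have not supplied it. The clean way, which you are missing, is to first compute $[X,S]=0$ (this comes out of the proof of Unger's Lemma: $[Y,S]=1$ and the map $\Hom(Y,S)\to\Hom(X,S)$ is zero). Then $[S,\tau X]^1=[\tau X,\tau S]=[X,S]=0$, so the long exact sequence from $0\to\tau X\to E\to X\to 0$ gives $[S,E]^1=[S,X]^1=1$. This forces exactly one summand $E_0$ with $[S,E_0]^1=1$ and the rest $E'$ with $[S,E']^1=0$. (If $X$ is projective one instead uses $0\to E\to X\to S_k\to 0$ with $E=\mathrm{rad}(X)$.)

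Your treatment of the case $Y$ projective is too vague to evaluate; note that $X$ is then also projective (morphisms in the preprojective component go forward), and one argues directly that $X_S=E=\mathrm{rad}(X)$ since $X/E$ is simple. The short exact sequence for $S^X$ follows from the analogous diagram obtained by applying $\tau^-$ to the almost split sequence and comparing with $0\to X\to Y\to S\to 0$.
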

\begin{proof}
From the proof of Unger's Lemma~\ref{Lem:Unger} we know that $[X,S]=0$. We apply $\Hom(-,X)$ to the short exact sequence $0\rightarrow X\rightarrow Y\rightarrow S\rightarrow 0$ and get $\mathbf{C}\cong (X,X)\rightarrow (S,X)^1\rightarrow (Y,X)^1=0$ and hence $[S,X]^1=1$. If $X$ is not--projective, let $0\rightarrow \tau X\rightarrow E\rightarrow X\rightarrow 0$ be the almost split sequence ending in $X$.  Since $[S,\tau X]^1=[\tau X, \tau S]=[X,S]=0$, it follows that $[S,E]^1=[S,X]^1=1$ and hence there exists a decomposition $E=E_0\oplus E'$ with the stated properties. Since $X$ is not projective there is a commutative diagram with exact rows
$$
\xymatrix{
0\ar[r]&\tau X\ar[r]\ar@{=}[d]&E\ar[r]\ar^\ell[d]&X\ar[r]\ar^f[d]&0\\
0\ar[r]&\tau X\ar[r]&\tau Y\ar[r]&\tau S\ar[r]&0
}
$$
It follows that $X_S=\ker(f)\cong \ker(\ell)=E'\oplus Ker (E_0\rightarrow \tau Y)$, as claimed. 
If $X=P_k$ is projective, then $E=rad(P)=\oplus_{k\rightarrow j}P_j$ and there is an exact sequence
$0\rightarrow E\rightarrow X\rightarrow S_k\rightarrow 0$.
If $Y$ is not projective, 
there exists a commutative diagram with exact rows
$$
\xymatrix{
0\ar[r]&E\ar[r]\ar^g[d]&X=P_k\ar[r]\ar[d]&S_k\ar[r]\ar@{^(->}[d]&0\\
0\ar[r]&\tau Y\ar[r]&\tau S\ar[r]&I_k\ar[r]&0\\
}
$$
from which it follows that $X_S\cong \ker(g)$. Moreover $[E,\tau Y]=[E,\tau S]=1$ and hence $E$ admits the claimed decomposition. 
If $Y=P_j$ is projective (and $X=P_k$ is projective too), then $X_S=E$ since $X/E=S_k$.  
To conclude the proof, we consider 
the commutative diagram with exact rows
$$
\xymatrix{
0\ar[r]& X\ar[r]\ar[d]&\tau^{-}E\ar[r]\ar^{\ell'}[d]&\tau^{-}X\ar[r]\ar^g[d]&0\\
0\ar[r]& X\ar[r]&Y\ar[r]& S\ar[r]&0
}
$$
In particular $Coker(\ell')\cong Coker(g)$ and hence the claimed surjection $\xymatrix{\im(\ell')\ar@{->>}[r]&\im(g)=S^X}$.
\end{proof}

\section{Proof of Theorem~\ref{t:SIntro}}\label{Sec:PropertyS}
	
	In this section we prove the following theorem. 
		\begin{thm} \label{t:S}
Let $M$ be a rigid representation of a quiver and let $X=\Gr_\mathbf{e}(M)$ be a quiver Grassmannian attached to it. Then $X$ is irreducible, has property (S) and the Chern classes of the universal bundles $\UU_i$ on $X$ generate the Chow ring $A^\ast(X)$ as a ring.	
	\end{thm}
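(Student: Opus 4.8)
The plan is to prove the three assertions — irreducibility, property (S), and generation of the Chow ring by the Chern classes of the $\mathcal{U}_i$ — simultaneously, by exhibiting a decomposition of the diagonal à la Ellingsrud–Strømme, which is the standard mechanism producing all three conclusions at once. First I would recall that since $M$ is rigid its support is acyclic (by \cite{Ringel:Tree}, as remarked after Theorem~\ref{t:SIntro}), so we may assume $Q$ is acyclic; this makes available the Ringel map \eqref{Eq:DefiPhiQuiver} and the Euler form \eqref{Eq:EulerForm}. The key geometric input is that the orbit of $M$ is dense in $\mathrm{R}_\mathbf{d}(Q)$, so $p_2$ is surjective, $\Gr_\mathbf{e}(M)$ lies in the smooth irreducible total space $\Gr^Q_\mathbf{e}(\mathbf{d})$, and every component has dimension exactly $\langle\mathbf{e},\mathbf{d-e}\rangle$; I would then show $\Gr_\mathbf{e}(M)$ is actually smooth of this dimension by identifying its tangent space at a subrepresentation $N$ with $\Hom_Q(N,M/N)$ and using rigidity of $M$ to force $\dim\Hom_Q(N,M/N)=\langle\mathbf{e},\mathbf{d-e}\rangle$ uniformly. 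Smoothness plus the dimension count, together with the (separately proved) irreducibility, then reduces property~(S) and the Chow-ring statement to a single cohomological/Chow-theoretic construction on a smooth projective irreducible variety.

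The core of the argument is the decomposition of the diagonal. I would work on $X\times X$ where $X=\Gr_\mathbf{e}(M)$, with the two families $\mathcal{U}=(\mathcal{U}_i)$ on the first factor and $\mathcal{U}'=(\mathcal{U}'_i)$ on the second, together with their quotients $\mathcal{Q},\mathcal{Q}'$. The diagonal $\Delta\subseteq X\times X$ is the locus where $\mathcal{U}_i = \mathcal{U}'_i$ inside the trivial bundle $M_i$ for all $i$, equivalently the zero locus of the composite section $\mathcal{U}_i \hookrightarrow M_i \twoheadrightarrow \mathcal{Q}'_i$, i.e.\ of a section $s$ of $\bigoplus_i \underline{\Hom}(p_1^*\mathcal{U}_i, p_2^*\mathcal{Q}'_i)$. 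The crucial point is that, because $N,N'$ are subrepresentations, the condition $\mathcal{U}_i=\mathcal{U}'_i$ for all $i$ already forces compatibility with the arrows, so $\Delta$ is cut out inside $X\times X$ by exactly $\sum_i e_i(d_i-e_i)$ equations, whereas $\mathrm{codim}\,\Delta = \dim X = \langle\mathbf{e},\mathbf{d-e}\rangle = \sum_i e_i(d_i-e_i) - \sum_{\alpha} e_{s(\alpha)}(d_{t(\alpha)}-e_{t(\alpha)})$ — so in general $\Delta$ is \emph{not} a complete intersection. To repair this, I would instead build the Ellingsrud–Strømme–type resolution: one enlarges the section to a two-term complex of bundles (adding the arrow-compatibility bundle $\bigoplus_\alpha \underline{\Hom}(p_1^*\mathcal{U}_{s(\alpha)}, p_2^*\mathcal{Q}'_{t(\alpha)})$ exactly as in the Ringel map $\Phi_M^N$) whose first Chern-class computation shows the associated Koszul-type complex is a resolution of $\mathcal{O}_\Delta$, expressing $[\Delta]$ in $A^*(X\times X)$ as a polynomial in the Chern classes of $p_1^*\mathcal{U}_i$ and $p_2^*\mathcal{Q}'_i$. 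Splitting this class through the Künneth/projection-formula machinery then yields: the cycle class map is surjective (hence an isomorphism) and $A^*(X)$ — and $H^*(X)$, with no odd part — is generated by the Chern classes of the $\mathcal{U}_i$.

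The main obstacle — and the place I expect to spend real effort — is establishing that the enlarged two-term complex is genuinely a resolution of $\mathcal{O}_\Delta$, i.e.\ exactness away from and along $\Delta$, which is where rigidity of $M$ is used a second time: one needs that at every point $(N,N')$ of $X\times X$ the relevant map $\Phi_N^{M/N'}$-type map has cokernel $\Ext^1_Q(N,M/N')$ of the expected rank, and that this rank drops (to zero) precisely on $\Delta$ in the right codimension, so that a Buchsbaum–Rim / Eagon–Northcott-style complex resolves the structure sheaf. Controlling these ranks — equivalently, showing $\mathrm{Ext}^1_Q(N,M/N')$ behaves generically enough — is exactly the point where the hypothesis $[M,M]^1=0$ enters decisively, via $[N,M/N'] - [N,M/N']^1 = \langle \mathbf{e},\mathbf{d-e}\rangle$ and semicontinuity. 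A secondary, more bookkeeping-heavy point is doing the whole construction over $\mathbb{Z}$ rather than over $\mathbb{C}$ (the statement asserts the generators are defined over any field), which requires the smoothness-over-$\mathbb{Z}$ and irreducibility-over-any-field statements flagged as Proposition~\ref{p:irred_rigid} and Lemma~\ref{Lem:SchemeZ}; granting those, the diagonal argument is insensitive to the base.
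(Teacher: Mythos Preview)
Your overall strategy---decomposition of the diagonal via Ellingsrud--Str{\o}mme---is exactly the paper's, but you have overcomplicated the central step and carry a specific misconception about where rigidity enters.

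The point you are missing is that when $M$ is rigid one has $\Ext^1_Q(N,M/N')=0$ for \emph{every} pair $(N,N')\in X\times X$, not merely on the diagonal. This is immediate from heredity: since $N\subseteq M$ we have a surjection $\Ext^1_Q(M,M/N')\twoheadrightarrow\Ext^1_Q(N,M/N')$, and since $M/N'$ is a quotient of $M$ we have $\Ext^1_Q(M,M)\twoheadrightarrow\Ext^1_Q(M,M/N')$; rigidity gives $\Ext^1_Q(M,M)=0$ and the chain collapses. Consequently the Ringel-type morphism $\Phi:H\to K$ of bundles on $X\times X$ (with $H=\bigoplus_i\underline{\Hom}(p_2^*\UU_i,p_1^*\QQ_i)$ and $K=\bigoplus_\alpha\underline{\Hom}(p_2^*\UU_{s(\alpha)},p_1^*\QQ_{t(\alpha)})$) is surjective at every point, and $\ker\Phi=\underline{\Hom}_Q(p_2^*\UU,p_1^*\QQ)$ is an honest vector bundle of rank exactly $\langle\mathbf{e},\mathbf{d-e}\rangle=\dim X$.

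Now the tautological section $s$ of $H$ that you wrote down is already a section of the subbundle $\ker\Phi$, because fibrewise it is a morphism of $Q$-representations $N\to M/N'$. Its zero locus is $\Delta$, and since $\operatorname{rk}(\ker\Phi)=\operatorname{codim}\Delta$, \cite[Prop.~14.1, Ex.~14.1.1]{Fulton:98} gives
\[
[\Delta]=c_{\langle\mathbf{e},\mathbf{d-e}\rangle}(\ker\Phi)\cap[X\times X]
\]
directly. No Eagon--Northcott or Buchsbaum--Rim complex is needed: the excess-intersection difficulty you flagged as the ``main obstacle'' simply is not there once you pass from $H$ to $\ker\Phi$. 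One then writes $c_t(\ker\Phi)=c_t(H)c_t(K)^{-1}$ and expands both factors via Lascoux's formula in the Chern roots of the $\UU_i^\vee$; every coefficient lands in the subring of $A^*(X\times X)$ spanned by products $p_1^*\alpha\cdot p_2^*\beta$ with $\alpha,\beta$ polynomials in the Chern classes of the $\UU_i$, and Theorem~\ref{t:decompDiag} finishes.

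So your sentence ``this rank drops (to zero) precisely on $\Delta$'' is the error: the cokernel of $\Phi$ is identically zero, not zero only on the diagonal. Rigidity is used once, globally, to make $\ker\Phi$ a bundle of the correct rank---not to control a degeneracy stratification. (Irreducibility, which you correctly set aside, is handled in the paper by a separate associated-bundle argument on the universal quiver Grassmannian, Proposition~\ref{p:irred_rigid}.)
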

	We start by proving the irreducibility. The main idea of the proof was communicated to the authors by Julia Sauter. 
	
	\begin{prop} \label{p:irred_rigid}
		Any quiver Grassmannian $\Gr_\mathbf{e}(M)$ attached to a rigid quiver representation $M$ is irreducible.
	\end{prop}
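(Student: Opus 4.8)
The plan is to use the universal quiver Grassmannian machinery from Section~\ref{Sec:QuivGrass}. Recall that $\Gr_\mathbf{e}^Q(\mathbf{d})$ is irreducible (being a vector bundle over the product of ordinary Grassmannians), that its second projection $p_2: \Gr_\mathbf{e}^Q(\mathbf{d}) \to \mathrm{R}_\mathbf{d}(Q)$ is proper, and that $\Gr_\mathbf{e}(M)$ is the fiber of $p_2$ over the point $M$. The key point is that $M$ is rigid, hence its $G_\mathbf{d}$-orbit $\mathcal{O}_M$ is dense open in $\mathrm{R}_\mathbf{d}(Q)$. First I would argue that $\mathcal{O}_M$ lies in the image of $p_2$: indeed $M$ has a subrepresentation of dimension vector $\mathbf{e}$ precisely because $\Gr_\mathbf{e}(M)$ is assumed non-empty (if it were empty there is nothing to prove), and this is a $G_\mathbf{d}$-invariant condition, so all of $\mathcal{O}_M$ is in the image; since the image is closed and contains a dense set, $p_2$ is surjective.

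The heart of the argument is a dimension/genericity count. Let $Z := p_2^{-1}(\mathcal{O}_M)$, an open subset of the irreducible variety $\Gr_\mathbf{e}^Q(\mathbf{d})$, hence itself irreducible. The restriction $p_2|_Z : Z \to \mathcal{O}_M$ is a $G_\mathbf{d}$-equivariant surjective morphism onto a homogeneous space, so all its fibers are isomorphic — in particular each fiber is isomorphic to $\Gr_\mathbf{e}(M)$ — and each fiber has dimension $\dim Z - \dim \mathcal{O}_M$. Because $Z$ is irreducible and $p_2|_Z$ is a surjection onto a homogeneous (hence smooth) base with all fibers of the same dimension, the generic fiber is irreducible: over a dense open $U \subseteq \mathcal{O}_M$ the fiber is irreducible (generic fibers of a dominant morphism from an irreducible variety are irreducible), and by $G_\mathbf{d}$-equivariance we may translate such a fiber to the fiber over $M$ itself, since $M \in \mathcal{O}_M$ and $G_\mathbf{d}$ acts transitively on $\mathcal{O}_M$, so $g \cdot U = \mathcal{O}_M$ for a suitable $g$. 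Therefore $\Gr_\mathbf{e}(M) = p_2^{-1}(M)$ is irreducible.

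Let me restate the logical skeleton to make the equivariance step airtight: for $g \in G_\mathbf{d}$ the automorphism of $\Gr_\mathbf{e}^Q(\mathbf{d})$ induced by $g$ carries the fiber over $N$ isomorphically onto the fiber over $g\cdot N$; so "the fiber over $N$ is irreducible" is a property constant on $G_\mathbf{d}$-orbits. Since it holds for $N$ in a dense open subset of the single orbit $\mathcal{O}_M$, and that open subset must meet — in fact its $G_\mathbf{d}$-saturation is — the whole orbit, it holds for every $N \in \mathcal{O}_M$, in particular for $N = M$.

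The main obstacle is justifying that the generic fiber of $p_2|_Z$ is irreducible rather than merely equidimensional: a dominant morphism of irreducible varieties has geometrically connected generic fiber only after passing to the algebraic closure of the function field and one must be careful, but here the situation is rigid in the strong sense that $Z$ is open in a smooth irreducible variety and the base $\mathcal{O}_M$ is smooth; the standard fact (e.g.\ via \cite[1.3]{Fulton:98}-style arguments, or generic smoothness in characteristic zero, or simply the theorem on generic irreducibility of fibers of a morphism from an irreducible scheme) applies. I would spell this out by noting $Z$ is irreducible, $p_2|_Z$ is dominant onto the irreducible $\mathcal{O}_M$, hence there is a dense open $U\subseteq\mathcal{O}_M$ over which the fibers are irreducible of the expected dimension; everything else is the equivariant translation described above.
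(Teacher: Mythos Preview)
Your setup is exactly right and matches the paper: restrict $p_2$ to the open preimage $Z = p_2^{-1}(\mathcal{O}_M)$ of the dense orbit, observe that $Z$ is irreducible, and use $G_\mathbf{d}$-equivariance to transport information between fibers. The problem is the sentence ``generic fibers of a dominant morphism from an irreducible variety are irreducible.'' This is simply false: the squaring map $\mathbb{G}_m \to \mathbb{G}_m$, $x \mapsto x^2$, is a dominant morphism between smooth irreducible varieties whose every closed fiber has two points. What is true is that the \emph{scheme-theoretic} fiber over the generic point of $\mathcal{O}_M$ is irreducible; but to descend this to closed fibers over a dense open you need the generic fiber to be \emph{geometrically} irreducible, and nothing in your smoothness remarks forces that. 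Your ``main obstacle'' paragraph correctly identifies the danger and then waves it away.

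The paper closes this gap by exploiting one more piece of structure: the equivariant map $Z \to \mathcal{O}_M \cong G_\mathbf{d}/\Aut(M)$ is not merely a map with isomorphic fibers, it is the associated bundle $G_\mathbf{d} \times^{\Aut(M)} \Gr_\mathbf{e}(M)$. The crucial input is then that $\Aut(M)$ is \emph{connected} (it is open in the vector space $\End(M)$), so each irreducible component $C_\alpha$ of $\Gr_\mathbf{e}(M)$ is $\Aut(M)$-stable and yields an irreducible closed subset $G_\mathbf{d} \times^{\Aut(M)} C_\alpha$ of $Z$; irreducibility of $Z$ forces there to be only one such component (this is the content of Lemma~\ref{Lem:AlgGroup}). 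Equivalently, you can argue directly: the action map $G_\mathbf{d} \times \Gr_\mathbf{e}(M) \to Z$ is surjective, so some $G_\mathbf{d} \times C_\alpha$ has image all of $Z$; intersecting with the fiber over $M$ gives $\Gr_\mathbf{e}(M) = \Aut(M)\cdot C_\alpha$, which is irreducible because $\Aut(M)$ is connected. Either way, connectedness of $\Aut(M)$ is the missing ingredient that replaces your appeal to generic irreducibility of fibers.
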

	
	\begin{proof}
		We consider the universal quiver Grassmannian $\Gr_\mathbf{e}^Q(\mathbf{d})\sub \Gr_\mathbf{e}(\mathbf{d}) \times \mathrm{R}_\mathbf{d}(Q)$ (see Section~\ref{Sec:QuivGrass}) and the map $p: \Gr_\mathbf{e}^Q(\mathbf{d}) \to \mathrm{R}_\mathbf{d}(Q)$ which projects to the second factor. Let $O(M)$ be the orbit of the representation $M$ by the group action of $G = \prod_i \mathrm{GL}_{d_i}(K)$ on $\mathrm{R}_\mathbf{d}(Q)$. The orbit is open in $\mathrm{R}_\mathbf{d}(Q)$ as $M$ is rigid. The inverse image $X = p^{-1}(O(M))$ is hence also open inside $\Gr_\mathbf{e}^Q(\mathbf{d})$ and thus, as $\Gr_\mathbf{e}^Q(\mathbf{d})$ is irreducible (it is an affine bundle over $\Gr_\mathbf{e}(\mathbf{d})$), $X$ is also irreducible. As we work in characteristic zero, the orbit $O(M)$ is isomorphic to $G/\Aut(M)$ via the action map. The resulting morphism $f: X \to G/\Aut(M)$ is $G$-equivariant and the fiber of $e\Aut(M)$ (the coset of the neutral element of $G$) is just $\Gr_\mathbf{e}(M)$. A general result yields that
		$$
			X \cong G \times^{\Aut(M)} \Gr_\mathbf{e}(M).
		$$
		The lemma that we are using here is well-known and the proof is very easy; this might be the reason why we could not find a reference for it other than \cite[Lem.\ 5.3]{FW:18}.
		The group $\Aut(M)$ is connected as it is open inside the vector space $\End(M)$. Now the claim follows from the following lemma.
	\end{proof}
	
	\begin{lem}\label{Lem:AlgGroup}
		Let $G$ be a connected algebraic group, let $H \sub G$ be a connected closed subgroup of $G$, let $Y$ be an $H$--scheme and $X=G\times^HY$. Then $Y_\alpha$ is an irreducible component of $Y$ if and only if $X_\alpha=G\times^HY_\alpha$  is an irreducible component of $X$. In particular, $X$ is irreducible if and only if $Y$ is irreducible. 
		\end{lem}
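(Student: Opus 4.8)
The plan is to reduce the lemma to two elementary facts and then combine them via the two projections out of $G\times Y$. (Throughout, all schemes are of finite type over the algebraically closed base field $K$.) The two facts are: \emph{(i)} a connected algebraic group acting on a scheme fixes each of its irreducible components setwise; and \emph{(ii)} if $f\colon Z\to W$ is a faithfully flat morphism of finite type all of whose fibres are irreducible, then $W_\alpha\mapsto f^{-1}(W_\alpha)$ is a bijection from the set of irreducible components of $W$ onto the set of irreducible components of $Z$, with inverse $Z_\beta\mapsto f(Z_\beta)$. Granting these, the lemma is almost immediate.

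Here is how I would deduce the lemma. Since $H$ is a connected closed subgroup of $G$, the quotient map $q\colon G\times Y\to G\times^{H}Y=X$ is faithfully flat of finite type: it is the base change of the faithfully flat morphism $G\to G/H$ along $X\to G/H$, and it is in fact an $H$-torsor, so its fibres are isomorphic to $H$, which is irreducible because a connected algebraic group over a field is irreducible. By fact (i) (applied to $H$ acting on $Y$) each irreducible component $Y_\alpha$ of $Y$ is $H$-stable, so $G\times^{H}Y_\alpha$ is defined, and since $G\times Y_\alpha$ is saturated for the fibres of $q$ one checks directly that $q^{-1}(G\times^{H}Y_\alpha)=G\times Y_\alpha$. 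Now $G$ is connected, hence irreducible (and geometrically so, as $K=\bar K$), so the schemes $G\times Y_\alpha$ are precisely the irreducible components of $G\times Y$: each is irreducible because a product of irreducible $K$-schemes over an algebraically closed field is irreducible, they cover $G\times Y$, and they are pairwise incomparable because the $Y_\alpha$ are. Applying fact (ii) to $q$, the irreducible components of $X$ are exactly the images $q(W)$ of the irreducible components $W$ of $G\times Y$; composing the two correspondences, $Y_\alpha\mapsto G\times^{H}Y_\alpha$ is a bijection from the set of irreducible components of $Y$ onto the set of irreducible components of $X$. This is exactly the first assertion, and the ``in particular'' statement is the special case of a single component.

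It remains to justify the two facts. For (i): given an irreducible component $Z_0$ of the $H$-scheme $Z$, the restriction of the action map to $H\times Z_0$ has irreducible source (product of irreducibles over $K=\bar K$), hence irreducible image $H\cdot Z_0$; this image contains $Z_0$ via the identity of $H$, and $Z_0$ is maximal among irreducible closed subsets of $Z$, so $\overline{H\cdot Z_0}=Z_0$, i.e.\ $Z_0$ is $H$-stable. For (ii): base-changing along $W_\alpha\hookrightarrow W$ reduces the irreducibility of $f^{-1}(W_\alpha)$ to the assertion ``$W$ irreducible $\Rightarrow$ $Z$ irreducible''; if $Z=Z_1\cup\dots\cup Z_n$ with $n\ge 2$, the pairwise disjoint nonempty opens $U_i=Z_i\setminus\bigcup_{j\ne i}Z_j$ have nonempty open images in the irreducible scheme $W$ (here one uses that a flat morphism of finite type is open), so two of these images must meet; any point $w$ in the intersection has fibre $f^{-1}(w)$ meeting two disjoint opens of $Z$, contradicting the irreducibility of that fibre. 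Finally the $f^{-1}(W_\alpha)$ cover $Z$ and are pairwise incomparable, and $W_\alpha\mapsto f^{-1}(W_\alpha)$ is injective, because $f$ is surjective; so this map is the claimed bijection.

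I expect the only point requiring genuine care to be fact (ii) — the simultaneous ascent and descent of irreducible components along a faithfully flat finite-type map with irreducible fibres — and, inside it, the use of openness of $f$ to force two of the strata $U_i$ to overlap over a common point of $W$. The remaining ingredients (existence of the associated-bundle quotient $G\times^{H}Y$, faithful flatness of $q$, and the behaviour of components under products over an algebraically closed field) are standard and purely formal.
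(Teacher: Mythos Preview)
Your proof is correct and follows essentially the same route as the paper: both pass through the principal $H$-bundle $\varphi\colon G\times Y\to X$, use connectedness of $H$ to make each $Y_\alpha$ $H$-stable, and use connectedness of $G$ to make each $G\times Y_\alpha$ irreducible. The only difference is packaging: you isolate the descent step as a general lemma about faithfully flat finite-type maps with irreducible fibres (your fact~(ii)), whereas the paper argues more tersely that openness of $\varphi$ forces each $X_\alpha$ to be closed in $X$ and hence an irreducible component---your version is more detailed and makes the maximality/incomparability explicit, but the underlying idea is the same.
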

	
	\begin{proof}
Since $H$ is connected, each irreducible component $Y_\alpha$ is $H$--stable and   $X=\cup_\alpha X_\alpha$. The map $\varphi:G \times Y \to X$ is a principal $H$--bundle. Thus, $G \times Y_\alpha$ and hence $X_\alpha$ is irreducible. Since the map $\varphi$ is open, $X_\alpha$ is closed in  $X$, and hence an irreducible component.
	\end{proof}
	
\begin{rem}\label{Rem:IrrAnyField}
In fact proposition~\ref{p:irred_rigid} holds even without the hypothesis of characteristic zero. In positive characteristic the orbit is not necessarily isomorphic to $G/Aut(M)$, but there is a radicial homeomorphism $G/Aut(M)\rightarrow \mathcal{O}(M)$ (\cite[III.3.5.2]{DG:70}) and one applies Lemma~\ref{Lem:AlgGroup} to the pullback of the map $X\rightarrow \mathcal{O}(M)$ to conclude. 
\end{rem}
	
	A result of Ellingsrud and Str{\o}mme states:
	
	\begin{thm}[{\cite[Th.\ 2.1]{ES:93}}] \label{t:decompDiag}
		Let $X$ be a smooth complete complex variety and assume that there exist $\alpha_\lambda, \beta_\lambda \in A^*(X)$ for all $\lambda$ in a finite set $I$ such that the class of the diagonal $\Delta \sub X \times X$ decomposes as
		$$
			[\Delta] = \sum_{\lambda \in I} p_1^*\alpha_\lambda \cdot p_2^*\beta_\lambda \cap [X \times X]
		$$
		in $A_*(X \times X)$. Then the $\alpha_\lambda$'s generate $A^*(X)$ as an abelian group and $X$ has property (S).
	\end{thm}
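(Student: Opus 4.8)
\textbf{Plan for proving Theorem~\ref{t:decompDiag} (the Ellingsrud--Str\o mme criterion).}
The plan is to extract both conclusions from a single manipulation of the diagonal class inside $A_*(X\times X)$, using the correspondence (Chow-theoretic ``kernel'') formalism for algebraic cycles. Fix a cycle class $\gamma\in A_*(X)$; I want to show $\gamma$ is an integral linear combination of the $\alpha_\lambda$. The key identity is that the diagonal $\Delta$, viewed as a correspondence from $X$ to $X$, acts as the identity on $A_*(X)$: for any $\gamma\in A_*(X)$ one has
\[
\gamma \;=\; (p_2)_*\bigl( p_1^*\gamma \cdot [\Delta]\bigr),
\]
where $p_1,p_2\colon X\times X\to X$ are the projections. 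This is standard (see \cite[Ch.~16]{Fulton:98}): $p_1^*\gamma\cdot[\Delta]$ is supported on $\Delta\cong X$, where it restricts to $\gamma$, and $p_2$ restricted to $\Delta$ is an isomorphism. Substituting the hypothesized decomposition $[\Delta]=\sum_{\lambda\in I} p_1^*\alpha_\lambda\cdot p_2^*\beta_\lambda\cap[X\times X]$ and using the projection formula together with $(p_2)_*(p_2^*\beta_\lambda\cdot -)=\beta_\lambda\cdot(p_2)_*(-)$ and $(p_2)_*(p_1^*(\gamma\cdot\alpha_\lambda)\cap[X\times X])=(\deg(\gamma\cdot\alpha_\lambda))[X]$-type identities, one obtains
\[
\gamma \;=\; \sum_{\lambda\in I} \bigl(\text{an integer depending on }\gamma,\alpha_\lambda\bigr)\cdot \beta_\lambda,
\]
so that the $\beta_\lambda$ generate $A_*(X)$ as an abelian group; by the symmetry of the diagonal (swapping the two factors interchanges the roles of the $\alpha$'s and $\beta$'s) the $\alpha_\lambda$ likewise generate. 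This gives the first conclusion. The main care needed here is bookkeeping of degrees and of which factor each class is pulled back from; the projection formula does all the real work.

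For property (S), I would run the identical argument but with $A_*$ replaced by singular (co)homology $H_*(X)=H_*(X(\mathbb{C});\mathbb{Z})$, using that the cycle map $\mathrm{cl}\colon A_*(X)\to H_*(X)$ is a ring homomorphism compatible with pushforward, pullback, and cap product, and that it sends $[\Delta]$ to the homology class of the diagonal. The homological diagonal still acts as the identity on $H_*(X)$ (Poincar\'e duality plus the K\"unneth formula identify $H_*(\Delta)$ with the identity correspondence). Hence the same computation shows the classes $\mathrm{cl}(\alpha_\lambda)$ generate $H_*(X;\mathbb{Z})$ as an abelian group. Since the $\alpha_\lambda\in A_*(X)$ live in even (co)homological degree, their images span $H_*(X)$ by even-degree classes, forcing $H_{\mathrm{odd}}(X)=0$; this is conclusion (2). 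Moreover the cycle map is then surjective in each degree (its image contains a generating set), and I would deduce injectivity plus the coincidence of numerical and rational equivalence (conclusions (1) and (3)) from the nondegeneracy of the intersection pairing: the decomposition of the diagonal yields, for each $\gamma$, an expression $\gamma=\sum_\lambda (\int_X \gamma\cdot\alpha_\lambda)\,\beta_\lambda$, so a class numerically trivial (all degrees $\int_X\gamma\cdot\alpha_\lambda$ vanishing) is already zero in $A_*(X)$; the same holds after applying $\mathrm{cl}$, and comparing the two nondegenerate pairings shows $\mathrm{cl}$ is an isomorphism.

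\textbf{Order of steps.} First, set up the correspondence formalism and prove the identity $\gamma=(p_2)_*(p_1^*\gamma\cdot[\Delta])$ in $A_*$. Second, substitute the hypothesized decomposition and simplify via the projection formula to get $\gamma=\sum_\lambda c_\lambda(\gamma)\,\beta_\lambda$ with $c_\lambda(\gamma)\in\mathbb{Z}$; invoke the symmetry $\Delta\mapsto\sigma(\Delta)$ to transfer this to the $\alpha_\lambda$. Third, transport the whole computation along the cycle map into $H_*(X;\mathbb{Z})$, read off vanishing of odd homology from the even degrees of the generators, and deduce that $\mathrm{cl}$ is onto. Fourth, use nondegeneracy of the pairing (visible from the same formula) to conclude injectivity of $\mathrm{cl}$ and the agreement of rational and numerical equivalence, hence finite generation and freeness of $A_*(X)$.

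\textbf{Expected main obstacle.} The genuinely delicate point is the step where the decomposition of the diagonal is used to read off that numerical equivalence implies rational equivalence: one must argue that the pairing matrix $(\int_X \alpha_\lambda\cdot\beta_\mu)$ is, after the projections to a basis, unimodular over $\mathbb{Z}$ --- equivalently that no torsion or divisibility phenomena intervene --- so that the expression $\gamma=\sum_\lambda(\int_X\gamma\cdot\alpha_\lambda)\beta_\lambda$ really does exhibit $A_*(X)$ as free on (a subset of) the $\beta_\lambda$ and forces the cycle map to be an isomorphism of abelian groups rather than merely a rational isomorphism. I would handle this by noting that the identity correspondence is defined over $\mathbb{Z}$ and that the same formula computes the action on $A_*$ and on $H_*$ compatibly, so a class killed by all pairings is literally $0$ in $A_*(X)$ (not just torsion), which pins down freeness and the integral isomorphism simultaneously.
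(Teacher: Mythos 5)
The paper gives no proof of this theorem at all---it is quoted directly from Ellingsrud--Str{\o}mme \cite[Th.~2.1]{ES:93}---and your plan reconstructs exactly the standard argument of that reference: the diagonal acts as the identity correspondence on $A_*(X)$ and on $H_*(X)$, and substituting the decomposition yields $\gamma=\sum_{\lambda}\deg\bigl((\gamma\cdot\alpha_\lambda)_0\bigr)\,\beta_\lambda$ and its homological analogue, from which generation by the $\alpha_\lambda$ (by symmetry of the two factors), vanishing of odd homology, surjectivity and injectivity of the cycle map, and the agreement of numerical and rational equivalence all follow; this is correct. The only superfluous point is the closing worry about unimodularity of the pairing matrix: the displayed formula already shows directly that a class whose pairings with all $\alpha_\lambda$ vanish is zero in $A_*(X)$, which, together with finite generation by the $\beta_\lambda$, is all that is needed for freeness and for the integral isomorphism.
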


	In the above theorem, the maps $p_1,p_2: X \times X \to X$ are the projections to the first and second component, respectively. Note that $\{ \beta_\lambda \mid \lambda \in I \}$ is also a set of generators for $A^*(X)$ as an abelian group. 
	
	\begin{rem}
		We illustrate how we can use the decomposition of the diagonal to prove that the (usual) Grassmannian $\Gr_k(V)$ has property (S). Let $V$ be an $n$-dimensional $K$-vector space. Denote by $V_X$ the trivial vector bundle on $X = \Gr_k(V)$ with fiber $V$. Let $\UU \sub V_X$ be the rank $k$--subbundle and let $\QQ = V_X/\UU$. On $X \times X$ we consider the composition of the morphisms
		$$
			p_2^*\UU \to p_2^*V_X = V_{X \times X} = p_1^*V_X \to p_1^*\QQ.
		$$
		It gives a global section $s$ of the bundle $\underline{\Hom}(p_2^*\UU,p_1^*\QQ) = p_2^*\UU^\vee \otimes p_1^*\QQ$. The zero locus $Z(s)$ agrees---as a scheme---with the diagonal $\Delta \sub X \times X$. The codimension of $Z(s)$ inside $X \times X$ is $\dim X = k(n-k)$ which is the same as the rank of $p_2^*\UU \otimes p_1^*\QQ$. Therefore, \cite[Prop.\ 14.1, Ex.\ 14.1.1]{Fulton:98} imply that
		$$
			[\Delta] = [Z(s)] = c_{k(n-k)}(p_2^*\UU^\vee \otimes p_1^*\QQ) \cap [X \times X]
		$$
		in $A_*(X \times X)$.
		If we denote by $\xi_1,\ldots,\xi_k$ the Chern roots of $\UU^\vee$ then we obtain, using a result of Lascoux (\cite{Lascoux:78}, see also \cite[Ex.\ 14.5.2]{Fulton:98}), the following expression for the top Chern class of the bundle $p_2^*\UU^\vee \otimes p_1^*\QQ$
		$$
			c_{k(n-k)}(p_2^*\UU^\vee \otimes p_1^*\QQ) = \sum_\lambda p_2^*s_\lambda(\xi_1,\ldots,\xi_k) \cdot p_1^*s_{\lambda^c}(\xi_1,\ldots,\xi_k).
		$$
		The sum ranges over all partitions $\lambda$ with $n-k \geq \lambda_1 \geq \ldots \geq \lambda_k \geq \lambda_{k+1} = 0$ and $\lambda^c$ stands for the complementary partition inside the $k \times (n-k)$ box, i.e.\ $\lambda_i + \lambda_{k-i+1}^c = n-k$ for all $i$. As the brick functions $s_\lambda$ are symmetric functions, they can be expressed as polynomials in elementary symmetric functions. Hence $s_\lambda(\xi_1,\ldots,\xi_k)$ is a polynomial in Chern classes of $\UU^\vee$ (and these lie in $A^*(X)$). The requirements of Theorem \ref{t:decompDiag} are fulfilled and thus $X$ has property (S).
	\end{rem}

	\begin{proof}[Proof of Theorem~\ref{t:S}]
	Let $\mathbf{d}=\mathbf{dim}\,M$ be the dimension vector of the rigid representation $M$ and let $\mathbf{e} \leq \mathbf{d}$ be a sub--dimension vector. We consider $X = \Gr_\mathbf{e}(M)$. It is a smooth projective variety of dimension $\langle \mathbf{e},\mathbf{d}-\mathbf{e} \rangle$. Let $\UU_i$, $\QQ_i$,  $\Phi:H\rightarrow K$ be the restrictions to $X\times X$ of the homonymous objects defined  in Section~\ref{Sec:ReductionThms}  for $M' = M'' = M$ and $\mathbf{f} = \mathbf{g} = \mathbf{e}$.  Since $M$ is rigid, the map $\Phi$ is surjective and hence $\ker(\Phi) = \underline{\Hom}_Q(p_2^*\UU,p_1^*\QQ)$ is a vector bundle of rank $\langle \mathbf{e},\mathbf{d-e} \rangle$. We use this vector bundle in the proof of Theorem \ref{t:S}. Let us consider the global section $s$ of $H$ given by
		$$
			p_2^*\UU_i \to p_2^*(M_i)_X = (M_i)_{X \times X} = p_1^*(M_i)_X \to p_1^*\QQ_i.
		$$
		This section is in fact a global section of the subbundle $\ker(\Phi)$: the fiber of the bundle $\underline{\Hom}_Q(p_2^*\UU,p_1^*\QQ)$ in a point $(U',U'')$ is $\Hom_Q(U'',M/U')$ and the composition $U'' \to M \to M/U'$ is a morphism of $K Q$-modules. The zero locus $Z(s)$ coincides with the diagonal $\Delta$ as a subscheme of $X\times X$. Then \cite[Prop.\ 14.1, Ex.\ 14.1.1]{Fulton:98} imply
		$$
			[\Delta] = i_{\Delta,*} \Z(s) = c_{\langle e,d-e \rangle}\big(\underline{\Hom}_Q(p_2^*\UU,p_1^*\QQ)\big) \cap [X \times X]
		$$
		because $X$ is smooth and the zero locus has the correct dimension. This Chern class agrees with the $\langle \mathbf{e},\mathbf{d-e} \rangle$\textsuperscript{th} coefficient of the power series
		$
			c_t(\EE) \cdot c_t(\FF)^{-1}.
		$
		Let $\xi_{i,1},\ldots,\xi_{i,\mathbf{e}_i}$ be the Chern roots of $\UU_i^\vee$. By Lascoux's result, the Chern polynomial of the bundle $\EE = \bigoplus_i \underline{\Hom}(p_2^*\UU_i,p_1^*\QQ_i)$ computes as
		$$
			c_t(\EE) = \prod_i \sum_{\lambda,\mu} \mathbf{d}_{\lambda,\mu} \cdot p_2^*s_\mu(\xi_{i,1},\ldots,\xi_{i,\mathbf{e}_i}) \cdot p_1^*s_{\lambda^c}(\xi_{i,1},\ldots,\xi_{i,\mathbf{e}_i}) \cdot t^{\mathbf{e}_i(\mathbf{d}_i-\mathbf{e}_i) + |\mu| - |\lambda|}.
		$$
		In the above equation the sum ranges over all partitions $\lambda$ and $\mu$ where $\mathbf{d}_i-\mathbf{e}_i \geq \lambda_1 \geq \ldots \geq \lambda_{\mathbf{e}_i} \geq \lambda_{\mathbf{e}_i+1} = 0$ and $\mu \sub \lambda$. The coefficient $\mathbf{d}_{\lambda,\mu}$ is the determinant of the $(\mathbf{e}_i \times \mathbf{e}_i)$-matrix whose $(k,l)$\textsuperscript{th} entry is $\binom{\lambda_k + \mathbf{e}_i - k}{\mu_l + \mathbf{e}_i - l}$. The formula for the Chern polynomial of $\FF$ is similar.
		Let $\delta(X) \sub A^*(X \times X)$ be the subset of all $\gamma$ which possess a decomposition
		$$
			\gamma = \sum_\lambda p_1^*\alpha_\lambda \cdot p_2^*\beta_\lambda
		$$
		for some classes $\alpha_\lambda,\beta_\lambda \in A^*(X)$. It is easy to see that $\delta(X)$ is a subring of $A^*(X \times X)$. We have argued that $c_t(\EE)$ and $c_t(\FF)$ lie in $\delta(X)[t]$. As the constant coefficient of $c_t(\FF)$ is one, also its inverse is a power series whose coefficients lie in $\delta(X)$. This shows that all coefficients of $c_t(\EE) \cdot c_t(\FF)^{-1}$ lie in $\delta(X)$; in particular the $\langle \mathbf{e,d-e} \rangle$\textsuperscript{th}.
		We are hence in the situation of Ellingsrud--Str{\o}mme's theorem, proving Theorem \ref{t:S}.
	\end{proof}

\begin{cor}\label{Cor:CohGrass}
 The map $\iota^\ast:{\rm H}^\bullet(\prod_{i \in Q_0}\Gr_{\mathbf{e}_i}(M_i))\rightarrow {\rm H}^\bullet(\Gr_\mathbf{e}(M))$ (induced by the closed embeddding $\iota:\Gr_\mathbf{e}(M)\rightarrow \prod_i\Gr_{\mathbf{e}_i}(M_i)$) is surjective. 
\end{cor}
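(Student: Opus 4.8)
The plan is to read this off directly from Theorem~\ref{t:S}. Write $X=\Gr_\mathbf{e}(M)$. First I would record that, by Theorem~\ref{t:S}, $X$ has property~(S); in particular the cycle map $\mathrm{cl}\colon A^\ast(X)\to H^{2\ast}(X)$ is an isomorphism of graded rings, $H^{\mathrm{odd}}(X)=0$, and $\mathrm{cl}$ sends the Chow Chern class $c_k(\mathcal E)$ of a vector bundle to the topological Chern class $c_k(\mathcal E)$ (all standard, see \cite[Ch.~19]{Fulton:98}). Theorem~\ref{t:S} also asserts that $A^\ast(X)$ is generated \emph{as a ring} by the classes $c_k(\UU_i)$, $i\in Q_0$, $k\ge 1$. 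Applying the ring isomorphism $\mathrm{cl}$, we conclude that $H^\bullet(X)$ is generated as a ring by the classes $c_k(\UU_i)\in H^{2k}(X)$. Since the isomorphism of property~(S) holds already with integral coefficients, the same statement holds with coefficients in any commutative ring.

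Next I would unwind the definition of the bundles $\UU_i$. By construction (Section~\ref{Sec:QuivGrass}, reproduced at the start of Section~\ref{Sec:ReductionThms}), $\UU_i$ on $X$ is the pullback, along the composite
$$
	X\xhookrightarrow{\ \iota\ }\prod_{j\in Q_0}\Gr_{\mathbf{e}_j}(M_j)\xrightarrow{\ \mathrm{pr}_i\ }\Gr_{\mathbf{e}_i}(M_i),
$$
of the tautological rank-$\mathbf{e}_i$ subbundle $\mathcal T_i$ of the trivial bundle with fibre $M_i$ on the ordinary Grassmannian $\Gr_{\mathbf{e}_i}(M_i)$, where $\mathrm{pr}_i$ is the projection to the $i$-th factor. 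Hence, for every $i$ and $k$,
$$
	c_k(\UU_i)=\iota^\ast\bigl(\mathrm{pr}_i^\ast\, c_k(\mathcal T_i)\bigr)
$$
lies in the image of $\iota^\ast$. Since $\iota^\ast$ is a ring homomorphism whose image therefore contains the ring generators $c_k(\UU_i)$ of $H^\bullet(X)$ produced in the first step, $\iota^\ast$ is surjective.

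There is essentially no obstacle here: the two facts that need a moment of care are that the cycle map is a ring homomorphism commuting with the formation of Chern classes (so that a ring-generating set of Chow classes is carried to a ring-generating set of cohomology classes), and that $\UU_i$ on $X$ is literally the pullback along $\iota\circ\mathrm{pr}_i^{}$ wait --- along $\mathrm{pr}_i\circ\iota$ --- of a bundle on the factor $\Gr_{\mathbf{e}_i}(M_i)$; both are immediate from the references above. Note that one does not even need to describe the cohomology of the ambient product $\prod_i\Gr_{\mathbf{e}_i}(M_i)$ (which is of course a polynomial-type ring on the $c_k(\mathcal T_i)$ by the Künneth formula): it suffices that each $\mathrm{pr}_i^\ast c_k(\mathcal T_i)$ is an element of it, which is trivial.
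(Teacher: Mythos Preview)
Your argument is correct and follows exactly the same approach as the paper's proof, which is a one-line reference to Theorem~\ref{t:S}: the cohomology of $\Gr_\mathbf{e}(M)$ is generated by the Chern classes of the $\UU_i$, and these are restrictions of classes on the product of Grassmannians. Your write-up simply unpacks this in more detail (aside from the stray self-correction about $\mathrm{pr}_i\circ\iota$, which you should clean up).
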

\begin{proof}
By Theorem~\ref{t:S}, ${\rm H}^\bullet(\Gr_\mathbf{e}(M))$ is generated by algebraic cycles which are restrictions of universal bundles on the product of Grassmannians.  
\end{proof}
\begin{cor}\label{Cor:PropertySPreprojective}
Every $Q$--representation $M$ whose   regular part $M_\mathcal{R}$  is rigid,  has property (S).
\end{cor}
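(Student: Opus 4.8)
The plan is to deduce Corollary~\ref{Cor:PropertySPreprojective} from Theorem~\ref{t:S} by splitting $M$ along its preinjective/regular/preprojective filtration and applying the reduction machinery of Section~\ref{Sec:ReductionThms} twice. Since the regular part is defined only for acyclic quivers, I assume $Q$ acyclic (otherwise pass to the acyclic support of $M$). By the Krull--Schmidt property and the split filtration $M_\mathcal{I}\subseteq M_\mathcal{I}\oplus M_\mathcal{R}\subseteq M$ recalled in Section~\ref{Sec:Quivers}, we have $M\cong M_\mathcal{I}\oplus M_\mathcal{R}\oplus M_\mathcal{P}$. Now $M_\mathcal{R}$ is rigid by hypothesis, while $M_\mathcal{P}$ and $M_\mathcal{I}$ are rigid because preprojective and preinjective representations of an acyclic quiver are rigid --- a standard fact, implicitly used already in Lemma~\ref{Lem:Irred} through Unger's Lemma~\ref{Lem:Unger}. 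Hence by Theorem~\ref{t:S} each of $M_\mathcal{P}$, $M_\mathcal{R}$, $M_\mathcal{I}$ has property~(S), and the proof of that theorem produces moreover, on each of their quiver Grassmannians, an explicit decomposition of the diagonal.

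The key step is the following consequence of Theorem~\ref{Thm:ReductionThm} (in the split case discussed right after its proof): if $A$ and $B$ are $Q$-representations with $\Ext^1(B,A)=0$ and every product $\Gr_\mathbf{f}(A)\times\Gr_\mathbf{g}(B)$ has property~(S), then $A\oplus B$ has property~(S). Indeed, the split sequence $\xi\colon 0\to A\to A\oplus B\to B\to 0$ is generating, so by Theorem~\ref{Thm:ReductionThm} each stratum $\mathcal{S}_{\mathbf{f},\mathbf{g}}^\xi$ of the decomposition \eqref{Eq:general decomposition} of $\Gr_\mathbf{e}(A\oplus B)$ is an affine bundle over the whole of $\Gr_\mathbf{f}(A)\times\Gr_\mathbf{g}(B)$; by Lemma~\ref{l:cell_vb} it then has property~(S), and by Lemma~\ref{Lem:AlphaPartQG} together with \cite[Lem.~1.8]{DLP} so does $\Gr_\mathbf{e}(A\oplus B)$. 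To use this I also need that a product of two property-(S) smooth projective varieties, at least one of which carries a decomposition of the diagonal, again has property~(S): such a decomposition forces the exterior product $A^*(\cdot)\otimes A^*(\cdot)\to A^*(\cdot\times\cdot)$ to be surjective, and a Künneth argument then gives property~(S) for the product. (Alternatively, $\Gr_\mathbf{f}(A)\times\Gr_\mathbf{g}(B)$ is the quiver Grassmannian of the rigid representation $A\sqcup B$ of the disjoint quiver $Q\sqcup Q$, and the proof of Theorem~\ref{t:S} never uses connectedness.)

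I would then apply the key step twice. First, with $A=M_\mathcal{R}$ and $B=M_\mathcal{P}$: here $\Ext^1(M_\mathcal{P},M_\mathcal{R})=0$ by the vanishing relations of Section~\ref{Sec:Quivers}, and $\Gr_\mathbf{f}(M_\mathcal{R})\times\Gr_\mathbf{g}(M_\mathcal{P})$ has property~(S) since both factors do and both carry a decomposition of the diagonal; hence $M_\mathcal{R}\oplus M_\mathcal{P}$ has property~(S). Second, with $A=M_\mathcal{I}$ and $B=M_\mathcal{R}\oplus M_\mathcal{P}$: here $\Ext^1(M_\mathcal{R}\oplus M_\mathcal{P},M_\mathcal{I})=\Ext^1(M_\mathcal{R},M_\mathcal{I})\oplus\Ext^1(M_\mathcal{P},M_\mathcal{I})=0$ again by those relations, and $\Gr_\mathbf{f}(M_\mathcal{I})\times\Gr_\mathbf{g}(M_\mathcal{R}\oplus M_\mathcal{P})$ has property~(S) because the first factor carries a decomposition of the diagonal and the second has property~(S) by the first application. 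Therefore $M\cong M_\mathcal{I}\oplus(M_\mathcal{R}\oplus M_\mathcal{P})$ has property~(S), proving the corollary.

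The ordering of the two steps --- which summand is the submodule at each stage --- is forced by the relations $\Ext^1(\mathcal{P},\mathcal{R})=\Ext^1(\mathcal{P},\mathcal{I})=\Ext^1(\mathcal{R},\mathcal{I})=0$: the preinjective part must sit innermost and the preprojective part outermost, which is precisely the split filtration of $M$. The main obstacle is that $M_\mathcal{R}\oplus M_\mathcal{P}$ need not itself be rigid (the extensions $\Ext^1(\mathcal{R},\mathcal{P})$ need not vanish), so Theorem~\ref{t:S} cannot be invoked on it directly and one is forced to route through the reduction; correspondingly, the point that needs care is the stability of property~(S) under products of varieties admitting a decomposition of the diagonal.
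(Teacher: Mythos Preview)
There is a genuine gap: your claim that $M_\mathcal{P}$ and $M_\mathcal{I}$ are rigid is false in general. Only \emph{indecomposable} preprojective and preinjective representations are exceptional; a direct sum of preprojectives can have self-extensions. For instance, over the Kronecker quiver take $M_\mathcal{P}=P_2\oplus\tau^{-1}P_1$: then $\Ext^1(\tau^{-1}P_1,P_2)\cong D\Hom(P_2,P_1)\cong K^2\neq 0$. The references you give (Lemma~\ref{Lem:Irred}, Unger's Lemma~\ref{Lem:Unger}) concern indecomposable objects and do not imply rigidity of decomposable preprojectives. Consequently Theorem~\ref{t:S} cannot be applied to $M_\mathcal{P}$ or $M_\mathcal{I}$ as a whole, and your two-step reduction breaks down at the very first input.

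The paper's proof avoids this by working one indecomposable summand at a time: each indecomposable direct summand of $M$ (preprojective, preinjective, or a summand of the rigid $M_\mathcal{R}$) is rigid, hence has property~(S) by Theorem~\ref{t:S}; since these summands can be totally ordered so that $\Ext^1$ vanishes in one direction (the preprojective and preinjective components are directed, and within $M_\mathcal{R}$ all Ext's vanish by hypothesis), iterated use of Theorem~\ref{Thm:ReductionThm} in the split case together with Lemma~\ref{l:cell_vb} gives property~(S) for $M$. Your argument is easily repaired along these lines by refining the three pieces $M_\mathcal{I}$, $M_\mathcal{R}$, $M_\mathcal{P}$ into their indecomposable summands. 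Your discussion of property~(S) for products is then still the one nontrivial point, but it is handled exactly as you say: at each stage one factor is a quiver Grassmannian of a rigid indecomposable and so carries a decomposition of the diagonal.
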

\begin{proof}
By assumption, every indecomposable direct summand of $M$ is rigid and hence the result follows from Theorem~\ref{Thm:ReductionThm} and Lemma~\ref{l:cell_vb}. 
\end{proof}

\subsection{Proof of Corollary~\ref{Cor:PolyCount}}\label{Subsec:Cor2}
Let $M$ be a rigid representation of a quiver $Q$ over an algebraically closed field $K$, let $\mathbf{d}=\mathbf{dim}\,M$ be its dimension vector and let $\Gr_\mathbf{e}(M)$ be a non--empty quiver Grassmannian attached to it. There exists a unique (up to isomorphism) rigid $\ZZ$-form $M_\ZZ\in \ZZ Q-\textrm{mod}$ such that $M\cong M_\ZZ\otimes K$, and $M_F=M_\ZZ\otimes F$ is a rigid $FQ$--module for any field $F$  \cite{CB:Rigid}. The $\ZZ$-form $M_\ZZ$ defines a 
scheme $\mathcal{X}\rightarrow \textrm{Spec}(\ZZ)$ over $\ZZ$ whose geometric fibers are $\Gr_\mathbf{e}(M_F)$.  
The scheme $\mathcal{X}$  is a  subscheme of the product $\mathcal{X}_0=\prod_i\mathcal{G}r_\ZZ(\mathbf{e}_i,\mathbf{d}_i)$ of Grassmannians over $\ZZ$ (see Section~\ref{Sec:QuivGrass}). 

\begin{lem}\label{Lem:SchemeZ}
The scheme $\mathcal{X}$ is a projective, smooth, absolutely irreducible, complete intersection subscheme of $\mathcal{X}_0$ of dimension $\langle\mathbf{e,d-e}\rangle+1$.
\end{lem}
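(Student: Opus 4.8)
The plan is to realise $\mathcal{X}$ as the zero scheme of a section of an explicit vector bundle on $\mathcal{X}_0$, to analyse this section fibre by fibre over $\Spec\ZZ$, where rigidity is used, and then to read off the global statements from a dimension count together with a flatness criterion. Concretely, put $D_0:=\sum_{i\in Q_0}\mathbf{e}_i(\mathbf{d}_i-\mathbf{e}_i)$, so that $\mathcal{X}_0=\prod_i\mathcal{G}r_\ZZ(\mathbf{e}_i,\mathbf{d}_i)$ is smooth, projective and integral over $\ZZ$ of relative dimension $D_0$, hence of dimension $D_0+1$. Let $\mathcal{U}_i$ and $\mathcal{Q}_i$ be the tautological sub- and quotient bundles on $\mathcal{X}_0$ pulled back from the $i$-th factor, and set $\mathcal{E}:=\bigoplus_{\alpha\in Q_1}\underline{\Hom}(\mathcal{U}_{s(\alpha)},\mathcal{Q}_{t(\alpha)})$, a bundle of rank $r:=\sum_{\alpha\in Q_1}\mathbf{e}_{s(\alpha)}(\mathbf{d}_{t(\alpha)}-\mathbf{e}_{t(\alpha)})$; note that $D_0-r=\langle\mathbf{e},\mathbf{d-e}\rangle$. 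The maps $(M_\ZZ)_\alpha$ induce a global section $s$ of $\mathcal{E}$ whose $\alpha$-component is the composite $\mathcal{U}_{s(\alpha)}\hookrightarrow(M_{s(\alpha)})_{\mathcal{X}_0}\xrightarrow{(M_\ZZ)_\alpha}(M_{t(\alpha)})_{\mathcal{X}_0}\twoheadrightarrow\mathcal{Q}_{t(\alpha)}$, and, by the definition of the universal quiver Grassmannian (Section~\ref{Sec:QuivGrass}), its zero scheme is exactly $\mathcal{X}$. Projectivity of $\mathcal{X}$ over $\ZZ$ is then clear, since $\mathcal{X}$ is closed in $\mathcal{X}_0$.

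The crucial fibrewise input is that for every field $F$ the fibre $\mathcal{X}_F=\Gr_\mathbf{e}(M_F)$ is empty or smooth over $F$ of pure dimension $\langle\mathbf{e},\mathbf{d-e}\rangle$. Indeed $M_F=M_\ZZ\otimes F$ is rigid by \cite{CB:Rigid}; for any subrepresentation $N\subseteq M_F$, applying $\Hom(-,M_F)$ to $0\to N\to M_F\to M_F/N\to 0$ and using that $Q$ is hereditary shows that $\Ext^1(N,M_F)$ is a quotient of $\Ext^1(M_F,M_F)=0$, and applying $\Hom(N,-)$ then shows that $\Ext^1(N,M_F/N)$ is a quotient of $\Ext^1(N,M_F)=0$. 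Hence the Zariski tangent space $\Hom_Q(N,M_F/N)$ of $\Gr_\mathbf{e}(M_F)$ at $[N]$ has dimension $[N,M_F/N]-[N,M_F/N]^1=\langle\mathbf{e},\mathbf{d-e}\rangle$, which coincides with the minimal possible dimension of an irreducible component through $[N]$ (Section~\ref{Sec:QuivGrass}); so $\Gr_\mathbf{e}(M_F)$ is smooth of that dimension at every point. This argument is characteristic-free, its only non-formal ingredient being the rigidity of $M_F$.

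It remains to globalise. Since $\mathcal{X}$ has a $K$-point it is non-empty, and every fibre of $\mathcal{X}\to\Spec\ZZ$ has dimension at most $\langle\mathbf{e},\mathbf{d-e}\rangle$, so $\dim\mathcal{X}\le\langle\mathbf{e},\mathbf{d-e}\rangle+1$. Conversely, $\mathcal{X}$ is, locally on $\mathcal{X}_0$, the vanishing locus of $r$ functions on the regular scheme $\mathcal{X}_0$ of dimension $D_0+1$, so every irreducible component of $\mathcal{X}$ has dimension at least $(D_0+1)-r=\langle\mathbf{e},\mathbf{d-e}\rangle+1$. Therefore $\mathcal{X}$ is equidimensional of dimension $\langle\mathbf{e},\mathbf{d-e}\rangle+1$, the section $s$ is regular, $\mathcal{X}\hookrightarrow\mathcal{X}_0$ is a complete intersection of codimension $r=\operatorname{rk}\mathcal{E}$, and $\mathcal{X}$ is Cohen--Macaulay. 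Moreover no irreducible component $W$ of $\mathcal{X}$ can lie in a single fibre over $\Spec\ZZ$, since such a fibre has strictly smaller dimension than $W$; hence $W$ dominates $\Spec\ZZ$ and, being proper over $\ZZ$, surjects onto it. So every fibre is non-empty, of dimension exactly $\langle\mathbf{e},\mathbf{d-e}\rangle=\dim\mathcal{X}-1$, and miracle flatness (a Cohen--Macaulay scheme mapping to a regular scheme with all fibres of the expected dimension is flat) shows that $\mathcal{X}\to\Spec\ZZ$ is flat; being flat with smooth fibres, it is smooth over $\ZZ$. Finally, the geometric fibres of $\mathcal{X}\to\Spec\ZZ$ are irreducible by Proposition~\ref{p:irred_rigid} together with Remark~\ref{Rem:IrrAnyField}; combined with flatness over the irreducible base $\Spec\ZZ$ this shows that $\mathcal{X}$ is irreducible (the generic fibre is dense) with geometrically irreducible fibres, i.e.\ that $\mathcal{X}$ is absolutely irreducible.

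The main obstacle is the fibrewise step and its dependence on $M_\ZZ\otimes F$ staying rigid over every field $F$, which is the non-trivial content of \cite{CB:Rigid}. Once this is granted, the vanishing $\Ext^1(N,M_F/N)=0$, and hence the fibrewise smoothness of constant dimension, is a short homological computation, and everything else is formal: the complete-intersection assertion is the dimension count above, flatness comes from the Cohen--Macaulay / miracle-flatness criterion, and absolute irreducibility is the already-established irreducibility of the fibres transported over $\Spec\ZZ$. The only additional points that need a little care are that $s$ cuts out $\mathcal{X}$ scheme-theoretically, and that $\mathcal{X}_0$ is $\ZZ$-smooth of the stated relative dimension, both of which are standard.
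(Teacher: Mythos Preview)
Your proof is correct and follows essentially the same route as the paper's: realise $\mathcal{X}$ as the zero locus of a section of a rank-$r$ bundle on $\mathcal{X}_0$, use rigidity of $M_F$ over every field to get fibrewise smoothness and irreducibility, deduce the complete-intersection property by the dimension count, and then conclude flatness (hence smoothness) and global irreducibility. The only cosmetic differences are that the paper cites \cite[Prop.~3.1(iii)]{CFR} for the complete-intersection step and \cite[14.4.4, 15.2.2]{EGA_IV_3} for flatness, whereas you spell out the section explicitly and invoke miracle flatness; these are the same arguments in different packaging.
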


\begin{proof}
By definition $\mathcal{X}$ is a closed subscheme of $\mathcal{X}_0$ and hence projective. Thus the map $p:\mathcal{X}\rightarrow\textrm{Spec}(\ZZ)$ is proper \cite[Th.~II.7.2]{Mumf:RedBook}.  Let $K$ be an algebraically closed field, since $M_K$ is rigid, the base change $\mathcal{X}_K$ of $\mathcal{X}$ to $K$ is smooth of dimension $\langle\mathbf{e,d-e}\rangle$, and irreducible by proposition~\ref{p:irred_rigid} and remark~\ref{Rem:IrrAnyField}. The same proof as \cite[Prop.~3.1~(iii)]{CFR} shows that $\mathcal{X}$ is complete intersection with $\sum_{i\rightarrow j\in Q_1}\mathbf{e}_i(\mathbf{d}_j-\mathbf{e}_j)$ number of equations inside the scheme $\mathcal{X}_0$  of dimension $\sum_{i\in Q_0}\mathbf{e}_i(\mathbf{d}_i-\mathbf{e}_i)+1$. So, $\langle\mathbf{e,d-e}\rangle+1$ is the dimension of the irreducible components of $\mathcal{X}$ which must all dominate $\textrm{Spec}(\ZZ)$. Thus $\mathcal{X}$ is irreducible, since the generic fiber is irreducible. Hence the morphism $p:\mathcal{X}\rightarrow\textrm{Spec}(\ZZ)$ is equidimensional in the sense of \cite[13.2.2]{EGA_IV_3}. By \cite[14.4.4, 15.2.2]{EGA_IV_3}, $p$ is flat and hence smooth.             
\end{proof}

For a prime $p$, let us denote by $K=\mathbb{Q}$ and by $\kappa=\mathbb{F}_p$. Then there is a commutative diagram:
$$
\xymatrix{
\textrm{H}^{2i}(\mathcal{X}_0(\CC),\mathbb{Q}_\ell)\ar[d]\ar[r]&
\textrm{H}^{2i}_{\textrm{\'et}}((\mathcal{X}_0)_{\CC},\mathbb{Q}_\ell)\ar[d]&
\textrm{H}^{2i}_{\textrm{\'et}}((\mathcal{X}_0)_{\overline{K}},\mathbb{Q}_\ell)\ar[d]\ar[r]\ar[l]&
\textrm{H}^{2i}_{\textrm{\'et}}((\mathcal{X}_0)_{\overline{\kappa}},\mathbb{Q}_\ell)\ar^{\iota^\ast_{\overline{\kappa}}}[d]\ar^{F^\ast}[r]&
\textrm{H}^{2i}_{\textrm{\'et}}((\mathcal{X}_0)_{\overline{\kappa}},\mathbb{Q}_\ell)\ar^{\iota^\ast_{\overline{\kappa}}}[d]\\
\textrm{H}^{2i}(\mathcal{X}(\CC),\mathbb{Q}_\ell)\ar[r]
&\textrm{H}^{2i}_{\textrm{\'et}}(\mathcal{X}_{\CC},\mathbb{Q}_\ell)
&\textrm{H}^{2i}_{\textrm{\'et}}(\mathcal{X}_{\overline{K}},\mathbb{Q}_\ell)\ar[l]\ar[r]
&\textrm{H}^{2i}_{\textrm{\'et}}(\mathcal{X}_{\overline{\kappa}},\mathbb{Q}_\ell)\ar^{F^\ast}[r]
&\textrm{H}^{2i}_{\textrm{\'et}}(\mathcal{X}_{\overline{\kappa}},\mathbb{Q}_\ell)
}
$$
where:  1) the horizontal arrows are isomorphisms (this follows from \cite[Th.~6.3~pg.~51, Cor.~3.3~pg.~63, Th.~3.1~pg.~62]{SGA_4_1_2}, passing to the limit, applied to $\mathcal{X}$ and $\mathcal{X}_0$ 
which are proper smooth over $\ZZ_{(p)}$, the localization of $\ZZ$ at $p$); 2) $F^\ast$ is induced by the geometric Frobenius morphism; 3) the leftmost vertical arrow is surjective by corollary~\ref{Cor:CohGrass}. It follows that $\iota^\ast_{\overline{\kappa}}$ is surjective. 
Moreover, since $F^\ast$ acts as multiplication by $p^i$ on $\textrm{H}^{2i}_{\textrm{\'et}}((\mathcal{X}_0)_{\overline{\kappa}},\mathbb{Q}_\ell)$ ($\mathcal{X}_0$ is a product of Grassmannians), then $F^\ast$ acts by the same scalar on  $\textrm{H}^{2i}_{\textrm{\'et}}(\mathcal{X}_{\overline{\kappa}},\mathbb{Q}_\ell)$. By the Grothendieck--Lefschetz trace formula (see e.g. \cite[(1.5.1)]{Deligne:74}):
$$
\left|\mathcal{X}(\mathbb{F}_{p^n})\right|=\sum_i\,(-1)^i\,\textrm{Tr}((F^\ast)^n, \textrm{H}^{i}_{\textrm{\'et}}(\mathcal{X}_{\overline{\kappa}},\mathbb{Q}_\ell)),
$$
calling $b_i=\textrm{dim}_\mathbb{Q}\textrm{H}^{i}(\mathcal{X}(\CC),\mathbb{Q})=\textrm{dim}_{\mathbb{Q}_\ell}\textrm{H}^{i}(\mathcal{X}(\CC),\mathbb{Q}_\ell)$ we get, for any power of a prime $q=p^n$,
$$
\left|\mathcal{X}(\mathbb{F}_{q})\right|=\sum_i b_{2i}q^i=P_\mathcal{X}(q)\in\ZZ_{\geq0}[q].
$$

\section{Proof of Theorem~\ref{t:DynkinIntro}: the Dynkin case}\label{Sec:CellDecDynkin}

\begin{thm}\label{t:Dynkin}
Every representation of a Dynkin quiver has property (C). 
\end{thm}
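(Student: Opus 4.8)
The plan is to argue by induction on $\dim_K M$, proving along the way the stronger \emph{relative} statement: $\Gr_\mathbf{e}(M)$ admits a cellular decomposition in which, for every pair $L\subseteq L'$ of subrepresentations of $M$, the closed subvariety $\{U\in\Gr_\mathbf{e}(M)\mid L\subseteq U\subseteq L'\}$ is a union of cells (equivalently, one decomposition in which all $\Gr_\mathbf{e}(N)$ and all $\{U\mid N\subseteq U\}$, $N\subseteq M$, are subcomplexes). This relative form is forced on us by the reduction theorems, since the loci preserved and removed by $\Psi^\xi_{\mathbf{f},\mathbf{g}}$ are exactly of this shape; bare property (C) would not survive the induction. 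The base case $\dim_K M\le 1$ is trivial, as then every non-empty quiver Grassmannian of $M$ is a point.

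If $M$ is decomposable, I split off a direct summand carrying no extension into the rest. Since $Q$ is Dynkin, $\Gamma(Q)$ is finite and acyclic, so I may order the indecomposables $E_1,\dots,E_N$ with $\Hom(E_a,E_b)\neq 0\Rightarrow a\le b$; then $\Ext^1(E_a,E_b)\cong D\Hom(E_b,\tau E_a)=0$ whenever $a\le b$. Writing $M=M'\oplus M''$ with the summands of $M'$ of strictly larger index than those of $M''$ yields $\Ext^1(M'',M')=0$, and Theorem~\ref{Thm:ReductionThm} applied to the split sequence $0\to M'\to M\to M''\to 0$ shows, using $[N'',M'/N']^1\le[M'',M']^1=0$ so that the rank $\langle\mathbf{g},\mathbf{dim}\,M'-\mathbf{f}\rangle$ is constant, that each stratum $\mathcal{S}_{\mathbf{f},\mathbf{g}}$ is an affine bundle over $\Gr_\mathbf{f}(M')\times\Gr_\mathbf{g}(M'')$. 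By induction the base has a relative decomposition (take the product of the ones for the two factors); Lemma~\ref{l:cell_vb} lifts it along the affine bundle, Lemma~\ref{Lem:AlphaPartQG} assembles the strata into a cellular decomposition of $\Gr_\mathbf{e}(M)$, and the relative property is checked stratum by stratum by tracking $L\cap M'$, $L'\cap M'$ and the images of $L,L'$ in $M''$.

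If $M$ is indecomposable, hence exceptional, the crux is to realize $M$ as the middle term of a non-split generating extension, i.e.\ to find a proper nonzero subrepresentation $X\subsetneq M$ with $[M/X,X]^1\le 1$ (then $\xi\colon 0\to X\to M\to S:=M/X\to 0$ is automatically non-split and generating). When $M$ receives an irreducible monomorphism — always the case for a non-simple projective, via a summand of $\rad M$ — Lemmata~\ref{Lem:Irred} and \ref{Lem:XsSXSectionalPath} provide such an $X$ and describe $X_S$, $S^X$, $S/S^X$ through Auslander--Reiten data; when $M$ admits an irreducible epimorphism one passes to $M^\ast$ over $Q^{\mathrm{op}}$ via the duality $\Gr_\mathbf{e}(M)\cong\Gr_{\mathbf{e}^\ast}(M^\ast)$ of Section~\ref{Sec:QuivGrass}; and when $M$ is quasi-simple and non-simple (so neither happens) one takes a suitable indecomposable $X\subsetneq M$ with $\Ext^1(M,X)=0$, for which the long exact sequence from $\Hom(-,X)$ applied to $\xi$ gives $[S,X]^1\le[X,X]=1$. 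Granting $\xi$, Theorem~\ref{Thm:Reduction2} gives $\Gr_\mathbf{e}(M)=\coprod_{\mathbf{f}+\mathbf{g}=\mathbf{e}}\mathcal{S}_{\mathbf{f},\mathbf{g}}^\xi$ with $\mathcal{S}_{\mathbf{f},\mathbf{g}}^\xi$ a Zariski-locally trivial affine bundle over the open set $\operatorname{Im}\Psi_{\mathbf{f},\mathbf{g}}^\xi=(\Gr_\mathbf{f}(X)\times\Gr_\mathbf{g}(S))\setminus(\Gr_\mathbf{f}(X_S)\times\Gr_\mathbf{g-dim\,S^X}(S/S^X))$. Since $X,S,X_S,S/S^X$ all have dimension $<\dim_K M$, the induction hypothesis gives a relative decomposition of $\Gr_\mathbf{f}(X)$ (so $\Gr_\mathbf{f}(X_S)$ is a subcomplex) and one of $\Gr_\mathbf{g}(S)$ in which $\{U\mid S^X\subseteq U\}\cong\Gr_\mathbf{g-dim\,S^X}(S/S^X)$ is a subcomplex (apply the relative hypothesis to $S^\ast$ and dualize); in the product decomposition the removed closed set is then a subcomplex, so $\operatorname{Im}\Psi^\xi_{\mathbf{f},\mathbf{g}}$ inherits a cellular decomposition, Lemma~\ref{l:cell_vb} lifts it to $\mathcal{S}_{\mathbf{f},\mathbf{g}}^\xi$, and Lemma~\ref{Lem:AlphaPartQG} produces the decomposition of $\Gr_\mathbf{e}(M)$; its relative property is verified by tracing $L\subseteq L'$ through $\Psi^\xi_{\mathbf{f},\mathbf{g}}$ to pairs of subrepresentations of $X$ and $S$.

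The two main obstacles I foresee are as follows. First, proving that \emph{every} non-simple indecomposable of a Dynkin quiver is the middle term of a generating extension: the cases arising from irreducible monomorphisms or epimorphisms are handled by Lemmata~\ref{Lem:Irred} and \ref{Lem:XsSXSectionalPath}, but the quasi-simple non-simple case really occurs (e.g.\ $\tau^{-1}P_i$ for a leaf $i$ of $D_4$), and one must show a suitable indecomposable $X\subsetneq M$ with $\Ext^1(M,X)=0$ always exists — this is read off from the shape of the component of $\Gamma(Q)$ through $M$, but needs care. Second, and more seriously, propagating the \emph{relative} cellular structure through the reduction: when a locus $\{L\subseteq U\subseteq L'\}$ is pulled back through $\Psi^\xi_{\mathbf{f},\mathbf{g}}$ the relevant fibre dimension $[U'',X/U']$ need not be locally constant on the corresponding subvariety of $\Gr_\mathbf{f}(X)\times\Gr_\mathbf{g}(S)$, so one is forced to refine the decompositions by upper-semicontinuous functions and to check that these refinements remain compatible with all the sub/quotient loci in play. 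This bookkeeping is the real content beyond the formal scheme above, and is precisely why the strengthened statement, rather than plain property (C), has to be carried through the induction.
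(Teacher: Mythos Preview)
Your inductive scheme has a genuine gap, and you have in fact put your finger on it yourself. The ``relative'' strengthening --- that \emph{every} locus $\{U\mid L\subseteq U\subseteq L'\}$ is a union of cells --- does not survive the passage through the affine bundles of Theorem~\ref{Thm:Reduction2}. The cells you produce in $\Gr_\mathbf{e}(M)$ are full preimages under $\Psi^\xi_{\mathbf{f},\mathbf{g}}$ of cells in the base. But for a given $L\subseteq L'\subseteq M$, the locus $\{L\subseteq N\subseteq L'\}\cap\mathcal{S}^\xi_{\mathbf{f},\mathbf{g}}$ meets each fibre $\Hom_Q(N'',X/N')$ in an affine \emph{subspace}, not in the whole fibre: writing $N=N_f$ for $f$ in the torsor $\Phi^{-1}(z_\theta)$, the conditions $L\subseteq N_f$ and $N_f\subseteq L'$ impose affine-linear equations on $f$. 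So this locus is not a union of your lifted cells, already in the split (decomposable) case. Refining further by upper-semicontinuous rank functions, as you suggest, does not obviously keep the pieces affine, and --- more seriously --- you would need a \emph{single} refinement that works simultaneously for \emph{all} pairs $L\subseteq L'$, since that is the hypothesis you must reproduce at the next step. There is no reason to expect such a common refinement to exist with affine cells.

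The paper sidesteps this entirely with a much simpler idea. After reducing to $M$ indecomposable (as you do), it inducts on the number of vertices of $Q$, not on $\dim_K M$. The key observation is that every positive root of a simply-laced Dynkin diagram, \emph{except} the highest root of $E_8$, has a leaf vertex $i$ with $d_i=1$. Taking $i$ to be a sink, one has $e_i\in\{0,1\}$, and in either case $\Gr_\mathbf{e}(M)$ is \emph{isomorphic} (not merely fibred over) a quiver Grassmannian for a representation supported on $Q\setminus\{i\}$: if $e_i=0$ one replaces $M$ by $\ker(M\twoheadrightarrow S_i)$, and if $e_i=1$ one replaces $M$ by $M/L$ where $L$ is generated by $M_i$. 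This reduces to a strictly smaller Dynkin quiver, with no bookkeeping of subcomplex structures at all. The single exceptional case, $\mathbf{d}$ the highest root of $E_8$, is handled by showing (Lemma~\ref{Lem:DynkinLeaf}) that $M$ is the middle term of an almost split sequence $0\to\tau N\to M\to N\to 0$; since $N$ and $\tau N$ are not of highest-root dimension, the previous argument gives them property~(C), and Corollary~\ref{Cor:RedThmAlmostSplit} finishes. No relative property is needed because the reduction is by isomorphism rather than by fibration.
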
	

\begin{proof}
Let $Q$ be a Dynkin quiver and let $M$ be a $Q$--representation. We prove that every (non--empty) quiver Grassmannian $\Gr_\mathbf{e}(M)$ attached to $M$ admits a cellular decomposition. 
The indecomposables $M(1),\cdots, M(N)$ of $\textrm{Rep}_K(Q)$ can be ordered so that $\Ext^1(M(i),M(j))=0$ if  $i>j$. Then, by Theorem~\ref{Thm:ReductionThm}, it is enough to prove Theorem~\ref{t:Dynkin} for $M$ indecomposable. We proceed by induction on the number of vertices of $Q$. The base of the induction being clear, we hence suppose the statement true for any subquiver of $Q$. In particular we may assume that $M$ is sincere, i.e. $\textrm{supp}(M)=Q$.  Let $\mathbf{d}=\mathbf{dim}\,M$. Suppose, first, that  $\mathbf{d}$ is not the longest root of $E_8$. Then $\mathbf{d}$ is minuscule, i.e. there exists a leaf $i\in Q_0$ such that $\mathbf{d}_i=1$. We may assume that $i$ is a sink. For such $i$, either $\mathbf{e}_i=0$ or $\mathbf{e}_i=1$. Let $L\subseteq M$ be the subrepresentation generated by $M_i$ (in fact it is $P_i$) and let $\overline{M}\subseteq M$ be such that $M/\overline{M}=S_i$. In both cases one sees that $\Gr_\mathbf{e}(\overline{M})$ is isomorphic to a quiver Grassmannian for a representation supported in a smaller quiver: namely  if $\mathbf{e}_i=0$ then $\Gr_\mathbf{e}(\overline{M})\stackrel{\sim}{\rightarrow} \Gr_\mathbf{e}(M)$ and
if $\mathbf{e}_i=1$ then $\Gr_{\mathbf{e-dim}\,L}(M/L)\stackrel{\sim}{\rightarrow}\Gr_\mathbf{e}(M)$ (to see this, notice that those natural maps  are bijective regular morphisms of algebraic varieties inducing isomorphisms on tangent spaces; since $M$ is rigid, $\Gr_\mathbf{e}(M)$ is smooth and hence  they are isomorphisms).  By induction we get the claim. If $\mathbf{d}$ is the longest root of $E_8$, then Lemma~\ref{Lem:DynkinLeaf} below and Corollary~\ref{Cor:RedThmAlmostSplit} ends the proof. 
\end{proof}

\begin{lem}\label{Lem:DynkinLeaf}
Let $M$ be an indecomposable of dimension vector equal to  the longest root of type $E_8$, then it is the middle term of an almost split sequence. 
\end{lem}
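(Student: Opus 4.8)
The plan is to realise $M$ as the full, indecomposable middle term of an almost split sequence $0\to\tau N\to M\to N\to 0$ for a suitable non-projective indecomposable $N$; equivalently, to exhibit in the Auslander--Reiten quiver $\Gamma(Q)$ a vertex $N$ which is a successor of $M$ and which receives exactly one arrow (necessarily $M\to N$). Since $\Gamma(Q)$ is a full subquiver of the translation quiver $\mathbb{Z}E_8$, every vertex lying over a leaf $v$ of $E_8$ — i.e.\ over $v\in\{1,2,8\}$ in Bourbaki labelling — has in-degree at most $\deg_{E_8}(v)=1$; so it is enough to produce a successor of $M$ lying over a leaf, because such a successor automatically has in-degree exactly one.

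First I would record the soft structural facts. Writing $\theta=\mathbf{dim}\,M=(2,3,4,6,5,4,3,2)$, every coordinate of $\theta$ is $\geq 2$, whereas $(\mathbf{dim}\,P_k)_k=(\mathbf{dim}\,I_k)_k=1$; hence $M$ is neither projective nor injective, so the almost split sequences starting and ending at $M$ are defined. Moreover $M$ admits no irreducible morphism to a projective: the minimal right almost split morphism ending in $P_k$ is $\mathrm{rad}\,P_k\hookrightarrow P_k$, so an irreducible morphism $M\to P_k$ would exhibit $M$ as a summand of $\mathrm{rad}\,P_k$, forcing $\theta_k\leq(\mathbf{dim}\,\mathrm{rad}\,P_k)_k=0$, a contradiction; dually, using $\Gr_\mathbf{e}(M)\cong\Gr_{\mathbf{e}^\ast}(M^\ast)$ for $M^\ast$ the highest-root module over $Q^{\mathrm{op}}$, the module $M$ receives no irreducible morphism from an injective. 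Consequently every arrow of $\Gamma(Q)$ out of $M$ ends at a non-projective indecomposable $N$; for each such $N$ the almost split sequence ending in $N$ is defined and $M$ occurs among the summands of its middle term, so it only remains to arrange that some such $N$ has in-degree one.

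Next I would pin down the position of $M$ in $\Gamma(Q)$, using that the vertices over a fixed diagram-vertex $v$ are exactly the $\tau$-orbit of $P_v$. For a bipartite orientation of $E_8$ one knits $\Gamma(Q)$ from the eight projectives and checks that the highest-root module $M$ lies over the trivalent vertex $4$ and has an arrow $M\to N$ with $N$ lying over the leaf $2$; then $N$ has in-degree one and is non-projective by the previous paragraph, so the almost split sequence ending in $N$ has middle term equal to $M$, which proves the lemma for this orientation. For an arbitrary orientation I would transport this along reflection functors at sinks: these fix the highest-root module (since $s_i\theta=\theta$ for $i\neq 8$, the unique vertex to which the affine node of $\widetilde{E}_8$ attaches) and preserve the property of being a middle term, and the remaining orientations are dealt with directly by the same knitting or by passing to $Q^{\mathrm{op}}$.

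The hard part will be the explicit combinatorics of $\Gamma(E_8)$: although the reductions above cut the problem down to a single finite verification — that in a bipartite orientation the highest-root module lies over the trivalent vertex and has a successor over the adjacent leaf — this still amounts to knitting a translation quiver on $120$ vertices, and one must check that the reflection-functor transport genuinely reaches every orientation of $E_8$ (reflecting at the vertex $8$ changes $M$, so some care is needed there). A cleaner variant that would bypass the bulk of the computation would be to show a priori that, for every orientation, the diagram-vertex over which $M$ lies is one of $\{3,4,7\}$, i.e.\ is adjacent to a leaf of $E_8$; this is the real combinatorial content of the statement.
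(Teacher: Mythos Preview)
Your overall strategy coincides with the paper's: both locate $M$ in the $\tau$-orbit of $P_i$ for $i$ the trivalent vertex, then observe that one AR-neighbour $N$ of $M$ lies over the adjacent leaf $j$, so the almost split sequence starting at $N$ has indecomposable middle term equal to $M$. Your structural reductions (non-projectivity of $M$, absence of irreducible maps $M\to P_k$ via the $\mathrm{rad}\,P_k$ argument) are correct and useful.

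The difference is entirely in how $M$ is located in $\Gamma(Q)$, and here the paper is far more efficient. You propose to knit $\Gamma(Q)$ for a bipartite orientation and then transport via BGP reflections at vertices $\neq 8$, falling back on further knitting or duality for any orientations not reached; this is a valid plan, but it leaves the transitivity of the restricted reflection action unproved and replaces a short argument by a sizable case analysis. The paper instead uses Bongartz's starting functions: for \emph{any} orientation one has $s_{P_i}(M)=\dim\Hom_Q(P_i,M)=\dim M_i=6$, and an inspection of Bongartz's tables shows that the value $6$ for $s_{P_i}$ is attained only on the $\tau$-orbit of $P_i$. This places $M$ over the trivalent vertex in one line, uniformly in the orientation. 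It is exactly the ``cleaner variant'' you flag in your last paragraph; the tool you were missing is the starting function $s_{P_i}$.
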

\begin{proof}
Let $Q$ be an arbitrary orientation of the Dynkin diagram of type $E_8$ given by
$$
\xymatrix{
&&j\ar@{-}[d]&&&&\\
\bullet\ar@{-}[r]&\bullet\ar@{-}[r]&i\ar@{-}[r]&\bullet\ar@{-}[r]&\bullet\ar@{-}[r]&\bullet\ar@{-}[r]&\bullet
}
$$
and let $\mathbf{d}=\mathbf{dim}\,M$ be the longest root:
$$
\xymatrix@C=0pt@R=0pt{
&&&3&&&&\\
\mathbf{d}:&2&4&6&5&4&3&2
}
$$
For an indecomposable $U$, consider the starting function $s_U$ which associates to any representation $V$ the number $\textrm{dim}\, \Hom_Q(U,V)$.  The starting function $s_{P_i}$ assumes the value $6$ on $M$, namely $s_{P_i}(M)=\textrm{dim}\,\Hom_Q(P_i,M)=\textrm{dim}\, M_i=6$. All starting functions for (extended) Dynkin quivers are listed in Bongartz's paper \cite{Bongartz:84}. Now by direct inspection we see that the value $6$ is only assumed in the $\tau$-orbit of $P_i$, thus M lies in that orbit. Consider the AR sequence ending in $M$, which has three middle terms. Precisely one of these middle terms, say $N$, belongs to the $\tau$-orbit of $P_j$. Thus the AR sequence ending in $N$ necessarily is of the form
$0\rightarrow N\rightarrow M\rightarrow \tau^{-}N\rightarrow 0.$
\end{proof}

\section{Proof of Theorem~\ref{t:DynkinIntro}: the affine case}\label{Sec:Affine}
In this section we prove that every indecomposable representation of an affine quiver $Q$ has property (C), in the sense that every quiver Grassmannian attached to it admits a cellular decomposition. Clearly, it is enough to deal with connected quivers. Thus, throughout the section $Q$ denotes an acyclic orientation of one of the extended Dynkin diagrams of type $\tilde{A}_n$, $\tilde{D}_n$, $\tilde{E}_6$, $\tilde{E}_7$ and $\tilde{E}_8$ depicted in table~\ref{Fig:ExtendedDynkinDiagrams}. In Section~\ref{Sec:Regular} we prove the result for the regular modules and in Section~\ref{Sec:Preproj} we prove the result for the preprojective modules. By duality we hence get the result for the preinjectives. 
\begin{table}[htbp]
\begin{center}
$$
\begin{array}{|c|c|c|}
\hline
\textrm{Type}&\textrm{Diagram}&\delta\\\hline
\hline
\xymatrix@R=10pt{\\\tilde{A}}
&
\xymatrix@C=8pt@R=8pt{
                         &                                         &\bullet\ar@{-}[drr]                                           &                                           &\\
\bullet\ar@{-}[r]\ar@{-}[urr]&\bullet\ar@{-}[r]&\cdots\ar@{-}[r]                   &\bullet\ar@{-}[r]&\bullet
                          }
                         &
\xymatrix@C=8pt@R=8pt{
                        &                                         &1\ar@{-}[drr]                                           &                                           &\\
1\ar@{-}[r]\ar@{-}[urr]&1\ar@{-}[r]&\cdots\ar@{-}[r]                   &1\ar@{-}[r]&1
                          }
\\\hline
\xymatrix@R=10pt{\\\tilde{D}}&
                          \xymatrix@C=8pt@R=5pt{
\bullet\ar@{-}[dr]&                         &                        &                                           &                                           &\bullet\\
                          &\bullet\ar@{-}[r]&\bullet\ar@{-}[r]&\cdots\ar@{-}[r]                   &\bullet\ar@{-}[ur]\ar@{-}[dr]&\\
\bullet\ar@{-}[ur]&                        &                         &                                           &                                           &\bullet
}
&
\xymatrix@C=8pt@R=5pt{
1\ar@{-}[dr]&                         &                        &                                           &                                           &1\\
                          &2\ar@{-}[r]&2\ar@{-}[r]&\cdots\ar@{-}[r]                   &2\ar@{-}[ur]\ar@{-}[dr]&\\
1\ar@{-}[ur]&                        &                         &                                           &                                           &1
}
\\\hline
\xymatrix@R=10pt{\\\tilde{E}_6}&
\xymatrix@C=10pt@R=10pt{
                        &   \bullet    \ar@{-}[r]                  &\bullet           &     &\\
\bullet\ar@{-}[r]&\bullet\ar@{-}[r]&\bullet\ar@{-}[r]\ar@{-}[u]&\bullet\ar@{-}[r]&\bullet}
&
\xymatrix@C=10pt@R=10pt{
       &               1\ar@{-}[r]        &2&&\\
1\ar@{-}[r]&2\ar@{-}[r]&3\ar@{-}[r]\ar@{-}[u]&2\ar@{-}[r]&1
}
\\\hline
\xymatrix@R=10pt{\\\tilde{E}_7}&
\xymatrix@C=10pt@R=10pt{
&& &\bullet           &  &   &\\
\bullet\ar@{-}[r]&\bullet\ar@{-}[r]&\bullet\ar@{-}[r]&\bullet\ar@{-}[r]\ar@{-}[u]&\bullet\ar@{-}[r]&\bullet\ar@{-}[r]&\bullet}
&
\xymatrix@C=10pt@R=10pt{
&& &2           &  &   &\\
1\ar@{-}[r]&2\ar@{-}[r]&3\ar@{-}[r]&4\ar@{-}[r]\ar@{-}[u]&3\ar@{-}[r]&2\ar@{-}[r]&1}
\\\hline\xymatrix@R=10pt{\\\tilde{E}_8}&
\xymatrix@C=10pt@R=10pt{
&&& &           & \bullet &   &\\
\bullet\ar@{-}[r]\ar_(.0){e}@{}[r]&\bullet\ar@{-}[r]&\bullet\ar@{-}[r]&\bullet\ar@{-}[r]&\bullet\ar@{-}[r]&\bullet\ar@{-}[u]\ar@{-}[r]&\bullet\ar@{-}[r]&\bullet}
&
\xymatrix@C=10pt@R=10pt{
&&& &           & 3 &   &\\
1\ar@{-}[r]&2\ar@{-}[r]&3\ar@{-}[r]&4\ar@{-}[r]&5\ar@{-}[r]&6\ar@{-}[u]\ar@{-}[r]&4\ar@{-}[r]&2}
\\\hline
\end{array}
$$
\caption{affine diagrams and the minimal positive imaginary roots}
\label{Fig:ExtendedDynkinDiagrams}
\end{center}
\end{table}

\subsection{Regular case}\label{Sec:Regular}
Let  $Q$ be an affine quiver shown in table~\ref{Fig:ExtendedDynkinDiagrams}. We denote by $\partial_Q:\ZZ^{Q_0}\rightarrow \ZZ$ the \emph{defect}  of $Q$ which is the linear form given by $\partial_Q(\mathbf{x}):=\langle\delta,\mathbf{x}\rangle$.
An indecomposable  $Q$--representation $M$ is preprojective, regular or preinjective  if and only if its defect $\partial_Q(M)$ is less, equal or greater than zero, respectively.  By using the defect, it is easy to see that the category $\mathcal{R}$ of regular $Q$--representations is an abelian category. This fact is not true for wild quivers (see e.g \cite{CB2}). The simple objects of $\mathcal{R}$ are precisely the quasi--simple regular $Q$--representations. Every indecomposable regular $Q$--representation $R$ admits a unique filtration: 
\begin{equation}\label{Eq:RegFilt}
R_0=0\subset R_1 \subset R_2\subset\cdots\subset R_{n-1}\subset R_{n-1}\subset R_n=R
\end{equation}
such that each $R_i$ is indecomposable regular and the successive quotients $S_k:=R_k/R_{k-1}$ are regular quasi--simple modules. Moreover the inclusions $R_{k-1}\subset R_k$ are irreducible. The quasi--simples $S_1=R_1, S_2, \cdots, S_n$ are called the regular composition factors of $R$; $R_1=S_1$ is called the regular socle of $R$ and $S_n$ is called the regular top of $R$. They satisfy the property $S_{k-1}\cong \tau S_k$ for every $k=2,\cdots, n$.  The number $n$ is called the regular length of $R$,  and $R_{n-1}$ is called the regular radical of $R$.   Every indecomposable $R$ is uniquely determined by its regular--length and its top $S$. For an indecomposable regular $R$ with regular--length $n$ and regular top $S$ we sometimes use the notations $R=R_n=R_n(S)$ and we denote by $R_i=R_i(S)$ the regular subrepresentation of regular--length $i$. 
It is worth noting that quasi-simple (indecomposable) regular representations do not have proper regular subrepresentations (since they are regular simples). In particular they are bricks. 
The restriction of the two endofunctors $\tau$ and $\tau^-$ to $\mathcal{R}$ define two inverse equivalences. The $\tau$--orbit of each indecomposable is finite and the order depends only on the regular top. The regular components of the AR--quiver are hence standard tubes. There are infinitely many tubes of rank one and at most three tubes of rank $>1$ which are called exceptional. The ranks of the exceptional tubes can be found in \cite{DlabRingel}. 

\begin{prop}\label{Prop:PropCQuasiSimpleAffineRegular}
Every quasi--simple regular $Q$--representation $S$ has property (C). Moreover, if $S$ and $T$ are regular quasi--simple such that $\mathbf{dim}\,S=\mathbf{dim}\,T=\delta$, then $\Gr_\mathbf{e}(S)\cong\Gr_\mathbf{e}(T)$ for every $\mathbf{e}$. 
\end{prop}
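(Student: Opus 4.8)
\emph{The plan.} Both parts are handled by reducing to the Dynkin case, Theorem~\ref{t:Dynkin}, via a vertex whose deletion turns $Q$ into a Dynkin quiver. In each affine type there is a vertex $i\in Q_0$ with $\delta_i=1$ which is either a sink or a source of $Q$ (a leaf in types $\tilde D,\tilde E$; any sink in type $\tilde A$), and $Q':=Q\setminus\{i\}$ is then a Dynkin quiver (every connected proper full subgraph of an extended Dynkin graph is Dynkin). Using the duality $\Gr_\mathbf{e}(M)\cong\Gr_{\mathbf{e}^\ast}(M^\ast)$, which preserves regularity and quasi--simplicity, we may assume $i$ is a sink. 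Since $S$ is a quasi--simple object of a tube we have $\mathbf{dim}\,S\le\delta$, hence $\dim_K S_i\in\{0,1\}$.

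\emph{Part 1.} If $\dim_K S_i=0$ then $S$ is a representation of $Q'$ and $\Gr_\mathbf{e}(S)$ is cellular by Theorem~\ref{t:Dynkin}. If $\dim_K S_i=1$, then the simple representation at $i$ embeds canonically as the subrepresentation $S_i\subseteq S$ supported at $i$, and modding it out, $S/S_i$ is a representation of $Q'$. Now fix $\mathbf{e}$. If $\mathbf{e}_i=0$ then every $U\in\Gr_\mathbf{e}(S)$ has $U_i=0$, so $\Gr_\mathbf{e}(S)=\{U\subseteq S\mid U_i=0\}=\Gr_\mathbf{e}(S_{<i})$, where $S_{<i}$ is the largest subrepresentation of $S$ with $(S_{<i})_i=0$ — a representation of $Q'$. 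If $\mathbf{e}_i=1$ then every $U\in\Gr_\mathbf{e}(S)$ contains $S_i$, so $U\mapsto U/S_i$ identifies $\Gr_\mathbf{e}(S)\cong\Gr_{\mathbf{e}-\mathbf{dim}\,S_i}(S/S_i)$, again a quiver Grassmannian over $Q'$. (One may instead phrase this through the short exact sequence $\eta\colon 0\to S_i\to S\to S/S_i\to 0$, Lemma~\ref{Lem:AlphaPartQG} and Theorem~\ref{thm:KeyThmReduction}: the strata $\mathcal{S}^\eta_{\mathbf{f},\mathbf{g}}$ with $\mathbf{f}\notin\{0,\mathbf{dim}\,S_i\}$ are empty because $\Gr_\mathbf{f}(S_i)=\varnothing$, the stratum with $\mathbf f=\mathbf{dim}\,S_i$ is a rank‑$0$ affine bundle over all of $\Gr_{\mathbf e-\mathbf{dim}\,S_i}(S/S_i)$ since $\Hom_Q(-,S_i)$ vanishes on $Q'$‑representations, and the stratum with $\mathbf f=0$ is $\Gr_\mathbf{e}(S_{<i})$.) In all cases $\Gr_\mathbf{e}(S)$ is isomorphic to a quiver Grassmannian over the Dynkin quiver $Q'$, hence admits a cellular decomposition by Theorem~\ref{t:Dynkin}; so $S$ has property (C).

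\emph{Part 2.} Let $\mathbf{dim}\,S=\mathbf{dim}\,T=\delta$; then $\dim_K S_i=\dim_K T_i=1$, so by Part 1 it suffices to show $S/S_i\cong T/T_i$ and $S_{<i}\cong T_{<i}$ as $Q'$--representations. First, $\mathbf{dim}(S/S_i)=\delta-\mathbf{dim}\,S_i$, and since $\delta$ lies in the radical of $q_Q$ we get
\[
q_{Q'}\big(\mathbf{dim}(S/S_i)\big)=q_Q(\delta-\mathbf{dim}\,S_i)=q_Q(\delta)+q_Q(\mathbf{dim}\,S_i)-2b_Q(\delta,\mathbf{dim}\,S_i)=0+1-0=1,
\]
so $\mathbf{dim}(S/S_i)$ is a real Schur root of $Q'$. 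Since $S/S_i$ is indecomposable, it is the unique indecomposable $Q'$--representation of that dimension vector, and in particular is independent of $S$; thus $S/S_i\cong T/T_i$. Secondly, $S_{<i}$ is the pull‑back into $S/S_i$ of the hyperplane $\ker(S_\alpha)\subset S_j$, where $\alpha\colon j\to i$; for $S$ a \emph{homogeneous} quasi--simple of dimension $\delta$ this hyperplane is in general position (the finitely many non‑generic positions are exactly those producing the exceptional tubes), so $S_{<i}$ is the corresponding generic subrepresentation, again independent of $S$. Hence $\Gr_\mathbf{e}(S)\cong\Gr_\mathbf{e}(T)$ for all $\mathbf{e}$.

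\emph{Main obstacle.} Part 1 is essentially bookkeeping once one has checked that $S/S_i$ and $S_{<i}$ are genuine representations of $Q'=Q\setminus\{i\}$ and that for each $\mathbf{e}$ only one stratum of the natural partition of $\Gr_\mathbf{e}(S)$ is nonempty. The real content of Part 2 is the independence‑of‑$S$ of the two $Q'$--representations $S/S_i$ and $S_{<i}$: concretely, that $S/S_i$ is indecomposable (equivalently, the unique rigid representation of the real root $\delta-\mathbf{dim}\,S_i$) and that $\ker(S_\alpha)$ is a generic hyperplane of $S_j$ when $S$ ranges over the homogeneous tubes — this is precisely the place where the hypotheses ``regular'', ``quasi--simple'' and ``$\mathbf{dim}\,S=\delta$'' are actually used.
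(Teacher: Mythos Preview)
Your Part~1 is essentially the paper's argument: the paper also picks an extending vertex $i$ (assumed a source rather than a sink, but this is the same up to duality), reduces the non--sincere case to Theorem~\ref{t:Dynkin}, and for sincere $S$ identifies $\Gr_\mathbf{e}(S)$ with a quiver Grassmannian for a representation supported on the Dynkin quiver $Q\setminus\{i\}$ according to whether $\mathbf{e}_i=0$ or $1$. The paper adds the remark that $\Gr_\mathbf{e}(S)$ is smooth (because proper subrepresentations of a quasi--simple regular are preprojective and proper quotients preinjective, so $[N,S/N]^1=0$), which is what turns the natural bijections into isomorphisms of varieties.

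Part~2 is where your argument has a genuine gap. You assert that $S/S_i$ is indecomposable but do not prove it, and your treatment of $S_{<i}$ via a ``hyperplane in general position'' is not a proof: you do not define general position, you do not explain why two generic hyperplanes give isomorphic $S_{<i}$'s, and you have not even verified that $\mathbf{dim}\,S_{<i}$ is independent of $S$. The paper's route is cleaner: it simply observes that the two auxiliary representations are \emph{rigid} and of the same dimension vector, hence isomorphic. Rigidity is easy to check from your own setup. Since $S$ is quasi--simple regular, $S/S_i$ is preinjective and $S_{<i}$ is preprojective; applying $\Hom(-,S/S_i)$ to $0\to S_i\to S\to S/S_i\to 0$ and using $\Hom(S_i,S/S_i)=(S/S_i)_i=0$ together with $\Ext^1(\mathcal R,\mathcal I)=0$ gives $\Ext^1(S/S_i,S/S_i)=0$; applying $\Hom(S_{<i},-)$ to $0\to S_{<i}\to S\to S/S_{<i}\to 0$ and using $\Hom(S_{<i},S/S_{<i})\hookrightarrow\Hom(S_{<i},I_i)=D(S_{<i})_i=0$ together with $\Ext^1(\mathcal P,\mathcal R)=0$ gives $\Ext^1(S_{<i},S_{<i})=0$. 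The dimension vector of $S/S_i$ is visibly $\delta-e_i$; that of $S_{<i}$ is fixed because $S/S_{<i}=\mathrm{Im}(S\to I_i)$ and Lemma~\ref{Lem:HappelRingel} forces this nonzero map to be mono or epi, the alternative being decided by whether $\delta\le\mathbf{dim}\,I_i$ or not --- a condition on $Q$ alone. Once rigidity is in hand, your indecomposability claim for $S/S_i$ follows a posteriori (a rigid representation of real--root dimension over a Dynkin quiver is indecomposable), but rigidity is the right thing to prove directly.
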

\begin{proof}
Let $\delta$ be the minimal positive imaginary root of $q_Q$ and let $\mathbf{d}=\mathbf{dim}\,S$.  Let $i\in Q_0$ be an extending vertex of $Q$, i.e. a vertex such that $\delta_i=1$ (see table~\ref{Fig:ExtendedDynkinDiagrams}). Up to duality, we can assume that $i$ is a source of $Q$. We show that a non--empty quiver Grassmannian $\Gr_\mathbf{e}(S)$ admits a cell decomposition. If $S$ is not supported on the whole quiver $Q$ then $S$ is supported on a Dynkin quiver and hence it has property (C) by Theorem~\ref{t:Dynkin}. We hence assume that $S$ is sincere.  It is known (see e.g. \cite[Sec.~9, Lem.~3]{CB1}) that $\mathbf{d}\leq\delta$. It follows that $\mathbf{d}_i=1$.  For every subrepresentation $N\subseteq S$ we have $[N,S/N]^1=0$ and hence $\Gr_\mathbf{e}(S)$ is smooth. We can now argue as in the proof of Theorem~\ref{t:Dynkin}: let $L(S)\subseteq S$ be the subrepresentation generated by $S_i$ and let $\overline{S}\subset S$ be such that $S/\overline{S}=S_i$. If $\mathbf{e}_i=0$ then $\Gr_\mathbf{e}(\overline{S})\stackrel{\sim}{\rightarrow} \Gr_\mathbf{e}(S)$ and if $\mathbf{e}_i=1$ then $\Gr_{\mathbf{e-dim}\,L(S)}(S/L(S))\stackrel{\sim}{\rightarrow}\Gr_\mathbf{e}(S)$ and the claim follows by Theorem~\ref{t:Dynkin}. To prove the last statement  we notice that $\overline{S}\cong \overline{T}$ and $S/L(S)\cong T/L(T)$, since they are rigid of the same dimension. 
\end{proof}

\begin{prop}\label{Prop:RegIndC}
Every indecomposable regular $Q$--representation has property (C). 
\end{prop}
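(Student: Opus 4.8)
The plan is to induct on the regular length $n$ of the indecomposable regular representation $R=R_n(S)$, proving a slightly stronger statement to make the induction self-sustaining: \emph{for every $n\ge 1$, every indecomposable regular $Q$--representation $R$ of regular length $n$, and every dimension vector $\mathbf{f}$, the quiver Grassmannian $\Gr_{\mathbf{f}}(R)$ admits a cellular decomposition which restricts to a cellular decomposition of the closed subvariety $\Gr_{\mathbf{f}}(R')$, where $R'\subset R$ is the regular radical (with $R'=0$ when $n=1$).} The base case $n=1$ is exactly Proposition~\ref{Prop:PropCQuasiSimpleAffineRegular}: here $\Gr_{\mathbf{f}}(R')$ is empty if $\mathbf{f}\neq 0$ and a single point (a $0$--cell) if $\mathbf{f}=0$, so it is automatically a union of cells of any cellular decomposition of $\Gr_{\mathbf{f}}(R)$.

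For the inductive step, use the short exact sequence $\xi\colon 0\to R_{n-1}\to R_n\to S_n\to 0$ coming from the filtration~\eqref{Eq:RegFilt}, where $R_{n-1}$ is the regular radical and $S_n$ the regular top of $R_n$. First one checks that $\xi$ is a non--split generating extension: it is non--split since $R_n$ is indecomposable, and since $S_n$ is quasi--simple with $\tau S_n=S_{n-1}$ the Auslander--Reiten formula gives $\Ext^1(S_n,R_{n-1})\cong D\Hom(R_{n-1},S_{n-1})$, which is one--dimensional because indecomposable objects of a standard stable tube are uniserial in the category $\mathcal{R}$ of regular representations (every nonzero map $R_{n-1}\to S_{n-1}$ is, up to scalar, the canonical surjection onto the regular top, its image being a nonzero regular subrepresentation of the regular--simple $S_{n-1}$). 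The same uniseriality identifies the Ringel reflections of Definition~\ref{Def:XsSx}: $(R_{n-1})_{S_n}=\ker(R_{n-1}\to S_{n-1})$ is the regular radical $R_{n-2}$ of $R_{n-1}$, while $S_n^{R_{n-1}}=S_n$ (using $\Ext^1(N,R_{n-1})\cong D\Hom(\tau^- R_{n-1},N)$ with $\tau^- R_{n-1}=R_{n-1}(S_n)$, together with the fact that a nonzero map from a regular representation into a proper subrepresentation of the regular--simple $S_n$ would have regular, hence zero, image).

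Now apply Theorem~\ref{Thm:Reduction2} to the $\alpha$--partition $\Gr_{\mathbf{f}}(R_n)=\coprod_{\mathbf{a}+\mathbf{b}=\mathbf{f}}\mathcal{S}^\xi_{\mathbf{a},\mathbf{b}}$ of Lemma~\ref{Lem:AlphaPartQG}. The first (closed) stratum $\mathcal{S}^\xi_{\mathbf{f},0}$ equals $\Gr_{\mathbf{f}}(R_{n-1})$, which by the inductive hypothesis carries a cellular decomposition restricting to one on $\Gr_{\mathbf{f}}(R_{n-2})$. For $\mathbf{b}\neq 0$ the stratum $\mathcal{S}^\xi_{\mathbf{a},\mathbf{b}}$ is a Zariski--locally trivial affine bundle over the image of $\Psi^\xi_{\mathbf{a},\mathbf{b}}$, which by the computation above is
$$
\bigl(\Gr_{\mathbf{a}}(R_{n-1})\times\Gr_{\mathbf{b}}(S_n)\bigr)\setminus\bigl(\Gr_{\mathbf{a}}(R_{n-2})\times\Gr_{\mathbf{b}-\mathbf{dim}\,S_n}(0)\bigr).
$$
This is the full product $\Gr_{\mathbf{a}}(R_{n-1})\times\Gr_{\mathbf{b}}(S_n)$ when $\mathbf{b}\neq\mathbf{dim}\,S_n$, and it is $\bigl(\Gr_{\mathbf{a}}(R_{n-1})\setminus\Gr_{\mathbf{a}}(R_{n-2})\bigr)\times\{\mathrm{pt}\}$ when $\mathbf{b}=\mathbf{dim}\,S_n$. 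In the first case the base is cellular by the inductive hypothesis (applied to $R_{n-1}$) together with Proposition~\ref{Prop:PropCQuasiSimpleAffineRegular} (applied to the quasi--simple $S_n$); in the second case the inductive hypothesis furnishes a cellular decomposition of $\Gr_{\mathbf{a}}(R_{n-1})$ restricting to the closed subvariety $\Gr_{\mathbf{a}}(R_{n-2})$, so the open complement is a union of cells and hence cellular. By Lemma~\ref{l:cell_vb} each $\mathcal{S}^\xi_{\mathbf{a},\mathbf{b}}$ is cellular; stacking these decompositions compatibly with the $\alpha$--partition yields a cellular decomposition of $\Gr_{\mathbf{f}}(R_n)$ in which the closed stratum $\Gr_{\mathbf{f}}(R_{n-1})$ appears as a union of (the first) cells, completing the induction and hence the proof.

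The main obstacle is that $\mathrm{Im}(\Psi^\xi_{\mathbf{a},\mathbf{b}})$ is only an \emph{open} subset of a product of cellular quiver Grassmannians, and open subsets of cellular varieties are not cellular in general. This is exactly why the inductive statement must be reinforced with the compatibility of the cellular decomposition of $\Gr_{\mathbf{f}}(R_{n-1})$ with the closed sub-Grassmannian of its regular radical, and why the identifications $(R_{n-1})_{S_n}=R_{n-2}$ and $S_n^{R_{n-1}}=S_n$ are crucial: they force the removed closed set to be merely $\Gr_{\mathbf{a}}(R_{n-2})\times\{\mathrm{pt}\}$, small enough for the bootstrap to close up. The only external input is the uniseriality of indecomposable objects in standard stable tubes, which I will quote from the representation theory of affine quivers.
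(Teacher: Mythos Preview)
Your proof is correct and follows essentially the same route as the paper: induction on the regular length via the short exact sequence $0\to R_{n-1}\to R_n\to S\to 0$, the identifications $(R_{n-1})_S=R_{n-2}$ and $S^{R_{n-1}}=S$, and Theorem~\ref{Thm:Reduction2}. The only organizational difference is that you close the induction by strengthening the hypothesis (compatibility of the cell decomposition of $\Gr_{\mathbf{f}}(R_{n-1})$ with $\Gr_{\mathbf{f}}(R_{n-2})$), whereas the paper phrases the inductive claim as ``each stratum $\mathcal{S}^{\xi_n}_{\mathbf{f},\mathbf{g}}$ is cellular'' and then observes $\mathcal{U}_{\mathbf{f}}(R_{n-1},R_{n-2})=\coprod_{\mathbf{f''}\neq 0}\mathcal{S}^{\xi_{n-1}}_{\mathbf{f'},\mathbf{f''}}$; these two formulations are equivalent.
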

\begin{proof}
Let $R=R_n(S)$ be an indecomposable regular representation of regular--lenght $n$ and regular--top $S$.  If $n=1$ then $R=S$ is quasi--simple, and the result is proved in proposition~\ref{Prop:PropCQuasiSimpleAffineRegular}. Suppose $n\geq2$. Then there is a short exact sequence 
$$
\xi_n:\xymatrix{
0\ar[r]&R_{n-1}\ar^{\iota}[r]&R_n\ar^{\pi}[r]&S\ar[r]&0}.
$$
We notice that $[S, R_{n-1}]^1=[R_{n-1}, \tau S]=1$ since $\tau S\cong R_{n-1}/R_{n-2}$ is the regular--top of $R_{n-1}$ and the quasi--simple regular representations are bricks. 
We have (see definition~\ref{Def:XsSx}) 
\begin{equation}\label{Eq:XsSXRegular}
\begin{array}{cc}
(S)^{R_{n-1}}=S,&(R_{n-1})_S=R_{n-2}.
\end{array}
\end{equation}
(This follows directly from Lemma~\ref{Lem:XsSX} parts 1 and 2.) 
We claim that each stratum $\mathcal{S}_\mathbf{f,g}^{\xi_n}$ admits a cellular decomposition for every $n\geq 2$. We proceed by induction on $n\geq2$. If $n=2$, then $\xi_2$ is almost split: by Theorem~\ref{Thm:Reduction2} we have an affine bundle
$
\xymatrix@R=2pt{
\mathcal{S}^{\xi_2} _{\mathbf{f}, \mathbf{g}}\ar@{->>}[r]&\textrm{Gr}_{\mathbf{f}}(\tau S)\times \textrm{Gr}_{\mathbf{g}}(S)
}
$
and we are done. Suppose $n\geq3$: we apply Theorem~\ref{Thm:Reduction2} to the exact sequence $\xi_n$ and in view of \eqref{Eq:XsSXRegular} we get affine bundles 
$$
\xymatrix{
\mathcal{S}^{\xi_n} _{\mathbf{f}, \mathbf{g}}\ar@{->>}[r]&\textrm{Gr}_{\mathbf{f}}(R_{n-1})\times \textrm{Gr}_{\mathbf{g}}(S)\textrm{ for }\mathbf{g}\neq \mathbf{dim}\,S,&  
\mathcal{S}^{\xi_n} _{\mathbf{f}, \mathbf{dim}\,S}\ar@{->>}[r]&\mathcal{U}_{\mathbf{f}}(R_{n-1}, R_{n-2})
}
$$
where $\mathcal{U}_{\mathbf{f}}(R_{n-1}, R_{n-2}):=\Gr_\mathbf{f}(R_{n-1})\setminus\Gr_\mathbf{f}(R_{n-2})$. By definition, 
$$
\mathcal{U}_{\mathbf{f}}(R_{n-1}, R_{n-2})=\coprod_{\mathbf{f'+f''=f},\; \mathbf{f''}\neq \mathbf{0}}\mathcal{S}_\mathbf{f',f''}^{\xi_{n-1}}.
$$
The claim follows by induction. 
\end{proof}
\begin{rem}
Formulas \eqref{Eq:XsSXRegular} can be deduced from \cite[Sec.~4]{Ringel:Wild}. Indeed Ringel proved the following: let $p:\xymatrix{R_{n-1}\ar@{->>}[r]&Z}$ be an indecomposable quotient of $R_{n-1}$. Ringel  noticed that if $Z\not\cong R_{n-1}/R_i$ for $i=0,\cdots, n-2$, then $p$ factors through $R_n$. This is another way to say that $(R_{n-1})_S=R_{n-2}$. Actually Ringel showed more: he proved that any morphism $R_{n-1}\rightarrow Z$ (not necessarily epi) factors through $R_n$ on the above hypothesis. He called this property ``extensions of homomorphisms''. 
\end{rem}

\subsection{Preprojective case}\label{Sec:Preproj}
Let $\mathcal{P}(Q)$ denote the subcategory of preprojective $Q$--representations. In this section we prove that every object of $\mathcal{P}(Q)$ has property (C).  

An indecomposable $M\in\mathcal{P}(Q)$ has the form $M=\tau^{-k}P_i$ and it has defect  $\partial (M):=\langle\delta, \mathbf{dim}\,M\rangle=-\delta_i$.
The following well--known fact will be used several times. 
\begin{lem}\label{Lem:DefectMono}
Let $M$ and $N$ be indecomposable preprojective representations of an affine quiver.  
\begin{enumerate}
\item If $|\partial(M)|\leq |\partial (N)|$ and $[N,M]^1=0$ then any non--zero morphism $f:M\rightarrow N$ is mono.
\item If there exists an epimorphism $g: \xymatrix{M\ar@{->>}[r]&N}$ then $|\partial(M)|>|\partial(N)|$.
\end{enumerate}
\end{lem}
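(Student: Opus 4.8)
The plan is to reduce both parts to a single mechanism: the defect $\partial=\langle\delta,-\rangle$ is additive on short exact sequences (being a linear form evaluated on dimension vectors), and every nonzero preprojective representation has \emph{strictly} negative defect, since an indecomposable preprojective $\tau^{-k}P_i$ has defect $-\delta_i\le -1$ (all entries of $\delta$ are positive, see Table~\ref{Fig:ExtendedDynkinDiagrams}) and defects add over direct sums. In particular, for indecomposable preprojective $M$ one has $\partial(M)<0$, so $|\partial(M)|=-\partial(M)$, and the inequality $|\partial(M)|\le|\partial(N)|$ in part~(1) is just $\partial(M)\ge\partial(N)$. I will also use that the subcategory $\mathcal{P}$ of preprojectives is closed under taking submodules, so that kernels of maps out of $M$ are again preprojective.

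For part~(1), I would argue as follows. Since $M$ and $N$ are indecomposable and $[N,M]^1=0$, the Happel--Ringel Lemma~\ref{Lem:HappelRingel} shows that a nonzero $f\colon M\to N$ is injective or surjective. Suppose $f$ is surjective and let $K=\ker f$, a submodule of $M$, hence preprojective. Additivity of the defect applied to $0\to K\to M\to N\to 0$ gives $\partial(M)=\partial(K)+\partial(N)$. If $K\ne 0$ then $\partial(K)<0$, so $\partial(M)<\partial(N)$, contradicting $\partial(M)\ge\partial(N)$; hence $K=0$ and $f$ is an isomorphism, in particular a monomorphism. Together with the injective case, $f$ is always mono.

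For part~(2), similarly: given an epimorphism $g\colon M\twoheadrightarrow N$, put $K=\ker g$, again preprojective as a submodule of $M$. Additivity gives $\partial(M)=\partial(K)+\partial(N)$, and if $g$ is not an isomorphism then $K\ne 0$, so $\partial(K)<0$ and $\partial(M)<\partial(N)$, i.e.\ $|\partial(M)|>|\partial(N)|$. I expect essentially no obstacle here: the whole content sits in the two elementary observations about the defect recalled in the first paragraph, and the only point to flag is that the strict inequality in~(2) tacitly presupposes $g$ to be a proper epimorphism (for $M\cong N$ the two defects obviously coincide).
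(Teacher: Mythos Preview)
Your argument is correct and follows essentially the same route as the paper: invoke Happel--Ringel for part~(1), then use additivity of the defect on the short exact sequence together with the fact that nonzero preprojectives have strictly negative defect to exclude a proper epimorphism. Your handling of the isomorphism case in~(1) and your remark that~(2) only yields a strict inequality when $g$ is a \emph{proper} epimorphism are in fact more careful than the paper's own proof, which glosses over both points.
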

\begin{proof}
By Happel-Ringel Lemma~\ref{Lem:HappelRingel}, $f$ is either mono or epi. If $f$ is epi, its kernel has positive defect and hence it cannot be preprojective. We conclude that $f$ must be injective.  To prove (2) we notice that the defect of the kernel of $g$, being preprojective, has negative defect. 
\end{proof}

\subsubsection{Type $\tilde{A}_n$, $\tilde{D}_n$, $\tilde{E}_6$, $\tilde{E}_7$}
Let $Q$ be an acylic orientation of an extended Dynkin diagram of one of the following types: $\tilde{A}_n$, $\tilde{D}_n$, $\tilde{E}_6$, $\tilde{E}_7$.  In this section we prove that every preprojective and every preinjective representation of $Q$ has property (C).  In order to deal with all cases at once, we introduce the  notion of a ``good monomorphism''. Recall that given two indecomposable preprojective $Q$--representations $X$ and $Y$, a sectional morphism $f:X\rightarrow Y$ is the composition
\begin{equation}\label{Eq:SectionalMorphism}
\xymatrix{
f:X=X_0\ar^(.6){f_1}[r]&X_1\ar^{f_2}[r]&X_2\ar[r]&\cdots\ar[r]&X_{t-1}\ar^{f_t}[r]&X_t=Y
}
\end{equation}
of irreducible morphisms $f_i$'s such that $X_{i-2}\not\cong\tau X_i$; in this case we say that $f$ factors through the modules $X_i$. We notice that $f$ is either mono or epi (in type $\tilde{A}$ it is mono, by Lemma~\ref{Lem:DefectMono}, in the other affine types this follows by Happel-Ringel Lemma~\ref{Lem:HappelRingel}). 
We say that $f$ is a \emph{minimal sectional mono} if it is mono and each morphism $X_{i}\rightarrow Y$ is epi for $i=1,2,\cdots, t$. In type $\tilde{A}$ a minimal sectional mono is irreducible by Lemma~\ref{Lem:DefectMono}.  An indecomposable preprojective $X=\tau^{-k}P_i$ is called a \emph{branch module} if $i\in Q_0$ is a branch vertex, i.e. it is connected to three vertices. 
\begin{definition}
Let $X$ and $Y$ be indecomposable preprojective $Q$--representations. A monomorphism $f:X\hookrightarrow Y$ is called a \emph{good mono} if 
$f$ is a sectional morphism which factors through at most one branch module. A good mono is called \emph{minimal good mono} if it is both a good mono and a minimal sectional mono. 
\end{definition}
\begin{ex}
Let $Q$ be a quiver of type $\tilde{D}_n$ and let $Y$ be an indecomposable preprojective $Q$--representation of defect $(-1)$. The section ending in $Y$ has the form: 
$$
\xymatrix@R=5pt{
Y_2 \ar[dr]&                    &          &Y_1\ar[dr]&              &\\
Y_3\ar[r]   & X_{n-3}\ar[r]&\cdots\ar[r]&X_2\ar[r]  &X_1\ar[r]&Y
}
$$ 
where $\partial(Y_i)=\partial(Y)=-1$ and $\partial(X_i)=-2$. The morphisms $Y_i \rightarrow Y$ are all sectional mono, but only the morphism $Y_1\rightarrow Y$ is good, since the other two factor through the branch modules $X_{n-3}$ and $X_1$. 
\end{ex}
\begin{rem}
We expect that there exists a more general notion of ``good morphism'' which holds for a larger class of quivers. It should be given by homological properties. The notion given here is adapted to the special choice of $Q$. The extra condition of not factoring through two branch modules only applies in type $\tilde{D}_n$ and it is needed to avoid pathological cases when $X$ is ``close to the projectives''. 
\end{rem}

We can now state the two fundamental lemmata to prove the main theorem of this section. Recall that a $Q$--representation $X$ is called \emph{thin} if $\textrm{dim}\,X_i\leq 1$ for every $i\in Q_0$. If the underlying graph of $Q$ is a tree then every indecomposable projective $Q$--representation is thin. If $Q$ is of type $\tilde{A}_n$ then all the indecomposable projective $Q$--representations are thin, except when $Q$ has precisely one source $i_0$ and precisely one sink $j_0$; in this case all projectives are thin except $P_{i_0}$ for which $\textrm{dim}\, (P_{i_0})_{j_0}=2$. Notice that $P_{i_0}$ is the only projective which is the end point of two non--isomorphic irreducible morphisms. 
\begin{lem}\label{Lem:Lemma1Preproj}
Let $Y\in\mathcal{P}(Q)$ be indecomposable. Then, either $Y$ is a quotient of an indecomposable projective $Q$--representation which is thin or there exists a good mono $f:X\hookrightarrow Y$ ending in $Y$. 
\end{lem}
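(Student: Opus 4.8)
The plan is to argue by induction on the number $k$ in the expression $Y = \tau^{-k}P_j$, using the knitting algorithm for the preprojective component $\Gamma(\mathcal P)$ of the AR-quiver. The base case $k = 0$, where $Y = P_j$ is projective, is almost vacuous: if $P_j$ is thin, then $Y$ is a quotient of itself (a thin indecomposable projective); and if $P_j$ is not thin, then by the discussion preceding the lemma $Q$ is of type $\tilde A_n$ with a unique source $i_0$ and unique sink $j_0$, and $j = i_0$. In that exceptional situation $P_{i_0}$ receives two non-isomorphic irreducible morphisms, and one checks directly that one of the two indecomposable summands of $\mathrm{rad}(P_{i_0})$ maps into $P_{i_0}$ by a minimal good mono (it is sectional, it is a monomorphism by Lemma~\ref{Lem:DefectMono}, and in type $\tilde A$ no branch vertices exist at all), so the second alternative holds.

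For the inductive step, write $Y = \tau^{-}Y'$ with $Y' = \tau^{-(k-1)}P_j$, and consider the almost split sequence $0 \to Y' \to E \to Y \to 0$. The irreducible morphisms $E_\ell \to Y$ out of the indecomposable summands $E_\ell$ of $E$ are the arrows of $\Gamma(\mathcal P)$ ending in $[Y]$. I would distinguish two cases. First, if at least one such irreducible morphism $\iota\colon E_\ell \to Y$ is a monomorphism, then I claim it can be chosen to be a \emph{good} mono: an irreducible morphism between preprojectives is trivially sectional (a path of length one cannot contain a subpath $[\tau M]\to[Z]\to[M]$), and it factors through at most the single module $E_\ell$, hence through at most one branch module; so $f = \iota$ works. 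Second, if \emph{every} irreducible morphism ending in $Y$ is an epimorphism, I would apply the inductive hypothesis to some summand $X'$ of $E$ that is not itself projective: either $X'$ is a quotient of a thin indecomposable projective — and then, composing with the epimorphism $X' \twoheadrightarrow Y$, so is $Y$, giving the first alternative for $Y$ — or there is a good mono $g\colon W \hookrightarrow X'$ ending in $X'$. In the latter subcase I would prolong $g$ by the irreducible morphism $X' \to Y$; the composite $W \to X' \to Y$ is again sectional provided the length-two path $W \to X' \to Y$ does not create a configuration $[\tau M]\to[Z]\to[M]$ at the junction $X'$, which is exactly the condition $\tau Y \not\cong$ (the module two steps back along $g$), and this I would verify using that $g$ is \emph{minimal} good mono together with the shape of the section ending in $Y$ (as in the $\tilde D_n$ example worked out above). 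One then also checks the composite remains a monomorphism and still factors through at most one branch module — here the "at most one branch module" clause is exactly what prevents the prolongation from passing through a second branch vertex, and is the reason the lemma is stated with good (rather than arbitrary sectional) monos.

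The main obstacle I anticipate is the bookkeeping in this last subcase: controlling, type by type among $\tilde A_n, \tilde D_n, \tilde E_6, \tilde E_7$, that prolonging a minimal good mono along one further irreducible arrow stays sectional and stays "good", i.e. does not run into a second branch module and does not turn into an epimorphism. For $\tilde A_n$ this is easy since there are no branch vertices and defects are $\pm 1$, so Lemma~\ref{Lem:DefectMono} forces monos; for $\tilde E_6, \tilde E_7$ there is a single branch vertex and the sections are short enough to enumerate; the genuinely delicate case is $\tilde D_n$ with its two branch vertices near the projectives, which is precisely why the hypothesis on good monos was tailored as it was. A clean way to organize the $\tilde D_n$ argument is to separate the modules "close to the projectives" (handled as quotients of thin projectives, the first alternative) from the rest (where the section ending in $Y$ has the standard shape of the displayed example, and the unique good sectional mono into $Y$ is visible directly from the knitting picture).
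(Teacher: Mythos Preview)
Your inductive approach is genuinely different from the paper's and, as written, has a real gap. The paper does not induct on $k$ at all: it considers directly the full section $\Sigma$ ending in $Y$. If $\Sigma$ is complete, an inspection of the table of $\delta$-values (excluding $\tilde E_8$) shows that some $X\in\Sigma$ satisfies $|\partial(X)|\le|\partial(Y)|$, so the sectional morphism $X\to Y$ is mono by Lemma~\ref{Lem:DefectMono}; one then simply chooses $X$ closest to $Y$ with this property (minimality) and, in type $\tilde D_n$, on the short side of the section (the ``at most one branch module'' condition). If $\Sigma$ is incomplete it contains a projective $P$, and the sectional map $P\to Y$ is mono or epi by Happel--Ringel, giving one of the two alternatives. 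No induction, no prolongation.

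The difficulty in your strategy is concentrated exactly where you place it, but it is worse than mere bookkeeping. Your inductive hypothesis only asserts the \emph{existence} of some good mono $g:W\hookrightarrow X'$; it does not let you choose $g$. If the good mono handed to you by induction happens to be $\tau Y\hookrightarrow X'$ (which is a perfectly legitimate irreducible, hence good, mono), the prolongation $\tau Y\to X'\to Y$ is not sectional, and you have no fallback. To make the induction go through you would have to strengthen the statement being proved (e.g.\ ``for every irreducible arrow $X'\to Z$ there exists a good mono into $X'$ extendable along it''), and then verify that stronger statement in the inductive step --- which amounts to re-deriving the structure of the section ending in $Z$ anyway. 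Even granting a suitable choice of $W$, the claim that the composite $W\to X'\to Y$ remains a monomorphism is not automatic from sectionality alone; you need $|\partial(W)|\le|\partial(Y)|$ (to invoke Lemma~\ref{Lem:DefectMono}) together with non-vanishing of the composite, neither of which you argue. The paper's one-shot use of the section ending in $Y$ gives all of this for free: one reads off both the defect inequality and the branch-module count directly from $\Sigma$, with no need to glue sectional paths.
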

\begin{proof}
Let us consider the section $\Sigma$ ending in $Y$. If $\Sigma$ is complete, 
then a look at table~\ref{Fig:ExtendedDynkinDiagrams} reveals that there exists $X\in \Sigma$ such that $|\partial(X)|\leq|\partial(Y)|$ (since $Q$ is not of type $\tilde{E}_8$) and hence the morphism $f:X\rightarrow Y$  is a sectional mono (see Lemma~\ref{Lem:DefectMono}). Thus we may choose $f$ to be a minimal sectional mono and in type $\tilde{D}_n$ such $X$ can be chosen so that the sectional morphism from $X$ to $Y$ does not factor through two branch modules. It follows that $f$ can be chosen to be a minimal good mono. 

If $\Sigma$ is not complete, then it contains a projective $X$ and the sectional map $X\rightarrow Y$ is either mono or epi by Happel-Ringel Lemma~\ref{Lem:HappelRingel}. This concludes the proof. 
\end{proof}

\begin{lem}\label{Lem:Lemma2Preproj}
Let $f:X\hookrightarrow Y$ be a minimal good mono and let $S=Coker(f)$. Then $[S,X]^1=1$ and hence $S^X$ and $X_S$ are well--defined. We have that $S^X=S$ and there are three possibilities for $X_S$: either
\begin{enumerate}
\item  $X_S=0$ or 
\item $X_S$ is indecomposable and $X_S\hookrightarrow X$ is a good mono or 
\item $X_S = F \oplus T$ with $F$ and $T$ indecomposable, $F\hookrightarrow X$ irreducible and $T\hookrightarrow X/F$  good mono.
\end{enumerate}
Moreover, $S$ is rigid, indecomposable and, if either preprojective or regular, it is quasi--simple. 
\end{lem}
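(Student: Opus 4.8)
The plan is to reduce the whole statement to Lemma~\ref{Lem:XsSXSectionalPath} by first checking that a minimal good mono satisfies its hypotheses, and then to refine the description of $X_S$ coming from that lemma into the stated trichotomy by a combinatorial analysis of the section ending in $Y$. So the first step is to show that, for a minimal good mono $f\colon X\hookrightarrow Y$, one has $[X,Y]=1$ and $[X,\tau Y]=0$. I would get $[X,\tau Y]=0$ from the general fact that a sectional morphism in the preprojective component has this property (equivalently, $[Y,X]^1=0$ by the Auslander--Reiten formula); this uses neither minimality nor the branch condition. For $[X,Y]=1$ I would use the ``at most one branch module'' clause: a sectional path meeting at most one branch module is, modulo the mesh relations, the unique path $[X]\to[Y]$ in the Auslander--Reiten quiver, so $\Hom_Q(X,Y)$ is one-dimensional, spanned by $f$. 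Combined with Happel--Ringel's Lemma~\ref{Lem:HappelRingel} this also reconfirms that every nonzero $X\to Y$ is $f$ up to scalar.

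Once these two numerical conditions are in place, Lemma~\ref{Lem:XsSXSectionalPath} applies directly and yields $[S,X]^1=1$ — so $X_S$ and $S^X$ are defined as in Definition~\ref{Def:XsSx} — together with the formula $X_S\cong E'\oplus\ker(E_0\to\tau Y)$ for $Y$ non-projective (resp.\ $X_S=E$ for $Y$ projective), where $E\to X$ is minimal right almost split and $E=E_0\oplus E'$ with $E_0$ the predecessor of $X$ along the section ending in $Y$. Unger's Lemma~\ref{Lem:Unger} then gives $[S,S]=1$ and $[S,S]^1=[X,Y]-1=0$, i.e.\ $S$ is an indecomposable rigid brick. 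For the quasi-simplicity claim I would invoke Corollary~\ref{Cor:QSimplePreproj}: when $S$ is preprojective or regular it is quasi-simple provided no irreducible monomorphism ends in $S$, and I would deduce the latter from minimality of the sectional mono by the same argument as in Lemma~\ref{Lem:Irred} (an irreducible mono $T\hookrightarrow S$ would pull back along $\pi\colon Y\twoheadrightarrow S$ to a refinement of $f$ beyond the last step of its section). Finally, for $S^X=S$: Lemma~\ref{Lem:XsSX}(2) identifies $S^X$ with the image of the essentially unique map $g\colon\tau^-X\to S$, and Lemma~\ref{Lem:XsSXSectionalPath} supplies the exact sequence $0\to X\to\im(\tau^-E_0\to Y)\to S^X\to 0$; minimality — that every intermediate $X_i$ surjects onto $Y$ — forces $\tau^-E_0\to Y$ to be onto, whence $\im(\tau^-E_0\to Y)=Y$ and $S^X=S$.

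It then remains to convert $X_S\cong E'\oplus\ker(E_0\to\tau Y)$ into the three cases. I would argue that $\ker(E_0\to\tau Y)$ is either $0$ or indecomposable, and that when it is nonzero the composite $\ker(E_0\to\tau Y)\hookrightarrow E_0\to X$ is again a sectional monomorphism through at most one branch module — a good mono into $X$ — which one reads off from the section of $X$ obtained by ``shifting'' the section of $Y$; and that $E'$ has at most one indecomposable summand $F$, with $F\hookrightarrow X$ irreducible. The bound on $E'$ is exactly where the branch-module hypothesis is used for $X$ itself: the middle term of an almost split sequence in an affine preprojective component has three summands only in the $\tau$-orbit of a branch-vertex projective, so if $f$ does not factor through a further branch module then only one summand of $E$ survives besides $E_0$. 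Assembling these: both pieces zero gives case~(1); exactly one nonzero gives an indecomposable $X_S$ good-embedded in $X$ (an irreducible mono being a good mono), i.e.\ case~(2); both nonzero gives $X_S=F\oplus T$ with $F=E'$ irreducibly embedded and $T=\ker(E_0\to\tau Y)$, whose image in $X/F$ is a good mono since $F\cap T=0$, i.e.\ case~(3).

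The main obstacle is precisely this last combinatorial part: bounding the number of summands of $E$ and certifying $\ker(E_0\to\tau Y)$ together with its embedding as a \emph{good} mono, uniformly across $\tilde A_n$, $\tilde D_n$, $\tilde E_6$, $\tilde E_7$, and — above all — handling the ``projective boundary'' configurations in type $\tilde D_n$, which is the reason the definition of good mono forbids passing through two branch modules. The homological content ($[S,X]^1=1$, rigidity and indecomposability of $S$, $S^X=S$) is comparatively routine once Lemma~\ref{Lem:XsSXSectionalPath} is available.
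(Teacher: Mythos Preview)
Your overall strategy matches the paper's: verify $[X,Y]=1$ and $[X,\tau Y]=0$, invoke Lemma~\ref{Lem:XsSXSectionalPath} to get $[S,X]^1=1$ and the formula $X_S\cong E'\oplus\ker(\tau X_1\to\tau Y)$, apply Unger's Lemma for rigidity, and then sort out the trichotomy combinatorially. Your argument for $S^X=S$ via the exact sequence $0\to X\to\im(\tau^-E_0\to Y)\to S^X\to 0$ and surjectivity of $X_1\to Y$ is correct (the paper argues more directly: minimality makes every pullback along a proper $N\hookrightarrow S$ split, forcing $[N,X]^1=0$).

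There is, however, a genuine gap in your trichotomy analysis. You assert that $E'$ has at most one indecomposable summand, and then identify case~(3) as ``$F=E'$ and $T=\ker(E_0\to\tau Y)$''. This fails when $X$ itself is a branch module --- which is allowed, since a good mono may pass through \emph{one} branch module. In that situation $E=\tau X_1\oplus E_1\oplus E_2$ with $E'=E_1\oplus E_2$ decomposable; the paper shows that then $\ker(\tau X_1\to\tau Y)=0$ (because the good mono does not factor through a \emph{second} branch module, so $X\to X_t$ is already injective), whence $X_S=E_1\oplus E_2$ and case~(3) arises with $F=E_1$, $T=E_2$, not with $T$ a kernel. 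Your scheme would classify this as ``$E'$ nonzero, kernel zero $\Rightarrow$ case~(2)'', which is wrong since $X_S$ is decomposable.

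The paper's proof therefore splits into three subcases according to whether $X$ is quasi-simple, a branch module, or neither, and in the last two relies on two facts verified type-by-type: the kernel of a sectional epi in $\mathcal{P}(Q)$ is a sectional mono, and the cokernel of an irreducible mono with strictly increasing $|\partial|$ is a sectional epi. The first is exactly what certifies that $\ker(\tau X_1\to\tau Y)\hookrightarrow X$ is sectional; the second is needed to see that $T\hookrightarrow X/F$ is sectional (you gloss this as ``$F\cap T=0$'', which does not by itself give a \emph{sectional} embedding into the quotient). These facts genuinely fail in type $\tilde{E}_8$, which is why that case is treated separately --- so a uniform ``soft'' argument of the kind you sketch cannot be expected to work.
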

\begin{proof}
If $Q$ is the Kronecker quiver, then $f$ is irreducible and $S$ is regular quasi--simple of dimension vector $\delta=(1,1)$. In particular, $[S,X]^1=[X,S]=1$. Since $f$ is irreducible,  $S^X=S$; moreover, by Lemma~\ref{Lem:XsSX} $X/X_S\cong S$ and $X_S\rightarrow X$ is irreducible. 

Let us now suppose that $Q$ is affine, neither of type $\tilde{E}_8$ nor of Kronecker type. In this case the fact that $f$ is a minimal sectional mono implies that $[X,\tau Y]=0$ and $[X,Y]=1$.  Thus, Unger's Lemma~\ref{Lem:Unger} guarantees that $S$ is indecomposable, rigid and $[S,X]^1=1$. It follows that $S^X$ and $X_S$ are well--defined (see definition~\ref{Def:XsSx}). Let us show that $S^X=S$. Let $\iota:N\hookrightarrow S$ be a proper non--zero subrepresentation of $S$. Since $f$ is a minimal sectional mono, the pullback sequence $\iota_\ast(\xi)$ splits. Since the morphism $\Ext^1(S,X)\rightarrow \Ext^1(N,X)$ is surjective and $[S,X]^1=1$, it follows that $[N,X]^1=0$. We conclude that $S^X=S$. Moreover, there are no irreducible monomorphisms ending in $S$. If $S$ is either preprojective or regular, Corollary~\ref{Cor:QSimplePreproj} hence implies that $S$ is quasi--simple.  Let us now compute $X_S$. Let $E\rightarrow X$ be the minimal right almost split map ending in $X$. Since $f:X\rightarrow Y$ is a sectional morphism, it has the form  \eqref{Eq:SectionalMorphism}. Then $E=\tau X_1\oplus E'$ for some $E'$. By Lemma~\ref{Lem:XsSXSectionalPath}, $X_S\cong \ker(\tau X_1\rightarrow \tau Y)\oplus E'$. By a case by case direct inspection, we notice that the following remarkable properties of $\mathcal{P}(Q)$ holds: 
\begin{eqnarray}\label{Fact1}
&&\textrm{The kernel of a sectional epi in $\mathcal{P}(Q)$ is a sectional mono;}\\ \label{Fact2}
&&\textrm{The cokernel of an irreducible mono $L\hookrightarrow M$ in $\mathcal{P}(Q)$ with $|\partial(L)|<|\partial(M)|$ is a sectional epi.}
\end{eqnarray}
(see remark~\ref{Rem:KernelSectional}.) We use \eqref{Fact1} and \eqref{Fact2}  to show that $X_S$ has the claimed form. 

Let us suppose first that $X$ is quasi--simple. Then $E'=0$ and $X_S=\ker(\tau X_1\rightarrow \tau Y)$. By \eqref{Fact1}, if $X_S\neq0$ then the embedding  $X_S\hookrightarrow X$ is a sectional mono. If $Q$ is of type $\tilde{D}_n$ then $|\partial(X)|=1$ and $|\partial(Y)|\in\{1,2\}$; if $|\partial(Y)|=1$, then  (since $f$ does not factor through two branch modules) $f=f_2\circ f_1$ where $f_1:X\rightarrow X_1$ and $f_2:X_1\rightarrow Y$ are irreducible, and $X_1$ is a branch module; in this case $X_S\rightarrow X$ has the form $X_S\rightarrow \tau X_1\rightarrow X$ where $X_S\rightarrow \tau X_1$ is irreducible and hence $X_S\rightarrow X$ is good mono. If $|\partial(Y)|=2$ then  $|\partial(Y)|=|\partial(X_1)|=2$ and hence $X_S=0$. 

Let us suppose that $X$ is a branch module. Then $E'=E_1\oplus E_2$ where $E_1$ and $E_2$ are indecomposable. Since $f$ does not factor through two branch modules we have 
$$
\textrm{Ker}(\tau X_1\rightarrow \tau Y)\cong \textrm{Ker}(X\rightarrow X_t)\subseteq \textrm{Ker}(X\rightarrow Y)=0.
$$
It follows that $X_S=E_1\oplus E_2$. By inspection,  $|\partial(X)|>|\partial(E_1)|$; thus \eqref{Fact2} guarantees that $X\rightarrow X/E_1$ is a sectional epi. Then $E_2\rightarrow X\rightarrow X/E_1$ is a sectional morphism. 

Let us suppose that $X$ is neither quasi--simple nor a branch module. Then $E'$ is either zero or indecomposable.  If either $\textrm{Ker}(\tau X_1\rightarrow \tau Y)=0$ or $E'=0$, then $X_S$ has the claimed form. Let us suppose that they are both non--zero. We put $F:=E'$ and $T:=\textrm{Ker}(\tau X_1\rightarrow \tau Y)$. We claim that $T\rightarrow X/F$ is a sectional morphism. In view of \eqref{Fact1}, the morphism $T\rightarrow X$ is a sectional mono and it does not factor through $F$. To prove the claim it remains to check that $X\rightarrow X/F$ is a sectional morphism.  By direct inspection, only two possibilities can occur: either $|\partial(X_1)|>|\partial(X)|>|\partial (E')|$ or $|\partial(E')|>|\partial(X)|>|\partial (X_1)|$.  Since $X_1$ is closer to the branch module of the section ending in $Y$ than $X$, again by direct inspection, $|\partial(X_1)|>|\partial(X)|>|\partial (E)|$. By \eqref{Fact2}, the cokernel $X\rightarrow X/E'$ is a sectional epi and we are done.  
\end{proof}

\begin{rem}\label{Rem:KernelSectional}
The fact that the kernel of a sectional epi in $\mathcal{P}(Q)$ is a sectional mono is not true if $Q$ is of type $\tilde{E}_8$. This is the reason why $\tilde{E}_8$ needs a slightly different treatment. 
\end{rem}
\begin{rem}
If $f:X\rightarrow Y$ is a good mono, not necessarily minimal, then the description of $X_S$ given in Lemma~\ref{Lem:Lemma2Preproj} still holds. The minimality of the morphism is needed only to get $S^X=S$. 
\end{rem}
\begin{thm}\label{Thm:Preprojective}
Every preprojective and every preinjective representation of $Q$ has property (C).
\end{thm}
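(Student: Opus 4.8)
The plan is to prove the theorem by two standard reductions and then an induction whose inner loop imitates the proof of Proposition~\ref{Prop:RegIndC}. First, the duality $\Gr_\mathbf{e}(M)\cong\Gr_{\mathbf{e}^\ast}(M^\ast)$ of Section~\ref{Sec:QuivGrass} exchanges $Q$ with $Q^{op}$ and preprojective with preinjective representations, so it suffices to treat preprojective $M$. Second, write $M=M(1)\oplus\cdots\oplus M(r)$ as a direct sum of indecomposables; since the preprojective component is directed, the $M(i)$ can be ordered so that $\Ext^1(M(j),M(i))=0$ whenever $i<j$. Then the sequence $0\to M(1)\oplus\cdots\oplus M(r-1)\to M\to M(r)\to 0$ splits, hence is a generating extension, and Theorem~\ref{Thm:ReductionThm} together with Lemma~\ref{Lem:AlphaPartQG} and Lemma~\ref{l:cell_vb} exhibits $\Gr_\mathbf{e}(M)$ as an $\alpha$--partition of affine bundles over products $\Gr_\mathbf{f}(M(1)\oplus\cdots\oplus M(r-1))\times\Gr_\mathbf{g}(M(r))$; an induction on $r$ thus reduces us to $M=Y$ indecomposable preprojective.

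For $Y$ indecomposable preprojective I would induct on the total dimension $|\mathbf{dim}\,Y|=\sum_i\dim Y_i$ and apply the dichotomy of Lemma~\ref{Lem:Lemma1Preproj}. If $Y$ is a quotient of a thin indecomposable projective then $Y$ is itself thin, so every $\Gr_{\mathbf{e}_i}(Y_i)$ is a single point and the closed subvariety $\Gr_\mathbf{e}(Y)\subseteq\prod_i\Gr_{\mathbf{e}_i}(Y_i)$ is empty or a point; property (C) is immediate. Otherwise Lemma~\ref{Lem:Lemma1Preproj} yields a minimal good mono $f\colon X\hookrightarrow Y$ with $X$ taken in the section ending in $Y$ so that $|\partial(X)|\le|\partial(Y)|$. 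Put $S=\textrm{Coker}(f)$ and let $\xi\colon 0\to X\to Y\to S\to 0$. By Lemma~\ref{Lem:Lemma2Preproj} (together with Unger's Lemma~\ref{Lem:Unger}) $\xi$ is a non--split generating extension, $S^X=S$, the representation $S$ is rigid and indecomposable, and $X_S$ has one of the three shapes listed there. Moreover $\partial(S)=\partial(Y)-\partial(X)\le 0$, so $S$ is preprojective or regular, never preinjective, and if regular it is quasi--simple by Lemma~\ref{Lem:Lemma2Preproj}. Theorem~\ref{Thm:Reduction2} and Lemma~\ref{Lem:AlphaPartQG} then write $\Gr_\mathbf{e}(Y)=\coprod_{\mathbf{f+g=e}}\mathcal{S}_\mathbf{f,g}^\xi$ as an $\alpha$--partition into Zariski--locally trivial affine bundles $\mathcal{S}_\mathbf{f,g}^\xi\to\textrm{Im}(\Psi_\mathbf{f,g}^\xi)$, so by Lemma~\ref{l:cell_vb} it is enough to cellularly decompose each image. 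Since $S^X=S$, the image equals $\Gr_\mathbf{f}(X)\times\Gr_\mathbf{g}(S)$ when $\mathbf{g}\ne\mathbf{dim}\,S$, and equals $\Gr_\mathbf{f}(X)\setminus\Gr_\mathbf{f}(X_S)$ (times the point $\Gr_{\mathbf{dim}\,S}(S)$) when $\mathbf{g}=\mathbf{dim}\,S$.

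In the first case $\Gr_\mathbf{f}(X)$ has property (C) by the induction hypothesis (it is indecomposable preprojective of strictly smaller total dimension), while $\Gr_\mathbf{g}(S)$ has property (C) by the induction hypothesis if $S$ is preprojective and by Proposition~\ref{Prop:PropCQuasiSimpleAffineRegular} if $S$ is regular; a product of two varieties with cellular decomposition has one. Hence the one step that requires real work, and the one I expect to be the main obstacle, is the second case: showing that each boundary piece $\Gr_\mathbf{f}(X)\setminus\Gr_\mathbf{f}(X_S)$ has a cellular decomposition. Here I would prove, by a secondary induction on $\dim X$ interlaced with the main one, the auxiliary statement that $\Gr_\mathbf{f}(X)\setminus\Gr_\mathbf{f}(Z)$ has property (C) for every indecomposable preprojective $X$ and every $Z\subseteq X$ of one of the three admissible shapes of Lemma~\ref{Lem:Lemma2Preproj}. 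If $Z=0$ the complement is empty or all of $\Gr_\mathbf{f}(X)$, handled by the main induction. If $Z$ is indecomposable with a (not necessarily minimal) good mono $Z\hookrightarrow X$, then $0\to Z\to X\to X/Z\to 0$ is again generating by the remark following Lemma~\ref{Lem:Lemma2Preproj}, its bottom stratum is exactly $\Gr_\mathbf{f}(Z)$, and the remaining strata partition $\Gr_\mathbf{f}(X)\setminus\Gr_\mathbf{f}(Z)$ into affine bundles over products $\Gr_\mathbf{a}(Z)\times\Gr_\mathbf{b}(X/Z)$ and over strictly smaller complements $\Gr_\mathbf{a}(Z)\setminus\Gr_\mathbf{a}(Z_{X/Z})$, where $X/Z$ is again preprojective or regular quasi--simple and $Z_{X/Z}$ again has one of the three shapes, so both induction hypotheses close the recursion, exactly as in Proposition~\ref{Prop:RegIndC}. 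If $Z=F\oplus T$ with $F\hookrightarrow X$ irreducible and $T\hookrightarrow X/F$ a good mono, I would first peel off $F$ via $0\to F\to X\to X/F\to 0$ and then apply the previous case to $T\subseteq X/F$. The delicate point — and the reason the theorem is restricted to $\tilde A_n,\tilde D_n,\tilde E_6,\tilde E_7$ — is to check that this recursion is well--founded and never leaves the class of admissible pairs $(X,Z)$: this rests on the structural facts~\eqref{Fact1} and \eqref{Fact2} (kernels of sectional epimorphisms, and cokernels of irreducible monomorphisms with strictly decreasing defect, are again sectional maps of the required kind) and on the defect inequality $|\partial(X)|\le|\partial(Y)|$ along sections, both of which fail for $\tilde E_8$ (Remark~\ref{Rem:KernelSectional}); one must in addition control the intermediate cokernels $X/Z$ well enough that the analogue of $S^X=S$ keeps holding, so that the complement strata recurse onto strictly smaller complements rather than onto something genuinely new.
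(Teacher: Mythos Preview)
Your strategy is the paper's: reduce to indecomposable preprojectives by duality and directedness, invoke Lemma~\ref{Lem:Lemma1Preproj} for a good mono $X\hookrightarrow Y$, and run an induction that simultaneously proves property~(C) for $Y$ and cellular decomposition of the open complements $\mathcal{U}_\mathbf{f}(X,Z):=\Gr_\mathbf{f}(X)\setminus\Gr_\mathbf{f}(Z)$ for $Z$ of one of the three shapes in Lemma~\ref{Lem:Lemma2Preproj}. Your three--case analysis of $X_S$ and the ``peel off $F$, then $T$'' treatment of case~(3) are exactly what the paper does.

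The obstacle you flag at the end is real, and your handling of case~(2) is not correct as written. When $Z\hookrightarrow X$ is a good mono that is \emph{not minimal}, one need not have $(X/Z)^Z=X/Z$ (the remark after Lemma~\ref{Lem:Lemma2Preproj} only asserts that the description of $Z_{X/Z}$ persists; minimality is precisely what forces $S^X=S$). In that situation Theorem~\ref{Thm:Reduction2} gives
\[
\textrm{Im}(\Psi_{\mathbf{a,b}})=(\Gr_\mathbf{a}(Z)\times\Gr_\mathbf{b}(X/Z))\setminus\bigl(\Gr_\mathbf{a}(Z_{X/Z})\times\Gr_{\mathbf{b}-\mathbf{dim}\,(X/Z)^Z}((X/Z)/(X/Z)^Z)\bigr),
\]
which is neither a product nor a complement of the form your recursion expects. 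The paper does not try to control $(X/Z)^Z$; it sidesteps the issue entirely. Given any good mono $Z\hookrightarrow X$, factor it as $Z\hookrightarrow Z'\hookrightarrow X$ with $Z'\hookrightarrow X$ a \emph{minimal} good mono (any terminal subpath of a sectional path is sectional and passes through no more branch modules, hence is still good). Since $Z\subseteq Z'\subseteq X$ one has the $\alpha$--partition
\[
\mathcal{U}_\mathbf{f}(X,Z)=\mathcal{U}_\mathbf{f}(X,Z')\amalg\mathcal{U}_\mathbf{f}(Z',Z),
\]
and both pieces are complements attached to good monos with strictly smaller induction parameter (e.g.\ lexicographic on $(\dim X,\dim X-\dim Z)$). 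The recursion then only ever unpacks a generating extension when the good mono is minimal, where $S^X=S$ holds and your description of the strata is valid.

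One small correction: you cannot always arrange $|\partial(X)|\le|\partial(Y)|$. When the section ending in $Y$ is incomplete, Lemma~\ref{Lem:Lemma1Preproj} may hand you a projective $X$ of larger absolute defect, and then $S=\textrm{Coker}(f)$ can be preinjective. The paper simply observes that $S^\ast$ is then preprojective of strictly smaller dimension, so the outer induction together with the duality $\Gr_\mathbf{g}(S)\cong\Gr_{\mathbf{g}^\ast}(S^\ast)$ of Section~\ref{Sec:QuivGrass} still gives property~(C) for $S$.
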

\begin{proof}
Up to duality, it is enough to show the claim for preprojective $Q$--representations. Since the category $\mathcal{P}(Q)$ of preprojective $Q$--representations is directed, in view of  Theorem~\ref{Thm:ReductionThm}, it is enough to prove the claim for an indecomposable preprojective $Q$--representation. 

Let $Y$ be an indecomposable preprojective $Q$--representation. If $Y$ is the quotient of an indecomposable projective $Q$--representation which is thin, then it is thin itself. Then every quiver Grassmannian  attached to $Y$ is either empty or a point. If $Y$ is not a quotient of a projective indecomposable which is thin, then, by Lemma~\ref{Lem:Lemma1Preproj} there exists a good mono $f: X\hookrightarrow Y$ ending in $Y$. We use this monomorphism to define the open subset $\mathcal{U}_\mathbf{e}(Y,X):=\Gr_\mathbf{e}(Y)\setminus \Gr_\mathbf{e}(X)$ of $\Gr_\mathbf{e}(Y)$.  There is the obvious decomposition $\Gr_\mathbf{e}(Y)=\Gr_\mathbf{e}(X)\coprod \mathcal{U}_\mathbf{e}(Y,X)$. By induction (on $\textrm{dim}\,Y$) $X$ has property (C) and hence $\Gr_\mathbf{e}(X)$ admits a cellular decomposition. Let us show that $\mathcal{U}_\mathbf{e}(Y,X)$ admits a cellular decomposition, too. We prooceed by induction on the total dimension of $X$ and $Y$. If the sectional monomorphism $X\hookrightarrow Y$ is not minimal, then it splits as the composition of two sectional monomorphisms $X\hookrightarrow X'$ and $X'\hookrightarrow Y$ which are still good and such that $X'\hookrightarrow Y$ is minimal. We have $\mathcal{U}_\mathbf{e}(Y,X)=\mathcal{U}_\mathbf{e}(Y,X')\coprod \mathcal{U}_\mathbf{e}(X',X)$ and hence by induction $\mathcal{U}_\mathbf{e}(Y,X)$ admits a cellular decomposition. We can hence assume that $f:X\hookrightarrow Y$ is minimal. Let $S$ be its cokernel.
By Lemma~\ref{Lem:Lemma2Preproj}, $S^X=S$ and $X_S$ has one of the three forms shown there.  Moreover, $S$ is rigid and indecomposable. If $S$ is regular, then it has property (C) by proposition~\ref{Prop:PropCQuasiSimpleAffineRegular}; if $S$ is not regular, then by induction we can assume that it has property (C) (if $S$ is preinjective, its dual is preprojective and property (C) is preserved by duality).  
 By definition, $\mathcal{U}_\mathbf{e}(Y,X)=\coprod_{\mathbf{g}\neq0}\mathcal{S}_{\mathbf{f,g}}^\xi$, where $\xi\in\Ext^1(S,X)$ is the non--zero exact sequence induced by $f$. In view of  Theorem~\ref{Thm:Reduction2}, there are affine bundles 
$$
\xymatrix@R=2pt{
\mathcal{S}^{\xi} _{\mathbf{f}, \mathbf{g}}\ar@{->>}[r]&\textrm{Gr}_{\mathbf{f}}(X)\times \textrm{Gr}_{\mathbf{g}}(S)\textrm{ for }\mathbf{g}\neq \mathbf{dim}\,S,&
\mathcal{S}^{\xi} _{\mathbf{f}, \mathbf{g}}\ar@{->>}[r]&\mathcal{U}_{\mathbf{f}}(X, X_S)\textrm{ for }\mathbf{g}=\mathbf{dim}\,S.
}
$$
By induction, $\textrm{Gr}_{\mathbf{f}}(X)\times \textrm{Gr}_{\mathbf{g}}(S)$ admits a cellular decomposition, and hence $\mathcal{S}^{\xi} _{\mathbf{f}, \mathbf{g}}$ admits a cellular decomposition, too, if $\mathbf{g}\neq \mathbf{dim}\,S$. It remains to show that $\mathcal{U}_{\mathbf{f}}(X, X_S)$ admits a cellular decomposition. We consider the three possibilities. 

If $X_S=0$, then $\mathcal{U}_{\mathbf{f}}(X, X_S)=\Gr_\mathbf{f}(X)$ admits a cellular decomposition by induction.

If $X_S$ is of type (2), i.e. it is indecomposable and $X_S\hookrightarrow X$ is a good mono, then the inductive hypothesis guarantees that $\mathcal{U}_{\mathbf{f}}(X, X_S)$ admits a cellular decomposition. 

Suppose, next, that $X_S$ is of type (3), i.e. $X_S=F\oplus T\hookrightarrow X$ with $F\hookrightarrow X$ irreducible and $T\hookrightarrow X/F$ good mono. We have a commutative diagram with exact rows and columns: 
$$
\xymatrix@R=15pt{
\eta:\ar@/^10pt/@{..>}[rrrr]&&&0\ar[d]&0\ar[d]&\\
& &                     &T\ar[d] \ar@{=}[r]      &T\ar[d]&\\
\zeta:&0\ar[r]&F\ar[r]\ar@{=}[d]&X\ar^{\pi'}[r]\ar^\pi[d]&X/F\ar^{p'}[d]\ar[r]&0\\
&0\ar[r]&F\ar[r]\ar[d]&R\ar^(.3)p[r]\ar[d]        &X/(F\oplus T)\ar[r]\ar[d]&0\\
&&0&0&0&
}
$$
The middle row $\zeta$ and the righthand column $\eta$ satisfy the hypothesis of Theorem~\ref{Thm:Reduction2}. Moreover, since $F\rightarrow X$ is irreducible,   $(X/F)^F=X/F$. We can hence decompose a quiver Grassmannian $\Gr_\mathbf{e}(X)$ as $\Gr_\mathbf{e}(X)=\coprod\mathcal{S}^\zeta_{\mathbf{f,g}}$ and there are affine bundles 
$$
\xymatrix@R=2pt{
\mathcal{S}^\zeta_{\mathbf{f,g}}\ar@{->>}[r]&Gr_\mathbf{f}(F)\times Gr_\mathbf{g}(X/F)& \textrm{for } \mathbf{g}\neq \mathbf{dim}\, (X/F),
\\
\mathcal{S}^\zeta_{\mathbf{f,dim }\, X/F}\ar@{->>}[r]&\mathcal{U}_\mathbf{f}(F, F_{X/F}) &\textrm{for } \mathbf{g}= \mathbf{dim}\, (X/F).
}
$$
By definition, $\mathcal{U}_\mathbf{e}(X,F)=\coprod_{\mathbf{g}\neq 0}\mathcal{S}_{\mathbf{f,g}}^\zeta$ and  $\mathcal{U}_\mathbf{e}(X,X_S)=\coprod_{\mathbf{g}\neq 0}\mathcal{S}_{\mathbf{f,g}}^\zeta\cap \mathcal{U}_\mathbf{e}(X, X_S)$.  A point $N$ lies in $\mathcal{S}_{\mathbf{f,g}}^\zeta\cap \mathcal{U}_\mathbf{e}(X, X_S)$ if and only if $p\circ\pi(N)=p'\circ\pi'(N)\neq0$.  This means that $N\in\mathcal{S}_{\mathbf{f,g}}^\zeta\cap \mathcal{U}_\mathbf{e}(X, X_S)$ if and only if  $\pi'(N)$ belongs to $\mathcal{U}_\mathbf{g}(X/F,T)$. Thus, the affine bundles above restrict to affine bundles 
$$
\xymatrix@R=2pt{
\mathcal{S}_{\mathbf{f,g}}^\zeta\cap \mathcal{U}_\mathbf{e}(X, X_S)\ar@{->>}[r]&Gr_\mathbf{f}(F)\times \mathcal{U}_\mathbf{g}(X/F,T)& \textrm{for } 0\neq\mathbf{g}\neq  \mathbf{dim}\, (X/F),\\
\mathcal{S}^\zeta_{\mathbf{f,dim }\, X/F}=\mathcal{S}^\zeta_{\mathbf{f,dim }\, X/F}\cap \mathcal{U}_\mathbf{e}(X, X_S)\ar@{->>}[r]&\mathcal{U}_\mathbf{f}(F, F_{X/F}) &\textrm{for } \mathbf{g}= \mathbf{dim}\, (X/F).
}
$$
By induction $\Gr_\mathbf{f}(F)$, $\mathcal{U}_\mathbf{g}(X/F,T)$ and $\mathcal{U}_\mathbf{f}(F, F_{X/F})$   admit a cellular decomposition, and so does $\mathcal{S}_{\mathbf{f,g}}^\zeta\cap \mathcal{U}_\mathbf{e}(X, X_S)$ and hence $\mathcal{U}_\mathbf{e}(X, X_S)$. 

\end{proof}

\subsubsection{Type $\tilde{E}_8$}
Let  $Q$ be an affine quiver of type $\tilde{E}_8$.  In this section we prove:
\begin{thm}\label{Thm:CellDecE8Tilde}
Every  preprojective and every preinjective representation of $Q$ has property (C).
\end{thm}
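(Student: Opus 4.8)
The plan is to mimic the proof of Theorem~\ref{Thm:Preprojective}. By the duality $\Gr_\mathbf{e}(M)\cong\Gr_{\mathbf{e}^\ast}(M^\ast)$, which exchanges preprojective $Q$--representations with preinjective $Q^{op}$--representations and preserves property (C), it is enough to treat the preprojective case; and since $\mathcal{P}(Q)$ is directed, Theorem~\ref{Thm:ReductionThm} reduces us to an indecomposable preprojective $Y$. As before I would run a double induction on the total dimension: pick a suitable proper submodule $X\hookrightarrow Y$, write $\Gr_\mathbf{e}(Y)=\Gr_\mathbf{e}(X)\amalg\mathcal{U}_\mathbf{e}(Y,X)$, deal with $\Gr_\mathbf{e}(X)$ by induction, and deal with $\mathcal{U}_\mathbf{e}(Y,X)=\coprod_{\mathbf{g}\neq 0}\mathcal{S}^\xi_{\mathbf{f},\mathbf{g}}$ via Theorem~\ref{Thm:Reduction2} applied to the induced non--split $\xi\in\Ext^1(S,X)$, $S=\coker(X\hookrightarrow Y)$. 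This produces affine bundles onto $\Gr_\mathbf{f}(X)\times\Gr_\mathbf{g}(S)$ for $\mathbf{g}\neq\mathbf{dim}\,S$ and onto $\mathcal{U}_\mathbf{f}(X,X_S)$ for $\mathbf{g}=\mathbf{dim}\,S$ (using $S^X=S$), all of which one wants handled by induction once $S$, $X$, and the indecomposable summands of $X_S$ are identified and shown to be smaller and of the right shape.

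Two ingredients of the $\tilde A$--$\tilde E_7$ argument fail for $\tilde E_8$ and must be replaced. First, in Lemma~\ref{Lem:Lemma1Preproj} we used that the section ending in $Y$ contains a module $X\neq Y$ with $|\partial(X)|\le|\partial(Y)|$, which then forces $X\hookrightarrow Y$ to be mono (a proper quotient of $X$ inside $Y$ would have positive defect, contradicting closure of $\mathcal{P}$ under subobjects). Inspecting $\delta$ for $\tilde E_8$ one sees this fails exactly when $\partial(Y)=-1$, i.e.\ when $Y=\tau^{-k}P_e$ lies over the extending vertex $e$ (the unique vertex with $\delta_e=1$), since then $Y$ is the only module of defect $-1$ in its section. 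For this single $\tau$--orbit of quasi--simple preprojectives I would instead produce, using Bongartz's tables \cite{Bongartz:84}, a proper preprojective submodule $X=\tau^{m}Y\hookrightarrow Y$ for an appropriate $m\ge2$ (concretely, the first $m$ for which $(\tau^{-m+1}P_e)_e=0$ while $(\tau^{-m}P_e)_e\neq0$, so that $[Y,X]^1=0$ by the Auslander--Reiten formula and a non--zero map $X\to Y$ exists and is mono by Happel--Ringel~\ref{Lem:HappelRingel}). The cokernel $S$ then has defect $0$; since $\Hom(\mathcal{R},\mathcal{P})=0$ one gets $[S,X]^1=1$, and, $Y,X$ being exceptional, Unger's Lemma~\ref{Lem:Unger} shows $S$ is indecomposable, hence regular, hence has property (C) by Proposition~\ref{Prop:RegIndC}. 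For the finitely many $k$ with $Y$ thin the quiver Grassmannians are reduced finite point sets and there is nothing to prove.

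Second, the computation of the Ringel reflection $X_S$ in Lemma~\ref{Lem:Lemma2Preproj} rested on Facts \eqref{Fact1}--\eqref{Fact2} about sectional morphisms, which are false for $\tilde E_8$ (Remark~\ref{Rem:KernelSectional}). I would instead start from the identity $X_S\cong E'\oplus\operatorname{Ker}(E_0\to\tau Y)$ of Lemma~\ref{Lem:XsSXSectionalPath} (valid for any sectional mono, with $E=E_0\oplus E'$ the minimal right almost split map ending in $X$), and classify the possible shapes of $X_S$ by a finite case check in the preprojective component of $\Gamma(Q)$, reading off the relevant Hom-- and Ext--dimensions from Bongartz's tables. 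The outcome I expect is a list only mildly longer than items (1)--(3) of Lemma~\ref{Lem:Lemma2Preproj}: $X_S$ is a direct sum $F_1\oplus\cdots\oplus F_r\oplus T$ of boundedly many indecomposables with each $F_i\hookrightarrow X/(F_1\oplus\cdots\oplus F_{i-1})$ irreducible and $T\hookrightarrow X/(F_1\oplus\cdots\oplus F_r)$ a (good) sectional mono. Then the nested--fibration argument of Theorem~\ref{Thm:Preprojective} — iterating Theorem~\ref{Thm:Reduction2} through the commutative diagram of short exact sequences built from the $F_i$, exactly as in the treatment of case (3) there — still yields a cellular decomposition of $\mathcal{U}_\mathbf{f}(X,X_S)$, and hence of $\Gr_\mathbf{e}(Y)$, closing the induction.

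The main obstacle is this second point. Without Facts \eqref{Fact1}--\eqref{Fact2} there is no uniform reason for the kernels occurring in $X_S$ to be sectional monos, so one is forced into an explicit and somewhat lengthy case analysis of $X_S$ for (minimal) sectional monos in the $\tilde E_8$ preprojective component, together with a verification that in every case the summands of $X_S$ are small enough and of the required form for the induction to proceed; this is precisely why $\tilde E_8$ deserves its own treatment. The first point is comparatively soft, but it genuinely needs the tables, since the natural candidate submodules of $\tau^{-k}P_e$ coming from its own section turn out to be epimorphic images rather than submodules.
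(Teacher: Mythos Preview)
Your overall strategy---duality, directedness, and induction via Theorem~\ref{Thm:Reduction2}---matches the paper, and your identification of $m=6$ for the defect $(-1)$ orbit is correct. But there is a genuine gap in the defect $(-1)$ step. You assert that the generating extension $\xi:0\to X\to Y\to S\to 0$ with $X=\tau^6Y$ satisfies $S^X=S$, and then use this to get affine bundles onto $\Gr_\mathbf{f}(X)\times\Gr_\mathbf{g}(S)$ for all $\mathbf{g}\ne\mathbf{dim}\,S$. This fails when $X=\tau^{-k}P_e$ with $k\le 4$: in that range $S^X=\tau^{-}X\subsetneq S$ (Lemma~\ref{Lem:E8SES}(4)), so for every $\mathbf{g}\ge\mathbf{dim}\,\tau^{-}X$ the image of $\Psi^\xi_{\mathbf{f},\mathbf{g}}$ is a proper open subset of $\Gr_\mathbf{f}(X)\times\Gr_\mathbf{g}(S)$ and your decomposition does not close. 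Also, $Y=\tau^{-5}P_e$ is in general \emph{not} thin (it can have a $2$ at the branch vertex), yet $\tau^6Y=0$, so it falls through both of your cases. The paper repairs this by a second short exact sequence $0\to\tau^{-}X\to S\to\tau^{4-k}I_e\to 0$ and a direct analysis showing the relevant images are affine spaces (Lemmas~\ref{Lem:E8Def1ClosetoProj} and~\ref{Lem:E8PropCDefect1}); this is not a cosmetic detail but an extra layer of the argument you would need to supply.

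Your plan for the remaining indecomposables---a case-by-case classification of the shapes of $X_S$ for sectional monos in the $\tilde E_8$ preprojective component---may be feasible but is both unspecified and unnecessary. The paper organizes the proof differently and thereby avoids that analysis entirely: it first treats \emph{all} quasi--simples (defect $-1$ as above, then defect $-2$ and $-3$ by short exact sequences of quasi--simples of strictly smaller $|\partial|$, Proposition~\ref{Prop:Defect23E8}), and only then turns to non--quasi--simple $Y$. For those there is always an \emph{irreducible} mono $X\hookrightarrow Y$ with $|\partial(X)|<|\partial(Y)|$; since such $X$ is never a branch module in $\tilde E_8$, Lemma~\ref{Lem:XsSXSectionalPath} forces $X_S$ to be indecomposable with $X_S\hookrightarrow X$ irreducible, so one is always in case~(2) of Lemma~\ref{Lem:Lemma2Preproj} and no enumeration of $X_S$-shapes is needed (Proposition~\ref{Prop:NotQuasiSimpleE8}). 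The quasi--simple cokernel $S$ here is preprojective, already handled. This reorganization is the key idea you are missing.
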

Up to duality, it is enough to show the claim for preprojective $Q$--representations. Since the category $\mathcal{P}(Q)$ of preprojective $Q$--representations is directed, in view of  Theorem~\ref{Thm:ReductionThm}, it is enough to prove the claim for an indecomposable preprojective $Q$--representation. 

We first prove the claim for representations of defect (-1) (see proposition~\ref{Prop:PropCDefect1} below), then for the remaining quasi--simples (see proposition~\ref{Prop:Defect23E8} below)  and finally for all the other indecomposable representations (see proposition~\ref{Prop:NotQuasiSimpleE8}). 
Let $\delta$ be the minimal positive imaginary root of $q_Q$ (see table~\ref{Fig:ExtendedDynkinDiagrams}).  We denote by $e\in Q_0$ the extending vertex of $Q$, which is the unique vertex such that $\delta_e=1$. A representation of defect (-1) is hence of the form $\tau^{-k}P_e$ for $k\geq0$.

\begin{lem}\label{Lem:E8SES}
For every $k\geq 0$ there exists a short exact sequence 
$$
\xi_k:\xymatrix{
0\ar[r]&X:=\tau^{-k} P_e\ar[r]&Y:=\tau^{-6}X\ar[r]&S=S_k\ar[r]&0}
$$ 
such that 
\begin{enumerate}
\item $S$ is regular quasi--simple in the tube of rank $5$ and $[S,X]^1=1$.
\item If $k\geq6$, $X_S=\tau^6 X$ and $S^X=S$.
\item If $k=5$, $\xi_5$ is generalized almost split (i.e. $X_S=0$ and $S^X=S$).
\item If $k\leq4$, $X_S=0$ and $S^X=\tau^{-}X$.
\end{enumerate}
\end{lem}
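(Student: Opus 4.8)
The plan is to produce $\xi_k$ as the essentially unique non--split extension of a suitable regular quasi--simple by $\tau^{-k}P_e$, to identify its middle term by a computation in the preprojective component, and then to read off $X_S$ and $S^X$ from Lemma~\ref{Lem:XsSX}. Since $\tilde E_8$ has a unique exceptional tube of rank $5$ and the dimension vectors of its five quasi--simples are nonnegative and sum to $\delta$, exactly one of them, say $S_{-1}$, has $e$--coordinate $1$ (the others $0$); set $S_0:=\tau^{-}S_{-1}$ and $S_k:=\tau^{-k}S_0$, all quasi--simple in that tube with $\tau^{5}S_k\cong S_k$. By the Auslander--Reiten formula $[S_0,P_e]^1=[P_e,\tau S_0]=[P_e,S_{-1}]=\dim(S_{-1})_e=1$. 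Since $\tau$ lifts to an auto--equivalence of $D^b(\mathrm{mod}\,KQ)$ and none of the modules $S_j$, $\tau^{-j}P_e$ ever acquires a projective or injective summand, applying this equivalence iteratively transports this to $[S_k,\tau^{-k}P_e]^1=[S_0,P_e]^1=1$ for every $k\ge 0$. Thus $\Ext^1(S_k,\tau^{-k}P_e)$ is one--dimensional and there is a well--defined non--split $\xi_k\colon 0\to X\to Y\to S_k\to 0$ with $X=\tau^{-k}P_e$; as none of its terms has an injective summand, $\tau^{-}\xi_k=\xi_{k+1}$, so it suffices to treat $k=0$ and translate.

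To finish (1) I would identify $Y$. It is indecomposable (an extension of indecomposables with one--dimensional $\Ext^1$), and it is preprojective: $\partial(Y)=\partial(X)=-1$ by additivity of the defect, and $Y$ has no regular part, since a regular submodule of $Y$ meets $X$ trivially (as $\mathcal{R}\cap\mathcal{P}=0$) hence embeds into the quasi--simple $S_k$, and if it were all of $S_k$ the sequence would split. Now $\mathbf{dim}\,Y=\mathbf{dim}\,X+\mathbf{dim}\,S_k$, and the combinatorial heart of the argument is the identity $\mathbf{dim}\,(\tau^{-(k+6)}P_e)=\mathbf{dim}\,(\tau^{-k}P_e)+\mathbf{dim}\,S_k$, which I would verify by an explicit computation in the Auslander--Reiten quiver of $\tilde E_8$ (locating the $\tau$--orbit of $P_e$ against the rank $5$ tube). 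Since indecomposable preprojectives are exceptional and hence determined by their dimension vector, this forces $Y\cong\tau^{-6}X$.

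For (2)--(4) I would use Lemma~\ref{Lem:XsSX}(1),(2): $X_S=\ker f$ for a non--zero $f\colon X\to\tau S=S_{k-1}$, with $X/X_S\cong\im f$, and $S^X=\im(\tau^{-}f)$ since $\tau^{-}f\colon\tau^{-}X\to\tau^{-}S_{k-1}=S_k$ spans the one--dimensional $\Hom(\tau^{-}X,S_k)$. The dichotomy is governed by whether the preprojective at hand is small enough to embed into a quasi--simple, i.e.\ has dimension vector $\le\delta$; a direct inspection of the $\tau$--orbit of $P_e$ in $\tilde E_8$ shows $\mathbf{dim}\,(\tau^{-j}P_e)\le\delta$ precisely for $0\le j\le 5$. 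When $k\le 5$ one checks that $X=\tau^{-k}P_e$ is itself a submodule of $S_{k-1}$, so $f$ is that inclusion and $X_S=\ker f=0$; then \eqref{Eq:SeqSxXXs} reads $0\to X\to\tau S^X\to I\to 0$ with $I$ injective, and since $S^X\subseteq S_k$ cannot be a proper preprojective submodule whose $\tau$--twist contains a regular module ($\mathcal{P}$ being closed under submodules), a defect count forces either $I=0$ and $\tau S^X\cong X$, giving $S^X=\tau^{-}X=\tau^{-(k+1)}P_e$ (possible exactly when that still embeds in $S_k$, i.e.\ $k\le 4$), or $S^X=S_k=S$, which is the remaining case $k=5$ and makes $\xi_5$ generalized almost split; this is (3) and (4). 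When $k\ge 6$, $f$ cannot be mono, so $\im f$, a non--zero submodule of the quasi--simple $S_{k-1}$ that is not a preprojective of dimension vector $\le\delta$, equals $S_{k-1}$; hence $\mathbf{dim}\,X_S=\mathbf{dim}\,X-\mathbf{dim}\,S_{k-1}=\mathbf{dim}\,(\tau^{-(k-6)}P_e)$ by the identity of (1) (using $S_{k-1}=S_{k-6}$ as the tube has rank $5$), so $X_S\subseteq X$ is the preprojective $\tau^{6}X$; and $S^X=S$ follows from \eqref{Eq:SeqSxXXs} exactly as for $k=5$.

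The main obstacle is precisely the explicit $\tilde E_8$ bookkeeping feeding the homological machinery: pinning down the rank $5$ tube and the quasi--simple $S_{-1}$, proving the dimension--vector identity that identifies the middle term of $\xi_k$ with $\tau^{-6}X$, and determining for which $j$ one has $\mathbf{dim}\,(\tau^{-j}P_e)\le\delta$ together with the accompanying embeddings into quasi--simples. Everything else — the Auslander--Reiten formula, the $\tau$--translation reducing the computation of $\Ext^1$ to the case $k=0$, additivity of the defect, uniqueness of an exceptional module of a given dimension vector, closure of $\mathcal{P}$ under submodules, and Lemma~\ref{Lem:XsSX} — is formal.
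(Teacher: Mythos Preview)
Your approach inverts the paper's: you fix a quasi--simple $S_k$ in the rank~$5$ tube first, build $\xi_k$ as the unique non--split extension, and then identify its middle term as $\tau^{-6}X$ via a dimension--vector computation. The paper instead reads $[X,\tau^{-6}X]=1$ and $[X,\tau^{-5}X]=0$ directly from Bongartz's starting--function tables (Table~\ref{Fig:Bongartz}), embeds $X$ into $Y=\tau^{-6}X$ by Lemma~\ref{Lem:DefectMono}, and then invokes Unger's Lemma~\ref{Lem:Unger} to see that the cokernel $S$ is indecomposable rigid with $[S,X]^1=1$; regularity and quasi--simplicity of $S$ then follow from defect and the absence of intermediate defect~$(-1)$ modules in the table, and membership in the rank~$5$ tube is deduced a posteriori by comparing $\xi_k$ with $\xi_{k-6}$.

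One genuine error: the middle term of a non--split extension of indecomposables with one--dimensional $\Ext^1$ need \emph{not} be indecomposable --- almost split sequences ending in a brick already give counterexamples. Your conclusion is still correct, but for the reason you state next: once $Y$ is preprojective (you should also exclude a preinjective summand, which follows from $\Hom(\mathcal{I},\mathcal{R})=\Hom(\mathcal{I},\mathcal{P})=0$), defect $-1$ forces indecomposability.

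For (2)--(4) your dichotomy via $\mathbf{dim}\,\tau^{-j}P_e\le\delta$ can be made to work but leaves loose ends: for $k\ge 6$ you assert $\im f=S_{k-1}$ without explaining why $\im f$ cannot be a proper preprojective submodule of the quasi--simple, and for $k\le 5$ the passage from \eqref{Eq:SeqSxXXs} to $S^X\in\{\tau^{-}X,\,S\}$ is sketchy. The paper bypasses this with a single device: applying $\tau^6$ to $\xi_k$ (using $\tau^5 S\cong S$). For $k\le 5$ this produces the exact sequence $0\to X\to\tau S\to\tau^{5-k}I_e\to 0$, giving $X_S=0$ immediately; one further $\tau^{-}$ yields $0\to\tau^{-}X\to S\to\tau^{4-k}I_e\to 0$, from which $S^X=\tau^{-}X$ for $k\le 4$ and $S^X=S$ for $k=5$ are read off directly. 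For $k\ge 6$ the tables give $\tau^6X\hookrightarrow Y$ with indecomposable cokernel (Unger again), and the induced sequence on quotients by $\tau^6X$ is almost split, so $X/\tau^6X\cong\tau S$ and hence $X_S=\tau^6X$, $S^X=S$ without any analysis of images.
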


\begin{table}
\begin{center}
\includegraphics[scale=0.9]{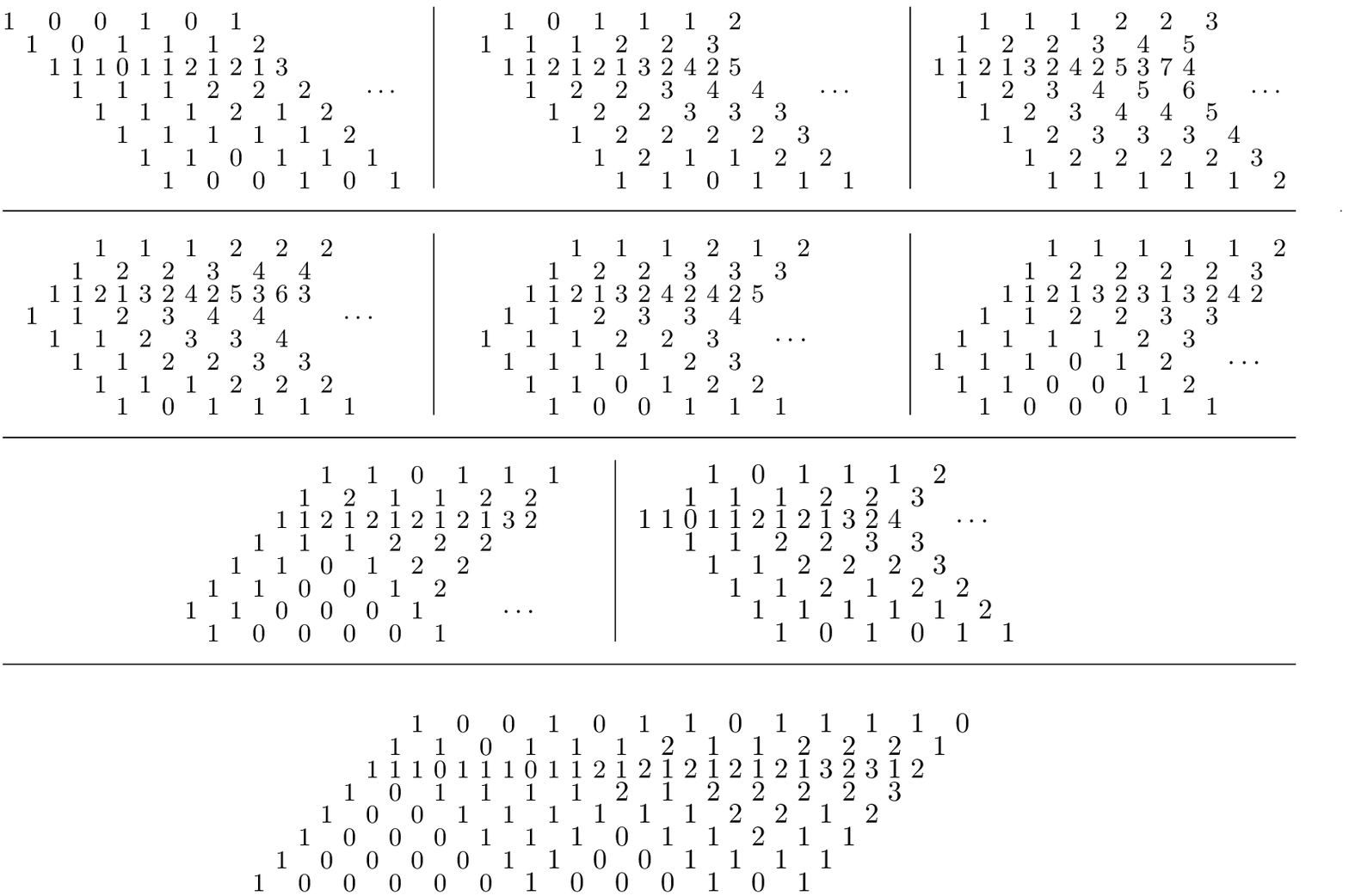}
\end{center}
\caption{The starting functions $s_{P_i}:=\textrm{dim }\Hom(P_i,-)$ for $i\in Q_0$.}\label{Fig:Bongartz}
\end{table}

\begin{proof}
In table~\ref{Fig:Bongartz} we reproduce the starting functions $s_i:=\textrm{dim }\Hom(P_i,-)$ ($i\in Q_0$) considered by Bongartz \cite{Bongartz:84}. The starting function $s_e$ is in the last row. From this we see that $[P_e, \tau^{-6}P_e]=1$ and $[P_e, \tau^{-5} P_e]=0$; thus the same holds for the $\tau^{-}$--translates and we get $[X,Y]=1$ and $[X,\tau Y]=0$. Since $X$ and $Y$ have the same defect, it follows from Lemma~\ref{Lem:DefectMono} that $X$ embeds into $Y$ and the sequence $\xi_k$ exists for every $k\geq0$. Moreover, by Unger's Lemma, we have that $[S,X]^1=1$ and that $S$ is rigid indecomposable. Since $X$ and $Y$ have the same defect, $S$ has defect zero and hence it is regular and must belong to an exceptional tube.  Since, again by table~\ref{Fig:Bongartz}, there are no $Z$ such that $X\subset Z\subset Y$ and $\partial(Z)=(-1)$, it follows that $S$ is quasi--simple.

Suppose that $k\geq 6$, so that $\tau^6X$ is non--zero. Again from Bongartz's table (last row of table~\ref{Fig:Bongartz}) we see that $[P_e,\tau^{-12}P_e]=1$ and $[P_e,\tau^{-11}P_e]=0$; thus the same holds for the $\tau^-$--translates and we get $[\tau^6 X, Y]=1$ and $[\tau^6 X, \tau Y]=0$. Thus, $\tau^6 X$ embeds both into $X$ and into $Y$ and we get an induced short exact sequence of quotients
\begin{equation}\label{Eq:AlmostSplitE8Defect1}
\xymatrix{
0\ar[r]&X/\tau^6X\ar[r]& Y/\tau^6X\ar[r]& S\ar[r]&0
}
\end{equation}
where $X/\tau^6 X$  and $S$ are indecomposable regular quasi--simple.  From Unger's lemma, we deduce that the middle term $Y/\tau^6X$ is indecomposable. This implies that  the sequence \eqref{Eq:AlmostSplitE8Defect1} is almost split and hence $X/\tau^6X\cong \tau S$. It follows that $X_S=\tau^6 X$. Moreover, since $X/X_S=\tau S$, it follows that $S^X=S$.  Let us now show that $\tau^5S_k\cong S_k$ for any $k\geq0$ and hence that $S_k$ belongs to the tube of rank 5.  Suppose that $k\geq6$. Then $\tau^6 X=X_S\neq 0$ and $\xi_{k-6}$ is given by 
$$
\xi_{k-6}:\xymatrix{
0\ar[r]&\tau^6 X\ar[r]&X\ar[r]&\tau^6 S_k\ar[r]&0}
$$ 
Since $X/\tau^6X=\tau S$, we see that $\tau^6 S_k\cong \tau S_k$ and hence $\tau^5S\cong S$. Since the exceptional tubes of a quiver of type $\tilde{E}_8$ have rank $2$, $3$ and $5$, the only possibility is that $S$ belongs to the tube of rank $5$. 

Suppose now that $k\leq 5$ so that $\tau^6 X=0$. Let us compute $X_S=\ker(X\rightarrow \tau S)$ and $S^X=\im( \tau^{-}X\rightarrow S)$. We apply the functor $\tau^6$ to $\xi_k$ and since  $\tau^6 X=0$ we get the short exact sequence
\begin{equation}\label{Eq:SESE8Injective}
\xymatrix{
0\ar[r]&\tau^6 Y=X\ar[r]&\tau^6 S=\tau S\ar[r]&\tau^{5-k}I_e\ar[r]&0.
}
\end{equation}
(Here we have used the fact that $\nu(P_e)=I_e$.) From this we see that $X$ embeds into $\tau S$ and hence $X_S=0$.  To compute $S^X$ we apply $\tau^{-}$ to \eqref{Eq:SESE8Injective} and get the short exact sequence
\begin{equation}\label{Eq:SESE8Injective2}
\xi_k': \xymatrix{
0\ar[r]&\tau^-X\ar[r]&S\ar[r]&\tau^{4-k}I_e\ar[r]&0.
}
\end{equation}
From this, we see that the map $\tau^{-}X\rightarrow S$ is injective for $k\leq 4$ and it is surjective for $k=5$. This concludes the proof. 
\end{proof} 
\begin{lem}\label{Lem:E8Def1ClosetoProj}
For $0\leq k\leq 4$ both $\tau^{-k}P_e$ and $\tau^k I_e$ are thin. A quiver Grassmannian associated with $\tau^{-5}P_e$ is either empty or a point or $\PP^1$.  In particular, $\tau^{-k}P_e$ has property (C), for every $k\in[0,5]$.
\end{lem}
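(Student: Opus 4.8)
The plan is to determine the dimension vectors $\mathbf{dim}\,\tau^{-k}P_e$ for $0\le k\le 5$ and then argue directly. For the first point I would read these off from Bongartz's starting functions (Figure~\ref{Fig:Bongartz}): since $\dim(\tau^{-k}P_e)_i=\dim\Hom_Q(P_i,\tau^{-k}P_e)=s_{P_i}(\tau^{-k}P_e)$, the value of $s_{P_i}$ at the vertex of the preprojective component of $\Gamma(Q)$ representing $\tau^{-k}P_e$ is the $i$-th coordinate of $\mathbf{dim}\,\tau^{-k}P_e$ (equivalently, iterate the inverse Coxeter transformation on $\mathbf{dim}\,P_e$). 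A direct inspection then shows that for $0\le k\le 4$ every coordinate of $\mathbf{dim}\,\tau^{-k}P_e$ is at most $1$, i.e.\ $\tau^{-k}P_e$ is thin, while $\mathbf{dim}\,\tau^{-5}P_e$ has a single coordinate equal to $2$ and all others at most $1$. The thinness of $\tau^k I_e$ for $0\le k\le 4$ then follows by duality: the contravariant duality $D\colon\textrm{Rep}_K(Q)\to\textrm{Rep}_K(Q^{op})$ sends $I_e$ to $P_e$ (the vertex $e$ is extending for $Q^{op}$ as well, $\delta$ depending only on the underlying graph) and intertwines $\tau_Q$ with $\tau^-_{Q^{op}}$, so $D(\tau_Q^k I_e)\cong\tau_{Q^{op}}^{-k}P_e$ is thin by the case already treated; hence so is $\tau_Q^k I_e$.

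Next I would record the elementary observation that a quiver Grassmannian of a \emph{thin} representation $M$ is empty or a single reduced point: indeed $\Gr_\mathbf{e}(M)$ is a closed subscheme of $\prod_{i\in Q_0}\Gr_{\mathbf{e}_i}(M_i)$ (see Section~\ref{Sec:QuivGrass}), and each factor with $\mathbf{e}_i\le\dim M_i\le 1$ is empty or a single reduced point, hence so is the product and so is $\Gr_\mathbf{e}(M)$. In either case this is a (trivial) cellular decomposition, so $\tau^{-k}P_e$ (and $\tau^k I_e$) has property (C) for $0\le k\le 4$.

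For $k=5$ set $M=\tau^{-5}P_e$ and let $v$ be the unique vertex with $\dim M_v=2$. Again $\Gr_\mathbf{e}(M)$ is a closed subscheme of $\Gr_{\mathbf{e}_v}(M_v)\times\prod_{i\ne v}\Gr_{\mathbf{e}_i}(M_i)$, whose second factor is empty or one reduced point. If $\mathbf{e}_v\in\{0,2\}$ the first factor is also a point, so $\Gr_\mathbf{e}(M)$ is empty or a point. If $\mathbf{e}_v=1$, the ambient space is $\Gr_1(M_v)\cong\PP^1$, and a point of $\Gr_\mathbf{e}(M)$ amounts to a line $N_v\subseteq M_v$ together with the subspaces $N_i\in\{0,M_i\}$ at the remaining vertices, the latter being uniquely determined by $\mathbf{e}$; the only conditions the structure maps of $M$ impose on $N_v$ are $A\subseteq N_v\subseteq B$, where $A=\sum_{\alpha:i\to v,\ \mathbf{e}_i=1}\operatorname{im}M_\alpha$ and $B=\bigcap_{\alpha:v\to j,\ \mathbf{e}_j=0}\ker M_\alpha$ are fixed subspaces of $M_v$. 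Since $A$ is a sum of subspaces of dimension $\le 1$ and $B$ an intersection of subspaces of codimension $\le 1$, either the locus of admissible lines $N_v$ is empty (if $A\not\subseteq B$, or $\dim A=2$, or $\dim B=0$), or $A\subseteq B$ with $\dim A\le 1\le\dim B\le 2$ and that locus is a single point (if $\dim A=1$ or $\dim B=1$) or all of $\PP^1$ (if $\dim A=0$ and $\dim B=2$). Hence every quiver Grassmannian of $\tau^{-5}P_e$ is empty, a point, or $\PP^1$, each of which admits a cellular decomposition ($\emptyset$, $\mathbf{A}^0$, or $\mathbf{A}^0\amalg\mathbf{A}^1$), so $\tau^{-5}P_e$ has property (C).

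The main obstacle is the purely combinatorial input: verifying from Bongartz's table (equivalently from $\mathbf{dim}\,\tau^{-k}P_e=\Phi^{-k}\mathbf{dim}\,P_e$, with $\Phi$ the Coxeter matrix) that $\tau^{-k}P_e$ is thin for $0\le k\le 4$ and has exactly one coordinate equal to $2$ for $k=5$, uniformly over the finitely many acyclic orientations of $\tilde E_8$. Once these dimension vectors are in hand, the rest is formal; should some orientation yield several coordinates of $\mathbf{dim}\,\tau^{-5}P_e$ equal to $2$, one would supplement the last argument by checking that the maps of $M$ between the corresponding vertices are isomorphisms, so that again only one line is free to vary and the Grassmannian is still $\PP^1$.
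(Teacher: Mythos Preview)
Your argument for thinness of $\tau^{-k}P_e$ ($0\le k\le 4$) via the starting functions, and the duality deduction for $\tau^k I_e$, match the paper exactly. For $k=5$ the paper likewise reads off from the table an upper bound for $\mathbf{dim}\,\tau^{-5}P_e$ with a single~$2$ at the branch vertex (so for some orientations $\tau^{-5}P_e$ may still be thin, a case you should allow for explicitly), but then argues slightly differently: instead of computing the linear constraints $A\subseteq N_v\subseteq B$, it simply notes that $\Gr_\mathbf{e}(M)$ embeds as a closed subscheme of $\PP(M_v)\cong\PP^1$ via the projection to the branch vertex, and invokes Proposition~\ref{p:irred_rigid} (irreducibility for rigid representations) to conclude that this closed subscheme is a point or all of $\PP^1$. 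Your direct analysis is more elementary---it avoids the appeal to irreducibility and in fact identifies the Grassmannian as a linear $\PP^r\subseteq\PP^1$---while the paper's route is shorter and sidesteps any computation of the images and kernels of the arrows incident to $v$. Both approaches are valid; your contingency about several coordinates equal to~$2$ is unnecessary, since the bound from the table already rules this out.
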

\begin{proof}
The fact that $\tau^{-k}P_e$ is thin for $k\in[0,4]$ follows from table~\ref{Fig:Bongartz}. By duality, the same holds for $\tau^k I_e$.
If $X=\tau^{-5} P_e$, then again by table~\ref{Fig:Bongartz}, its dimension vector is estimated as 
\begin{equation}\label{Eq:DimTau-5Pe}
\xymatrix@1@C=0pt@R=0pt{ &&&&&&1&&\\\mathbf{dim}(\tau^{-5}P_e)\leq&0&1&1&1&1&2&1&1}
\end{equation}
Suppose that $X$ is not thin (otherwise we are done) and hence $\mathbf{dim}(X)_i=2$ where $i$ is the branch vertex.  Let us consider a non--empty quiver Grassmannian $\Gr_\mathbf{e}(X)$. If $\mathbf{e}_i\neq 1$ then  $\Gr_\mathbf{e}(X)$ is either empty or a point because it can be identified with the quiver Grassmannian of a thin representation. Suppose $\mathbf{e}_i=1$. We have a natural map $\Gr_\mathbf{e}(X)\rightarrow \mathbf{P}^1$ which is projection to vertex $i$. This is an algebraic map of projective irreducible varieties (indeed $\Gr_\mathbf{e}(X)$ is irreducible in view of proposition~\ref{p:irred_rigid}). So the image is closed and irreducible. It is hence either a point or $\PP^1$. This map is injective because outside of the branch vertex $X$ is thin and hence there is at most one way to extend a point of $\mathbf{P}^1=\mathbf{P}(X_i)$ to a subrepresentation of $X$ of dimension vector $\mathbf{e}$.  
\end{proof}

\begin{lem}\label{Lem:E8PropCDefect1}
Let $X=\tau^{-k}P_e$ for $k\in[0,5]$. Let us consider the s.e.s. 
$$
\xi_k:\xymatrix{
0\ar[r]&X\ar[r]&Y:=\tau^{-6} X\ar[r]&S\ar[r]&0}
$$ 
Then $Y$ has property (C) and $\mathcal{U}_\mathbf{e}(Y,X)$ admits a cellular decomposition for every $\mathbf{e}$.
\end{lem}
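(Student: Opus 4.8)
The plan is to apply Theorem~\ref{Thm:Reduction2} to $\xi_k$, which is a non--split generating extension: $[S,X]^1=1$ by Lemma~\ref{Lem:E8SES}(1), and $\xi_k$ is non--split because $Y=\tau^{-(k+6)}P_e$ is indecomposable while $X$ and $S$ are both non--zero. Writing $\Gr_\mathbf{e}(Y)=\Gr_\mathbf{e}(X)\amalg\mathcal{U}_\mathbf{e}(Y,X)$, where $\Gr_\mathbf{e}(X)$ is the deepest, hence closed, stratum of the decomposition \eqref{Eq:general decomposition} and $\mathcal{U}_\mathbf{e}(Y,X)=\coprod_{\mathbf{g}\neq\mathbf{0}}\mathcal{S}^{\xi_k}_{\mathbf{f},\mathbf{g}}$, it suffices by Lemma~\ref{Lem:AlphaPartQG} (to glue the strata into an $\alpha$--partition) to exhibit a cellular decomposition of $\Gr_\mathbf{e}(X)$ and of each $\mathcal{S}^{\xi_k}_{\mathbf{f},\mathbf{g}}$ with $\mathbf{g}\neq\mathbf{0}$. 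The first is immediate since $X=\tau^{-k}P_e$ has property (C) for $k\in[0,5]$ by Lemma~\ref{Lem:E8Def1ClosetoProj}.

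By Lemma~\ref{Lem:E8SES}(3)--(4), $X_S=0$ for every $k\in[0,5]$, so the closed set removed from the image in Theorem~\ref{Thm:Reduction2} touches the stratum $\mathcal{S}^{\xi_k}_{\mathbf{f},\mathbf{g}}$ only when $\mathbf{f}=\mathbf{0}$. Thus for $\mathbf{f}\neq\mathbf{0}$ the stratum is a Zariski--locally trivial affine bundle over the whole product $\Gr_\mathbf{f}(X)\times\Gr_\mathbf{g}(S)$, which admits a cellular decomposition because $X$ has property (C) (Lemma~\ref{Lem:E8Def1ClosetoProj}) and $S$, being regular quasi--simple, has property (C) (Proposition~\ref{Prop:PropCQuasiSimpleAffineRegular}); hence so does the stratum, by Lemma~\ref{l:cell_vb}. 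The only remaining stratum is $\mathcal{S}^{\xi_k}_{\mathbf{0},\mathbf{e}}$, an affine bundle over $\Gr_\mathbf{e}(S)$ with the closed locus $Z=\{N''\subseteq S\mid N''\supseteq S^X\}$ deleted, $Z$ being the isomorphic image of $\Gr_{\mathbf{e}-\mathbf{dim}\,S^X}(S/S^X)$. When $k=5$ we have $S^X=S$, so $Z=\Gr_\mathbf{e}(S)$ if $\mathbf{e}=\mathbf{dim}\,S$ (and then $\Gr_\mathbf{e}(S)$ is a point and the base is empty) and $Z=\emptyset$ otherwise (and then the base is $\Gr_\mathbf{e}(S)$, cellular), so this case is done.

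For $k\leq4$ — the substantial case — Lemma~\ref{Lem:E8SES}(4) together with the sequence \eqref{Eq:SESE8Injective2} identifies $S^X\cong\tau^{-(k+1)}P_e$ and $S/S^X\cong\tau^{4-k}I_e$; both have property (C), the former by Lemma~\ref{Lem:E8Def1ClosetoProj} (as $k+1\leq5$) and the latter because $\tau^{4-k}I_e$ is thin for $4-k\in[0,4]$. I would then check that $[S/S^X,S^X]^1=1$ — non--split since $S$ is a brick, and at most one--dimensional by an Unger--type computation (Lemma~\ref{Lem:Unger}) or by inspection of the preprojective/preinjective part of the $\tilde{E}_8$ Auslander--Reiten quiver — so that $\zeta\colon 0\to S^X\to S\to S/S^X\to 0$ is itself a non--split generating extension. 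Feeding $\zeta$ into Theorem~\ref{Thm:Reduction2} exhibits $\Gr_\mathbf{e}(S)$ as an $\alpha$--partition into strata $\mathcal{S}^\zeta_{\mathbf{a},\mathbf{b}}$ whose deepest, closed stratum ($\mathbf{a}=\mathbf{dim}\,S^X$) is exactly $Z$, while each $\mathcal{S}^\zeta_{\mathbf{a},\mathbf{b}}$ is a Zariski--locally trivial affine bundle over the complement in $\Gr_\mathbf{a}(S^X)\times\Gr_\mathbf{b}(S/S^X)$ of a similar deletion locus, now assembled from the Ringel reflections of the thin (respectively thin--away--from--one--vertex) modules $S/S^X$ and $S^X$. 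Since submodules and quotients of such representations remain thin, respectively thin away from one vertex, this reduction terminates with cellular bases at every level; hence $\Gr_\mathbf{e}(S)\setminus Z$, and with it $\mathcal{S}^{\xi_k}_{\mathbf{0},\mathbf{e}}$, is cellular.

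The step I expect to be the real obstacle is precisely the removal of $Z$ from $\Gr_\mathbf{e}(S)$ in the range $k\leq4$: a variety with a cellular decomposition from which a point has been deleted need not be cellular (already $\mathbb{A}^n\setminus\{0\}$ has odd cohomology for $n\geq2$), so one cannot merely invoke property (C) of $S$. The resolution is that the cellular decomposition of $\Gr_\mathbf{e}(S)$ must be chosen compatibly with $Z$, and the one induced by the filtration $S^X\subseteq S$ does this because $Z$ is a \emph{closed} stratum of it; the cost is the recursive analysis of the Ringel reflections of $S^X$ and $S/S^X$, together with the two auxiliary verifications that $\zeta$ is a generating extension and that the recursion never leaves the class of (near--)thin representations.
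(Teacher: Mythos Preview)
Your approach is the paper's: stratify via $\xi_k$, dispose of the strata with $\mathbf{f}\neq\mathbf{0}$ using property~(C) of $X$ and $S$, and for $\mathbf{f}=\mathbf{0}$ re-stratify the base $\Gr_\mathbf{e}(S)\setminus Z$ via the secondary sequence $\zeta=\xi_k'\colon 0\to S^X\to S\to S/S^X\to 0$, observing that $Z$ is precisely the closed stratum $\mathbf{a}=\mathbf{dim}\,S^X$. For $k=5$ (generalized almost split) and for $k\leq 3$ (both $S^X=\tau^{-(k+1)}P_e$ and $S/S^X=\tau^{4-k}I_e$ thin, so every product $\Gr_\mathbf{a}(S^X)\times\Gr_\mathbf{b}(S/S^X)$ is empty or a point) your argument is complete.

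The gap is at $k=4$. There $S^X=\tau^{-5}P_e$ is only almost thin, so $\Gr_\mathbf{a}(S^X)$ can be $\PP^1$, and the image of $\Psi^\zeta_{\mathbf{a},\mathbf{b}}$ is this $\PP^1$ (times a point, since $S/S^X=I_e$ is thin) with the closed set $\Gr_\mathbf{a}\bigl((S^X)_{S/S^X}\bigr)$ removed. Your assertion that ``the recursion never leaves the class of (near--)thin representations'' is true but not enough: a near--thin representation that is not rigid can have a reducible Grassmannian---two points, say---and $\PP^1$ minus two points has odd cohomology, so is not cellular. What is missing is that $(S^X)_{S/S^X}=(\tau^{-5}P_e)_{I_e}$ is \emph{rigid}. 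This is supplied by Proposition~\ref{Prop:XsSXRigidity}(2), whose hypotheses hold because $\tau^{-5}P_e$ and $I_e$ are exceptional with $[I_e,\tau^{-5}P_e]^1=1$ and $[\tau^{-5}P_e,I_e]^1=0$; rigidity then gives irreducibility of $\Gr_\mathbf{a}\bigl((\tau^{-5}P_e)_{I_e}\bigr)$ by Proposition~\ref{p:irred_rigid}, forcing it to be empty, a point, or all of $\PP^1$. With this one step the image is an affine space and your argument closes. This is exactly how the paper proceeds.

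A smaller point: Unger's Lemma~\ref{Lem:Unger} does not directly yield $[S/S^X,S^X]^1=1$, since its hypothesis $[S,S^X]^1=0$ (with $S$ regular and $S^X$ preprojective) need not hold. Your alternative route---direct inspection---is the right one; the paper computes $[\tau^{4-k}I_e,\tau^{-(k+1)}P_e]^1=[P_e,\tau^6 I_e]=1$ via the Auslander--Reiten formula and the starting functions in table~\ref{Fig:Bongartz}.
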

\begin{proof}
By Lemma~\ref{Lem:E8SES},  $\xi_5$ is a generalized almost split sequence and hence, since both $X$ and $S$ have property (C), then $Y$ has property (C). Moreover, $\mathcal{U}_\mathbf{e}(Y,X)=\coprod_{\mathbf{g\neq0}}\mathcal{S}^{\xi_5}_{\mathbf{f,g}}$ and there are affine bundles $\xymatrix@1{\mathcal{S}^{\xi_5}_{\mathbf{f,g}}\ar@{->>}[r]&\Gr_\mathbf{f}(X)\times\Gr_\mathbf{g}(S)}$ from which we deduce that $\mathcal{U}_\mathbf{e}(Y,X)$ admits a cellular decomposition. 

Let us now assume that $0\leq k\leq 4$. By Lemma~\ref{Lem:E8SES} we have  $[S,X]^1=1$, $X_S=0$ and $S^X=\tau^-X$. Thus there are affine bundles
$$
\xymatrix{
\mathcal{S}_{\mathbf{f,g}}^{\xi_k}\ar@{->>}[r]&\Gr_\mathbf{f}(X)\times\Gr_\mathbf{g}(S)\textrm{ for } \mathbf{f}\neq\mathbf{0},&
\mathcal{S}^{\xi_k}_{\mathbf{0,e}}\ar@{->>}[r]&\Gr_\mathbf{e}(S)\setminus\Gr_{\mathbf{e-dim}\,\tau^-X}(S/\tau^-X)\textrm{ for } \mathbf{f=0}.
}
$$
Since $X$ and $S$ have property (C), $\mathcal{S}_{\mathbf{f,g}}^{\xi_k}$ admits a cellular decomposition for $\mathbf{f}\neq\mathbf{0}$. Let us show that the same holds for $\mathcal{S}^{\xi_k}_{\mathbf{0,e}}$.
Let 
$\xi_k': \xymatrix@C=10pt{
0\ar[r]&\tau^-X\ar[r]&S\ar[r]&\tau^{4-k}I_e\ar[r]&0
}
$ 
be the short exact sequence \eqref{Eq:SESE8Injective2}. Then, by definition,  $\Gr_\mathbf{e}(S)\setminus\Gr_{\mathbf{e-dim}\,\tau^-X}(S/\tau^-X)=\coprod_{\mathbf{f\neq0}}\mathcal{S}_\mathbf{f,g}^{\xi'_k}$. Since $[\tau^{4-k}I_e, \tau^-X]^1=[\tau^{4-k}I_e, \tau^{-k-1}P_e]^1=[\tau^{-k-1}P_e, \tau^{5-k}I_e]=[P_e, \tau^6I_e]=1$, we can use  Theorem~\ref{Thm:Reduction2} to get algebraic maps which are affine bundles on their image
$$
\Psi_{\mathbf{f,g}}^{\xi_k'}:\xymatrix{
\mathcal{S}_\mathbf{f,g}^{\xi_k'}\ar@{->}[r]&\Gr_\mathbf{f}(\tau^-X)\times\Gr_\mathbf{g}(\tau^{4-k}I_e)
}
$$
If $0\leq k\leq 3$, then both $\tau^-X$ and $\tau^{4-k}I_e$ are thin by Lemma~\ref{Lem:E8Def1ClosetoProj} and hence their quiver Grassmannians are either empty or a point. It follows that the image of $\Psi_{\mathbf{f,g}}^{\xi_k'}$ is either empty or a point and hence $\mathcal{S}_\mathbf{f,g}^{\xi_k'}$ is either empty or an affine space, in this case. If $k=4$, then $\Gr_\mathbf{f}(\tau^-X)=\Gr_\mathbf{f}(\tau^{-5}P_e)$ is either empty or a point or $\PP^1$ while $\Gr_\mathbf{g}(I_e)$ is either empty or a point. The image of $\Psi_{\mathbf{f,g}}^{\xi_4'}$ is hence obtained from $\Gr_\mathbf{f}(\tau^-X)$ by removing $\Gr_\mathbf{f}((\tau^-X)_{I_e})$. In view of proposition~\ref{Prop:XsSXRigidity} (part (2)), $(\tau^-X)_{I_e}$ is rigid and hence, by proposition~\ref{p:irred_rigid}, $\Gr_\mathbf{f}((\tau^-X)_{I_e})$ is irreducible. It follows that if $\Gr_\mathbf{f}(\tau^-X)$ is isomoprhic to $\PP^1$, $\Gr_\mathbf{f}((\tau^-X)_{I_e})$ is either empty or a point or $\PP^1$. In all cases, we see that the image of $\Psi_{\mathbf{f,g}}^{\xi_4'}$ is an affine space, and hence $\mathcal{S}_\mathbf{f,g}^{\xi_4'}$ is an affine space, too.
\end{proof}

\begin{prop}\label{Prop:PropCDefect1}
If $Y=\tau^{-k}P_e\in\mathcal{P}(Q)$ has defect $(-1)$, then $Y$ has property (C) and, for $k\geq 6$, both $\mathcal{U}_\mathbf{e}(Y,\tau^6Y)$ and $\Gr_\mathbf{e}(Y)\setminus(\Gr_{\mathbf{e-dim}\,\tau^6Y})(Y/\tau^6Y)$ admit a  cellular decomposition, for every $\mathbf{e}$. 
\end{prop}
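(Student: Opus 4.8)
The plan is to prove three assertions simultaneously by induction on $k$: (a) $Y=\tau^{-k}P_e$ has property (C); (b) $\mathcal{U}_\mathbf{e}(Y,\tau^6Y)$ admits a cellular decomposition for every $\mathbf{e}$; and (c) $\Gr_\mathbf{e}(Y)\setminus\Gr_{\mathbf{e}-\mathbf{dim}\,\tau^6Y}(Y/\tau^6Y)$ admits a cellular decomposition for every $\mathbf{e}$. Note that $\tau^6Y=\tau^{-(k-6)}P_e$, so the right-hand sides of (b) and (c) involve a strictly smaller representation, and that (by Lemma~\ref{Lem:E8SES}) $\tau^6Y$ embeds into $Y$, so both sets make sense and are open in $\Gr_\mathbf{e}(Y)$. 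The base of the induction is the range $0\le k\le 11$: for $0\le k\le 5$, assertion (a) is Lemma~\ref{Lem:E8Def1ClosetoProj} ($Y$ is thin, except $\tau^{-5}P_e$, whose quiver Grassmannians are empty, a point, or $\PP^1=\mathbf{A}^1\amalg\mathbf{A}^0$), while (b) and (c) are vacuous; for $6\le k\le 11$, assertions (a) and (b) are Lemma~\ref{Lem:E8PropCDefect1} applied with $X=\tau^{-(k-6)}P_e=\tau^6Y$, and (c) is obtained by running the stratification argument below for the sequence $\xi_{k-6}$ of Lemma~\ref{Lem:E8SES}, where now $(\tau^6Y)_S=0$ and $\tau^{12}Y=0$, so that every variety occurring as a base of an affine bundle has already been shown to admit a cellular decomposition in the proofs of Lemmas~\ref{Lem:E8Def1ClosetoProj} and \ref{Lem:E8PropCDefect1} and of Proposition~\ref{Prop:PropCQuasiSimpleAffineRegular}.

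For the inductive step I would fix $k\ge 12$, set $Z=\tau^6Y=\tau^{-(k-6)}P_e$, and use the short exact sequence $\xi\colon 0\to Z\to Y\to S\to 0$ supplied by Lemma~\ref{Lem:E8SES} (parameter $k-6\ge 6$), for which $S$ is regular quasi-simple in the tube of rank $5$—hence has property (C) by Proposition~\ref{Prop:PropCQuasiSimpleAffineRegular}—and for which $[S,Z]^1=1$, $Z_S=\tau^6Z=\tau^{12}Y$ and $S^Z=S$. The induced decomposition $\Gr_\mathbf{e}(Y)=\coprod_{\mathbf{f}+\mathbf{g}=\mathbf{e}}\mathcal{S}^\xi_{\mathbf{f},\mathbf{g}}$ is an $\alpha$-partition by Lemma~\ref{Lem:AlphaPartQG}, in which the stratum $\mathbf{f}=\mathbf{e}$ is $\Gr_\mathbf{e}(Z)$ and the stratum $\mathbf{f}=\mathbf{dim}\,Z$ is $\Gr_{\mathbf{e}-\mathbf{dim}\,Z}(Y/Z)$, both closed. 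By Theorem~\ref{Thm:Reduction2} each $\mathcal{S}^\xi_{\mathbf{f},\mathbf{g}}$ is a Zariski-locally trivial affine bundle over $(\Gr_\mathbf{f}(Z)\times\Gr_\mathbf{g}(S))\setminus(\Gr_\mathbf{f}(\tau^{12}Y)\times\Gr_{\mathbf{g}-\mathbf{dim}\,S}(S/S))$. Since $S^Z=S$: when $\mathbf{g}\ne\mathbf{dim}\,S$ the removed locus is empty, so the base is the full product $\Gr_\mathbf{f}(Z)\times\Gr_\mathbf{g}(S)$, which has a cellular decomposition by the induction hypothesis (a) for $Z$ together with property (C) of $S$; when $\mathbf{g}=\mathbf{dim}\,S$ the base is $\mathcal{U}_\mathbf{f}(Z,\tau^{12}Y)$, which has a cellular decomposition by the induction hypothesis (b) for $Z$. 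Hence by Lemma~\ref{l:cell_vb} each stratum $\mathcal{S}^\xi_{\mathbf{f},\mathbf{g}}$ admits a cellular decomposition, so Lemma~\ref{Lem:AlphaPartQG} gives (a) for $Y$; assertion (b) for $Y$ is the $\alpha$-partition of $\mathcal{U}_\mathbf{e}(Y,\tau^6Y)=\Gr_\mathbf{e}(Y)\setminus\Gr_\mathbf{e}(Z)$ obtained by discarding the closed stratum $\mathbf{f}=\mathbf{e}$, and assertion (c) for $Y$ is the one obtained by discarding the closed stratum $\mathbf{f}=\mathbf{dim}\,Z$; both are $\alpha$-partitions into pieces with cellular decompositions.

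The hard part will be arranging the induction so that it actually closes. The affine bundle $\mathcal{S}^\xi_{\mathbf{f},\mathbf{g}}$ sits over the image of $\Psi^\xi_{\mathbf{f},\mathbf{g}}$, which a priori is only a locally closed subvariety of a product of quiver Grassmannians; one must recognise that for $\xi$ this image is either the full product $\Gr_\mathbf{f}(\tau^6Y)\times\Gr_\mathbf{g}(S)$ or exactly the auxiliary variety $\mathcal{U}_\mathbf{f}(\tau^6Y,\tau^{12}Y)$ of assertion (b) for the strictly smaller representation $\tau^6Y$. This is why (a) and (b)—and (c), which is needed later, e.g.\ in Proposition~\ref{Prop:NotQuasiSimpleE8}—must travel through the induction together, and why the precise identifications $Z_S=\tau^6Z$ and $S^Z=S$ of Lemma~\ref{Lem:E8SES}, themselves read off from Bongartz's tables of starting functions for $\tilde E_8$, are indispensable: they are what force the removed locus to vanish for $\mathbf{g}\ne\mathbf{dim}\,S$ and to equal a full $\Gr_\mathbf{f}(\tau^6Y)$-factor over a point for $\mathbf{g}=\mathbf{dim}\,S$, so that no genuinely new variety enters at any step. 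The remaining verifications—that discarding a closed initial segment of strata from an $\alpha$-partition leaves an $\alpha$-partition of the open complement, and the unravelling of the small cases $k\le 11$ via Lemma~\ref{Lem:E8PropCDefect1}—are routine.
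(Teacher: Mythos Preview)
Your proposal is correct and follows essentially the same approach as the paper: induction on $k$, base cases handled by Lemmas~\ref{Lem:E8Def1ClosetoProj} and~\ref{Lem:E8PropCDefect1}, and the inductive step via the sequence $\xi_{k-6}$ of Lemma~\ref{Lem:E8SES} together with Theorem~\ref{Thm:Reduction2}, so that the bases of the affine bundles are either $\Gr_\mathbf{f}(\tau^6Y)\times\Gr_\mathbf{g}(S)$ or $\mathcal{U}_\mathbf{f}(\tau^6Y,\tau^{12}Y)$. Your write-up is in fact somewhat more explicit than the paper's in carrying assertion~(c) through the base range $6\le k\le 11$ and in identifying which closed stratum is being discarded in each case.
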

\begin{proof}
We proceed by induction on $k\geq 0$. If $k\in[0,5]$ then $Y$ has property (C) by Lemma~\ref{Lem:E8Def1ClosetoProj}. 
If $6\leq k\leq 11$ then, by proposition~\ref{Lem:E8PropCDefect1}, $Y$ has property (C) and $\mathcal{U}_\mathbf{e}(Y,\tau^6Y)$ admits a cellular decomposition.
If $k\geq12$, then there exits a short exact sequence $\xi=\xi_{k-6}: 0\rightarrow \tau^6Y\rightarrow Y\rightarrow S\rightarrow 0$ with the good properties hilighted in Lemma~\ref{Lem:E8SES}. There are affine bundles 
$$
\xymatrix@R=2pt{
\mathcal{S}^{\xi} _{\mathbf{f}, \mathbf{g}}\ar@{->>}[r]&\textrm{Gr}_{\mathbf{f}}(\tau^6Y)\times \textrm{Gr}_{\mathbf{g}}(S)\textrm{ for }\mathbf{g}\neq \mathbf{dim}\,S,&
\mathcal{S}^{\xi} _{\mathbf{f}, \mathbf{g}}\ar@{->>}[r]&\mathcal{U}_{\mathbf{f}}(\tau^6Y, \tau^{12} Y)\textrm{ for }\mathbf{g}=\mathbf{dim}\,S.
}
$$
By induction and proposition~\ref{Prop:PropCQuasiSimpleAffineRegular}, both $\textrm{Gr}_{\mathbf{f}}(\tau^6Y)\times \textrm{Gr}_{\mathbf{g}}(S)$ and $\mathcal{U}_{\mathbf{f}}(\tau^6Y, \tau^{12} Y)$ admit a cellular decomposition and hence each stratum $\mathcal{S}^{\xi} _{\mathbf{f}, \mathbf{g}}$ admits a cellular decomposition, too.  By definition,   $\Gr_\mathbf{e}(Y)\setminus(\Gr_\mathbf{e-dim}\,\tau^6Y)(Y/\tau^6Y)=\coprod_{\mathbf{f}\neq\mathbf{0}}\mathcal{S}_\mathbf{f,g}^\xi$ and hence it admits a cellular decomposition. 
\end{proof}

Let us now deal with the remaining quasi--simple representations. 
\begin{prop}\label{Prop:Defect23E8}
Every quasi--simple preprojective $Q$--representation has property (C). 
\end{prop}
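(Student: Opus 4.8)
The plan is to run the same argument that established the defect $(-1)$ case in Lemmas~\ref{Lem:E8SES}, \ref{Lem:E8Def1ClosetoProj}, \ref{Lem:E8PropCDefect1} and Proposition~\ref{Prop:PropCDefect1}. By Corollary~\ref{Cor:QuasiSimpleLeaves} a quasi--simple preprojective $Q$--representation is of the form $\tau^{-k}P_i$ with $i$ a leaf of the $\tilde{E}_8$ diagram, and the three leaves carry $\delta_i=1,2,3$. The case $\delta_i=1$ being Proposition~\ref{Prop:PropCDefect1}, it remains to treat the two leaves $i$ with $\delta_i\in\{2,3\}$, i.e.\ the indecomposables of defect $(-2)$ and $(-3)$. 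Fix such a leaf $i$; I will prove by induction on $k\geq 0$ the package: $\tau^{-k}P_i$ has property (C), and (once $k$ is large enough) the open pieces $\mathcal{U}_\mathbf{e}(\tau^{-k}P_i,\tau^m\tau^{-k}P_i)$ and $\Gr_\mathbf{e}(\tau^{-k}P_i)\setminus \Gr_{\mathbf{e}-\mathbf{dim}\,\tau^m\tau^{-k}P_i}(\tau^{-k}P_i/\tau^m\tau^{-k}P_i)$ admit cellular decompositions.

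First I would set up the analogue of Lemma~\ref{Lem:E8SES}. Reading the starting function $s_{P_i}$ off Table~\ref{Fig:Bongartz}, one locates the minimal $m=m(i)>0$ with $[P_i,\tau^{-m}P_i]=1$ and $[P_i,\tau^{-(m-1)}P_i]=0$; hence $[X,Y]=1$ and $[X,\tau Y]=0$ for $X=\tau^{-k}P_i$, $Y=\tau^{-m}X$, so by Lemma~\ref{Lem:DefectMono} $X$ embeds into $Y$, and Unger's Lemma~\ref{Lem:Unger} yields $\xi_k:0\to X\to Y\to S=S_k\to 0$ with $S$ rigid and indecomposable. As $\partial(X)=\partial(Y)$ we get $\partial(S)=0$, so $S$ is regular; a table inspection shows there is no $Z$ with $X\subsetneq Z\subsetneq Y$ of defect $\partial(X)$, so $S$ has no proper regular submodule and is therefore regular quasi--simple, and comparison with the exceptional tube ranks $\{2,3,5\}$ identifies the tube. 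Exactly as in Lemma~\ref{Lem:E8SES}, applying $\tau^m$ (resp.\ $\tau^-$) to $\xi_k$ and using $\nu(P_i)=I_i$ gives: for $k\geq m$, $X_S=\tau^m X$ and $S^X=S$; for the single border value of $k$ the sequence is generalized almost split ($X_S=0$, $S^X=S$); and for smaller $k$, $X_S=0$, $S^X=\tau^-X$ together with an auxiliary sequence $\xi_k':0\to\tau^-X\to S\to(\text{shift of }I_i)\to 0$. For these small $k$, Table~\ref{Fig:Bongartz} shows $\tau^{-k}P_i$ is thin (with at most one or two border values at which a quiver Grassmannian is a $\PP^1$ or a product of projective spaces, still a union of affine spaces); here, as in Lemma~\ref{Lem:E8PropCDefect1}, the irreducibility of quiver Grassmannians of rigid representations (Proposition~\ref{p:irred_rigid}) and the rigidity of $X_S,S^X$ (Proposition~\ref{Prop:XsSXRigidity}) handle the pieces cut out inside them. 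This settles the base of the induction.

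For $k\geq m$ I would feed $\xi_k$ into Theorem~\ref{Thm:Reduction2}. Since $S$ is regular quasi--simple it has property (C) by Proposition~\ref{Prop:PropCQuasiSimpleAffineRegular}, and $X_S=\tau^m X=\tau^{-(k-m)}P_i$ is a quasi--simple preprojective of the same defect with smaller index, so by the inductive hypothesis $\Gr_\mathbf{f}(X)\times\Gr_\mathbf{g}(S)$ and $\mathcal{U}_\mathbf{f}(X,X_S)$ admit cellular decompositions. The affine bundles $\mathcal{S}^{\xi_k}_{\mathbf{f},\mathbf{g}}\twoheadrightarrow\Gr_\mathbf{f}(X)\times\Gr_\mathbf{g}(S)$ for $\mathbf{g}\neq\mathbf{dim}\,S$ and $\mathcal{S}^{\xi_k}_{\mathbf{f},\mathbf{dim}\,S}\twoheadrightarrow\mathcal{U}_\mathbf{f}(X,X_S)$ then endow each stratum with a cellular decomposition, whence $\Gr_\mathbf{e}(Y)=\Gr_\mathbf{e}(X)\amalg\coprod_{\mathbf{g}\neq 0}\mathcal{S}^{\xi_k}_{\mathbf{f},\mathbf{g}}$ does too (Lemma~\ref{Lem:AlphaPartQG}), and the two distinguished open pieces are unions of strata, completing the inductive step. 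The main obstacle I anticipate is precisely the bookkeeping in the base cases: for the defect $(-2)$ and $(-3)$ leaves the representations $\tau^{-k}P_i$ with $k<m$ are somewhat larger than the corresponding $\tau^{-k}P_e$, so one must read off Table~\ref{Fig:Bongartz} exactly when they fail to be thin, verify the precise shapes of $X_S$ and $S^X$ there (the analogue of parts (2)--(4) of Lemma~\ref{Lem:E8SES}), and check that the subvarieties $\Gr_\mathbf{f}(X_S)$ removed from these Grassmannians remain unions of affine spaces; everything beyond this is a formal repetition of the defect $(-1)$ argument.
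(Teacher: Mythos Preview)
Your approach is plausible but genuinely different from the paper's, and the obstacle you flag at the end is real and is exactly what the paper's route avoids.

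You propose to rerun the defect $(-1)$ machinery (Lemmas~\ref{Lem:E8SES}--\ref{Lem:E8PropCDefect1}, Proposition~\ref{Prop:PropCDefect1}) separately for each of the two remaining leaves, finding for each a period $m=m(i)$ and an infinite family of short exact sequences $0\to\tau^{-k}P_i\to\tau^{-k-m}P_i\to S_k\to 0$ with $S_k$ regular quasi--simple, then inducting on $k$. The inductive step (large $k$) would indeed go through exactly as in Proposition~\ref{Prop:PropCDefect1}. But your base case for $k<m$ is substantially harder than in the defect $(-1)$ case: the representations $\tau^{-k}P_i$ for the defect $(-2)$ and $(-3)$ leaves are in general not thin, and handling the strata $\mathcal{S}^{\xi_k}_{\mathbf{0},\mathbf{e}}$ via the auxiliary sequence $\xi_k'$ would require controlling complements of closed subvarieties inside Grassmannians that are no longer just points or $\PP^1$. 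Your remark that ``a product of projective spaces [is] still a union of affine spaces'' is not enough: you need the \emph{complement} of an irreducible closed subvariety in such a product to have a cell decomposition, which is not automatic.

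The paper sidesteps all of this by a cross--defect reduction. For a quasi--simple $X$ of defect $(-2)$ it takes the quasi--simple $Y$ of defect $(-1)$ whose section contains $X$; the sectional morphism $X\to Y$ has kernel $\tau^7Y$ (checked directly), yielding a single short exact sequence
\[
\xi:\ 0\longrightarrow \tau^7Y\longrightarrow X\longrightarrow Y\longrightarrow 0
\]
with both ends of defect $(-1)$. One computes $(\tau^7Y)_Y=0$ and $Y^{\tau^7Y}=\tau^6Y$, and Theorem~\ref{Thm:Reduction2} gives affine bundles whose bases are $\Gr_\mathbf{f}(\tau^7Y)\times\Gr_\mathbf{g}(Y)$ for $\mathbf{f}\neq\mathbf{0}$ and $\Gr_\mathbf{e}(Y)\setminus\Gr_{\mathbf{e}-\mathbf{dim}\,\tau^6Y}(Y/\tau^6Y)$ for $\mathbf{f}=\mathbf{0}$. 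Both of these are already known to have cell decompositions by Proposition~\ref{Prop:PropCDefect1} (this is precisely why that proposition records the cell decomposition of that second open piece, not just property~(C) for $Y$). Defect $(-3)$ is then handled by the same trick, using a sequence $0\to Y'\to Z\to X'\to 0$ with $Y'$ of defect $(-1)$ and $X'$ of defect $(-2)$, and reusing the sequence $\xi'$ just built for $X'$ to decompose the complement that appears. No new base cases arise.

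So: your approach could in principle be made to work, but you would have to carry out a nontrivial case analysis of the small--$k$ representations for two further leaves, whereas the paper reduces everything to the defect $(-1)$ results already in hand.
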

\begin{proof}
By Proposition~\ref{Prop:PropCDefect1}  the claim holds for the quasi--simple representations of defect $(-1)$. Let us now deal with the remaining ones, which have defect either (-2) or (-3). 

Let $X\in\mathcal{P}(Q)$ be quasi--simple of defect $(-2)$ and let $Y\in\mathcal{P}(Q)$ be the quasi--simple of defect $(-1)$ such that $X$ lies in the section ending in $Y$ (see figure~\ref{Fig:PreprojCompE8}). By direct check, one verifies that the kernel of the sectional morphism $X\rightarrow Y$ is the the sectional morphism $\tau^7Y\rightarrow X$.  If $\tau^7Y=0$ then $\tau^6Y$ is either projective or zero; in both cases $[P_e,X]=0$ and hence $X$ is supported on a Dynkin quiver of type $E_8$. Thus, by Theorem~\ref{t:Dynkin}, $X$ has property (C). Let us hence suppose that $\tau^7Y\neq 0$.  Then we consider the short exact sequence 
$$
\xi:\xymatrix{
0\ar[r]&\tau^7 Y \ar[r]&X\ar[r]&Y\ar[r]&0.
}
$$
Since $[Y,\tau^7 Y]^1=[\tau^7 Y, \tau Y]=[\tau^6 Y, Y]=1$, we see that $(\tau^7 Y)_Y$ and $Y^{(\tau^7Y)}$ are well--defined (see Definition~\ref{Def:XsSx}). For defect reasons, $(\tau^7 Y)_{Y}=\textrm{Ker}(\tau^7 Y\rightarrow Y)=0$ and $(Y)^{\tau^7 Y}=\textrm{Im}(\tau^6Y\rightarrow Y)=\tau^6 Y$. By  Theorem~\ref{Thm:Reduction2}, there are (Zariski--locally trivial) affine bundles
$$
\xymatrix@R=2pt{
\mathcal{S}^{\xi} _{\mathbf{f}, \mathbf{g}}\ar@{->>}[r]&\textrm{Gr}_{\mathbf{f}}(\tau^7Y)\times \textrm{Gr}_{\mathbf{g}}(Y)\textrm{ for }\mathbf{f}\neq \mathbf{0},&
\mathcal{S}^{\xi} _{\mathbf{0}, \mathbf{e}}\ar@{->>}[r]&\Gr_\mathbf{e}(Y)\setminus \Gr_{\mathbf{e-dim}\,\tau^6Y}(Y/\tau^6Y)\textrm{ for }\mathbf{f}=\mathbf{0}.
}
$$
By proposition~\ref{Prop:PropCDefect1}, both $\textrm{Gr}_{\mathbf{f}}(\tau^7Y)\times \textrm{Gr}_{\mathbf{g}}(Y)$ and $\Gr_\mathbf{e}(Y)\setminus \Gr_{\mathbf{e-dim}\,\tau^6Y}(Y/\tau^6Y)$ admit a cellular decomposition and hence each stratum $\mathcal{S}^{\xi} _{\mathbf{f}, \mathbf{g}}$ has the same property, proving property (C) for $X$.

Let $Z\in\mathcal{P}(Q)$ be quasi--simple of defect $(-3)$ and let $X'\in\mathcal{P}(Q)$ be the quasi--simple of defect $(-2)$ such that $Z$ lies in the section ending in $X'$ (see figure~\ref{Fig:PreprojCompE8}).  By direct check, one verifies that the kernel of the sectional morphism $Z\rightarrow X'$ is the sectional morphism $Y'\rightarrow Z$, where $Y'$ is the quasi--simple of defect $(-1)$ in the section ending in $Z$. If $Y'=0$ then $[P_e,Z]=0$ and hence $Z$ is supported on a quiver of type $E_8$ and by Theorem~\ref{t:Dynkin}, $Z$ has property (C). Let us hence suppose that $Y'\neq0$. Then there is a short exact sequence 
$$
\eta:\xymatrix{
0\ar[r]&Y'\ar[r]&Z\ar[r]&X'\ar[r]&0.
}
$$
We have $[X',Y']^1=[Y', \tau X']=1$ and hence $(Y')_{X'}$ and $X'^{(Y')}$ are well--defined (see Definition~\ref{Def:XsSx}). For defect reasons, the sectional morphisms $Y'\rightarrow \tau X'$ and $\tau^-Y'\rightarrow X'$ are monomorphisms, and hence $(Y')_{X'}=0$ and $X'^{(Y')}=\tau^-Y'$.  It follows that there are (Zarisky-locally trivial) affine bundles 
$$
\xymatrix@C=20pt{
\mathcal{S}^{\eta} _{\mathbf{f}, \mathbf{g}}\ar@{->>}[r]&\textrm{Gr}_{\mathbf{f}}(Y')\times \textrm{Gr}_{\mathbf{g}}(X')\textrm{ for }\mathbf{f}\neq \mathbf{0},&
\mathcal{S}^{\eta} _{\mathbf{0}, \mathbf{e}}\ar@{->>}[r]&\Gr_\mathbf{e}(X')\setminus \Gr_{\mathbf{e-dim}\,\tau^-Y'}(X'/\tau^-Y')\textrm{ for }\mathbf{f}=\mathbf{0}.
}
$$
We have already proved that $\textrm{Gr}_{\mathbf{f}}(Y')\times \textrm{Gr}_{\mathbf{g}}(X')$ admits a cellular decomposition. To get a cellular decomposition of $\Gr_\mathbf{e}(X')\setminus \Gr_{\mathbf{e-dim}\,\tau^-Y'}(X'/\tau^-Y')$ we consider the short exact sequence 
$$
\xi':\xymatrix{
0\ar[r]&\tau^- Y' \ar[r]&X'\ar[r]&\tau^{-8}Y'\ar[r]&0.
}
$$
We proved above that each stratum $\mathcal{S}_\mathbf{f,g}^{\xi'}$ admits a cellular decomposition. It follows that $\Gr_\mathbf{e}(X')\setminus \Gr_{\mathbf{e-dim}\,\tau^-Y'}(X'/\tau^-Y')=\coprod_{\mathbf{f}\neq0}\mathcal{S}_\mathbf{f,g}^{\xi'}$ admits a cellular decomposition, too. This concludes the proof. 
\end{proof}

\begin{prop}\label{Prop:NotQuasiSimpleE8}
An indecomposable preprojective representation of $Q$  has property (C). 
\end{prop}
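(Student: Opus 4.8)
The plan is to argue by induction on $\dim Y$, $Y$ an indecomposable preprojective $Q$--representation, reducing an arbitrary such $Y$ to quasi--simple representations and to representations of strictly smaller dimension; for those, property (C) is already available from Propositions~\ref{Prop:PropCQuasiSimpleAffineRegular},~\ref{Prop:PropCDefect1} and~\ref{Prop:Defect23E8}, together with the fact that property (C) is preserved by duality (which will absorb any preinjective summand that appears, its dual being a preprojective $Q^{\mathrm{op}}$--representation of the same, smaller, dimension). If $Y$ is quasi--simple we are done by Proposition~\ref{Prop:Defect23E8}. Otherwise, since $Y$ is preprojective over the non--Dynkin quiver $Q$, Corollary~\ref{Cor:QSimplePreproj} produces an irreducible monomorphism $\iota\colon X\hookrightarrow Y$; by direct inspection of the preprojective component (Figure~\ref{Fig:PreprojCompE8}) one may, after replacing $\iota$ by a longer sectional monomorphism and telescoping, even arrange that the source has defect $-1$. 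Indeed a complete slice has no self--extensions, so $[Y,X_0]^1=0$ for the representation $X_0$ of defect $-1$ lying in the section ending in $Y$, and since $|\partial(X_0)|=1\le|\partial(Y)|$ Lemma~\ref{Lem:DefectMono}(1) shows the sectional morphism $X_0\to Y$ is a monomorphism.

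Factoring this sectional monomorphism through the intermediate vertices of the section gives a chain of irreducible monomorphisms $X_0\hookrightarrow X_1\hookrightarrow\cdots\hookrightarrow X_t=Y$, whence a nested chain of closed embeddings $\Gr_\mathbf{e}(X_0)\subseteq\Gr_\mathbf{e}(X_1)\subseteq\cdots\subseteq\Gr_\mathbf{e}(Y)$, so that $\Gr_\mathbf{e}(Y)=\Gr_\mathbf{e}(X_0)\amalg\coprod_{i=1}^{t}\mathcal{U}_\mathbf{e}(X_i,X_{i-1})$ is an $\alpha$--partition. Here $\Gr_\mathbf{e}(X_0)$ admits a cellular decomposition by Proposition~\ref{Prop:PropCDefect1} (as $X_0$ has defect $-1$), so it remains to give a cellular decomposition of each $\mathcal{U}_\mathbf{e}(X_i,X_{i-1})$, i.e.\ to treat a single irreducible---hence minimal sectional---monomorphism $\iota'\colon X'\hookrightarrow X''$ inside a preprojective section, with cokernel $S$. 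For such $\iota'$ one has $[X',X'']=1$ and $[X',\tau X'']=0$, so by Unger's Lemma~\ref{Lem:Unger} the cokernel $S$ is indecomposable and rigid with $[S,X']^1=1$; thus $\xi\in\Ext^1(S,X')$ is a non--split generating extension, and exactly as in Lemma~\ref{Lem:Lemma2Preproj} one gets $S^{X'}=S$ (the pull--back of $\xi$ along any proper subrepresentation of $S$ splits, by minimal sectionality). Theorem~\ref{Thm:Reduction2} then yields $\mathcal{U}_\mathbf{e}(X'',X')=\coprod_{\mathbf{g}\neq 0}\mathcal{S}_{\mathbf{f,g}}^{\xi}$ with Zariski--locally trivial affine bundles $\mathcal{S}_{\mathbf{f,g}}^{\xi}\twoheadrightarrow\Gr_\mathbf{f}(X')\times\Gr_\mathbf{g}(S)$ for $\mathbf{g}\neq\mathbf{dim}\,S$ and $\mathcal{S}_{\mathbf{f},\mathbf{dim}\,S}^{\xi}\twoheadrightarrow\mathcal{U}_\mathbf{f}(X',X'_S)$.

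It remains to see that the bases of these bundles admit cellular decompositions. The base $\Gr_\mathbf{f}(X')\times\Gr_\mathbf{g}(S)$ does: $X'$ has strictly smaller dimension than $Y$ (induction), and $S$ is rigid and indecomposable, hence quasi--simple whenever it is preprojective or regular (Lemma~\ref{Lem:Irred}), so it is covered by Proposition~\ref{Prop:Defect23E8} or Proposition~\ref{Prop:PropCQuasiSimpleAffineRegular}, and by duality if it is preinjective. For $\mathcal{U}_\mathbf{f}(X',X'_S)$ I would invoke Lemma~\ref{Lem:XsSXSectionalPath}: writing $E\to X'$ for the minimal right almost split morphism and $E=E_0\oplus E'$ for the associated splitting, $X'_S\cong E'\oplus\ker(E_0\to\tau X'')$, a subrepresentation of $X'$ with at most two or three indecomposable summands, each of strictly smaller dimension than $Y$. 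One then iterates the decomposition---peeling off the first summand of $X'_S$ and applying Theorem~\ref{Thm:Reduction2} again, exactly as in the case ``$X_S=F\oplus T$'' of the proof of Theorem~\ref{Thm:Preprojective}---the recursion terminating because the total dimension strictly decreases at each step. Throughout, every representation occurring ($X'$, $S$, $X'_S$, and their summands) is rigid by Lemma~\ref{Lem:Irred} and Proposition~\ref{Prop:XsSXRigidity}, so the relevant quiver Grassmannians are irreducible (Proposition~\ref{p:irred_rigid}); this controls the images of the maps $\Psi_{\mathbf{f,g}}^{\xi}$ occurring in the descent, as in Lemmata~\ref{Lem:E8Def1ClosetoProj} and~\ref{Lem:E8PropCDefect1}.

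The main obstacle is precisely this last descent through the terms $\mathcal{U}_\mathbf{f}(X',X'_S)$. In types $\tilde A_n,\tilde D_n,\tilde E_6,\tilde E_7$ the bookkeeping closes because the kernel of a sectional epimorphism in $\mathcal{P}(Q)$ is again a sectional monomorphism; for $\tilde E_8$ this fails (Remark~\ref{Rem:KernelSectional}), so there is no uniform ``good mono'' argument and one must instead verify \emph{by hand}, using Bongartz's starting functions (Table~\ref{Fig:Bongartz}) and the explicit shape of the preprojective component (Figure~\ref{Fig:PreprojCompE8}), that every subrepresentation and quotient produced along the recursion is thin, or quasi--simple, or of defect $-1$, or of strictly smaller dimension than $Y$---so that each stratum $\mathcal{S}_{\mathbf{f,g}}^{\xi}$ is an affine bundle over a base already known to admit a cellular decomposition, and the resulting partition of $\Gr_\mathbf{e}(Y)$ is an $\alpha$--partition into affine spaces.
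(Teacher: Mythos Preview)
Your plan is more complicated than necessary and, as you yourself concede, leaves a genuine gap at the end. The paper avoids the ``by--hand'' verification entirely by a sharper choice of monomorphism.

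Instead of descending along the whole section to a defect~$(-1)$ module $X_0$ and then treating each irreducible step $X_{i-1}\hookrightarrow X_i$ separately, the paper picks a \emph{single} irreducible monomorphism $\iota:X\hookrightarrow Y$ with $|\partial(X)|<|\partial(Y)|$ (such an $X$ exists because $Y$ is not quasi--simple; inspect the $\tilde E_8$ diagram). This one inequality does all the work: it forces $\partial(S)<0$, so $S=\mathrm{Coker}(\iota)$ is \emph{preprojective} quasi--simple by Lemma~\ref{Lem:Irred}, hence has property~(C) by Proposition~\ref{Prop:Defect23E8}; and it forces $X$ not to be a branch module (the branch module has the maximal $|\partial|=6$), so Lemma~\ref{Lem:XsSXSectionalPath} gives that $X_S$ is \emph{indecomposable} with $X_S\hookrightarrow X$ irreducible. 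One then has the affine bundles
\[
\mathcal{S}^{\xi}_{\mathbf{f},\mathbf{g}}\twoheadrightarrow \Gr_\mathbf{f}(X)\times\Gr_\mathbf{g}(S)\ (\mathbf{g}\neq\mathbf{dim}\,S),\qquad
\mathcal{S}^{\xi}_{\mathbf{f},\mathbf{dim}\,S}\twoheadrightarrow \mathcal{U}_\mathbf{f}(X,X_S),
\]
and since $X_S\hookrightarrow X$ is again an irreducible monomorphism between indecomposable preprojectives, $\mathcal{U}_\mathbf{f}(X,X_S)$ is handled by the same inductive hypothesis. No multi--summand $X_S$, no Bongartz tables, no case analysis.

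Two further remarks on your plan. First, the assertion that factoring the sectional monomorphism $X_0\to Y$ through intermediate vertices yields a chain of irreducible \emph{mono}morphisms is not automatic: if the path passes through the branch module (defect $-6$) and then continues to a short--arm vertex of smaller $|\partial|$, the last steps are epimorphisms, not monomorphisms. Second, your worry about ``two or three indecomposable summands'' of $X'_S$ and the ensuing hand verification is exactly the complication the paper sidesteps; the choice $|\partial(X)|<|\partial(Y)|$ guarantees $X$ is never a branch module, whence $X_S$ is indecomposable.
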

\begin{proof}
Let $Y$ be an indecomposable preprojective representation of $Q$. If $Y$ is projective, then it is thin and hence it has property (C). If $Y$ is quasi--simple, then $Y$ has property (C) by Proposition~\ref{Prop:Defect23E8}. Suppose that $Y$ is not projective and not quasi--simple. Then there is an irreducible monomorphism $\iota:X\hookrightarrow Y$ ending in $Y$ and starting in a module $X$ such that $|\partial(X)|<|\partial(Y)|$ (see table~\ref{Fig:ExtendedDynkinDiagrams}). Let 
$S=\textrm{Coker}(\iota)$. Since $\iota$ is irreducible, $S$ is quasi--simple and preprojective. Moreover $[X,Y]=1$ and hence, by Unger Lemma~\ref{Lem:Unger}, $[S,X]^1=1$. The short exact sequence 
$$
\xi:\xymatrix{
0\ar[r]&X\ar^\iota[r]&Y\ar[r]&S\ar[r]&0
}
$$
is thus generating and we can apply  Theorem~\ref{Thm:Reduction2}. 
By proposition~\ref{Prop:Defect23E8}, $S$ has property (C). If $X$ is quasi--simple then $\xi$ is almost split and hence $Y$ has property (C) (see corollary~\ref{Cor:RedThmAlmostSplit}). If $X$ is not quasi--simple, it is not a branch module, for defect reasons, and hence, by Lemma~\ref{Lem:XsSXSectionalPath},  $X_S$ is indecomposable and the embedding $X_S\rightarrow X$ is irreducible. There are affine bundles
$$
\xymatrix@R=2pt{
\mathcal{S}^{\xi} _{\mathbf{f}, \mathbf{g}}\ar@{->>}[r]&\textrm{Gr}_{\mathbf{f}}(X)\times \textrm{Gr}_{\mathbf{g}}(S)\textrm{ for }\mathbf{g}\neq \mathbf{dim}\,S,&
\mathcal{S}^{\xi} _{\mathbf{f}, \mathbf{e}}\ar@{->>}[r]&\mathcal{U}_\mathbf{e}(X,X_S)\textrm{ for }\mathbf{g}=\mathbf{dim}\,S.
}
$$
By induction, both $\textrm{Gr}_{\mathbf{f}}(X)\times \textrm{Gr}_{\mathbf{g}}(S)$ and $\mathcal{U}_\mathbf{e}(X,X_S)$ admit a cellular decomposition. We conclude that each stratum $\mathcal{S}^{\xi} _{\mathbf{f}, \mathbf{g}}$ admits a cellular decomposition, and hence $Y$ has property (C). 
\end{proof}

\begin{figure}
\begin{center}
\includegraphics[scale=0.9]{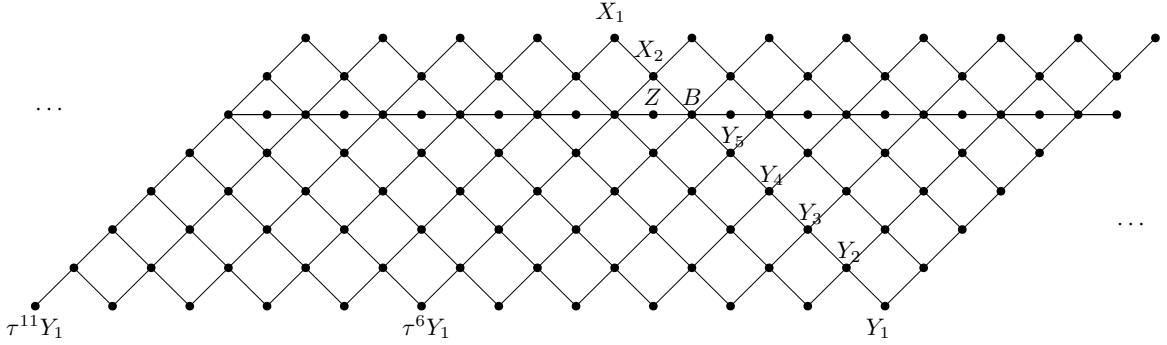}
\end{center}
\caption{Preprojective component of the AR--quiver of a quiver of type $\tilde{E}_8$}\label{Fig:PreprojCompE8}
\end{figure}

\section{The cluster multiplication formula}\label{Sec:ClusterMultFormula}
We recall the definition of the cluster character with coefficients (\cite{DWZ, FuKeller, GLS:07}). 
Let $M$ be a finite dimensional representation of an acyclic quiver $Q$ with $n$ vertices. The $\mathbf{g}$--vector \cite{DWZ} or index \cite{Palu} of $M$ is the integer vector $\mathbf{g}_M\in \ZZ^{Q_0}$ given by $(\mathbf{g}_M)_i:=-\langle S_i,M\rangle$.
Let $H=(h_{ij})_{i,j \in Q_0}$ be the matrix representing the Euler form $\langle-,-\rangle$ in the basis of simples: $\langle M,N\rangle={}^t(\mathbf{dim}\,M) H (\mathbf{dim }N)$.  The \emph{exchange matrix} of $Q$ is defined as
$B:=H-H^t$.  The CC--map is a map $M\mapsto CC(M)$ that associates to a representation of $Q$ a Laurent polynomial $CC(M)\in \ZZ[y_1,\cdots, y_n, x_1^{\pm1},\cdots, x_n^{\pm 1}]$. Given an integer vector $\mathbf{b}=(b_1,\cdots, b_n)$ we use the shorthand notation $\mathbf{x}^\mathbf{b}:=x_1^{b_1}x_2^{b_2}\cdots x_n^{b_n}$.  The Laurent polynomial $CC(M)$ is defined as follows
$$
CC(M):=\sum_{\mathbf{e}}\chi(\Gr_\mathbf{e}(M))\mathbf{y}^\mathbf{e}\mathbf{x}^{B\mathbf{e}+\mathbf{g}_M}.
$$
Given two $Q$--representations $X$ and $S$ such that $[S,X]^1=1$,  a non--zero vector $\xi'\in \Ext^1(S^X, X/X_S)$ is a generalized almost split sequence and there exists an exact sequence
$
0\rightarrow X/X_S\rightarrow \tau S^X\rightarrow I\rightarrow 0
$ where $I$ is either injective or zero (see Lemma~\ref{Lem:XsSX}). We consider the indecomposable decomposition  $I=I_1^{f_1}\oplus I_2^{f_2}\oplus\cdots\oplus I_n^{f_n}$ of $I$. Let $\mathbf{f}=(f_1,\cdots, f_n)$. We can now state and prove a multiplication formula between cluster characters with coefficients.  

\begin{thm}
Let $\xi\in\Ext^1(S,X)$ be a generating extension with middle term $Y$. Then 
\begin{equation}\label{Eq:ClusterFormula}
CC(X)CC(S)=CC(Y)+\mathbf{y}^{\mathbf{dim}\, S^X} CC(X_S\oplus S/S^X)\mathbf{x}^{\mathbf{f}}.
\end{equation}
If, in addition, $\Ext^1(X,S)=0$ and both $X$ and $S$ are exceptional then formula~\eqref{Eq:ClusterFormula} is an exchange relation between the cluster variables $CC(X)$ and $CC(S)$ for the cluster algebra $\mathcal{A}_\bullet(\Sigma)$ with principal coefficients at the initial seed $\Sigma=(Q,\mathbf{x})$.  \end{thm}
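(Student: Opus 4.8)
The plan is to prove the cluster multiplication formula \eqref{Eq:ClusterFormula} by comparing the Euler characteristics appearing on both sides, using the geometric decomposition established in Corollary~\ref{Cor:DecForCluster}, and then to deduce the second assertion about exchange relations by identifying the two monomials on the right-hand side as the two terms of a mutation in the principal-coefficient cluster algebra. First I would expand the product $CC(X)CC(S)$ according to the definition
$$
CC(X)CC(S)=\sum_{\mathbf{f},\mathbf{g}}\chi(\Gr_\mathbf{f}(X))\chi(\Gr_\mathbf{g}(S))\,\mathbf{y}^{\mathbf{f}+\mathbf{g}}\,\mathbf{x}^{B(\mathbf{f}+\mathbf{g})+\mathbf{g}_X+\mathbf{g}_S}
$$
and observe that, since $\chi(\Gr_\mathbf{f}(X)\times\Gr_\mathbf{g}(S))=\chi(\Gr_\mathbf{f}(X))\chi(\Gr_\mathbf{g}(S))$, the coefficient of $\mathbf{y}^{\mathbf{e}}$ (with $\mathbf{e}=\mathbf{f}+\mathbf{g}$) is controlled by $\sum_{\mathbf{f}+\mathbf{g}=\mathbf{e}}\chi(\Gr_\mathbf{f}(X)\times\Gr_\mathbf{g}(S))$. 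This is exactly the left-hand side of the identity \eqref{Eq:EulerCharFormula} in Corollary~\ref{Cor:DecForCluster}, so it equals $\chi(\Gr_\mathbf{e}(Y))+\chi(\Gr_{\mathbf{e}-\mathbf{dim}\,S^X}(X_S\oplus S/S^X))$.

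The next step is the bookkeeping of the $\mathbf{x}$- and $\mathbf{y}$-exponents. For the first term I need $\mathbf{g}_Y=\mathbf{g}_X+\mathbf{g}_S$, which follows from additivity of the index on the short exact sequence $\xi$ (the $\mathbf{g}$-vector is $(\mathbf{g}_M)_i=-\langle S_i,M\rangle$, and $\langle S_i,-\rangle$ is additive because $\Ext^{\geq2}$ vanishes); combined with $B\mathbf{e}+\mathbf{g}_Y$ this matches the exponent coming from $\Gr_\mathbf{e}(Y)$ termwise in $\mathbf{e}$, giving $CC(Y)$. For the second term, with $\mathbf{e}'=\mathbf{e}-\mathbf{dim}\,S^X$, I must check that
$$
\mathbf{y}^{\mathbf{e}}\mathbf{x}^{B\mathbf{e}+\mathbf{g}_X+\mathbf{g}_S}
=\mathbf{y}^{\mathbf{dim}\,S^X}\mathbf{x}^{\mathbf{f}}\cdot \mathbf{y}^{\mathbf{e}'}\mathbf{x}^{B\mathbf{e}'+\mathbf{g}_{X_S\oplus S/S^X}},
$$
i.e. that $B\,\mathbf{dim}\,S^X+\mathbf{g}_X+\mathbf{g}_S-\mathbf{g}_{X_S\oplus S/S^X}=\mathbf{f}$. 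Here $\mathbf{g}_{X_S\oplus S/S^X}=\mathbf{g}_{X_S}+\mathbf{g}_S-\mathbf{g}_{S^X}$ by additivity, so the claim reduces to $B\,\mathbf{dim}\,S^X=\mathbf{g}_{S^X}-\mathbf{g}_X+\mathbf{g}_{X_S}+\mathbf{f}$. I would prove this by feeding the two short exact sequences $0\to X_S\to X\to X/X_S\to 0$ and $0\to X/X_S\to \tau S^X\to I\to 0$ from Lemma~\ref{Lem:XsSX} into the index, using the standard identity $\mathbf{g}_{\tau N}-\mathbf{g}_N = -B\,\mathbf{dim}\,N$ (equivalently $\langle S_i,\tau N\rangle-\langle S_i,N\rangle$ expressed via $B$, which follows from $\Ext^1(N,M)\cong D\Hom(M,\tau N)$ and the Euler form) applied with $N=S^X$, and the fact that $\mathbf{g}_I=\sum_j f_j\,\mathbf{g}_{I_j}=-\mathbf{f}$ since $\langle S_i,I_j\rangle=\delta_{ij}$ for injectives. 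Assembling these gives the required exponent identity, completing the proof of \eqref{Eq:ClusterFormula}.

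For the final assertion, suppose additionally $\Ext^1(X,S)=0$ and $X,S$ are exceptional. Then $\xi$ together with a generating extension in $\Ext^1(X,S)=0$ (which is split) shows $X$ and $S$ are ``compatible'' and rigid, hence the characters $CC(X)$ and $CC(S)$ are cluster variables of $\mathcal{A}_\bullet(\Sigma)$ lying in a common cluster up to the exchanged pair, by the results of \cite{DWZ,FuKeller} identifying rigid indecomposables with cluster variables and their $\mathbf{g}$-vectors with $g$-vectors of the cluster algebra. It then suffices to recognize that the two monomials $\mathbf{y}^{\mathbf{0}}CC(Y)$ and $\mathbf{y}^{\mathbf{dim}\,S^X}\mathbf{x}^{\mathbf{f}}CC(X_S\oplus S/S^X)$ are precisely the two terms of the exchange relation obtained by mutating at the vertex corresponding to the pair $(X,S)$: the binomial exchange polynomial in principal coefficients has exponents determined by the relevant column of the $B$-matrix, and the computation of the previous paragraph shows these match $\mathbf{dim}\,S^X$ and $\mathbf{f}$; moreover $Y$ and $X_S\oplus S/S^X$ are the generic extension and the ``other'' representation whose cluster characters are the two exchanged variables (this uses $X_S=0$, $S^X=S$ would give the classical almost-split case, and in general Lemma~\ref{Lem:XsSX} identifies $X/X_S$, $S^X$ as the bricks governing the exchange). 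The main obstacle I expect is precisely this last identification: matching the combinatorially defined exchange binomial of $\mathcal{A}_\bullet(\Sigma)$ with the representation-theoretic data $(S^X, X_S, \mathbf{f})$, which requires carefully invoking the dictionary between $\mathbf{g}$-vectors, $c$-vectors, and the $B$-matrix in the principal-coefficient setting and checking the signs are consistent with the acyclic initial seed $(Q,\mathbf{x})$.
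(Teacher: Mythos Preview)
Your derivation of formula~\eqref{Eq:ClusterFormula} follows the same route as the paper: expand the product, invoke Corollary~\ref{Cor:DecForCluster} for the Euler characteristics, and then match the monomial exponents using additivity of $\mathbf{g}$-vectors on short exact sequences together with the index/coindex relation for $\tau$. Two small slips: after substituting $\mathbf{g}_{X_S\oplus S/S^X}=\mathbf{g}_{X_S}+\mathbf{g}_S-\mathbf{g}_{S^X}$ your reduced identity should read $B\,\mathbf{dim}\,S^X=-\mathbf{g}_{S^X}-\mathbf{g}_{X/X_S}+\mathbf{f}$ (you dropped a sign on $\mathbf{g}_{S^X}$), and the ``standard identity'' you quote is $\mathbf{g}_{\tau N}+\mathbf{g}_N=-B\,\mathbf{dim}\,N$ in the paper's conventions (equivalently $\mathbf{g}^N=-\mathbf{g}_{\tau N}$ with $\mathbf{g}^N=-H^t\mathbf{dim}\,N$), not a difference. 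With these corrections your exponent computation goes through exactly as in the paper, which phrases the target as $B\,\mathbf{dim}\,S^X+\mathbf{g}_{X/X_S}+\mathbf{g}_{S^X}=-\mathbf{g}_I$ and verifies it from $\mathbf{g}_{\tau S^X}=\mathbf{g}_{X/X_S}+\mathbf{g}_I$ together with $\mathbf{g}_{\tau S^X}=H^t\mathbf{dim}\,S^X$.

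For the exchange-relation part your outline is too indirect, and the ``main obstacle'' you flag is avoided in the paper by a more concrete argument. Rather than trying to match the abstract exchange binomial of $\mathcal{A}_\bullet(\Sigma)$, the paper appeals to Proposition~\ref{Prop:XsSXRigidity}: under the hypotheses $[X,S]^1=0$ and $X,S$ exceptional, it gives that $Y$, $X_S\oplus S/S^X$, $X\oplus Y\oplus X_S\oplus S/S^X$, and $S\oplus Y\oplus X_S\oplus S/S^X$ are all rigid. Hence $CC(Y)$ and $CC(X_S\oplus S/S^X)$ are cluster monomials, and the compatibility with $CC(X)$ and $CC(S)$ follows. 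The remaining check is the vanishing $[X,I]=[S,I]=[Y,I]=[X_S,I]=[S/S^X,I]=0$, which controls the frozen (coefficient) part and is verified directly. You should replace your attempted dictionary-matching with this rigidity argument; it is both shorter and avoids the sign/convention issues you anticipate.
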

\begin{proof}

By using corollary~\ref{Cor:DecForCluster}, we easily get the following formula:  
$$
CC(X)CC(S)=CC(Y)+\mathbf{y}^{\mathbf{dim}\,S^X}\mathbf{x}^{B\mathbf{dim}\, S^X+\mathbf{g}_{(X/X_S)}+\mathbf{g}_{(S^X)}}CC(X_S\oplus S/S^X).
$$
Since $\mathbf{x}^{-\mathbf{g}_{I_k}}=x_k$ by definition of $\mathbf{g}$--vector, to prove formula \eqref{Eq:ClusterFormula} it remains to check that
\begin{equation}\label{Eq:MultFormProof}
B\mathbf{dim}\, S^X+\mathbf{g}_{X/X_S}+\mathbf{g}_{S^X}=-\mathbf{g}_{I}.
\end{equation}
To prove this we recall some few properties of $\mathbf{g}$--vectors. Given a $Q$--representation $M$, by defintion we have $\mathbf{g}_M=-H\mathbf{dim}\,M$. We define the coindex of $M$ as $\mathbf{g}^M:=-H^t\mathbf{dim}\,M$, where $H^t$ denotes the traspose matrix. If $0\rightarrow A\rightarrow B\rightarrow C\rightarrow 0$ is a short exact sequence, then $\mathbf{g}_{A\oplus C}=\mathbf{g}_{A}+\mathbf{g}_C=\mathbf{g}_B$. If $M$ is indecomposable non--projective, the following formula holds (see e.g. \cite[Sec.~5.1]{Cer17})
$\mathbf{g}^M=-\mathbf{g}_{\tau M}$.
Now \eqref{Eq:MultFormProof} follows at once from these properties and concludes the proof of  \eqref{Eq:ClusterFormula}. 

If $X$ and $S$ are exceptional, and $\Ext^1(X,S)=0$, then the two modules $Y$ and $X_S\oplus S/S^X$ are rigid (see Proposition~\ref{Prop:XsSXRigidity}) and hence $CC(Y)$ and $CC(X_S\oplus S/S^X)$ are cluster monomials.  Moreover $X\oplus Y\oplus X_S\oplus S/S^X$ and  $S\oplus Y\oplus X_S\oplus S/S^X$ are rigid, too. To conclude we need to check 
that $[X, I]=[S, I]=[Y, I]=[X_S,I]=[S/S^X, I]=0$.  This is done easily and concludes the proof.  
\end{proof}

\begin{rem}
In case $[X,S]^1=0$ and both $X$ and $S$ are exceptional, formula~\eqref{Eq:ClusterFormula} makes clear that that the $\mathbf{c}$-vector of this exchange relation is the dimension vector of $S^X$.  In particular, it is a positive real Schur root, since $S^X$ is a rigid brick (see Proposition~\ref{Prop:XsSXRigidity}). This provides a representation theoretic interpretation of those $\mathbf{c}$-vectors. The $\mathbf{c}$--vectors of cluster algebras have been studied by several authors: \cite{StellaNakanishi} (for finite type cluster algebras), \cite{SpeyerThomas:AcyclicClusterAlgebras} and \cite{Chavez:CVectors} (for general skew--symmetric cluster algebras). More recently, $\mathbf{c}$--vectors have appeared in $\tau$--tilting theory \cite{DIJ, Asai}.
\end{rem}
\begin{ex}
Let $M$ be a rigid indecomposable non--projective representation of an acyclic quiver. Let $\xi:0\rightarrow \tau M\rightarrow E\rightarrow M\rightarrow 0$ be the almost split sequence ending in $M$. Then 
\begin{equation}\label{Eq:CCAlmostSplit}
CC(M)CC(\tau M)=CC(E)+\mathbf{y}^{\mathbf{dim}\,M}
\end{equation}
\end{ex}
Indeed, since $M$ is exceptional, $[M,\tau M]^1=[\tau M,\tau M]=[M,M]=1$. The generating extension $\xi\in\Ext^1(M,\tau M)$ is an almost split sequence. It follows that $M^{\tau M}=M$ and $(\tau M)_M=M$ (see definition~\ref{Def:XsSx}). Thus formula~\ref{Eq:CCAlmostSplit} is a particular case of the cluster multiplication formula~\ref{Eq:CCAlmostSplit}. For almost split sequences, formula~\eqref{Eq:CCAlmostSplit} holds in full generality \cite{Geiss:14}. 

\section*{Acknowledgments}
We thank Antonio Rapagnetta, Corrado De Concini, Claus Michael Ringel, Bernhard Keller and Julia Sauter for helpful discussions and comments.

	\bibliographystyle{abbrv}
	\bibliography{Literature}
\end{document}